\newcounter{braid}
\newcounter{strands}
\def\cross{%
  \@ifnextchar^{\message{Got sup}\cross@sup}{\cross@sub}}
\def\cross@sup^#1_#2{\render@cross{#2}{#1}}
\def\cross@sub_#1{\@ifnextchar^{\cross@@sub{#1}}{\render@cross{#1}{1}}}
\def\cross@@sub#1^#2{\render@cross{#1}{#2}}
\def\render@cross#1#2{
  \def\strand{#1}
  \def\crossing{#2}
  \pgfmathsetmacro{\cross@y}{-\value{braid}*\braid@h}
  \pgfmathtruncatemacro{\nextstrand}{#1+1}
  \foreach \thread in {1,...,\value{strands}}
  {
    \pgfmathsetmacro{\strand@x}{\thread * \braid@w}
    \ifnum\thread=\strand
    \pgfmathsetmacro{\over@x}{\strand * \braid@w + .5*(1 - \crossing) * \braid@w}
    \pgfmathsetmacro{\under@x}{\strand * \braid@w + .5*(1 + \crossing) * \braid@w}
    \draw[braid] \pgfkeysvalueof{/tikz/braid start} +(\under@x pt,\cross@y pt) to[out=-90,in=90] +(\over@x pt,\cross@y pt -\braid@h);
    \draw[braid] \pgfkeysvalueof{/tikz/braid start} +(\over@x pt,\cross@y pt) to[out=-90,in=90] +(\under@x pt,\cross@y pt -\braid@h);
    \else
    \ifnum\thread=\nextstrand
    \else
     \draw[braid] \pgfkeysvalueof{/tikz/braid start} ++(\strand@x pt,\cross@y pt) -- ++(0,-\braid@h);
    \fi
   \fi
  }
  \stepcounter{braid}
}
\tikzset{braid/.style={double=\pgfkeysvalueof{/tikz/braid colour},double distance=1pt,line width=2pt,white}}
\newcommand{\braid}[2][]{%
  \begingroup
  \pgfkeys{/tikz/strands=2}
  \tikzset{#1}
  \pgfkeysgetvalue{/tikz/braid width}{\braid@w}
  \pgfkeysgetvalue{/tikz/braid height}{\braid@h}
  \setcounter{braid}{0}
  \let\dsigma=\cross
  #2
  \endgroup
}
\def\GQ{\mathrm{Q}}
\def\into{\hookrightarrow}
\def\onto{\twoheadrightarrow}
\def\diag{\mathrm{diag}}
\def\kk{\mathbbm{k}}
\def\g{\mathfrak{g}}
\def\C{\ensuremath{\mathbbm{C}}}
\def\Q{\mathbbm{Q}}
\def\Z{\mathbbm{Z}}
\def\N{\mathbbm{N}}
\def\F{\mathbbm{F}}
\def\sl{\mathfrak{sl}}
\newcommand{\la}{\lambda}
\def\eps{\varepsilon}
\def\ii{\mathrm{i}}
\def\dd{\mathrm{d}}
\def\Ker{\mathrm{Ker}}
\def\End{\mathrm{End}}
\def\Hom{\mathrm{Hom}}
\def\Res{\mathrm{Res}}
\def\Ug{\mathsf{U}\g}
\newtheorem{theorem}{Theorem}[section]
\newtheorem{lemma}[theorem]{Lemma}
\newtheorem{remark}[theorem]{Remark}
\newtheorem{corollary}[theorem]{Corollary}
\newtheorem{lemme}[theorem]{Lemme}
\newtheorem{proposition}[theorem]{Proposition}
\numberwithin{equation}{section}
\date{November 9, 2018}
\author{Ivan Marin}
\title{A maximal  cubic quotient of the braid algebra}
\newlength\Colsep
\begin{document}

\maketitle

{\bf Abstract.} We study a quotient of the group algebra of the braid group in which the Artin generators satisfy a cubic relation. This quotient is maximal among the ones satisfying
such a cubic relation. It is finite-dimensional for at least $n \leq 5$ and we investigate its module structure in this range. 
We also investigate the proper quotients of it that appear in the realm of quantum groups, and describe another maximal quotient
related to the usual Hecke algebras. Finally, we describe the connection between this algebra and a quotient of the algebra of horizontal chord diagrams introduced by Vogel. We prove that these two are isomorphic for $n \leq 5$.

\tableofcontents

\section{Introduction}

Since its introduction 80 years ago by E. Artin, the braid
group and its representations have been recognized as object of great importance in a number of mathematical problems, in topology and elsewhere. There is one finite-dimensional quotient of the group algebra of the braid group -- braid algebra for short -- with a simple description that is well-understood. It is the Hecke algebra, where the additional
relation is a quadratic relation on any Artin generator
on 2 strands. It is thus worthwhile to get
similar quotients.

Replacing the quadratic relation by a generic cubic one,
one gets another quotient which deserves to be called in this context a cubic Hecke algebra, but which is not
finite-dimensional (for at least one specialization
of the parameters). However, a number of already defined finite-dimensional quotients of the braid algebra (some
of them originating from the quantum world and in the realm
of Vassiliev invariants) factor through this cubic Hecke
algebra. It is thus tempting to look for a new quotient,
covering the usual ones, and whose defining relations would
involve as few strands as possible.

Since there is no other possible relation on 2 strands
than the basic cubic one, the next possibility is to look
for relations on 3 strands. It turns out that the generic cubic Hecke algebras \emph{on at most 5 strands} are finite-dimensional and semisimple, and therefore there is
only a finite number of ideals by which it is possible to divide out. Therefore a `maximal cubic quotient' defined
on the fewest possible number of strands is uniquely
defined by one of (the isotypic component attached to) irreducible representations of the cubic Hecke algebra on 3 strands. This explains the term \emph{maximal} used in the title of the present paper.

It turns out that, up to some Galois symmetries, there
are only 3 possibilities for such a maximal quotient,
corresponding to representations of dimension 1, 2 and 3.
We prove below, elaborating on previous joint work
with M. Cabanes, that the one corresponding to the 3-dimensional representation is finite dimensional
but is closely related to the usual, quadratic Hecke algebra. The one related to the 1-dimensional one is still
mysterious, although we proved in \cite{CABANESMARIN} that the quotient related to the collection of all 1-dimensional representations collapses on 5 strands. In the present paper we study in detail the maximal quotient related to an arbitrary 2-dimensional irreducible representation.

This quotient is particularly interesting because all
the quantum constructions we know of which provide quotients of the cubic Hecke algebras factorize
through this quotient. Moreover, there are reasons to
hope that this quotient is itself finite-dimensional. Finally, it has the advantage of being defined simply by
\begin{itemize}
\item the generic cubic relation on 2 strands
\item a single, relatively simple, braid relation on 3 strands
\end{itemize}

Although we do not solve the question of its  finite dimensionality in this paper for $n \geq 6$,
we tried to provide a thorough algebraic study of
this quotient on at most 5 strands. This includes
\begin{itemize}
\item  the
comparaison with an infinitesimal algebra introduced by P. Vogel in \cite{VOGELEXP}
\item the determination of a convenient $\Z[a^{\pm 1},b^{\pm 1},c^{\pm 1}]$-module structure for it, which is
proved to be free on at most 4 strands, with an explicit basis.
\item determination of generators (for at most 5 strands)
as modules over the subalgebra on 1 strand less
\end{itemize} 

The $\Z[a^{\pm 1},b^{\pm 1},c^{\pm 1}]$-module structure we define is the quotient of the group algebra
of the braid group on $n$ strands by the cubic relation $(s_1-a)(s_1-b)(s_1-c) = 0$, where $s_1,\dots,s_{n-1}$ denote the Artin generators, together with the relation
$$
\begin{array}{lcl}
 s_1^{-1}s_2s_1 &=& s_1 s_2 s_1^{-1} - a^{-1} s_1 s_2 + a s_1 s_2^{-1} + a s_1^{-1}s_2 - a^3 s_1^{-1} s_2^{-1}
+ a^{-1} s_2 s_1 - a s_2 s_1^{-1} - a s_2^{-1} s_1 \\
 & & + a^3 s_2^{-1}s_1^{-1} + a^2 s_1^{-1} s_2^{-1} s_1
- a^2 s_1 s_2^{-1} s_1^{-1}  \\
\end{array}
$$
that is

\begin{center}

\begin{tikzpicture}
\begin{scope}[scale=.28]
\braid[braid colour=black,strands=3,braid start={(0,0)}]%
{ \dsigma _1^{-1} \dsigma_2 \dsigma_1 }
\end{scope}
\begin{scope}[shift={(1.1,-.5)}]
\draw (0,0) node {$=$};
\end{scope}
\begin{scope}[scale=.28,shift={(3.8,0)}]
\braid[braid colour=black,strands=3,braid start={(0,0)}]%
{ \dsigma _1 \dsigma_2 \dsigma_1^{-1} }
\end{scope}
\begin{scope}[shift={(2.4,-.5)}]
\draw (0,0) node {$-a^{-1}$};
\end{scope}
\begin{scope}[scale=.28,shift={(9.6,-.5)}]
\braid[braid colour=black,strands=3,braid start={(0,0)}]%
{ \dsigma _1 \dsigma_2  }
\end{scope}
\begin{scope}[shift={(3.8,-.5)}]
\draw (0,0) node {$+a$};
\end{scope}
\begin{scope}[scale=.28,shift={(14,-.5)}]
\braid[braid colour=black,strands=3,braid start={(0,0)}]%
{ \dsigma _1 \dsigma_2^{-1}  }
\end{scope}
\begin{scope}[shift={(5.1,-.5)}]
\draw (0,0) node {$+a$};
\end{scope}
\begin{scope}[scale=.28,shift={(18.5,-.5)}]
\braid[braid colour=black,strands=3,braid start={(0,0)}]%
{ \dsigma _1^{-1} \dsigma_2  }
\end{scope}
\begin{scope}[shift={(6.35,-.5)}]
\draw (0,0) node {$-a^3$};
\end{scope}
\begin{scope}[scale=.28,shift={(23,-.5)}]
\braid[braid colour=black,strands=3,braid start={(0,0)}]%
{ \dsigma _1^{-1} \dsigma_2^{-1}  }
\end{scope}
\begin{scope}[shift={(7.9,-.5)}]
\draw (0,0) node {$+a^{-1}$};
\end{scope}
\begin{scope}[scale=.28,shift={(29,-.5)}]
\braid[braid colour=black,strands=3,braid start={(0,0)}]%
{ \dsigma _2 \dsigma_1  }
\end{scope}
\begin{scope}[shift={(9.2,-.5)}]
\draw (0,0) node {$-a$};
\end{scope}
\begin{scope}[scale=.28,shift={(33,-.5)}]
\braid[braid colour=black,strands=3,braid start={(0,0)}]%
{ \dsigma _2 \dsigma_1^{-1}  }
\end{scope}
\begin{scope}[shift={(10.4,-.5)}]
\draw (0,0) node {$-a$};
\end{scope}
\begin{scope}[scale=.28,shift={(37.4,-.5)}]
\braid[braid colour=black,strands=3,braid start={(0,0)}]%
{ \dsigma _2^{-1} \dsigma_1  }
\end{scope}

\begin{scope}[shift={(11.65,-.5)}]
\draw (0,0) node {$+a^3$};
\end{scope}
\begin{scope}[scale=.28,shift={(42,-.5)}]
\braid[braid colour=black,strands=3,braid start={(0,0)}]%
{ \dsigma _2^{-1} \dsigma_1^{-1}  }
\end{scope}

\begin{scope}[shift={(12.9,-.5)}]
\draw (0,0) node {$+a^2$};
\end{scope}
\begin{scope}[scale=.28,shift={(46.5,-.5)}]
\braid[braid colour=black,strands=3,braid start={(0,0)}]%
{ \dsigma _1^{-1} \dsigma_2^{-1}\dsigma_1  }
\end{scope}

\begin{scope}[shift={(14.15,-.5)}]
\draw (0,0) node {$-a^2$};
\end{scope}
\begin{scope}[scale=.28,shift={(50.9,-.5)}]
\braid[braid colour=black,strands=3,braid start={(0,0)}]%
{ \dsigma _1 \dsigma_2^{-1}\dsigma_1^{-1}  }
\end{scope}
\end{tikzpicture}
\end{center}

This relation, to the best of our knowledge, has first been exhibited by Ishii, in his study of the Links-Gould polynomial
(see \cite{ISHII}).

We hope that this algebraic work will be useful, both
for the further study of this quotient on a higher number
of strands, and concerning the study of specializations of these algebras as well. In particular, it is important to have a well-defined algebra over a generic ring of the form
$\Z[a^{\pm 1},b^{\pm 1},c^{\pm 1}]$ in order to be able
to deal with specialization, and it is known in particular
that some of the Markov traces occuring on some of the cubic quotients cannot be defined over $\Q(a,b,c)$ -- notably the one producing the Kauffman polynomial.

\bigskip

The paper is organized as follows. In the preliminary section 2 we provide background on the cubic Hecke algebras,
and start by a definition of the quotient $Q_n$ over the field $\Q(a,b,c)$ -- that we denote $Q_n \otimes K$
for coherence with the sequence. This definition does not involve any braid formula, and is enough to justify that a few quantum quotients of the braid group factorize through it. With the exception of the BMW algebra, which is quite well known (see e.g.
\cite{BW,WENZL,TRBMW}), we describe them for small $n$.  We also prove here that the other 'maximal cubic quotient' mentionned above is actually a `tripled' version of the ordinary (quadratic) Hecke algebra.
Finally, we introduce Vogel's algebra and explain the connection with our quotient $Q_n$.

Section 3 investigates the structure of $Q_3$ on 3 strands,
notably from a computer algebra point of view. It also establishes some tools which are useful for the sequel.

The module structure of $Q_4$ is determined in section 4, as well as its structure as a $Q_3$-bimodule. Finally, we prove in section 5 that $Q_5$ is a finitely generated module, and investigate it as a $Q_4$-bimodule.

\section{Preliminaries}

\subsection{The cubic Hecke algebras}

Let us denote
$
R = \Z[a,a^{-1},b,b^{-1},c,c^{-1}] = \Z[a,b,c,(abc)^{-1}]$.
 We let $H_n$ denote
the $R$-algebra defined as the quotient of the group algebra $R B_n$
of the braid group on $n$ strands by the relations $(s_i-a)(s_i-b)(s_i-c) = 0$
for $1 \leq i \leq n-1$ or, equivalently -- since each $s_i$ is conjugated to $s_1$ --
by the relation $(s_1-a)(s_1-b)(s_1-c)= 0$. It is known that $H_n$ is a
free $R$-module of finite rank for $n \leq 5$ (see \cite{CUBIC5}) and that the specialization
of $H_n$ at $\{a,b,c\} = \mu_3(\C)$ -- that is, the group algebra of $B_n/s_1^3$ --
is infinite-dimensional for $n\geq 6$ by a theorem of Coxeter (see \cite{COXETER}). In the present state of knowledge it remains however possible that
$H_n$ is finite dimensional for other values of $n$ when extended over $\Q(a,b,c)$, although there is no evidence
in this direction so far.

\subsubsection{The cubic Hecke algebra for $n=3$}
\label{sect:H3}
More precisely, for $n =3$, one may excerpt from \cite{CUBIC5} the
following result (see also \cite{BROUEMALLE, FUNAR,THESE} for related statements).

\begin{proposition} {\ } \label{prop:H3libre}
\begin{enumerate}
\item The algebra $H_3$ is a free $H_2$-module of rank $8$, with basis the elements $1, s_2,s_2^{-1}$, $s_1^{\alpha} s_2^{\beta}$ for $\alpha,\beta \in \{ 1,-1 \}$,
$s_2 s_1^{-1} s_2$.
\item The algebra $H_3$ is a free $R$-module of rank 24, with basis the
elements 
$$
\begin{array}{lcl}
\mathcal{B}_1&=& (1,  s_1, s_1^{-1}, s_2, s_2^{-1}, s_1s_2, s_1s_2^{-1}, s_1^{-1}s_2, s_1^{-1}s_2^{-1}, s_1s_2s_1, s_1s_2s_1^{-1}, s_1^{-1}s_2s_1, 
  s_1^{-1}s_2s_1^{-1},\\ & & s_1s_2^{-1}s_1, s_1^{-1}s_2^{-1}s_1, s_2s_1, s_2^{-1}s_1, s_2s_1^{-1}, s_2^{-1}s_1^{-1}, s_1s_2^{-1}s_1^{-1}, 
  s_1^{-1}s_2^{-1}s_1^{-1}, s_2s_1^{-1}s_2,\\ & & s_1s_2s_1^{-1}s_2, s_1^{-1}s_2s_1^{-1}s_2 ).
  \end{array}
$$ {}
\end{enumerate}
\end{proposition}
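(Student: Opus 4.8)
The plan is to prove the two statements together, the numerology $24 = 3\cdot 8$ reducing everything to a spanning argument plus one injectivity input. First I would pin down $H_2 = R[s_1]/\big((s_1-a)(s_1-b)(s_1-c)\big)$: expanding the cubic relation and multiplying by $s_1^{-1}$ shows at once that $\{1,s_1,s_1^{-1}\}$ spans $H_2$ over $R$ and that every power $s_1^k$ lies in this span; so if one knows, in addition, that $H_2$ is free of rank $3$, then to get part (2) it suffices to prove that $H_3$ is spanned over $R$ by the $24$ elements of $\mathcal{B}_1$ \emph{and} that $\rk_R H_3 \geq 24$. Part (1) will then follow formally: the eight listed elements will be shown to span $H_3$ as a (right) $H_2$-module, and a spanning $H_2$-family of cardinality $8 = \rk_R H_3/\rk_R H_2$ in a free module is necessarily a basis once one tensors with $K = \Q(a,b,c)$ and uses that $H_3$ is $R$-free.

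For the spanning step I would run a rewriting procedure on words in $s_1^{\pm 1},s_2^{\pm1}$. From the braid relation $s_1 s_2 s_1 = s_2 s_1 s_2$ one derives the exchange identities $s_1 s_2 s_1^{-1} = s_2^{-1}s_1 s_2$, $s_1^{-1}s_2 s_1 = s_2 s_1 s_2^{-1}$, $s_2 s_1^{-1}s_2^{-1} = s_1^{-1}s_2^{-1}s_1$, $s_2^{-1}s_1^{-1}s_2 = s_1 s_2^{-1}s_1^{-1}$ and their inverses; together with the cubic relation, which rewrites $s_i^{\pm 2}$ as an $R$-combination of $1,s_i,s_i^{-1}$, these allow an induction on word length showing that every element of $R B_3$ is an $R$-linear combination of the $24$ words of $\mathcal{B}_1$. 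Concretely: normalise a word so that all exponents are $\pm1$ and consecutive letters involve distinct generators; then use an exchange identity to manufacture a repeated generator, followed by the cubic relation to absorb it, to shorten any alternating word of length $\geq 4$; finally check by inspection that every alternating word of length $\leq 3$ is either already in $\mathcal{B}_1$ or is turned into such a combination — the word $s_2 s_1^{-1}s_2$ being the unique genuinely irreducible length-$3$ word, which explains why it must sit among the eight $H_2$-module generators. Organising this case analysis so that it terminates and so that the $R$-coefficients are produced consistently (confluence) is the main bookkeeping chore of this step, though not a conceptual obstacle.

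For the lower bound $\rk_R H_3 \geq 24$ I see two routes. The structural one is to recognise $H_3$ as the generic Hecke algebra of the rank-two complex reflection group $G_4 = \langle s,t \mid s^3=t^3=1,\ sts=tst\rangle$ (whose Artin-like group is $B_3$ and whose order is $24$), so that the Broué--Malle--Rouquier freeness theorem for $G_4$ gives freeness of rank $|G_4|=24$ directly; this is in the spirit of the references to \cite{BROUEMALLE}. The self-contained route, and the one genuinely carried out in \cite{CUBIC5}, is to define an action of $s_1,s_2$ on the free $R$-module with basis $\mathcal{B}_1$ by the $24\times24$ matrices dictated by the rewriting rules above, and then \emph{verify} that these matrices satisfy both the cubic relation and the braid relation; $R^{24}$ then becomes a genuine $H_3$-module, cyclic on the basis vector attached to $1\in\mathcal{B}_1$, whose cyclic vector is carried by the $24$ elements of $\mathcal{B}_1$ to the $24$ distinct basis vectors, proving their $R$-linear independence in $H_3$ (and supplying exactly the matrices exploited in Section~\ref{sect:H3}). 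The verification of the braid relation for these explicit matrices is a finite but genuinely laborious computation — this is the real obstacle, and the point at which computer algebra is used. Combining the two steps yields that $\mathcal{B}_1$ is an $R$-basis (part (2)) and hence that the eight-element family is an $H_2$-basis (part (1)).
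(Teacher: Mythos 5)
Your overall architecture coincides with the paper's: the paper obtains the spanning statement by citing Theorem 3.2 of \cite{CUBIC5}, gets freeness from the count $|\Gamma_3|=24$ together with the argument of \cite{BMR} (see \cite{CYCLO}, Proposition 2.4(1)), and deduces (1) formally; your deduction of (1) from (2) by tensoring with $K$ and torsion-freeness is correct, and either of your two routes to the lower bound $\rk_R H_3\geq 24$ (quoting freeness for the Hecke algebra of $G_4\simeq\Gamma_3$, or building a faithful rank-$24$ module on the conjectured basis and verifying the relations) is legitimate.

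The genuine gap is in your hand-made spanning argument. You claim that in any alternating word of length $\geq 4$ an exchange identity creates a repeated generator which the cubic relation then absorbs, so that all irreducible words have length $\leq 3$, with $s_2s_1^{-1}s_2$ the unique truly irreducible one. This is provably false. The reduced words of length $\leq 3$ represent at most $23$ distinct elements of $B_3$ (the sixteen alternating length-$3$ words fall into ten classes under the braid-type identities, and four of them, $s_1s_2^{-1}s_1$, $s_1^{-1}s_2s_1^{-1}$, $s_2s_1^{-1}s_2$, $s_2^{-1}s_1s_2^{-1}$, admit no exchange at all); so if every word of length $\geq 4$ were an $R$-combination of shorter ones, $H_3$ would be spanned by at most $23$ elements, contradicting the rank $24$ you are trying to prove. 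Concretely, your procedure has no move available on $s_1^{-1}s_2s_1^{-1}s_2$: neither of its length-$3$ subwords $s_1^{-1}s_2s_1^{-1}$, $s_2s_1^{-1}s_2$ can be rewritten with the other generator on the outside, and indeed this element is not in the $R$-span of words of length $\leq 3$, whereas $s_1s_2s_1^{-1}s_2 = u\,s_2^{-1}s_1s_2 - v\,s_2^{-1}s_1 + w\,s_2^{-1}s_1s_2^{-1}$ is, at the price of the non-basis word $s_2^{-1}s_1s_2^{-1}$. This is precisely why $\mathcal{B}_1$ must contain two length-$4$ words, why your ``uniqueness'' remark about $s_2s_1^{-1}s_2$ is incorrect, and why reducing arbitrary words to these $24$ normal forms (which also uses mixed relations such as (\ref{eq2b12b1com}) and a genuine termination/confluence analysis) is the non-trivial content of \cite{CUBIC5}, Theorem 3.2, not a bookkeeping chore. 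To repair your proof, either cite that theorem as the paper does, or set up the rewriting system with the correct $24$ normal forms (including $s_1s_2s_1^{-1}s_2$ and $s_1^{-1}s_2s_1^{-1}s_2$) and verify it honestly; your independence step can then stand as written.
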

\begin{proof}  From \cite{CUBIC5} theorem 3.2 we know that $H_3$ is generated as a $H_2$-module by the $8$ elements on the first statement. Since $H_2$
is spanned by $1,s_1, s_1^{-1}$ it follows that $H_3$ is generated as a $H_3$-module by the $24$ elements of the
second statement. Since $\Gamma_3$ has 24 elements and by an argument of \cite{BMR} (see also \cite{CYCLO}, proposition 2.4 (1)) it follows that these $24$ elements are 
a basis over $R$ of $H_3$. It readily follows that the $8$ original elements provide a basis of $H_3$ as a $H_2$-module.
\end{proof}

A consequence is that $H_3$ is a free deformation
of the group algebra $R \Gamma_3$,
where $\Gamma_n$ denotes the quotient of the braid group by the relations $s_i^3 = 1$,
and $H_3$ becomes isomorphic to it after extension of scalars to the algebraic
closure $\overline{K}$ of the field of fractions $K$ of $R$. Actually, one
has the stronger result $H_3 \otimes_R K \simeq K \Gamma_3$,
because the irreducible representations of $K H_3$ 
are absolutely irreducible.

We will use the following explicit matrix models for the representations, which
are basically the same which were obtained in \cite{BROUEMALLE}, \S 5B. We endow $\{a,b,c \}$
with the total ordering $a<b<c$. We denote
\begin{enumerate}
\item $S_{\alpha}$ for $\alpha \in \{a,b,c\}$ the 1-dimensional representation
$s_1,s_2 \mapsto \alpha$ 
\item $U_{\alpha,\beta}$ for $\alpha , \beta \in  \{a,b,c\}$ with $\alpha < \beta$ the 2-dimensional representation
$$
U_{\alpha,\beta} : 
s_1 \mapsto \left( \begin {array}{cc} \alpha&0\\ \noalign{\medskip}-\alpha&\beta\end {array} \right)
s_2 \mapsto \left( \begin {array}{cc} \beta&\beta\\ \noalign{\medskip}0&\alpha\end {array} \right)
$$
\item $V$ the $3$-dimensional irreducible representation 
$$
s_1 \mapsto \left( \begin {array}{ccc} c&0&0\\ \noalign{\medskip}ac+{b}^{2}&b&0\\ \noalign{\medskip}b&1&a\end {array} \right)
s_2 \mapsto \left( \begin {array}{ccc} a&-1&b\\ \noalign{\medskip}0&b&-ac-{b}^{2}\\ \noalign{\medskip}0&0&c\end {array} \right) 
$$
\end{enumerate}
We note the important feature that these representations are actually defined over $R$.
As a consequence, these formulas provide an explicit embedding
$$
\Phi_{H_3} : H_3 \into R^3 \oplus M_2(R)^3 \oplus M_3(R) 
$$
with the RHS being a free $R$-module of rank $24$.

Another interesting property that we have in $H_3$ is the following relation (see \cite{CUBIC5}, lemma 3.6).
\begin{equation} \label{eq2b12b1com}
s_2^{-1}s_1s_2^{-1}s_1 - s_1 s_2^{-1}s_1s_2^{-1} \in u_1u_2+u_2u_1
\end{equation}

Finally we recall from e.g. \cite{TRBMW} section 2 that specializations of $H_3$ remain semisimple as soon as 
$ (x-y)(x^2-xy+y^2)(xy+z^2) \neq 0
$ for $\{x,y,z \} = \{a,b,c \}$.

\subsubsection{The cubic Hecke algebra on 4 strands}
\label{sect:cubicHecke4}

A description of the irreducible representations of $H_4$ can
be found in \cite{THESE} and \cite{LG}. We use the same notation here.
There are
\begin{itemize}
\item three 1-dimensional representations $S_x$ for $x \in \{a,b,c\}$,
defined by $s_i \mapsto x$. 
\item three 2-dimensional representations $T_{x,y}$
indexed by the subsets $\{x,y \} \subset \{ a,b,c \}$ of cardinality 2,
which factorize through the special morphism $B_4 \to B_3$ (hence through $H_3$).
\item one 3-dimensional representation $V$, factorizing through $B_3$.
\item six 3-dimensional representations $U_{x,y}$ for each tuple $(x,y)$
with $x \neq y$ and $x,y \in \{a,b,c \}$. 
\item six 6-dimensional representations $V_{x,y,z}$ for each tuple
$(x,y,z)$ with $\{x,y,z \} = \{ a,b,c \}$
\item three 8-dimensional representations $W_x$ for $x \in \{ a,b,c \}$
\item two 9-dimensional representations $X$, $X'$.
\end{itemize}

Except for $X,X'$, they are uniquely determined by their restriction
to $B_3$. 
$$
\begin{array}{lcl}
\Res U_{x,y} & = & S_x + U_{x,y} \\
\Res V_{x,y,z} &=& S_x + U_{x,y} + V \\
\Res W_x &=& S_x + U_{x,y} + U_{x,z} + V \\  
\Res X &= & U_{x,y} + U_{x,z} + U_{y,z} + V
\end{array}
$$
The representations $U_{x,y}$ of $B_3$ are well-determined by their
restriction to $B_2$ : the restriction to $B_2$ of
$U_{x,y}$ is the sum of two 1-dimensional representations on which $s_1$ acts by $x$ and $y$, respectively.

A complete set of matrices for these representations was first found by Brou\'e and Malle in \cite{BROUEMALLE}.
Other constructions were subsequently given, in \cite{THESE} and \cite{MALLEMICHEL}.
The latter ones have been included in the development version of the CHEVIE package for GAP3,
and the order in which they are stored in this package at the present time is
$S_a,S_c,S_b,T_{b,c}, T_{a,b}, T_{a,c},V$, $U_{b,a},U_{a,c},U_{c,b},U_{c,a},
U_{a,b}, U_{b,c}$,
$V_{c,a,b}, V_{b,c,a}, V_{a,b,c},V_{b,a,c},V_{c,b,a},
V_{a,c,b}, W_a, W_c, W_b, X , X'.
 $

For a printed version of these models, we refer the reader to the tables of \cite{LG}.

A consequence of the trace conjecture of \cite{BMM}
for the complex reflection group $G_{25}$ would be that
there exists a symmetrizing trace on this algebra with
a unicity condition enabling one to compute the corresponding Schur elements. Under this conjecture, these
Schur elements have been computed (see \cite{MALLE,MARIATH}). Of interest
for us will be the following fact : none of these Schur
elements vanish in $R/(a^3+b^2c)$ (while some do inside
$R/(a^3-b^2c)$, though !). Therefore this conjecture implies that $H_4 \otimes L$ is semisimple for $L$ the fraction field
of $R/(a^3-b^2c)$. We check this as follows. In CHEVIE there
are matrix models of all irreducible representations of $H_4$. We check on these by direct (computer) computation
that for the specialization $a = -4$, $b =8$, $c = 1$
all these representations remain irreducible, and pairwise non
isomorphic.

\subsection{A first definition of $\GQ_n$, over $K$}

Our quotient $\GQ_n$ can be defined over $K$ as follows.
Consider the ideal of $H_n \otimes K$ generated by
the (images inside $H_n \otimes K$) of the ideal of
$H_3$ associated to the irreducible representation $U_{b,c}$. Then this version of $\GQ_n$ over $K$, that we
denote $\GQ_n \otimes K$ for compatibility reasons with the sequel, is the quotient of
$H_n \otimes K$ by this ideal.

From the description of $H_4\otimes K$ given here, it
follows that $\GQ_4\otimes K$ is semisimple, and that
its irreducible representations are (see \cite{LG})
the $S_x$ for $x \in \{a,b,c\}$ and $T_{a,b}$,
$T_{a,c}$,$V$, $U_{a,b}$,$U_{b,a}$,$U_{a,c}$, $U_{c,a}$,
$V_{a,b,c}$, $V_{b,a,c}$, $V_{a,c,b}$, $V_{c,a,b}$, $W_a$.

The above definition can already be slightly generalized to
the case where $L$ is the field and we consider a specialization $R \to L$ where the image of
$(x-y)(x^2-xy+y^2)(xy+z^2)$ is nonzero for any
$\{x,y,z \} = \{a,b,c \}$. In this case $\GQ_4 \otimes L$
is again the quotient of the semisimple algebra
$H_4 \otimes L$ by the ideal corresponding to $U_{b,c}$.

\subsection{Quantum cubic quotients}

Let $\g$ denote a (finite dimensional) semisimple Lie
algebra over the complex numbers, endowed with a $\g$-invariant non-degenerate bilinear form $<\cdot,\cdot>$
(usually the Killing form). We 
fix an arbitrary basis $e_1,\dots,e_m$ of $\g$, denote
$e^1,\dots,e^m$ its dual basis with respect to the
given form. Let $C = \sum_{i=1}^m e_me^m \in Z(\mathsf{U}\g)$. If our form is the Killing form,
then $C$ is the Casimir operator.

Let us fix an finite dimensional $\g$-module $U$, and
denote $\tau \in \End(U\otimes U)$ denote the
action of $\sum_{i=1}^m e_i \otimes e_i$ on $U \otimes U$.
It commutes with the flip $x \otimes y \mapsto y\otimes x$. Defining $\tau_{ij} \in \End(U^{\otimes n})$ for $1\leq i \neq j \leq n$
as in \cite{KASSEL} we get a linear representation $\mathfrak{B}_n \to \End(U^{\otimes n})$, induced by $t_{ij} \mapsto \tau_{ij}$ and extending the natural action of $\mathfrak{S}_n$ on $U^{\otimes n}$. This
action commutes with the action of the envelopping algebra $\mathsf{U}\g$. By the Drinfeld-Kohno theorem, for generic values of $q$, this $\mathfrak{B}_n \otimes \mathsf{U}\g$-module structure provides through the monodromy of the associated KZ-system
$$
\frac{h}{\ii \pi} \sum_{1 \leq i < j \leq n} \tau_{ij} \dd \log(z_i - z_j)
$$
the
$\C B_n \otimes \mathsf{U}_q \g$-action on
the quantized module also denoted $U^{\otimes n}$ (see e.g. \cite{KASSEL}), where $q = e^h$. 

\subsubsection{Action on $U^{\otimes 2}$}

In particular the action of the braid generator $s_1$ is conjugate (for generic values of $h$)
to $(1 \ 2)\exp(h \tau)$.  Since 
$$
2 \sum_{i=1}^m e_i \otimes e^i = \Delta(C) - C \otimes 1 - 1 \otimes C
$$
where $\Delta : \Ug \to \Ug \otimes \Ug$ is the coproduct, we know that $\tau$ acts
by a scalar on any simple component of $U^{\otimes 2}$.
Also recall that $\tau$ commutes with $(1 \ 2)$ and
that the value of the Casimir element $C \in Z(\mathsf{U}\g)$
on $V(\la)$ is equal to $<\la,\la + 2 \rho>$,
where $\rho$ is equal to the half-sum of the positive roots (\cite{FH}, (25.14)).

\subsubsection{Commutant algebra}

We set $\mathcal{C}_n = \End_{\Ug}(U^{\otimes n})$
and assume $U^{\otimes n}$ is semisimple as a $\Ug$-module for $n$ smaller
than some $n_{\infty} \in \N$. Let $P_+$ denote the
lattice of dominant weights for $\g$, and $V(\la)$
the (irreducible) highest weight module associated to it. We set $P_+(n)$ the set of all $\la \in P_+$
such that $U^{\otimes n}$ contains an irreducible component isomorphic to $V(\la)$. As a $\mathcal{C}_n \otimes \Ug$-module there is a canonical multiplicity-free
decomposition of $U^{\otimes n}$ of the form
$$
U^{\otimes n} = \bigoplus_{\la \in P_+(n)} \hat{M}_n(\la), \ \ \ \ \ \ \hat{M}_n(\la) \simeq V_n(\la) \otimes M_n(\la)
$$
with $M_n(\la) = \Hom_{\Ug}(V_n(\la),U^{\otimes n})
=\Hom_{\Ug}(V_n(\la),\hat{M}_n(\la))$ and
the isomorphism $V_n(\la) \otimes M_n(\la) \to
\hat{M}_n(\la)$ being given by the evaluation map.
Since, as a $\Ug$-module, $U^{\otimes n}$
is the direct sum of the $\hat{M}_{n-1}(\mu) \otimes U$
and this decomposition is stable under $\mathfrak{B}_{n-1}$, we get a canonical decomposition
$$
\Res_{\mathcal{C}_{n-1}} M_n(\la) = \bigoplus_{\mu \in P_{n-1}(\la)} \Hom_{\Ug}(V(\la),\hat{M}_{n-1}(\mu)\otimes U)= \bigoplus_{\mu \in P_{n-1}(\la)}
c(V(\la),V(\mu)\otimes U)  M_{n-1}(\mu)
$$
where $c(V(\la),V(\mu)\otimes U)$ denotes the multiplicity of the simple $\Ug$-module $V(\la)$
inside $V(\mu) \otimes U$.

When every such restriction is multiplicity free, up to the
restriction to $\mathfrak{B}_2$, we get a canonical
decomposition of $M_n(\la)$ as direct sum of lines. A basis obtained by chosing one nonzero vector per line
will be called a \emph{suitable basis}. It is naturally
indexed by paths in the following levelled graph.
If $U = V(\la_0)$, then there is only one
vertex of level $0$, corresponding to $\la_0$. In general,
the vertices of level $k$ are 1-1 correspondence with
the $\mu \in P_+(k)$, and there is an edge between
the level $k$ vertex attached to $\la \in P_+(k)
$ and $\mu \in P_+(k+1)$ if and only if $V(\la) \otimes U$ contains an irreducible component isomorphic to $V(\mu)$ (and there will be only one such component under our multplity free assumption, for the $\la,\mu$ we will consider). When needed, we denote $\mu^{(k)}$ the vertex of level $k$ attached to $\mu \in P_+(k)$.

The indexing paths are the following ones. Consider paths from $\la_0^{(1)}$ to $\la^{(n)}$ (always
passing from one level to the next). Every such path
$\la_0^{(1)} = \mu_1^{(1)} \to\mu_2^{(2)} \to \dots
\to \mu_n^{(n)} = \la^{(n)}$ 
is canonically associated to the only line corresponding
to the inclusions $M_1(\mu_1) \subset M_2(\mu_2) \subset
\dots \subset M_n(\mu_n) = M_n(\la)$ in the direct sum
decomposition above.

\subsubsection{Action of infinitesimal braids}

From now on we assume that the multiplicity free assumption is satisfied by $M_n(\la)$, and that
we have pick a suitable basis.

There is a natural morphism $\mathfrak{B}_n \to \mathcal{C}_n$, and we want to know whether the
restriction to $\mathfrak{B}_n$ of the modules
$M_n(\la)$ is irreducible. We introduce
the elements $Y_r = \sum_{i<r} t_{ir} \in \mathfrak{B}_r$. They
commute to $\mathfrak{B}_{r-1} \subset \mathfrak{B}_r$
and in particular they commute to each other. Under
our assumption it is readily checked that they act
diagonally on our suitable basis. The scalar by which 
$Y_r$ acts on the (basis element indexed by the) path
$\mu_1^{(1)} \to\mu_2^{(2)} \to \dots
\to \mu_n^{(n)}$ is equal to $\frac{1}{2}( C(\mu_r) - C(\mu_{r-1}) - C(\la_0))$ (see e.g. \cite{THESE} ch. 4 or \cite{LG} lemma 2.1). Setting $s_r = (r,r+1)$ we get
$s_r Y_r s_r = Y_{r+1} - t_{r,r+1}$
hence $s_r W_r s_r + W_r = 2 t_{r,r+1}$ where
$W_r = Y_{r+1} - Y_r$. Notice that $W_r, s_r$ and $t_{r,r+1}$ all commute with $\mathfrak{B}_{r-1}$
and that their action on a given basis element only
depends on the section of the path given by
$\mu_{(r-1)}^{(r-1)}\to \mu_{(r)}^{(r)} \to 
\mu_{(r+1)}^{(r+1)}$. We call \emph{brick} between
$\mu_{(r-1)}^{(r-1)}$ and $ 
\mu_{(r+1)}^{(r+1)}$ the vector space indexed
by the paths of the form
$\mu_{(r-1)}^{(r-1)}\to \mu_{(r)}^{(r)} \to 
\mu_{(r+1)}^{(r+1)}$, endowed with the $\mathcal{A}$-module structure $W \mapsto W_r$, $s \mapsto s_r$, $u \mapsto t_{r,r+1}$, where $\mathcal{A}$ is the
unital algebra defined by generators $W,s,u$ and relations
$$
W + sWs = 2u, su=us, s^2 = 1
$$

\begin{lemma} \label{lem:brickirred33}Let $E$ be a $\mathcal{A}$-module
on which $s$ or $-s$ acts as a reflection, and on which
$W,u$ act diagonally with a disjoint set of distinct eigenvalues.
Then $E$ is irreducible.
\end{lemma}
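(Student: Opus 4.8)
The plan is to use the assumption that $u$ acts with pairwise distinct eigenvalues, which rigidifies the situation completely, and then read off simplicity by hand. We may assume $d := \dim E \ge 2$ (for $d = 1$ the module is trivially simple). Fix a basis $e_1,\dots,e_d$ of $E$ with $u\,e_i = u_i e_i$, the $u_i$ pairwise distinct. Since $s$ commutes with $u$ and each $u$-eigenspace $\k e_i$ is a line, $s$ preserves each $\k e_i$, so $s\,e_i = \epsilon_i e_i$ with $\epsilon_i \in \{1,-1\}$. The hypothesis that $s$ or $-s$ acts as a reflection says precisely that exactly one of the signs $\epsilon_i$ differs from all the others; after renumbering, we may assume it is $\epsilon_1$.

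Next I would feed this into the relation $W + sWs = 2u$. In the basis $(e_i)$ one has $(sWs)_{ij} = \epsilon_i\epsilon_j W_{ij}$, so the relation reads $(1 + \epsilon_i\epsilon_j)W_{ij} = 2u_i\delta_{ij}$. Taking $i = j$ gives $W_{ii} = u_i$; taking $i \neq j$ with $\epsilon_i = \epsilon_j$ gives $W_{ij} = 0$. Since $\epsilon_1$ is the only exceptional sign, the only off-diagonal entries of $W$ that can be nonzero are those of the first row and the first column: $W$ is an \emph{arrowhead matrix} with diagonal $(u_1,\dots,u_d)$.

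The disjointness hypothesis enters here. For $k \ge 2$ the arrowhead shape gives $W e_k = u_k e_k + W_{1k}e_1$ and, dually (reading the $k$-th row of $W$), $e_k^{\ast}W = u_k e_k^{\ast} + W_{k1}e_1^{\ast}$; hence a zero entry $W_{1k}$ in the first row, resp.\ $W_{k1}$ in the first column, of $W$ would make $u_k$ a right-, resp.\ left-eigenvalue of $W$, and these two spectra coincide. As no eigenvalue of $W$ is an eigenvalue of $u$ by hypothesis, we conclude $W_{1k}\neq0$ and $W_{k1}\neq0$ for every $k \ge 2$: the first row and the first column of $W$ have no zero entry off the diagonal.

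It remains to conclude. Let $F$ be a nonzero submodule of $E$; being stable under $u$, which has distinct eigenvalues, $F = \bigoplus_{i\in I}\k e_i$ for some nonempty $I \subseteq \{1,\dots,d\}$. If $1 \notin I$, pick $j \in I$ (so $j \ge 2$); then $W e_j - u_j e_j = W_{1j}e_1 \in F$ with $W_{1j}\neq0$, forcing $e_1 \in F$, a contradiction. Hence $1 \in I$, and then $W e_1 - u_1 e_1 = \sum_{i \ge 2}W_{i1}e_i$ lies in $F$ and has nonzero $e_i$-component for every $i \ge 2$, forcing $\{2,\dots,d\} \subseteq I$; thus $F = E$ and $E$ is simple. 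I do not expect a genuine obstacle: the crux is the elementary remark that $W + sWs = 2u$ pins $W$ down to arrowhead form, and the only step needing a little care is that disjointness of the two spectra makes \emph{both} the first row and the first column of $W$ nonzero off the diagonal — exactly the input that lets a hypothetical submodule spread, in either direction, over all of $E$.
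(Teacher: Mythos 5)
Your proof is correct and follows essentially the same route as the paper: diagonalize $s$ and $u$ simultaneously, use $W+sWs=2u$ to force $W$ into arrowhead form, and then combine the distinctness of the $u$-eigenvalues (so any submodule is a sum of coordinate lines) with the disjointness $Sp(W)\cap Sp(u)=\emptyset$ to rule out proper submodules. If anything, your handling of the case where a putative submodule contains $e_1$ (via the first-column/left-eigenvector argument showing $W_{k1}\neq 0$) spells out a point that the paper's own proof passes over rather quickly with its claim that reducibility would force some single line to be stable.
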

\begin{proof}
Since $\mathcal{A}$ admits an algebra automorphism exchanging $s \leftrightarrow -s$, we can assume $s$ acts
by a reflection.
We choose a basis $e_1,\dots,e_{m+1}$ on which $s,u$ both act diagonally,
$s = \diag(-1,1,\dots,1)$, $u = \diag(\alpha,\beta_1,\dots,\beta_{m})$. Then the equation $W + sWs = 2u$ implies that $W = u + F$ with $F$ mapping $Vect(e_2,\dots,e_{m+1})$ to $Vect(e_1)$ and $Vect(e_1)$ to
$Vect(e_2,\dots,e_{m+1})$. Since $u$ has distinct eigenvalues, $E$ is irreducible unless one of the lines
$Vect(e_i)$ is stable. But this is possible only if $F.e_i = 0$, and then $Vect(e_i)$ would be a common eigenspace for $u,W$ and for the same eigenvalue for both $u$ and $W$. This is excluded by assumption, hence $E$ is irreducible.
\end{proof}

In dimension 2, we have the following much stronger form.

\begin{lemma} \label{lem:brickirred22} Let $E$ be a 2-dimensional $\mathcal{A}$-module on which $s$ or $-s$ acts as a reflection, and on which 
$W,u$ act diagonally. If $u$ has two distinct eigenvalues and $Sp(u) \neq Sp(W)$ then
$E$ is irreducible.
\end{lemma}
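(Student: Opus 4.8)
The plan is to carry out the same kind of explicit $2\times 2$ matrix computation as in the proof of Lemma~\ref{lem:brickirred33}, but taking full advantage of the low dimension. In dimension $2$ the hypothesis that $s$ or $-s$ acts as a reflection means exactly that $s$ is diagonalizable with eigenvalues $1$ and $-1$ (a reflection and its negative have the same spectrum $\{1,-1\}$ here, so the automorphism $s\leftrightarrow -s$ of $\mathcal{A}$ is not even needed; the relation $s^2=1$ already makes $s$ an involution, and ``reflection'' just excludes $s=\pm\Id$). So I would fix an eigenbasis $(e_1,e_2)$ of $s$ and write $s=\diag(-1,1)$. Since $su=us$, a one-line computation forces $u$ to be diagonal in this basis; writing $u=\diag(\alpha,\beta)$, the hypothesis that $u$ has two distinct eigenvalues says $\alpha\neq\beta$ and identifies $Sp(u)=\{\alpha,\beta\}$.

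The second step exploits the defining relation $W+sWs=2u$. Conjugating a matrix by $\diag(-1,1)$ negates its off-diagonal entries, so for $W=(w_{ij})$ one gets $W+sWs=\diag(2w_{11},2w_{22})$; comparing with $2u$ yields $w_{11}=\alpha$ and $w_{22}=\beta$, while $w_{12}$ and $w_{21}$ are unconstrained. (In particular the assumption that $W$ acts diagonally plays no role.)

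For the conclusion I would argue by contradiction: if $E$ were reducible it would contain a one-dimensional submodule $L$, which in particular is stable under $s=\diag(-1,1)$ and hence equals $Vect(e_1)$ or $Vect(e_2)$, the only $s$-stable lines (this is where the distinctness of the eigenvalues of $s$ enters). Stability of $L$ under $W$ then forces $w_{21}=0$ (if $L=Vect(e_1)$) or $w_{12}=0$ (if $L=Vect(e_2)$); in either case $W$ becomes triangular with diagonal $(\alpha,\beta)$, so $Sp(W)=\{\alpha,\beta\}=Sp(u)$, contradicting $Sp(u)\neq Sp(W)$. Hence $E$ is irreducible.

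There is essentially no obstacle: the whole content lies in the two elementary observations that the relation $W+sWs=2u$ pins down the diagonal of $W$ in an $s$-eigenbasis, and that an $s$-stable line of a $2$-dimensional space on which $s$ acts with distinct eigenvalues is a coordinate line. It is worth recording that this argument shows Lemma~\ref{lem:brickirred22} is genuinely stronger than Lemma~\ref{lem:brickirred33} restricted to dimension $2$: one needs neither that $W$ be diagonalizable nor that the spectra of $W$ and $u$ be disjoint, only the weaker condition $Sp(u)\neq Sp(W)$, which is exactly what rules out the two triangular cases.
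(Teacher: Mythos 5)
Your proof is correct and follows essentially the same route as the paper: fix a common eigenbasis, use the relation $W+sWs=2u$ to pin down the diagonal of $W$, and observe that a stable line would force $W$ to be triangular with the same spectrum as $u$, contradicting $Sp(u)\neq Sp(W)$. Your write-up merely spells out the details that the paper's terse proof leaves implicit (and correctly notes that diagonality of $W$ is not actually needed).
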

\begin{proof}
The proof starts in the same way as in the previous lemma, but then $W + sWs = 2u$ implies that, either $W$ and $u$ have the same spectrum, or none of the two eigenspaces of $u$ is stable under $W$. This implies that $E$ is irreducible.
\end{proof}

Assume that the $M_{n-1}(\mu)$ are all pairwise non-isomorphic irreducible
$\mathfrak{B}_{n-1}$-modules
for all the $\mu \in P_+(n-1)$ for which $V(\la)$ is
an irreducible component of $V(\mu) \otimes U$. Then,
the \emph{irreducibility graph} is the unoriented
graph whose vertices are all such $\mu$, and there
is an edge between $\mu_1$ and $\mu_2$ if there
is an irreducible brick between some $\nu^{(n-2)}$ and $ 
\la^{(n)}$ and paths 
$\nu^{(n-2)} \to \mu_1^{(n-1)}\to 
\la^{(n)}$
 and $\nu^{(n-2)} \to \mu_2^{(n-1)}\to 
\la^{(n)}$. It is easily proved (see e.g. \cite{THESE}, proposition 17, p. 51)
that, if the irreducibility graph is connected, then
$M_n(\la)$ is irreducible.

\subsection{Quantum cubic quotients : the $\sl(V)$ modules $\Lambda^2 V$ and $S^2 V$}
\label{sect:quantumLS}

Let $\g = \sl_n(\C)$ for $n \geq 7$, $V = \C^n$
and $U_- = \Lambda^2 V$. We denote $E_{ij}$ the elementary
matrix containing $1$ in row $i$ and column $j$ and $0$ otherwise. We consider the standard generators $X_i = E_{i,i+1}$, $Y_i = E_{i+1,i}$, $H_i = E_{ii}-E_{i+1,i+1}$. A highest weight vector $v$ is characterised by $X_i.v = 0$ for all $i$.

We use the notation $V(\alpha)$ for the highest weight $\g$-module of highest weight $\alpha$. Denoting $\varpi_1,\dots,\varpi_{n-1}$ the fundamental weight of $\g$ we have
$U_- = V(\varpi_2)$. Let $(e_1,\dots,e_n)$ denote
the standard basis of $\C^n = V$. We have $X_i.e_j = \delta_{i+1,j}e_{j-1}$
hence
$e_1$ is a highest weight vector for $V$. Then a h.w. vector for $U_- = \Lambda^2 V$ is $e_1 \wedge e_2$.

A related module if $U_+ = S^2 V$. All the results below concerning the action
of $B_k$ or $\mathfrak{B}_k$ are equally valid with $U_-$ replaced by $U_+$, after exchanging $n$ with $-n$. This
is because these two modules can be considered
as modules under the Lie superalgebra $\sl_n = \sl(n|0)
= \sl(0|n)$ of the form $S^2 E$ where $E$ is a
super vector space of type $n|0$ for $U_+$
or $0|n$ for $U_-$, and of superdimension $n$ or $-n$.
We provide the details in the case of $U_-$, the
corresponding details for $U_+$ being left to
the reader.

\subsubsection{Action of $B_k$, $k \leq 3$}
From the fairly easy multiplication rule by $U_- = V(\varpi_2)$ (see \cite{FH} proposition 15.25) one gets that the Bratteli
diagram of the tensor powers $U_-,U_-^{\otimes 2},U_-^{\otimes 3}$ is as follows.

$$
\xymatrix{
 & & U_- \ar@{-}[dl]\ar@{-}[d]\ar@{-}[dr] \\
& V(2 \varpi_2) \ar@{-}[dl]\ar@{-}[d] \ar@{-}[dr] &
V(\varpi_1+\varpi_3)\ar@{-}[ddl] \ar@{-}[ddr] \ar@{-}[dl] \ar@{-}[d]\ar@{-}[dr]
& V(\varpi_4) \ar@{-}[dl] \ar@{-}[d] \ar@{-}[dr] \\
V(3 \varpi_2) & V(\varpi_1+\varpi_2+\varpi_3) & V(\varpi_2+\varpi_4) & V(\varpi_1+\varpi_5) & V(\varpi_6) \\
 & V(2 \varpi_3) & & V(2 \varpi_1+\varpi_4) \\
} 
$$
We have $S^2U_- = V(2\varpi_2) + V(\varpi_4)$ (\cite{FH}, ex. 15.34) hence $\Lambda^2 U_- = V(\varpi_1+\varpi_3)$. Also,
$
\Lambda^3 U_- = V(2 \varpi_1 + \varpi_4)+V(2\varpi_3)$ and
$S^3 U_- = V(3 \varpi_2)+ V(\varpi_2+\varpi_4) + V(\varpi_6)
$.

The value of the Casimir element $C \in Z(\mathsf{U}\g)$
on $V(\la)$ is equal to $(\la,\la + 2 \rho)$,
where $\rho$ is equal to the half-sum of the positive roots (\cite{FH}, (25.14)).
We refer to Bourbaki (\cite{LIE456}, planche 1) for the basic datas involved in this case. Straightforward
calculations show $(\varpi_i,2 \rho) = i(n-i)$
and $(\varpi_i, \varpi_j) = \min(i,j) - ij/n$. From this the value of the Casimir is readily computed.

The value of $\tau$ on each of the irreducible components of $U_-^{\otimes 2}$ is then :
$$
 V(\varpi_1+\varpi_3): \frac{-4}{n}  \ \ \ \ \ \ V(2\varpi_2):\frac{2(n-2)}{n} \ \ \ \ \ \ V(\varpi_4):  \frac{-4(n+1)}{n}
$$
Therefore, the action of the braid group factorises through the cubic Hecke algebras, with 
$$
a = - \exp\left(\frac{-4}{n} h\right),
b =  \exp\left(\frac{2(n-2)}{n} h\right),
c =  \exp\left(\frac{-4(n+1)}{n} h\right),
$$
We have $a^3 + b^2c = 0$, so in this section we replace
$K$ by the fraction field $K_{\Lambda}$ of $R_{\Lambda}=R/(a^3 + b^2c)$.
Note that, inside $K_{\Lambda}$, $ (x-y)(x^2-xy+y^2)(xy+z^2) \neq 0
$ for $\{x,y,z \} = \{a,b,c \}$, so $\GQ_4\otimes K_{\Lambda}$ is well-defined, and semisimple by section \ref{sect:cubicHecke4}. Moreover,
$
R/(a^3 + b^2c) = \Z[a^{\pm 1},b^{\pm 1},c^{\pm 1}]/(a^3 + b^2c)$ is equal to
$$
\Z\left[a^{\pm 1},\left(\frac{b}{-a}\right)^{\pm 1},\left(\frac{c}{-a}\right)^{\pm 1}\right]/\left(1 - \left(\left(\frac{b}{a}\right)^2\left(\frac{c}{-a}\right)\right)\right)
\simeq \Z\left[a^{\pm 1},b^{\pm 1},c^{\pm 1}\right]/\left(1 - b^2c\right)
$$ which is isomorphic to 
$
(\Z[a^{\pm 1},b^{\pm 1}])[c^{\pm 1}]/\left(c- b^{-2}\right) \simeq \Z[a^{\pm 1},b^{\pm 1}]
$.

In order to check that the specialization
we are considering is generic inside $R_{\Lambda} = R/(a^3+b^2c)$, we need to check that the discriminant of
the corresponding algebra is nonzero. Since it is
homogeneous we can renormalize $s_i \mapsto -a^{-1} s_ i$, and assume $a = -1$, $b = e^{2h}$, $c = e^{4h}$.
Then, under the above isomorphism, this discriminant becomes a Laurent polynomial in $b$ specialized at $b = e^{2h}$,
which is nonzero for generic $h$ since the Laurent polynomial itself is nonzero by section \ref{sect:cubicHecke4}. Therefore, we will loosely work in the sequel as though we were working inside $\GQ_4 \otimes K_{\Lambda}$.

Similarly, the value of $Y_3$ can be computed easily,
and one checks that the irreducibility criteria
above are satisfied (for $n$ large enough), so that all the $M_3(\la)$
are irreducible as $\mathfrak{B}_3$-modules.
They are obviously pairwise non-isomorphic, except for the 1-dimensional representations. Actually, a 1-dimensional representation of $\mathfrak{B}_n$ has
the form $t_{ij} \mapsto \alpha$, $(i \ j) \mapsto \eps$ for some $\eps = \pm 1$ and $\alpha \in \C$. But since
here $(1 \ 2)$ acts as a polynomial of $t_{12}$ the
value of $\alpha$ determines $\eps$. And there
can be at most 4 values of $\alpha$, which are the
values of $\tau$ on $U_-^{\otimes 2}$. This
shows a priori that there cannot be 4 non-isomorphic 1-dimensional components for the action of $\mathfrak{B}_3$. Actually, the ones which are isomorphic
are the ones whose restriction to $\mathfrak{B}_2$
are isomorphic, and these are $M_3(2 \varpi_1+\varpi_4)$
and $M_3(2 \varpi_3)$. Another reason for this
is that $\Lambda^3 U_- = V(2 \varpi_1 + \varpi_4)+V(2\varpi_3)$ and that, for all $x \in U_-^{\otimes n}$,
we have $(i \ j).x = -x \Rightarrow t_{ij}.x = (-4/n)x$. Indeed, this property is true for $t_{12} = \tau$ on $U^{\otimes 2}$, therefore also on $U^{\otimes n}$ and through $\mathfrak{S}_n$-conjuguation one gets it holds for arbitrary $i,j$.

From this one gets the following decomposition of the
quantized module $U_-^{\otimes 3}$,
$$
\begin{array}{lcl}
U_-^{\otimes 3} &=& V(3 \varpi_2) \otimes S_ b  +V(\varpi_1+\varpi_2+\varpi_3) \otimes U_{a,b}
+V(\varpi_2+\varpi_4) \otimes V
+V(\varpi_1+\varpi_5) \otimes U_{a,c}\\ 
& & +V(\varpi_6) \otimes S_c
+(V(2\varpi_3) 
+V(2\varpi_1+\varpi_4)) \otimes S_ a
\end{array}
$$
This implies the following.
\begin{proposition} The action of $B_k$ on $U_-^{\otimes n}$ factorizes through $\GQ_k \otimes K$. Moreover, the
morphism $\GQ_3\otimes K \to \End(U_-^{\otimes 3})$
is injective and not surjective.
\end{proposition}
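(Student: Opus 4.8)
The plan is to deduce everything from the two facts established just above: the explicit decomposition of the quantized module $U_-^{\otimes 3}$ as a $\mathcal{C}_3 \otimes \mathsf{U}\g$-module, and the description of $\GQ_3 \otimes K$ as the quotient of $H_3 \otimes K$ by the ideal attached to the representation $U_{b,c}$ of $H_3$. First I would check the factorization claim: by the Drinfeld--Kohno theorem the action of $B_3$ on $U_-^{\otimes 3}$ is (for generic $h$) conjugate to the one built from the infinitesimal braids, so the braid generator $s_1$ acts with eigenvalues $a,b,c$ as computed, hence the action of $B_3$ factors through $H_3 \otimes K$. Reading off the $\mathfrak{B}_3$-isotypic components from the displayed decomposition, the irreducible constituents of $U_-^{\otimes 3}$ as a $B_3$-module are $S_a$, $S_b$, $S_c$, $U_{a,b}$, $U_{a,c}$ and $V$ --- and crucially $U_{b,c}$ does \emph{not} appear. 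Since $\GQ_3 \otimes K = (H_3 \otimes K)/I_{U_{b,c}}$ and $H_3 \otimes K$ is semisimple with $U_{b,c}$ an absolutely irreducible block, an $H_3 \otimes K$-module factors through $\GQ_3 \otimes K$ exactly when it has no $U_{b,c}$-constituent; this is the case here, so the action of $B_3$ on $U_-^{\otimes 3}$ factors through $\GQ_3 \otimes K$, and likewise for $B_k$ on $U_-^{\otimes n}$ after restricting the remark about constituents appropriately (or invoking the earlier subsection on quantum cubic quotients).

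Next, injectivity. The algebra $\GQ_3 \otimes K$ is semisimple (being a quotient block of the semisimple $H_3 \otimes K$), and its simple modules are precisely $S_a$, $S_b$, $S_c$, $U_{a,b}$, $U_{b,a}$, $U_{a,c}$, $U_{c,a}$, $V$ --- those simple $H_3$-modules on which the ideal $I_{U_{b,c}}$ acts by zero. To show $\GQ_3 \otimes K \to \End(U_-^{\otimes 3})$ is injective it suffices, by semisimplicity, to exhibit every simple $\GQ_3 \otimes K$-module as a constituent of $U_-^{\otimes 3}$. From the decomposition we directly see $S_a$, $S_b$, $S_c$, $U_{a,b}$, $U_{a,c}$ and $V$ occurring. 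The two remaining simples are $U_{b,a}$ and $U_{c,a}$, which are the Galois conjugates of $U_{a,b}$ and $U_{a,c}$. The cleanest way to get them is to use the second natural module $U_+ = S^2 V$: by the symmetry $n \leftrightarrow -n$ noted in the text, the analogous decomposition of $U_+^{\otimes 3}$ produces $U_{b,a}$ and $U_{c,a}$ as constituents --- but since we want injectivity for $U_-$ alone, I would instead argue that $\GQ_3 \otimes K$ has a well-chosen symmetry (or: that $U_{a,b}$ and $U_{b,a}$ are dual representations, and $U_-^{\otimes 3}$ is self-dual as a $B_3$-module up to twist, so both conjugates appear). Concretely, $U_-^* \cong U_+$ is \emph{not} what we want; rather the relevant point is that $U_{a,b}^* \cong U_{b,a}$ and that the $B_3$-module $U_-^{\otimes 3}$, being a tensor power, has a contragredient containing the dual constituents, which one relates back to $U_-^{\otimes 3}$ itself via a Markov-type symmetry or the $S$-matrix. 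This is the one step I expect to require genuine care: pinning down exactly which of the two Galois conjugates $U_{a,b}$ versus $U_{b,a}$ (and $U_{a,c}$ versus $U_{c,a}$) actually occurs, and then accounting for the other, so that all eight simples are recovered.

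Finally, non-surjectivity is a dimension count. The source $\GQ_3 \otimes K$ has $K$-dimension $1+1+1+4+4+4+4+9 = 28$. The target $\End(U_-^{\otimes 3})$, assuming injectivity has identified the list of constituents of $U_-^{\otimes 3}$, is a sum of matrix algebras whose dimension is the sum of the squares of the multiplicities of the distinct simple $\mathfrak{B}_3$-constituents of $U_-^{\otimes 3}$; from the displayed decomposition the multiplicities are $1$ for $S_b$, $U_{a,b}$, $V$, $U_{a,c}$, $S_c$ and $2$ for $S_a$ (the sum $V(2\varpi_3)+V(2\varpi_1+\varpi_4)$ giving a single isomorphism class $S_a$ with multiplicity $2$), so $\dim \End_{\mathfrak{B}_3}(U_-^{\otimes 3}) = 5\cdot 1 + 4 = 9$. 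Since $9 < 28$, the map cannot be surjective. (Strictly, for this last line I use that the image of $\GQ_3\otimes K$ inside $\End(U_-^{\otimes 3})$ lands inside the commutant-of-the-commutant, which is $\End_{\mathcal{C}_3}(U_-^{\otimes 3})$ of dimension $9$; this also re-proves that the map is far from surjective as a map to the full endomorphism algebra.) Assembling the three parts --- factorization, injectivity via covering all simples, non-surjectivity via $9<28$ --- completes the proof.
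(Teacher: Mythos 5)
Your factorization step is essentially the paper's (the paper treats the whole proposition as an immediate consequence of the displayed decomposition of $U_-^{\otimes 3}$): since $U_{b,c}$ does not occur among the $\mathfrak{B}_3$-constituents, the ideal of $H_n\otimes K$ generated by the ideal of $H_3$ attached to $U_{b,c}$ acts by zero, so the braid action factors through $\GQ_k\otimes K$. The rest of your argument, however, rests on a wrong list of simple modules of $\GQ_3\otimes K$. For $H_3$ the two-dimensional irreducibles are the $U_{\alpha,\beta}$ with $\alpha<\beta$ only, namely $U_{a,b}$, $U_{a,c}$, $U_{b,c}$; the symbols $U_{b,a}$ and $U_{c,a}$ that you want to ``account for'' are three-dimensional representations of $H_4$, not representations of $H_3$ at all. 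Hence $\mathrm{Irr}(\GQ_3\otimes K)=\{S_a,S_b,S_c,U_{a,b},U_{a,c},V\}$ and $\dim \GQ_3\otimes K=3+4+4+9=20$ (as stated in the paper), not $28$. With the correct list, the step you flag as ``requiring genuine care'' disappears: every simple module of $\GQ_3\otimes K$ already appears in the decomposition of $U_-^{\otimes 3}$ ($S_a$ even twice), so injectivity is immediate, and no appeal to duals, to $U_+=S^2V$, or to Galois twists is needed -- indeed that whole paragraph chases constituents that do not exist.

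Your non-surjectivity count is also muddled. The number $9=5\cdot 1+2^2$ is the dimension of the commutant of the braid image inside $\bigoplus_\la \End(M_3(\la))$; it is neither $\End_{\mathfrak{B}_3}(U_-^{\otimes 3})$ nor $\End_{\mathcal{C}_3}(U_-^{\otimes 3})$ (the latter is the image of $\mathsf{U}_q\g$, which is huge), and ``commutant-of-the-commutant'' would return the braid image itself. The clean statement is: the image of $\GQ_3\otimes K$ lies in the quantum commutant $\End_{\mathsf{U}_q\g}(U_-^{\otimes 3})$, whose dimension is $\sum_\la (\dim M_3(\la))^2=1+4+9+4+1+1+1=21$, while the image, being isomorphic to $\GQ_3\otimes K$, has dimension $20$ (the two copies of $S_a$ coming from $V(2\varpi_3)$ and $V(2\varpi_1+\varpi_4)$ contribute only a diagonal line). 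So the map is not surjective even onto the commutant, and a fortiori not onto $\End(U_-^{\otimes 3})$, whose dimension $(\dim U_-^{\otimes 3})^2$ dwarfs $20$. The conclusion of the proposition is thus reachable by your general strategy, but only after replacing your dimensions $28$ and $9$ by the correct ones.
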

\subsubsection{The bimodule $U_-^{\otimes 4}$}
{\ } \\

The goal of this section is to prove the following.

\begin{proposition} For generic values of $q$, the
$(\mathsf{U}_q \g,B_4)$-bimodule $U_-^{\otimes 4}$ admits the following decomposition
$$
\begin{array}{lcl}
U_-^{\otimes 4} &=&  V(4\varpi_2)\otimes S_b +
V(\varpi_1+2\varpi_2+\varpi_3)\otimes  U_{b,a}  + V(2\varpi_1+2\varpi_3)\otimes T_{a,b} \\
& & + V(2\varpi_2+\varpi_4)\otimes  V_{b,a,c}  +V(\varpi_1+\varpi_3+\varpi_4)\otimes ( S_a + V_{a,b,c} )+V(2\varpi_4)\otimes V \\  
 & & +V(2\varpi_1+\varpi_2+\varpi_4)\otimes  U_{a,b} +V(\varpi_2+2\varpi_3)\otimes  U_{a,b} \\ & & 
+V(\varpi_1+\varpi_2+\varpi_5)\otimes  W_a + V(\varpi_3+\varpi_5)\otimes  V_{a,c,b}    +V(\varpi_2+\varpi_6)\otimes  V_{c,a,b} \\  
& & +V(3\varpi_1+\varpi_5)\otimes S_a 
+V(2\varpi_1+\varpi_6)\otimes  U_{a,c}  +V(\varpi_1+\varpi_7)\otimes U_{c,a}  +V(\varpi_8)\otimes S_c\\
\end{array}
$$
\end{proposition}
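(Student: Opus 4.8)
The plan is to proceed as in the treatment of $U_-^{\otimes 3}$, in two stages: first determine the $\mathsf{U}_q\g$-module structure, then the $B_4$-module structure of each multiplicity space $M_4(\la)$. Throughout I use the Drinfeld--Kohno correspondence recalled above to move freely between the quantum $B_4$-action for generic $q$ and the infinitesimal $\mathfrak{B}_4$-action governed by the Casimir, and I use that, by the previous proposition, every $M_4(\la)$ is a module over the semisimple algebra $\GQ_4\otimes K$, whose simple modules and their restrictions to $B_3$ are those recorded in \S\ref{sect:cubicHecke4}.

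For the $\g$-side, since everything is semisimple for generic $q$ and the Bratteli diagram is the classical one, I would extend by one level the diagram drawn above for $U_-^{\otimes 2},U_-^{\otimes 3}$: for each simple constituent $V(\mu)$ of $U_-^{\otimes 3}$, decompose $V(\mu)\otimes V(\varpi_2)$ using the rule of \cite{FH}, Proposition~15.25, and add up multiplicities. This yields $P_+(4)$ together with the ranks $d_\la=\dim M_4(\la)$, which should come out as $1$ for the four weights carrying an $S_x$, $2$ for $2\varpi_1+2\varpi_3$, $3$ for those carrying a $U_{x,y}$ or $V$, $6$ for the four $V_{x,y,z}$-weights, $8$ for $\varpi_1+\varpi_2+\varpi_5$, and $7$ for $\varpi_1+\varpi_3+\varpi_4$. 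Along the way one records the Casimir values $(\la,\la+2\rho)$ from $(\varpi_i,2\rho)=i(n-i)$ and $(\varpi_i,\varpi_j)=\min(i,j)-ij/n$; in particular the three possible $\tau$-eigenvalues are $-\tfrac4n$, $\tfrac{2(n-2)}n$ and $-\tfrac{4(n+1)}n$, attached to $a$, $b$ and $c$ respectively.

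For the $B_4$-side, the branching rule gives $\Res_{B_3}M_4(\la)=\bigoplus_\mu c\bigl(V(\la),V(\mu)\otimes U_-\bigr)\,M_3(\mu)$, where the $M_3(\mu)$ are the pairwise non-isomorphic simple $B_3$-modules $S_a,S_b,S_c,U_{a,b},U_{a,c},V$ coming from $U_-^{\otimes 3}$. Reading this restriction off the level-$3$ branching and matching it against $\Res U_{x,y}=S_x+U_{x,y}$, $\Res V_{x,y,z}=S_x+U_{x,y}+V$, $\Res W_x=S_x+U_{x,y}+U_{x,z}+V$ (completed by $\Res S_x=S_x$, $\Res T_{x,y}=U_{x,y}$, $\Res V=V$) identifies $M_4(\la)$ outright in most cases; it fails only where a $3$-dimensional $U_{x,y}$ has the same restriction as $S_x\oplus T_{x,y}$, a $6$-dimensional $V_{x,y,z}$ the same as $V\oplus U_{x,y}$, the $8$-dimensional $W_a$ the same as $V\oplus U_{a,b}\oplus T_{a,c}$, and the $7$-dimensional $S_a\oplus V_{a,b,c}$ the same as $2S_a\oplus T_{a,b}\oplus V$ --- these pairs even agree upon restriction to $B_2$. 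I would break the ties in two ways. First, the multiplicity of $S_a$ in $M_4(\la)$ equals that of $V(\la)$ in $\Lambda^4U_-$: the sign-isotypic part of $U_-^{\otimes 4}$ under $\mathfrak{S}_4$ is $\Lambda^4U_-$, and by the remark above (that $(i\,j).x=-x$ forces $t_{ij}.x=(-4/n)x$) the group $B_4$ acts there through $s_i\mapsto a$; computing $\Lambda^4U_-$ from the extended Bratteli diagram gives exactly one copy of $S_a$ inside $M_4(\varpi_1+\varpi_3+\varpi_4)$ and inside $M_4(3\varpi_1+\varpi_5)$ and none elsewhere, and a parallel argument using the trivial-isotypic part $S^4U_-$, on which $B_4$ acts through the quadratic quotient $(s_i-b)(s_i-c)=0$, locates the $S_b$- and $S_c$-summands. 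Second, for the remaining higher-dimensional constituents I would prove irreducibility by the irreducibility-graph criterion: list the level-$3$ vertices $\mu$ with $V(\la)\subset V(\mu)\otimes U_-$, then the level-$2$ vertices below them, and join two such $\mu$'s through a common level-$2$ vertex $\nu$ whenever the brick between $\nu^{(2)}$ and $\la^{(4)}$ has $u=t_{34}$ with distinct eigenvalues among the $\tau$-values above and $\Sp(u)\ne\Sp(W)$, where $W=W_3=Y_4-Y_3$ has eigenvalues $\tfrac12\bigl(C(\la)-2C(\mu^{(3)})+C(\nu)\bigr)$; these two conditions already force $\pm s_3$ to act as a reflection on the brick (otherwise $W=u$ on the brick, contradicting $\Sp(u)\ne\Sp(W)$), so Lemma~\ref{lem:brickirred33}, or Lemma~\ref{lem:brickirred22} for the $2$-dimensional bricks, applies. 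Connectedness of the graph then gives irreducibility of $M_4(\la)$, and its dimension together with $\Res_{B_3}M_4(\la)$ names the unique simple $\GQ_4\otimes K$-module it is.

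The case needing extra care is $M_4(\varpi_1+\varpi_3+\varpi_4)$: once its (unique) $S_a$-summand is split off, the complementary $6$-dimensional module involves two of the relevant $M_3(\mu)$ equal to $S_a$, so the irreducibility-graph criterion does not apply verbatim. There I would instead separate $V_{a,b,c}$ from $V\oplus U_{a,b}$ directly, by computing the trace of $s_3$ (or of $s_2s_3$) on $M_4(\varpi_1+\varpi_3+\varpi_4)$ inside $U_-^{\otimes 4}$, subtracting the contribution of the $S_a$-summand, and comparing with the traces of the CHEVIE matrix models of $V_{a,b,c}$ and of $V\oplus U_{a,b}$. The main obstacle is precisely this $B_4$-side bookkeeping: organizing, for each of the fifteen weights, the level-$2$/level-$3$/level-$4$ vertices and the associated Casimir and $\tau$ eigenvalues, and verifying that the non-degeneracy conditions ($u$ with distinct eigenvalues, $\Sp(u)\ne\Sp(W)$) in the two brick lemmas hold for all sufficiently large $n$ and not merely generically --- which is where the hypothesis $n\ge7$ is actually used. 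Everything else is a routine assembly of the pieces.
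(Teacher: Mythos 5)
Your overall skeleton is the paper's: extend the Bratteli diagram by the Littlewood--Richardson rule, read off $\Res_{B_3}M_4(\lambda)$ from the branching, and settle the ambiguous weights with the brick lemmas and the irreducibility graph. Your $\Lambda^4U_-$ count of $S_a$-multiplicities is sound and is a nice complement: it rests precisely on the remark that $(i\,j).x=-x$ forces $t_{ij}.x=(-4/n)x$, which makes the sign-isotypic subspace a $\mathfrak{B}_4$-submodule on which every $t_{ij}$ is the scalar $-4/n$. But the claimed ``parallel argument'' with $S^4U_-$ is wrong: there is no analogous statement for symmetric vectors, the trivial-isotypic subspace of $\mathfrak{S}_4$ is \emph{not} stable under the $t_{ij}$ (hence not under the monodromy), and indeed $V(2\varpi_2+\varpi_4)$ occurs in $S^4U_-$ while $M_4(2\varpi_2+\varpi_4)\simeq V_{b,a,c}$ has no one-dimensional summand. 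Luckily this step is not needed, since $S_b$ and $S_c$ only occur where $\dim M_4(\lambda)=1$ and the restriction already decides.

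The genuine gap is your treatment of the hardest weight, $\varpi_1+\varpi_3+\varpi_4$ (and, to a lesser extent, $\varpi_1+\varpi_2+\varpi_5$). After splitting off the unique $S_a$, you must separate $V_{a,b,c}$ from $U_{a,b}\oplus V$; but these have the \emph{same} $s_3$-spectrum $(a,a,a,b,b,c)$ (read off $\Res U_{a,b}=S_a+U_{a,b}$ and $\Res V_{a,b,c}=S_a+U_{a,b}+V$), so the trace of $s_3$ distinguishes nothing, and you offer no way to compute the trace of $s_2s_3$ on the multiplicity space: the path/Casimir formalism you set up only gives the diagonal operators $Y_r$, $t_{r,r+1}$ and the relation $W_r+s_rW_rs_r=2t_{r,r+1}$, not the matrix of a product of two distinct generators. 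The case can be closed with tools you already have, and this is what the paper does: the $2$-dimensional brick based at $V(2\varpi_2)$ is irreducible, so by semisimplicity some irreducible constituent of $M_4(\varpi_1+\varpi_3+\varpi_4)$ has restriction containing both $U_{a,b}$ and $V$, hence dimension at least $5$; since $H_4$ has no irreducible of dimension $5$ or $7$, the module is a $6$-dimensional irreducible plus a $1$-dimensional one, and the restriction forces $V_{a,b,c}\oplus S_a$. Similarly, for $\lambda=\varpi_1+\varpi_2+\varpi_5$ your ``connectedness gives irreducibility'' step is not available as stated: the vertex $2\varpi_1+\varpi_4$ (the $S_a$-constituent) lies only over the $4$-dimensional brick at $\varpi_1+\varpi_3$, where $t_{34}$ has a repeated eigenvalue and the lemmas do not apply, so the certifiable graph has only the three vertices $\varpi_1+\varpi_2+\varpi_3$, $\varpi_2+\varpi_4$, $\varpi_1+\varpi_5$ in one component; one needs the same supplementary count (an irreducible constituent of dimension at least $7$, and no $7$-dimensional irreducible of $H_4$) to conclude $M_4(\lambda)\simeq W_a$. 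A last small point: for the $3$-dimensional bricks Lemma \ref{lem:brickirred33} requires $\mathrm{Sp}(u)\cap\mathrm{Sp}(W)=\emptyset$, not merely $\mathrm{Sp}(u)\neq\mathrm{Sp}(W)$, so your stated edge criterion should be tightened (the disjointness does hold in all the cases at hand).
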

\begin{corollary}
The image of $\GQ_4 \otimes K$ inside $\End(U_-^{\otimes 4 })$ has dimension 260, and its irreducible
representations are $\mathrm{Irr}(\GQ_4 \otimes K) \setminus \{ T_{a,c}\}$.
\end{corollary}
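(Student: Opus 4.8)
\textit{Proof proposal.} The plan is to deduce the Corollary directly from the preceding Proposition together with the semisimplicity of $\GQ_4 \otimes K$ established in Section~\ref{sect:quantumLS} (where, following the genericity discussion there, we work inside $\GQ_4 \otimes K_{\Lambda}$, whose scalar extension to $K$ is semisimple with the list of irreducible representations recalled in Section~2). First I would record that the $B_4$-action on $U_-^{\otimes 4}$ factors through $\GQ_4\otimes K$ by the earlier Proposition, so that the image $A$ of $\GQ_4 \otimes K$ in $\End(U_-^{\otimes 4})$ is the image of the \emph{semisimple} algebra $\GQ_4\otimes K\cong\prod_{\sigma}\End_K(\sigma)$ under the module map; hence the map factors as the projection onto the product of those factors $\End_K(\sigma)$ with $\sigma$ occurring as a constituent (equivalently, by semisimplicity, a direct summand) of $U_-^{\otimes 4}$, followed by the inclusion of that sub-product. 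Thus $A\cong\prod_{\rho}\End_K(\rho)$, the product being over the $\rho\in\Irr(\GQ_4\otimes K)$ occurring in $U_-^{\otimes 4}$, and in particular $\dim_K A=\sum_{\rho}(\dim\rho)^2$ over this set, while $\Irr(A)$ is exactly this set of $\rho$.

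Next I would read off from the bimodule decomposition of the Proposition which $B_4$-representations occur. The right-hand tensor factors appearing there are, with multiplicity, $S_b$, $U_{b,a}$, $T_{a,b}$, $V_{b,a,c}$, $S_a$, $V_{a,b,c}$, $V$, $U_{a,b}$ (twice), $W_a$, $V_{a,c,b}$, $V_{c,a,b}$, $U_{a,c}$, $U_{c,a}$, $S_c$, so the underlying set of irreducibles is
$$
\{\, S_a,\ S_b,\ S_c,\ T_{a,b},\ V,\ U_{a,b},\ U_{b,a},\ U_{a,c},\ U_{c,a},\ V_{a,b,c},\ V_{b,a,c},\ V_{a,c,b},\ V_{c,a,b},\ W_a \,\}.
$$
Comparing with the list of $\Irr(\GQ_4\otimes K)$ recalled in Section~2, this set is precisely $\Irr(\GQ_4\otimes K)\setminus\{T_{a,c}\}$ (note $T_{a,c}$ indeed does not appear, as $T_{a,b}$ is the only $T$-type factor), which gives the assertion about $\Irr(A)$.

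Finally, for the dimension I would just sum the squares of the dimensions over this set, using the dimensions recorded in Section~\ref{sect:cubicHecke4}. Since $\dim_K(\GQ_4\otimes K)=3\cdot 1^2+2\cdot 2^2+5\cdot 3^2+4\cdot 6^2+8^2=264$ and the only irreducible removed is $T_{a,c}$ of dimension $2$, one gets $\dim_K A=264-2^2=260$. There is essentially no obstacle in the Corollary itself — the substantive work is the Proposition — and the one point deserving a word of care is that the semisimplicity and the direct-summand decomposition are used over $K$ rather than $K_{\Lambda}$, which is exactly what the genericity argument of Section~\ref{sect:quantumLS} licenses.
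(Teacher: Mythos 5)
Your proposal is correct and matches the paper's (implicit) derivation: the corollary is simply read off from the bimodule decomposition in the Proposition, using semisimplicity of the $B_4$-action to identify the image with the product of matrix factors for the constituents, which are exactly $\mathrm{Irr}(\GQ_4\otimes K)\setminus\{T_{a,c}\}$, giving $264-2^2=260$. Only a small remark on phrasing: the action actually lives over (a specialization of) $K_{\Lambda}$ rather than $K$, and the genericity argument of that section is what lets one transport the representation-theoretic bookkeeping from $\GQ_4\otimes K$ to this specialization — the bridge goes in that direction, but the substance of your argument is unaffected.
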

In order to prove this decomposition, we need
to determine the restriction to $\mathfrak{B}_4$ of
the $\mathcal{C}_4$-modules $M_4(\la)$ for
$\la \in P_+(4)$. For this we will, in most cases,
first prove irreducibility, and then determine the
isomorphism type by looking at the restriction on 3 strands. Notice that all the representations of $\mathfrak{B}_4$ we are considering here are semisimple,
as they provide monodromy representations of $H_4 \otimes K_{\Lambda}$
and $H_4\otimes K_{\Lambda}$ is semisimple (see \cite{ASSOC} on a detail account on these monodromy properties). 

From the Littlewood-Richardson rule (see e.g. \cite{FH}, (15.23) and (A.8)) we
get the decompositions of the $V(\la) \otimes V(\varpi_2)$
for $\la \in P_3(U_-)$ (see table \ref{tab:timesVpi2}).

\begin{table}
$$
\begin{array}{|rcl|}
\hline
V(3 \varpi_2)  \otimes V(\varpi_2) & = & V(4\varpi_2) + V(\varpi_1+2\varpi_2+\varpi_3) + V(2 \varpi_2 + \varpi_4)\\
 V(\varpi_1+\varpi_2+\varpi_3) \otimes V(\varpi_2) & = & V(\varpi_1+2 \varpi_2 + \varpi_3)+V(2 \varpi_1 + 2\varpi_3)+V(2 \varpi_1 + \varpi_2+\varpi_4)\\
 & & +V( \varpi_2 + 2\varpi_3)+V(2 \varpi_2 + \varpi_4)
 +V(\varpi_1+ \varpi_3 + \varpi_4)\\ & & +V(\varpi_1+ \varpi_2 + \varpi_5) \\
 V(\varpi_2+\varpi_4) \otimes V(\varpi_2) & = & V(2 \varpi_2 + \varpi_4)+V( \varpi_1+\varpi_3 + \varpi_4)
 +V(\varpi_1+ \varpi_2 + \varpi_5)\\
 & & +V(2 \varpi_4)+V(\varpi_3 + \varpi_5)+V( \varpi_2 + \varpi_6)\\
 V(\varpi_1+\varpi_5) \otimes V(\varpi_2) & = &
 V(\varpi_1+ \varpi_2 + \varpi_5)+V(\varpi_3 + \varpi_5) +V(2 \varpi_1 + \varpi_6)\\ & & +V(\varpi_2 + \varpi_6)+V( \varpi_1 + \varpi_7)\\
 V(\varpi_6) \otimes V(\varpi_2) & = & V( \varpi_2 + \varpi_6)+V(\varpi_1 + \varpi_7)+V(\varpi_8) \\
 V(2 \varpi_3) \otimes V(\varpi_2) & = & V( \varpi_2 + 2\varpi_3)+V(\varpi_1+ \varpi_3 + \varpi_4)+V( \varpi_3 + \varpi_5)\\
  V(2 \varpi_1+\varpi_4) \otimes V(\varpi_2) & = & 
  V(2 \varpi_1+\varpi_2 + \varpi_4)+V(\varpi_1 +
  \varpi_3+ \varpi_4)+V(3 \varpi_1 + \varpi_5)\\ & & 
  +V(\varpi_1+ \varpi_2 + \varpi_5)+V(2 \varpi_1 + \varpi_6)\\
  \hline
\end{array}
$$
\caption{Decomposition of $V(\la) \otimes V(\varpi_2)$}
\label{tab:timesVpi2}
\end{table}

From these we readily get that $M_4(4 \varpi_2) \simeq S_b$, $\Res M_4(2 \varpi_1+2\varpi_3) = M_3(\varpi_1+\varpi_2+\varpi_3) \simeq U_{a,b}$
hence $M_4(2 \varpi_1+2\varpi_3)\simeq T_{a,b}$,
$\Res M_4(2\varpi_4) = M_3(\varpi_2 + \varpi_4) \simeq V$
hence $M_4(2\varpi_4) \simeq V$,
$\Res M_4(3 \varpi_1+\varpi_5) = M_3(2 \varpi_1 + \varpi_4) \simeq S_a$ hence $M_4(3 \varpi_1+\varpi_5)  \simeq S_a$,
$\Res M_4(\varpi_8) = M_3(\varpi_6) \simeq S_c$ hence $M_4(\varpi_8)  \simeq S_c$.

Again by using the Littlewood-Richardson rule we get
the decomposition of the $V(\la) \otimes V(\mu)$ for
$\la, \mu \in P_2(U_-)$ (see table \ref{tab:decU2laotmu}).
\begin{table}
$$
\begin{array}{|lcl|}
\hline
V(2 \varpi_2) \otimes V(2 \varpi_2) &=& V(4\varpi_2)+
V(\varpi_1+2\varpi_2+\varpi_3) + V(2\varpi_1+2\varpi_3)
+ V(2\varpi_2+\varpi_4)\\ & & +V(\varpi_1+\varpi_3+\varpi_4)+V(2\varpi_4) \\  
V(2 \varpi_2) \otimes V(\varpi_1+\varpi_3) &=& V(\varpi_1+2\varpi_2+\varpi_3)+V(2\varpi_1+\varpi_2+\varpi_4)+V(\varpi_2+2\varpi_3)\\ & & + V(2\varpi_2+\varpi_4)
+V(\varpi_1+\varpi_3+\varpi_4)+V(\varpi_1+\varpi_2+\varpi_5)\\ & & + V(\varpi_3+\varpi_5)  \\
V(2 \varpi_2) \otimes V(\varpi_4) &=& V(2\varpi_2+\varpi_4)+V(\varpi_1+\varpi_2+\varpi_5)+V(\varpi_2+\varpi_6)\\  
V(\varpi_1+\varpi_3) \otimes V(\varpi_1+\varpi_3) &=&
V(2\varpi_1+2\varpi_3)+V(2\varpi_1+\varpi_2+\varpi_4)
+V(\varpi_2+2\varpi_3)\\ & & +V(2\varpi_2+\varpi_4)+2V(\varpi_1+\varpi_3+\varpi_4)+V(3\varpi_1+\varpi_5)\\ & & +2V(\varpi_1+\varpi_2+\varpi_5)+V(2\varpi_4)
+V(\varpi_3+\varpi_5)+V(2\varpi_1+\varpi_6)\\ & & +
V(\varpi_2+\varpi_6) \\ 
V(\varpi_1+\varpi_3) \otimes V(\varpi_4) &=& V(\varpi_1+\varpi_3+\varpi_4) +V(\varpi_1+\varpi_2+\varpi_5)+V(\varpi_3+\varpi_5)\\ & & +V(2\varpi_1+\varpi_6)+V(\varpi_2+\varpi_6)+V(\varpi_1+\varpi_7)\\
V(\varpi_4) \otimes V(\varpi_4) &=& V(2\varpi_4)+V(\varpi_3+\varpi_5)+V(\varpi_2+\varpi_6)+V(\varpi_1+\varpi_7)+V(\varpi_8)\\
\hline 
\end{array}
$$
\caption{Decomposition of the $V(\la)\otimes V(\mu)$ for
$V(\la),V(\mu) \hookrightarrow (\Lambda^2 V)^{\otimes 2}$}
\label{tab:decU2laotmu}
\end{table}
These decompositions determine the
spectrum of $t_{34}$ (hence of $(3 \ 4)$) on any given brick.

We first deal with $2 \varpi_1+\varpi_2+\varpi_4 , \varpi_2+2\varpi_3 , 2 \varpi_1 +\varpi_6,\varpi_1+\varpi_7 , \varpi_1+2\varpi_2 +\varpi_3$. In all these
cases, the restriction to $\mathfrak{B}_3$ has 2 distinct irreducible components, and there are two bricks, one being 1-dimensional and the other being 2-dimensional. In order to prove irreducibility
we thus only need to prove irreducibility of the 2-dimensional brick under the action of $u$ and $W$, since
the irreducibility graph is then obviously connected.
By lemma \ref{lem:brickirred22} this only depends on the spectra of $u$ and $W$, provided $s$ is a reflection, which means here
that $\{ -4/n \} \subsetneq Sp(u)$. In table \ref{tab:LSirredU4part1}
we give the vertices of each brick as well as the
spectra, where $u' =  u$, $W' =  W$.
This proves irreducibility in all these cases.
Since the irreducible representations of $H_4$ are uniquely determined by their restriciton to $H_3$ and
since we know the restrictions of the $M_4(\la)$, we get
the conclusion for these weights.

\begin{table}
$$
\begin{array}{|c|c|c|c|c|}
\hline
U_-^{\otimes 2} & \varpi_1+\varpi_3 & \varpi_1+\varpi_3
& \varpi_1+\varpi_3
 \\
\hline
U_-^{\otimes 3} & \varpi_1+\varpi_2+\varpi_3;2\varpi_1+\varpi_4 & \varpi_1+\varpi_2+\varpi_3;2 \varpi_3 & 2 \varpi_1 + \varpi_4; \varpi_1+\varpi_5\\
\hline
U_-^{\otimes 4} &2 \varpi_1+\varpi_2+\varpi_4 & \varpi_2+2\varpi_3 & 2 \varpi_1 +\varpi_6 \\
\hline
Sp(u') &-4 ;2(n-2) &-4 ;2(n-2) & -4;-4(n+1)\\
\hline
Sp(W') & -2(n+2);4(n-1)& -2(n+2);4(n-1) & -4(2n+1) ; 4(n-1)\\
\hline
\hline
U_-^{\otimes 2} & \varpi_4 & 2 \varpi_2
&  \\
\hline
U_-^{\otimes 3} & \varpi_1+\varpi_5;\varpi_6 & 3\varpi_2;\varpi_1+\varpi_2+ \varpi_3 & \\
\hline
U_-^{\otimes 4} &\varpi_1+\varpi_7 & \varpi_1+2\varpi_2 +\varpi_3 &  \\
\hline
Sp(u') &-4 ;-4(n+1) &-4 ;2(n-2) & \\
\hline
Sp(W') & -4(2n+1);4(n-1)& -4(n+1);2(n-2) & \\
\hline
\end{array}
$$
\caption{Irreducibility of the 2-dimensional bricks.}
\label{tab:LSirredU4part1}
\end{table}

We now consider the weights $2 \varpi_2+\varpi_4$, $\varpi_3+\varpi_5$, $\varpi_2+\varpi_6$. In all the
cases the restriction of $M_ 4(\la)$ admits 3 pairwise non-isomorphic irreducible components, and there is 3-dimensional brick. Therefore, it remains to prove that
$s$ acts as a reflection and that $Sp(u') \cap Sp(W') = \emptyset$. From table \ref{tab:decU2laotmu} we get
that the spectrum of $u$ has 3 elements, equal to the
3 eigenvalues of $t_{12}$. In particular, $s$ acts as a reflection, and it thus sufficient to compute
the spectrum of $W$ to prove irreducibility, by lemma \ref{lem:brickirred33}. This is
done in table \ref{tab:LSirredU4part2}. Then, by considering the restriction, we similarly get
the conclusion for these weights.

\begin{table}
$$
\begin{array}{|c|c|c|c|c|}
\hline
U_-^{\otimes 2} & 2 \varpi_2 & \varpi_1+\varpi_3 
 \\
\hline
U_-^{\otimes 3} & 3 \varpi_2; \varpi_1+\varpi_2+\varpi_3;\varpi_2+\varpi_4 & 2 \varpi_3;\varpi_2+\varpi_4;\varpi_1+\varpi_5 \\
\hline
U_-^{\otimes 4} &2 \varpi_2+\varpi_4 & \varpi_3+\varpi_5 \\
\hline
Sp(W') & -4(2n+1);-2(n+2);4(2n-1) & -2(3n+2);-2(n+2) ; 2(3n-2)\\
\hline
\hline
U_-^{\otimes 2}  
& \varpi_4 & 
 \\
\hline
U_-^{\otimes 3}  & \varpi_2+\varpi_4; \varpi_1 + \varpi_5; \varpi_6 & \\
\hline
U_-^{\otimes 4} &  \varpi_2 +\varpi_6 & \\
\hline
Sp(W') & -2(5n+2) ; -2(n+2) ; 2(5n-2)& \\
\hline
\end{array}
$$
\caption{Irreducibility of some 3-dimensional bricks.}
\label{tab:LSirredU4part2}

\end{table}

\begin{table}
$$
\begin{array}{|lclcl|}
\hline
S^4 U_- &=& F_{[4]}(U_-) &=& V(4\varpi_2) + V(2\varpi_2+\varpi_4) + V(2 \varpi_4) + V(\varpi_2+\varpi_6) + V(\varpi_8) \\
& &F_{[3,1]}(U_-) &=& V(\varpi_1+2\varpi_2+\varpi_3)+V(2\varpi_2+\varpi_4)+V(\varpi_1+\varpi_3+\varpi_4)\\ 
& & &+ &  V(\varpi_1+\varpi_2+\varpi_5)+V(\varpi_3+\varpi_5)+V(\varpi_2+\varpi_6)+V(\varpi_1+\varpi_7)\\  
& &F_{[2,2]} &=&
V(2\varpi_1+2\varpi_3)+V(2\varpi_2+\varpi_4)
+V(\varpi_1+\varpi_2+\varpi_5)\\
& & &+& V(2\varpi_4) +V(\varpi_2+\varpi_6)\\ 
& &F_{[2,1,1]}(U_-) &= & V(2\varpi_1+\varpi_2+\varpi_4)+V(\varpi_2+2\varpi_3)+V(\varpi_1+\varpi_3+\varpi_4)\\
& & &+& V(\varpi_1+\varpi_2+\varpi_5)+V(\varpi_3+\varpi_5)+V(2\varpi_1+\varpi_6)\\
\Lambda^4 U_- &=& F_{[1,1,1,1]}(U_-) &=& V(\varpi_1+\varpi_3 + \varpi_4) + V(3\varpi_1 + \varpi_5) \\
\hline 
\end{array}
$$
\caption{Plethysm of $V(\varpi_2)^{\otimes 4}$ for
$\sl_9(\C)$.}
\label{tab:plethysm}
\end{table}

\paragraph{{\bf $M_4(\varpi_1+\varpi_2+\varpi_5)$ is irreducible } }

$$
\xymatrix{
 & & U_- \ar@{-}[dl]\ar@{-}[d]\ar@{-}[dr] \\
& V(2 \varpi_2) \ar@{-}[d] \ar@{-}[dr] &
V(\varpi_1+\varpi_3) \ar@{-}[ddr] \ar@{-}[dl] \ar@{-}[d]\ar@{-}[dr]
& V(\varpi_4) \ar@{-}[dl] \ar@{-}[d]  \\
 & V(\varpi_1+\varpi_2+\varpi_3) \ar@{-}[ddr] & V(\varpi_2+\varpi_4) \ar@{-}[dd]& V(\varpi_1+\varpi_5) \ar@{-}[ddl]&  \\
 &  & & V(2 \varpi_1+\varpi_4) \ar@{-}[dl] \\
 & & V(\varpi_1+\varpi_2+\varpi_5) \\
} 
$$
We consider the two 2-dimensional bricks. For the one
based at $V(2 \varpi_2) \subset U_-^{\otimes 2}$,
$t_{34}$ has eigenvalues $-4/n$ and 
$\frac{-4(n+1)}{n}$.
For the one
based at $V(\varpi_4) \subset U_-^{\otimes 2}$,
$t_{34}$ has eigenvalues $-4/n$ and 
$\frac{2(n-2)}{n}$. Hence on both bricks
$s$ acts as a reflection. On the first one the
eigenvalues of $W$ are $-(7n+4)/n$ and $(3n-4)/n$, while on the second one the eigenvalues are $-(3n+4)/n$ and $(5n-4)/n$. In the irreducibility graph, we thus see that $\varpi_1+\varpi_2+\varpi_3$,
$\varpi_2 + \varpi_4$ and $\varpi_1 + \varpi_5$
belong to the same connected component. It follows
that either $M_4(\varpi_1+\varpi_2+\varpi_3)$
is irreducible, or it is the direct sum of two
irreducible representations, one 1-dimensional
and one 7-dimensional. Since there is no 7-dimensional
irreducible representation of $H_4$ the conclusion follows.
The
restriction to $B_3$ is isomorphic to $S_a + U_{a,c}+U_{a,b} + V$
hence $M_4(\varpi_1+\varpi_2+\varpi_5)\simeq W_a$.

\paragraph{{\bf The case  $M_4(\varpi_1+\varpi_3+\varpi_4)$ } }

$$
\xymatrix{
 & & U_- \ar@{-}[dl]\ar@{-}[d]\ar@{-}[dr] \\
& V(2 \varpi_2) \ar@{-}[d] \ar@{-}[dr] &
V(\varpi_1+\varpi_3)\ar@{-}[ddl] \ar@{-}[ddr] \ar@{-}[dl] \ar@{-}[d]
& V(\varpi_4) \ar@{-}[dl]   \\
 & V(\varpi_1+\varpi_2+\varpi_3)\ar@{-}[ddr] & V(\varpi_2+\varpi_4) \ar@{-}[dd]&  &  \\
 & V(2 \varpi_3) \ar@{-}[dr] & & V(2 \varpi_1+\varpi_4)\ar@{-}[dl]  \\
 & & V(\varpi_1 + \varpi_3 + \varpi_4) & } 
$$
Here the restriction to $\mathfrak{B}_3$ is \emph{not} multiplicity free, hence our criterion cannot be
applied directly. Consider however the 2-dimensional brick based at $V(2 \varpi_2)$. The eigenvalues of
$t_{34}$ are $-4/n$ and $2(n-2)/n$ (hence $s$ acts
by a reflection). The eigenvalues of $W$ are $-4(n+1)/n$ and $2(3n-2)/n$, hence the brick is irreducible by our criterion. Since the action of $\mathfrak{B}_4$ is semisimple, it easily follows that our
7-dimensional representation is a sum of an irreducible component of dimension at least $5$, and of
another component on which $t_{12}$ has for only
eigenvalue $-4/n$. Since there are no 5-dimensional
or 7-dimensional irreducible representation for $H_4$, the only possibility is that we have the sum of a 6-dimensional irreducible representations and a 1-dimensional one.
The
restriction to $B_3$ is isomorphic to $2S_a + U_{a,b}+V$
hence $M_4(\varpi_1+\varpi_3+\varpi_4)\simeq S_a + V_{a,b,c}$.

\begin{remark} We are not able to follow the same method to
elucidate the bimodule structure on $U_-^{\otimes 5 }$
because some of the
putative Schur elements of the cubic Hecke algebra on 5 strands vanish inside $R_{\Lambda}$, suggesting that its
representation-theoretic behavior is not generic. As a matter of fact, it can be checked that the restriction to $B_4$ of $M_5(\varpi_3+\varpi_7)$ cannot be obtained by
restriction of a representation of the generic Hecke algebra, thus proving that the discriminant of $H_5$
admits $a^3+b^2c$ as a factor. This has for consequence that we are not able to find the dimension by easy representation-theoretic arguments based
on the generic cubic Hecke algebra.
The question of whether the action of $B_5$ is semisimple over $K_{\Lambda}$ remains open, though.
\end{remark}

\subsection{Quantum cubic quotients : the Links-Gould invariant}

In \cite{LG} we investigated the quotient of the braid algebra involved in the Links-Gould polynomial invariant. Recall that this invariant arises through the consideration of a $1$-parameter family of 4-dimensional representations of the Lie superalgebra $\sl(2|1)$.
This invariant is stronger than the Alexander polynomial, and yet shares a number of properties with it (for instance, it vanishes on split links).

We proved in \cite{LG} that the centralizer algebra $LG_n$ involved in this construction is a quotient of the cubic Hecke algebra $H_n$, and even of $\GQ_n$.
We defined a quotient (denoted $A_n$ in \cite{LG}) of $\GQ_n$, proper when $n\geq 4$, as the quotient of
$K B_n$ by the ideal generated by $\Ker(K B_4 \to LG_4)$. Let us denote it
by $LG'_n$. We conjectured $LG_n \simeq LG'_n$. The
dimensions for $n=2,3,4,5$ are $3,20,175,1764$,
and conjecturally $\dim LG_{n+1}=(2n)!(2n+1)!/(n!(n+1)!)^2$. The description of the defining ideal $\Ker(K B_4 \to LG_4)$
given in \cite{LG} was representation-theoretic at first. In particular we got that 
$$
\mathrm{Irr}(LG_4) = \mathrm{Irr}(GQ_4 \otimes K) \setminus \{ T_{a,b},T_{a,c},V,V_{a,b,c},V_{a,c,b} \}
$$
and from this we get that the quotient map $\GQ_4\otimes K \onto LG_4$ factorizes through the algebra $ \Lambda S_4 = \mathrm{Im}(KB_4 \to \End(U_-^{\otimes 4}))$ described above.

From this representation-theoretic description we got
in particular some remarkable properties that we recall here :
$$
\begin{array}{lcll}
s_1^{-1} (s_3^{-1} s_2 s_3^{-1}) &\equiv& (s_3^{-1} s_2 s_3^{-1}) s_1^{-1} & \mod LG_3 s_3 LG_3 + LG_3 s_3^{-1} LG_3 + LG_3 \\
s_1 (s_3^{-1} s_2 s_3^{-1}) &\equiv &(s_3^{-1} s_2 s_3^{-1}) s_1 & \mod LG_3 s_3 LG_3 + LG_3 s_3^{-1} LG_3 + LG_3
\end{array}
$$

\subsection{The tripled quadratic Hecke algebra}

In this section $R$ denotes an arbitrary domain with $x,y \in R^{\times}$. We let
$H_n(x,y)$ denote the (ordinary) Hecke algebra with
these parameters, namely the quotient of the group algebra $R B_n$ by the quadratic relations $(s_i - x)(s_i-y) = 0$ for $1 \leq i \leq n-1$, or equivalently by
the relation $(s_1 - x)(s_1-y) = 0$. We denote $J_n(x,y) \subset R B_n$ the (twosided) ideal generated by these
relations, so that $H_n(x,y) = R B_n/J_n(x,y)$.

Now assume $a,b,c \in R^{\times}$ and assume the
additional condition that $(a-b)(a-c)(b-c) \in R^{\times}$. Then, we define the \emph{tripled quadratic algebra} as
$$
\mathcal{H}_n = \mathcal{H}_n(a,b,c) = \frac{R B_n}{J_n(a,b) \cap J_n(a,c) \cap J_n(b,c)}
$$
We studied this algebra in detail in \cite{CABANESMARIN}, in
the special case where $R$ was a field of characteristic $2$ containing $\mathbbm{F}_4$, and $\{ a,b, c \} = \mathbbm{F}_4 \setminus \{ 0 \}$. It turns out that most results of \cite{CABANESMARIN} are also valid in the present setting, as we explain now.

 Notice first that $\mathcal{H}_n$ obviously projects onto $H_n(x,y)$
for all $x \neq y$ with $\{x,y \} \subset  \{a,b,c\}$, and in particular there is a natural morphism
$\mathcal{H}_n \to H_n(a,b) \oplus H_n(a,c) \oplus H_n(b,c)$. We denote $q_x : H_n(x,y) \to R$ characterized by
$s_i \mapsto x$. 
$$
\xymatrix{
H_n(b,c) \ar[d]_{q_{c}} \ar[dr]^{q_{b}} & H_n(a,c) \ar[dl]^{q_{c}} \ar[dr]_{q_{a}} & H_n(a,b) \ar[dl]_{q_{b}} \ar[d]^{q_a} \\
R & R & R 
}
$$

\begin{proposition} The natural morphism
$\mathcal{H}_n \to H_n(a,b) \oplus H_n(a,c) \oplus H_n(b,c)$ is injective, and its image is made of the triples $(z_c,z_b,z_a)$ 
such that $q_{\alpha}(z_{\alpha'}) =q_{\alpha}(z_{\alpha''})$ whenever $\{\alpha,\alpha',\alpha'' \} = \{a,b,c\}$. If $R$ is a field, then $\mathcal{H}_n$ has dimension $3(n!-1)$.
\end{proposition}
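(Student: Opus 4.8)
The plan is to realize $\mathcal{H}_n$ explicitly as a fiber product (pullback) of the three quadratic Hecke algebras over the three one-dimensional characters, and then to read off the dimension by a rank count. Concretely, consider the map
$$
\pi : \mathcal{H}_n \longrightarrow H_n(a,b)\oplus H_n(a,c)\oplus H_n(b,c), \qquad z \longmapsto (z_c,z_b,z_a),
$$
where $z_c$ is the image of $z$ under $RB_n \onto H_n(a,b)$ (the factor on which neither generator-eigenvalue is $c$), and similarly for the others. This $\pi$ is well-defined because $J_n(a,b)\cap J_n(a,c)\cap J_n(b,c)$ is contained in each of the three ideals. The first claim, injectivity of $\pi$, is exactly the statement that $J_n(a,b)\cap J_n(a,c)\cap J_n(b,c)$ is the kernel of $RB_n \to H_n(a,b)\oplus H_n(a,c)\oplus H_n(b,c)$, which is a tautology from the definition of $\mathcal{H}_n$ together with the fact that the kernel of a product of quotient maps is the intersection of the kernels. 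So injectivity requires no work.

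The substantive part is identifying the image. First I would check that $\pi(\mathcal{H}_n)$ is contained in the claimed pullback: for any $z\in\mathcal{H}_n$ and any $\{\alpha,\alpha',\alpha''\}=\{a,b,c\}$, the scalars $q_\alpha(z_{\alpha'})$ and $q_\alpha(z_{\alpha''})$ are both computed by sending the original braid-group element $z\in RB_n$ along $RB_n\to R$, $s_i\mapsto\alpha$, since both $H_n(\alpha,\alpha')\to R$ ($s_i\mapsto\alpha$) and $H_n(\alpha,\alpha'')\to R$ ($s_i\mapsto\alpha$) are induced from that same character of $RB_n$; hence they agree. This gives the compatibility conditions in the proposition statement. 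For the reverse inclusion I would argue on dimensions when $R$ is a field (the general domain case of image-description follows by the same diagram chase, or by base change). The key input here is the well-known decomposition of the generic Hecke algebra: $\dim_R H_n(x,y) = n!$, and its natural semisimple structure gives that the character $q_x : H_n(x,y)\onto R$ is split surjective with kernel of dimension $n!-1$. The pullback $P = \{(z_c,z_b,z_a) : q_\alpha(z_{\alpha'})=q_\alpha(z_{\alpha''})\ \forall\ \{\alpha,\alpha',\alpha''\}=\{a,b,c\}\}$ is cut out inside $H_n(a,b)\oplus H_n(a,c)\oplus H_n(b,c)$, a space of dimension $3\,n!$, by $3$ linear equations — one for each of $\alpha=a,b,c$ — and these $3$ equations are linearly independent (each involves a distinct pair of the three summands, and each $q$ is nonzero); hence $\dim_R P = 3n! - 3 = 3(n!-1)$.

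It then remains to show $\pi(\mathcal{H}_n) = P$, i.e. that the injection $\mathcal{H}_n\into P$ is onto; equivalently $\dim_R \mathcal{H}_n \geq 3(n!-1)$. For this I would exhibit enough elements of $\mathcal{H}_n$, or, more cleanly, count $\dim_R\mathcal{H}_n = \dim_R RB_n/(J_n(a,b)\cap J_n(a,c)\cap J_n(b,c))$ directly via the containment $\mathcal{H}_n\into H_n(a,b)\oplus H_n(a,c)\oplus H_n(b,c)$ combined with a lower bound coming from the fact that the three composite maps $\mathcal{H}_n\onto H_n(\alpha,\alpha')$ are each surjective and have pairwise "independent" images in the sense made precise by the diagram; this is precisely the situation analyzed in \cite{CABANESMARIN} and the argument there transfers verbatim, since it only used the generic structure of the quadratic Hecke algebras and the condition $(a-b)(a-c)(b-c)\in R^\times$, both of which hold here. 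Concretely one checks that for each pair $\{\alpha,\alpha'\}$ the kernel of $\mathcal{H}_n\onto H_n(\alpha,\alpha')$ maps isomorphically onto $\Ker q_{\alpha''}\oplus\Ker q_{\alpha''}$ inside the other two summands (here $\alpha''$ is the remaining parameter), which forces $\dim_R\mathcal{H}_n \geq n! + (n!-1) + (n!-1) = 3n!-2$... — and refining this book-keeping to account for the single common relation among the three characters upgrades the bound to the exact value $3(n!-1)$, matching $\dim_R P$, whence $\pi(\mathcal{H}_n)=P$.

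The main obstacle is the surjectivity onto $P$ (equivalently the sharp lower bound on $\dim_R\mathcal{H}_n$): the inclusion $\mathcal{H}_n\into H_n(a,b)\oplus H_n(a,c)\oplus H_n(b,c)$ and the compatibility constraints are easy, but proving that every compatible triple is actually hit requires knowing that $J_n(a,b)\cap J_n(a,c)\cap J_n(b,c)$ is not strictly larger than what the three-fold quotient sees — i.e. that there is no "hidden" relation. I expect this to follow by importing the structural results of \cite{CABANESMARIN} (where the characteristic-$2$, $\mathbbm{F}_4$ hypothesis was not essential to this particular dimension computation), but making precise exactly which lemmas of that paper are needed, and verifying they hold over an arbitrary domain $R$ with $(a-b)(a-c)(b-c)\in R^\times$, is the delicate point.
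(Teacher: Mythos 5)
Your outline gets the easy parts right (injectivity is tautological, and the image lands in the compatible triples), but the substantive step --- that \emph{every} compatible triple is attained, i.e.\ that $J_n(a,b)\cap J_n(a,c)\cap J_n(b,c)$ contains no ``hidden'' relation --- is exactly the step you leave to an unspecified import from \cite{CABANESMARIN}, and you yourself flag that identifying what is needed there is the delicate point. That identification is the entire content of the paper's proof: the general argument of \cite{CABANESMARIN}, Proposition 5.7, applies as soon as one knows that for $\{x,y,z\}=\{a,b,c\}$ the twosided ideal $J_n(x,y)+J_n(x,z)$ is generated by $s_1-x$, and this follows from the one-line identity $(s_1-x)(s_1-y)-(s_1-x)(s_1-z)=(z-y)(s_1-x)$ together with $(z-y)\in R^{\times}$; it also yields $J_n(a,b)+J_n(a,c)+J_n(b,c)=RB_n$. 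These ideal computations are what pin down the gluing quotients $RB_n/(J_n(x,y)+J_n(x,z))\simeq R$ (via $q_x$) and hence give both the image description over an arbitrary domain and the surjectivity onto the fiber product. Without them, your argument proves only that the image is contained in the pullback $P$ and that $\dim P=3(n!-1)$ over a field; it gives no lower bound on $\dim\mathcal{H}_n$, which is the whole question. Note also that the first two statements of the proposition are asserted over any domain $R$, where no dimension count is available, so the field-case count cannot carry them, and ``the same diagram chase'' in the general case is precisely the step that needs the ideal computation above.

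The bookkeeping you propose as a substitute is moreover both circular and numerically inconsistent. To compute the kernel of $\mathcal{H}_n\onto H_n(\alpha,\alpha')$ you must already know which compatible triples lie in the image, i.e.\ the surjectivity you are trying to prove; and the claimed identification of that kernel with a direct sum of two augmentation-ideal kernels forgets the remaining compatibility condition tying the two surviving coordinates (for the kernel of $\mathcal{H}_n\to H_n(a,b)$ one still needs $q_c(z_b)=q_c(z_a)$), so even granting surjectivity the kernel has dimension $2n!-3$, not $2(n!-1)$. Your stated bound $\dim\mathcal{H}_n\geq 3n!-2$ therefore cannot be correct, as it exceeds the upper bound $3n!-3$ coming from your own computation of $\dim P$; no ``refinement'' can turn an inequality in the wrong direction into the equality $3(n!-1)$. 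The fix is not better counting but the ideal identity above, after which the general fiber-product argument of \cite{CABANESMARIN} gives the image description directly and the dimension count becomes the trivial consequence $3\,n!-3$.
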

\begin{proof}
This proposition follows from general arguments as in \cite{CABANESMARIN}, proposition 5.7, as soon as we know that, for $\{x,y,z \} = \{a,b,c\}$, the (twosided) ideal $J_n(x,y) + J_n(x,z)$ is generated by $s_1 -x$.
This follows from the fact that $(s_1-x)(s_1-y) -
(s_1-x)(s_1-z) = (z-y)(s_1 -x)$ and that $(z-y) \in R^{\times}$. Indeed, this imply immediately that $J_n(a,b)+J_n(b,c)+J_n(a,c) = R B_n$ again because $(a-b)(a-c)(b-c) \in R^{\times}$.
\end{proof}

\begin{lemma} \label{lem:relsternary} The following equalities hold inside $\mathcal{H}_3$, where $\Sigma_1 = a+b+c$, $\Sigma_2 = ab+bc+ac$, $\Sigma_3 = abc$.
\begin{enumerate}
\item $(s_i - a)(s_i -b)(s_i -c)=0$, $i\in \{1,2\}$
\item $[s_2^2,s_1] = [s_2,s_1^2]$, that is $s_2^2 s_1 = s_2 s_1^2 - s_1^2 s_2+s_1s_2^2$
\item $s_2 s_1^2s_2 = -\Sigma_3 s_1 + \Sigma_2 s_1s_2 + s_1 s_2 s_1^2 - \Sigma_1 s_1 s_2^2 + s_1^2s_2^2$
\end{enumerate}
The following equalities hold inside $\mathcal{H}_4$, \begin{enumerate}
\item $s_2 s_3^2 = -s_1^2 s_3 + s_1^2s_2 + s_2^2s_3 - s_1 s_2^2 + s_1 s_3^2$
\item $s_2^2s_3s_1 = s_2 s_1^2s_3 - s_1^2 s_2 s_3 + s_1 s_2^2 s_3$
\item $s_2^2s_3^2 = -\Sigma_3 s_1 + \Sigma_2 s_1s_3 - \Sigma_1 s_1^2s_3 + \Sigma_3 s_2 - \Sigma_2 s_2s_3 + \Sigma_1 s_1^2s_2 - s_1^2s_2s_3 + \Sigma_1 s_2^2 s_3 - \Sigma_1 s_1 s_2^2 + s_1 s_2^2 s_3 + s_1^2 s_3^2$
\item $s_2^2s_3^2 =-\Sigma_3 s_1 + \Sigma_2 s_1s_3 - \Sigma_1 s_1^2s_3 + \Sigma_3 s_2 - \Sigma_2 s_2s_3 + \Sigma_1 s_1^2s_2 + \Sigma_1 s_2^2 s_3 - \Sigma_1 s_1 s_2^2+s_1^2s_3^2 + s_2^2s_1 s_3 - s_2 s_1^2s_3$
\end{enumerate}
\end{lemma}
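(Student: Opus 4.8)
The plan is to reduce everything to computations in ordinary Hecke algebras via the injection established in the previous proposition. That map $\mathcal{H}_n \into H_n(a,b)\oplus H_n(a,c)\oplus H_n(b,c)$ is induced by the three quotient morphisms $R B_n \to H_n(x,y)$, so an equality between images of elements of $R B_n$ holds in $\mathcal{H}_n$ as soon as it holds in each ordinary Hecke algebra $H_n(x,y)$ with $\{x,y\}\subset\{a,b,c\}$. For each of the stated identities it therefore suffices to verify it in $H_n(x,y)$, where now $s_i$ obeys the quadratic relation $s_i^2=\sigma_1 s_i-\sigma_2$ with $\sigma_1=x+y$, $\sigma_2=xy$; writing $z$ for the third element of $\{a,b,c\}$ we have $\Sigma_1=\sigma_1+z$, $\Sigma_2=\sigma_2+z\sigma_1$ and $\Sigma_3=z\sigma_2$, and these three numeric relations will be responsible for all the coefficient matchings below.

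In $H_n(x,y)$ one rewrites each word in the $s_i^{\pm1}$, using the quadratic relation together with the braid relations $s_is_{i+1}s_i=s_{i+1}s_is_{i+1}$ and $s_1s_3=s_3s_1$, as an $R$-combination of the standard basis $\{T_w: w\in\SN\}$; each identity then becomes a handful of scalar equalities between coefficients. For $\mathcal{H}_3$ this is immediate: identity (1) is $(s_i-x)(s_i-y)(s_i-z)=0$; identity (2) reduces on both sides to $\sigma_1(s_2s_1-s_1s_2)$; and for identity (3), using $s_2s_1s_2=s_1s_2s_1$, both sides become $\sigma_1 s_1s_2s_1-\sigma_1\sigma_2 s_2+\sigma_2^2$, the leftover $s_1$- and $s_1s_2$-coefficients on the right-hand side collapsing to $0$ precisely because $\Sigma_3=z\sigma_2$ and $\Sigma_2-\sigma_2=z\sigma_1=\Sigma_1\sigma_1-\sigma_1^2$.

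The $\mathcal{H}_4$ identities are handled the same way, with two shortcuts. Identity (2) is obtained by right-multiplying the relation $s_2^2s_1=s_2s_1^2-s_1^2s_2+s_1s_2^2$ (which holds in $\mathcal{H}_4$ by the same component check) by $s_3$ and using $s_1s_3=s_3s_1$; and identity (4) follows from identities (3) and (2), since the difference of their right-hand sides is $s_2^2s_1s_3-s_2s_1^2s_3+s_1^2s_2s_3-s_1s_2^2s_3$, which vanishes by (2). Identity (1) reduces in $H_4(x,y)$ to $(s_1^2-\sigma_1s_1+\sigma_2)(s_2-s_3)=0$. The one laborious point is identity (3): one reduces its left-hand side $s_2^2s_3^2$ and the eleven words on the right-hand side (all of length at most $4$ in $s_1,s_2,s_3$) to the standard basis and checks that the coefficients of $1,s_1,s_2,s_3,s_1s_2,s_2s_3,s_1s_3,s_1s_2s_3$ agree — each agreement being once more a one-line consequence of $\Sigma_1=\sigma_1+z$, $\Sigma_2=\sigma_2+z\sigma_1$, $\Sigma_3=z\sigma_2$. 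I expect this bookkeeping for identity (3), rather than any conceptual point, to be the main obstacle; it is entirely routine, could be confirmed by a short symbolic computation, and follows the pattern of the corresponding statements in \cite{CABANESMARIN}.
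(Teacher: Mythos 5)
Your proposal is correct and follows the paper's own route: the paper likewise reduces each identity, via the injection $\mathcal{H}_n \into H_n(a,b)\oplus H_n(a,c)\oplus H_n(b,c)$ from the preceding proposition, to a verification inside the quadratic Hecke algebras $H_n(x,y)$, done by direct computation in their natural bases (or via their simple modules). Your extra details — the relations $\Sigma_1=\sigma_1+z$, $\Sigma_2=\sigma_2+z\sigma_1$, $\Sigma_3=z\sigma_2$, and the shortcuts deriving $\mathcal{H}_4$-identities (2) and (4) from the others — are sound and simply make explicit what the paper leaves as routine.
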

\begin{proof}
By the previous proposition it is enough to prove these equalities inside each of the $H_n(x,y)$ for $\{x,y\} \subset \{a,b,c\}$. Depending on taste, this can be done (by hand or by computer) either by using the natural bases of the Hecke algebras or models over $\Z[a,b,c]$ of their simple modules.
\end{proof}

From now on we assume that $R=K$ is a field of characteristic $0$, with $a,b,c \in K$ being generic
(e.g., algebraic independent over $\Q \subset K$. Actually, the genericity condition under which the remaining part of the section is valid is precisely the following. One needs $a,b,c$ as well as $(a-b)(a-c)(b-c)$ to be nonzero, and also that the algebras $H_3(a,b,c)$ and $H_4(a,b,c)$ are split semisimple. This last condition can be made quite explicit under the trace conjecture of Broué, Malle and Michel (see \cite{BMM} \S 2), which implies that the algebras $H_3$ and $H_4$ should be symmetric algebras over $\Z[a^{\pm},b^{\pm},c^{\pm}]$, this conjecture being known to be true for $H_3$ (see  \cite{MALLEMICHEL,LG}). Under this conjecture the condition of being semisimple amounts to
the nonvanishing of a collection known as the Schur elements of these algebras, and they have been determined in \cite{TRBMW} for $H_3$.
For $H_3$ this condition implies in particular $ac+b^2 \neq 0$.

We consider the element
$$
\mathbf{b}=[s_2^2,s_1] - [s_2,s_1^2] = s_2^2 s_1 - s_1 s_2^2 - s_2 s_1^2 + s_1^2 s_2
$$
and define $\mathcal{K}_n = \mathcal{K}_n(a,b,c) =  H_n(a,b,c)/(\mathbf{b})$. Note that, because of the cubic relation on $s_1$ and $s_2$ one can express inside
$H_n(a,b,c)$ each $s_i^2$ as a linear combination of $s_i^{-1}$, $s_i$ and $1$. By direct calculation one gets that $\mathcal{K}_n$ is equivalently defined as the quotient of $H_n(a,b,c)$ by the relation
$[s_2^{-1},s_1] = [s_2,s_1^{-1}]$, that is 

\begin{center}

\begin{tikzpicture}[scale=.6]
\braid[braid colour=red,strands=3,braid start={(0,0)}]%
{ \dsigma _2^{-1} \dsigma_1 }
\node[font=\Large] at (3.5,-1) {\( - \)};
\braid[braid colour = red,strands=3,braid start={(3.5,0)}]
{\dsigma_1 \dsigma_2^{-1} }
\node[font=\Large] at (7,-1) {\(- \)};
\braid[braid colour = red,strands=3,braid start={(7,0)}]
{\dsigma_2 \dsigma_1^{-1}}
\node[font=\Large] at (10.5,-1) {\(+\)};
\braid[braid colour = red,strands=3,braid start={(10.5,0)}]
{\dsigma_1^{-1} \dsigma_2 }
\node[font=\Large] at (14.5,-1) {\( = 0\)};
\end{tikzpicture}

\end{center}

The image of $\mathbf{b}=[s_2^2,s_1] - [s_2,s_1^2]$
inside the 3-dimensional representation $V$ of $H_3(a,b,c)$ (using the matrix model of section \ref{sect:H3}) is
$$
\left(\begin{array}{ccc}
-(a-c)(ac+b^2) & 2c(a+b)-ab-c^2 & (a-c)(ac+b^2) \\ 
-(ac+b^2)(a^2+bc-2a(b+c)) & 2 (a-c)(ac+b^2) & (ac+b^2)(a^2+bc -2a(b+c)) \\
(a-c)(ac+b^2) & ab+c^2-2c(a+b) & -(a-c)(ac+b^2)
\end{array}
\right)
$$

\begin{proposition} \label{prop:KegalHtriple} Assume that $a,b,c$ are generic.
The twosided ideal of $H_3$ generated by $\mathbf{b}$ is
the indecomposable ideal attached to the representation $V$. Moreover, the natural morphism
$\mathcal{K}_n \onto \mathcal{H}_n$ is an isomorphism for $n \leq 4$. In particular the relations of lemma \ref{lem:relsternary} hold true inside $\mathcal{K}_3$ and $\mathcal{K}_4$, respectively.
\end{proposition}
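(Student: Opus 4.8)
The plan is to establish the two claims of Proposition~\ref{prop:KegalHtriple} in sequence, starting from the fact that $H_3 \otimes K$ is split semisimple (under the genericity/semisimplicity hypotheses in force) with the irreducibles $S_a,S_b,S_c,U_{a,b},U_{a,c},U_{b,c},V$ described in \S\ref{sect:H3}. Since $H_3$ is a semisimple $K$-algebra, it decomposes as a direct sum of matrix blocks, and every twosided ideal is a direct sum of a subset of these blocks; equivalently, a twosided ideal is determined by the set of irreducible representations that do \emph{not} kill it. So to identify the ideal $(\mathbf{b})$ it suffices to determine, for each irreducible $\rho$, whether $\rho(\mathbf{b}) = 0$ or not. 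First I would note that $\mathbf{b} = [s_2^2,s_1] - [s_2,s_1^2]$ is visibly a sum of commutators, hence lies in the kernel of every one-dimensional representation; thus $S_a,S_b,S_c$ kill $\mathbf{b}$. Next, $\mathbf{b}$ also lies in the kernel of the two-dimensional representations: by Lemma~\ref{lem:relsternary}(2), the relation $[s_2^2,s_1] = [s_2,s_1^2]$ holds in $\mathcal{H}_3$, hence in each $H_3(x,y)$, and the $U_{x,y}$ factor through the corresponding quadratic Hecke algebra $H_3(x,y)$ (they are exactly the restrictions of the quadratic Hecke representations, as recorded in the description of $U_{\alpha,\beta}$). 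So the only block where $\mathbf{b}$ can survive is the three-dimensional one $V$. The displayed $3\times 3$ matrix $\rho_V(\mathbf{b})$ given just before the proposition is nonzero — e.g. its $(1,1)$ entry is $-(a-c)(ac+b^2)$, which is a unit under our genericity hypothesis (recall the semisimplicity of $H_3$ forces $a\neq c$ and, as noted, $ac+b^2\neq 0$) — and since the block $M_3(K)$ is simple, the twosided ideal generated by any nonzero element of it is all of $M_3(K)$. Therefore $(\mathbf{b})$ is precisely the indecomposable (simple) ideal attached to $V$, which proves the first assertion.

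For the second assertion, observe that by the first part the quotient $\mathcal{K}_3 = H_3/(\mathbf{b})$ is the complement block, i.e. $\mathcal{K}_3 \cong K^3 \oplus M_2(K)^3$, which has $K$-dimension $3 + 12 = 15 = 3(3!-1)$; and by Proposition~\ref{prop:KegalHtriple}'s preceding Proposition the algebra $\mathcal{H}_3$ also has dimension $3(3!-1) = 15$. Since the surjection $\mathcal{K}_3 \onto \mathcal{H}_3$ is a surjection of $K$-algebras of equal finite dimension, it is an isomorphism. (Alternatively, and this is the ``intrinsic'' reason: $\mathcal{H}_3$ embeds into $H_3(a,b)\oplus H_3(a,c)\oplus H_3(b,c)$, and the three-dimensional $V$ is exactly the one irreducible of $H_3$ not surviving in any $H_3(x,y)$, so $\mathcal{H}_3$ is the sum of all the other blocks — which is $\mathcal{K}_3$.) For $n=4$ I would argue similarly but now using the classification of $\Irr(H_4\otimes K)$ from \S\ref{sect:cubicHecke4}. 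We have a surjection $\mathcal{K}_4 = H_4/(\mathbf{b}) \onto \mathcal{H}_4$, so it suffices to show the two algebras have the same dimension (equivalently, the same set of surviving irreducibles). On one side, $\mathcal{H}_4 \hookrightarrow H_4(a,b)\oplus H_4(a,c)\oplus H_4(b,c)$, and its dimension — given by the ``matching triples'' description of the preceding Proposition — is $3(4!-1) = 69$. On the other side, the twosided ideal $(\mathbf{b}) \subset H_4$ contains the ideal generated by the image of $\mathbf{b}\in H_3 \subset H_4$, whose kernel in each irreducible of $H_4$ is controlled by the restriction to $B_3$: an irreducible $\rho$ of $H_4$ kills $\mathbf{b}$ iff $\mathbf{b}$ acts by zero on $\Res_{B_3}\rho$, i.e. iff $V$ is \emph{not} a constituent of $\Res_{B_3}\rho$. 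Reading off the restriction tables in \S\ref{sect:cubicHecke4}, the irreducibles of $H_4$ whose $B_3$-restriction does \emph{not} involve $V$ are exactly $S_a,S_b,S_c$, the $T_{x,y}$, and the $U_{x,y}$ — whose squared dimensions sum to $3\cdot1 + 3\cdot4 + 6\cdot 9 = 69$. Hence $\dim \mathcal{K}_4 \le \dim(\text{this sum of blocks}) = 69$ provided we also check that $\mathbf{b}$ genuinely survives in every block whose $B_3$-restriction \emph{does} contain $V$ (so that $(\mathbf{b})$ is not smaller than this complementary ideal). Combining $\dim\mathcal{H}_4 = 69 \le \dim\mathcal{K}_4 \le 69$ forces equality, hence the surjection $\mathcal{K}_4 \onto \mathcal{H}_4$ is an isomorphism. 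The final sentence of the proposition is then immediate: the relations of Lemma~\ref{lem:relsternary} hold in $\mathcal{H}_3$ and $\mathcal{H}_4$, hence in the isomorphic algebras $\mathcal{K}_3$ and $\mathcal{K}_4$.

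The main obstacle is the last bookkeeping step for $n=4$: showing that $(\mathbf{b})$ is \emph{exactly} the ideal corresponding to the blocks whose $B_3$-restriction contains $V$, and no smaller. The containment in one direction ($\mathbf{b}$ dies in the blocks whose restriction avoids $V$) is clean from the tables plus the first part of the proposition. For the other direction one must verify that in each of the larger irreducibles of $H_4$ — the $V_{x,y,z}$, the $W_x$, and the three-dimensional $V$ itself — the element $\mathbf{b}$ acts by a nonzero operator; by the restriction formulas this reduces to the already-established fact that $\rho_V(\mathbf{b})\neq 0$ in $H_3$ together with the observation that each such $H_4$-irreducible restricts to a direct sum containing a copy of $V$ on which $\mathbf{b}$ therefore acts nontrivially. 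In other words, nothing new is needed beyond the $H_3$ computation and the branching tables; one just has to assemble them carefully. (If one prefers to sidestep even this, one can instead check the dimension of $\mathcal{K}_4$ directly by a Gröbner/rewriting computation using the relations of Lemma~\ref{lem:relsternary}, but the representation-theoretic route above is cleaner and is the one I would write up.) As with Lemma~\ref{lem:relsternary}, all the concrete matrix verifications can be done either by hand or by computer using the explicit models over $\Z[a,b,c]$.
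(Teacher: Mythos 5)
Your proposal is correct and follows essentially the same route as the paper: identify $(\mathbf{b})$ as the block of $V$ by checking $\rho_V(\mathbf{b})\neq 0$ and that $\mathbf{b}$ dies in all irreducibles factoring through the quadratic Hecke algebras, then conclude $\dim\mathcal{K}_3 = 24-9 = 15 = 3(3!-1)$, and for $n=4$ use the branching rules to see that the irreducibles killing $\mathbf{b}$ are exactly $S_x$, $T_{x,y}$, $U_{x,y}$, giving $\dim\mathcal{K}_4 = 69 = 3(4!-1) = \dim\mathcal{H}_4$. The extra care you take in verifying that $\mathbf{b}$ genuinely survives in every $H_4$-block whose restriction contains $V$ is exactly the point the paper leaves implicit, and your argument for it (restriction contains a copy of $V$ on which $\mathbf{b}$ acts nontrivially) is the right one.
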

\begin{proof}
Under the genericity assumption, the first assertion is equivalent to the non-vanishing of $\mathbf{b}$ under $V$, which has been established above, and its vanishing under the other irreducible representations of $H_3$. But each one of these factorizes through one of the (quadratic)
Hecke algebras $H_3(x,y)$, and $\mathbf{b}$ maps to $0$ in each one of them. This proves the first claim. Since this quotient has dimension $24 - 3^2 = 15 = 3(3! -1)$
this also proves that the surjective morphism $\mathcal{K}_3 \onto \mathcal{H}_3$ is an isomorphism.
The ideal generated by $\mathbf{b}$ inside the semisimple algebra $H_4(a,b,c)$ is attached, up to possibling extending the scalars to an algebraic closure of $K$, to the irreducible representations of $H_4(a,b,c)$ whose restriction to $H_3(a,b,c)$ contains
$V$ as a constituent. From the restriction rules recalled in section \ref{sect:cubicHecke4} we get
that these irreducible representations are exactly the
ones which do \emph{not} factorize through one of the
$H_4(x,y)$ and we get $\dim \mathcal{K}_4 = 3 \times 1^2 + 6 \times 3^2 + 3 \times 2^2 = 69 = 3(4!-1) = \dim \mathcal{H}_4$ which proves the claim. 
\end{proof}

The next propositions are similar to the ones established in \cite{CABANESMARIN}. Moreover, the proofs
are most of the time exactly the same. Therefore, we only provide precise references to them as well as the small changes to make when they are needed. Notice that the results below do not use the genericity condition (except for corollary \ref{cor:surjternary}).

\begin{proposition} For $n\geq 2$ one has 
\begin{enumerate}
\item $\mathcal{K}_{n+1} = \mathcal{K}_{n} + \mathcal{K}_{n}s_n \mathcal{K}_{n} + \mathcal{K}_{n} s_n^2 \mathcal{K}_{n}$
\item $\mathcal{K}_{n+1} = \mathcal{K}_{n} + \mathcal{K}_{n}s_n \mathcal{K}_{n} + \mathcal{K}_{n} s_n^2$
\item If $k<n$, $r,t \in \{ 0,1,2 \}$ we have $s_k^r s_1^t s_n^2 \equiv s_1^{r+t} s_n^2 \mod \mathcal{K}_n + \mathcal{K}_n s_n$
\item $\mathcal{K}_{n+1} = \mathcal{K}_{n} + \mathcal{K}_{n}s_n \mathcal{K}_{n} + \mathcal{K}_{2} s_n^2$
\end{enumerate}
\end{proposition}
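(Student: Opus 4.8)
The plan is to prove the four statements by induction on $n$, feeding each into the next, following the template of \cite{CABANESMARIN}. Statement (1) is the base: it says $\mathcal{K}_{n+1}$ is spanned over $\mathcal{K}_n$ on the left and right by $1, s_n, s_n^2$. First I would recall that $\mathcal{K}_{n+1}$ is a quotient of $\mathcal{K}_n B_{n+1}/(\text{cubic})$, so it is generated as an algebra by $\mathcal{K}_n$ together with $s_n$, and since $s_n^3$ is a $\mathcal{K}_n$-linear combination of $1,s_n,s_n^2$ (the cubic relation), every word reduces to one in $\mathcal{K}_n s_n^{\eps_1} \mathcal{K}_n s_n^{\eps_2} \cdots$. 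So the real content is collapsing products with \emph{two or more} occurrences of $s_n$ (separated by elements of $\mathcal{K}_n$) down to at most one. The key reduction is the braid relation $s_n s_{n-1} s_n = s_{n-1} s_n s_{n-1}$ together with the cubic relation, which together let one push at most one $s_n$ through; I would phrase this exactly as in \cite{CABANESMARIN}, Prop.~3.x, noting that the only property of $\mathcal{K}_n$ used is that it contains $s_1,\dots,s_{n-1}$ and satisfies the cubic relation. This gives (1).

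For (2), I would refine (1) by showing $\mathcal{K}_n s_n \mathcal{K}_n + \mathcal{K}_n s_n^2 \mathcal{K}_n = \mathcal{K}_n s_n \mathcal{K}_n + \mathcal{K}_n s_n^2$, i.e.\ that no factor of $\mathcal{K}_n$ is needed on the right of $s_n^2$. Write an arbitrary element of $\mathcal{K}_n$ in terms of the generators $s_1,\dots,s_{n-1}$; those commuting with $s_n$ (namely $s_1,\dots,s_{n-2}$) can be absorbed on the left, so one only has to deal with $s_n^2 s_{n-1}^{\pm 1}$ and $s_n^2 s_{n-1}^2$. Using $s_n^2 s_{n-1} s_n = s_n s_{n-1} s_n s_{n-1}^{-1}\cdot s_{n-1}$-type manipulations (equivalently, the ternary relations of Lemma~\ref{lem:relsternary}, which hold in $\mathcal{K}_n$ for $n\le 4$ by Proposition~\ref{prop:KegalHtriple} and in general by the same computation), one rewrites $s_n^2 s_{n-1}^{\pm 1}$ and $s_n^2 s_{n-1}^2$ modulo $\mathcal{K}_n + \mathcal{K}_n s_n + \mathcal{K}_n s_n^2$. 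This is the step where the defining relation $\mathbf{b}=0$ of $\mathcal{K}_n$ is genuinely used — indeed the relations in the $\mathcal{K}_4$ part of Lemma~\ref{lem:relsternary}, e.g.\ $s_2^2 s_3^2 = \dots$ and $s_2^2 s_3 s_1 = s_2 s_1^2 s_3 - s_1^2 s_2 s_3 + s_1 s_2^2 s_3$, are precisely the $n=3$ shadow of what one needs. I expect this to be the main obstacle: one must organize the finitely many cases $s_n^2 w$ with $w$ a short word in $s_{n-1},s_{n-2}$ and check each collapses, and getting the bookkeeping of which monomials land in $\mathcal{K}_n$ versus $\mathcal{K}_n s_n$ versus $\mathcal{K}_n s_n^2$ right is the delicate part.

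For (3), I would prove the commutation-type statement $s_k^r s_1^t s_n^2 \equiv s_1^{r+t} s_n^2 \pmod{\mathcal{K}_n + \mathcal{K}_n s_n}$ for $k<n$, $r,t\in\{0,1,2\}$, by first reducing to $k=1$ (since $s_1^{r+t}$ only involves $s_1$, and for $k\ge 2$ we may conjugate, or simply note $s_k$ commutes with $s_n$ when $k\le n-2$ so that case is immediate up to absorbing into $\mathcal{K}_n$), and then for $k=1$ invoking the relations $s_1 s_n^2 \equiv s_n^2 s_1$-type identities modulo $\mathcal{K}_n + \mathcal{K}_n s_n$ — again exactly parallel to \cite{CABANESMARIN}. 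The point is that modulo the two-sided ``lower'' pieces, the generator $s_1$ becomes central against $s_n^2$, which is where the relation $\mathbf b=0$ enters once more. Finally, (4) is an immediate consequence of (2) and (3): by (2), $\mathcal{K}_{n+1}=\mathcal{K}_n + \mathcal{K}_n s_n\mathcal{K}_n + \mathcal{K}_n s_n^2$; expanding the last summand $\mathcal{K}_n s_n^2$ using that $\mathcal{K}_n$ is spanned by monomials $s_{i_1}^{r_1}\cdots$ and repeatedly applying (3) to move everything past $s_n^2$ to the $s_1$-side, one gets $\mathcal{K}_n s_n^2 \subseteq \mathcal{K}_2 s_n^2 + \mathcal{K}_n + \mathcal{K}_n s_n \subseteq \mathcal{K}_2 s_n^2 + \mathcal{K}_n s_n \mathcal{K}_n + \mathcal{K}_n$ (absorbing the stray $\mathcal{K}_n$ into $\mathcal{K}_n s_n\mathcal{K}_n+\mathcal{K}_n$), which is (4). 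The only genuinely new work beyond \cite{CABANESMARIN} is checking that the ternary relations used there, originally proved over $\mathbb{F}_4$, remain valid over the generic ring — but that is exactly the content of Lemma~\ref{lem:relsternary} and Proposition~\ref{prop:KegalHtriple}, so the arguments transport verbatim.
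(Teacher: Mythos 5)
Your overall architecture — prove (1), refine to (2), establish the commutation-type statement (3), and deduce (4) — is the paper's, and your deduction of (4) from (2) and (3) is fine. But your justification of (1) contains a genuine error: you claim the collapse of words with two or more occurrences of $s_n$ follows from the braid and cubic relations alone, ``the only property of $\mathcal{K}_n$ used'' being that it contains $s_1,\dots,s_{n-1}$ and satisfies the cubic relation. If that were so, the same spanning statement would hold in the generic cubic Hecke algebra $H_{n+1}$ itself, which is false already for $n=2$: writing $u_i = R + Rs_i + Rs_i^2$, one has $H_2 + H_2 s_2 H_2 + H_2 s_2^2 H_2 \subseteq u_1u_2u_1$, and $s_2s_1^{-1}s_2 \notin u_1u_2u_1$ by the basis of Proposition \ref{prop:H3libre} (and for general $n$ it would make $H_n$ of finite rank, contradicting Coxeter's theorem at the specialization $\{a,b,c\}=\mu_3$). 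The braid relation disposes of $s_ns_{n-1}s_n$ but not of $s_ns_{n-1}^2s_n$ (equivalently $s_ns_{n-1}^{-1}s_n$), and this is precisely where the defining relation $\mathbf{b}=0$ must already enter in part (1): the paper multiplies the (shifted) relation $\mathbf{b}$ by $s_2$ to rewrite $s_2s_1^2s_2$, concluding $s_ns_{n-1}^2s_n \in \mathcal{K}_n + \mathcal{K}_ns_n\mathcal{K}_n + \mathcal{K}_ns_n^2\mathcal{K}_n$, and only then runs the induction of \cite{CABANESMARIN}, Proposition 4.2. Deferring all use of $\mathbf{b}=0$ to step (2), as you do, leaves (1) unproved.

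There is a second, smaller gap in your treatment of (3): the case $k\leq n-2$ is not ``immediate up to absorbing into $\mathcal{K}_n$'' from the commutation $s_ks_n=s_ns_k$, nor does ``conjugating'' $k$ to $1$ respect the congruence modulo $\mathcal{K}_n+\mathcal{K}_ns_n$. The content of (3) is that left multiplication by $s_k^r$ on $s_1^ts_n^2$ agrees, modulo $\mathcal{K}_n+\mathcal{K}_ns_n$, with left multiplication by $s_1^r$ — a statement specific to $\mathcal{K}_{n+1}$, not a consequence of commutation, since $s_k^rs_1^ts_n^2$ already lies in $\mathcal{K}_ns_n^2$ and the issue is identifying its class. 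The paper gets this from the shifted identities of Lemma \ref{lem:relsternary} (valid in $\mathcal{K}_{n+1}$ by Proposition \ref{prop:KegalHtriple}), which give $s_{n-1}s_n^2 \equiv s_{n-2}s_n^2$ and $s_{n-1}^2s_n^2 \equiv s_{n-2}^2s_n^2$ modulo $\mathcal{K}_n+\mathcal{K}_ns_n$, and then applies the argument of \cite{CABANESMARIN}, Lemma 5.11. You correctly identify that these ternary identities, transported to the generic ring via Proposition \ref{prop:KegalHtriple}, are the new input compared with \cite{CABANESMARIN}; but they are needed in (3) for all $k<n$, and a cousin of them ($\mathbf{b}=0$ itself) is needed already in (1).
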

\begin{proof}
Inside $\mathcal{K}_3$, from $\mathbf{b}s_2 = 0$
we get $s_2 s_1^2 s_2 = s_2^2 s_1 - s_1 s_2^3 + s_1^2 s_2^2$. Using the cubic relation to expand $s_2^3$ one
gets $s_2 s_1^2 s_2 \in \mathcal{K}_2 + \mathcal{K}_2 s_2 \mathcal{K}_2 + \mathcal{K}_2 s_2^2 \mathcal{K}_2$
hence $s_n s_{n-1}^2 s_n \in \mathcal{K}_{n} + \mathcal{K}_{n}s_n \mathcal{K}_{n} + \mathcal{K}_{n} s_n^2 \mathcal{K}_{n}$. From this one gets by induction
(1) following the proof of \cite{CABANESMARIN}, proposition 4.2. Then (2) follows from (1) and $\mathbf{b} = 0$ with the same proof as in \cite{CABANESMARIN}, lemma 5.11. When $n=2$ (3) is trivial so we can assume $n \geq 3$. By proposition \ref{prop:KegalHtriple} we know that the identities of lemma \ref{lem:relsternary} are valid inside $\mathcal{K}_{n+1}$ hence in particular
$s_2 s_3^2 \equiv s_1 s_3$ and $s_2^2 s_3^2 \equiv s_1^2 s_3^2$
modulo $\mathcal{K}_3 + \mathcal{K}_3 s_3$. This implies
that $s_{n-1} s_n^2 \equiv s_{n-2} s_n$ and $s_{n-1}^2 s_n^2 \equiv s_{n-2}^2 s_n$
modulo $\mathcal{K}_n + \mathcal{K}_n s_n.$ From this the arguments for \cite{CABANESMARIN} lemma 5.11  can be applied directly and to prove (3) and then (4).
\end{proof}

For $0 \leq k \leq n$, we let $s_{n,k} = s_n s_{n-1} \dots s_{n-k+1}$
with the convention that $s_{n,0} = 1$ and $s_{n,1} = s_n$.
We let $\mathcal{K}_n^k = \mathcal{K}_n s_{n,k}$ (hence $\mathcal{K}_n^0 = \mathcal{K}_n$). Similarly,
we let $x_{n,k} = s_n s_{n-1} \dots s_{n-k+2} s_{n-k+1}^2$
for $1 \leq k \leq n$,
with the convention $x_{n,1} = s_n^2$.

\begin{lemma} \label{lem:s1Un} \ 
\begin{enumerate}
\item If $r \leq n-1$, $1 \leq k \leq n$ and $c \in \{ 0, 1 , 2 \}$, then $s_r s_1^c x_{n,k} \in
s_1^{c+1} x_{n,k} + \mathcal{K}_n^0 + \dots + \mathcal{K}_n^k$.
\item $\mathcal{K}_n x_{n,k} \subset \mathcal{K}_2x_{n,k} + \mathcal{K}_n^0 + \dots + \mathcal{K}_n^n$.
\end{enumerate}
\end{lemma}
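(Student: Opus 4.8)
The plan is to prove (1) by induction on $k$ and then to deduce (2) from it. Throughout I use freely the braid relations, the cubic relation --- which expresses each $s_i^{-1}$ as a $K$-combination of $1,s_i,s_i^2$, so that $\mathcal{K}_n$ is spanned by the images of positive braid words in $s_1,\dots,s_{n-1}$ and $\mathcal{K}_2$ by $1,s_1,s_1^2$ --- the identities of Lemma~\ref{lem:relsternary}, and the relation $s_m^p s_1^q s_N^2\equiv s_1^{p+q}s_N^2 \pmod{\mathcal{K}_N+\mathcal{K}_N s_N}$ (for $m<N$ and $p,q\in\{0,1,2\}$) proved in the preceding proposition. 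For $k=1$ statement (1) reads $s_r s_1^c s_n^2\in s_1^{c+1}s_n^2+\mathcal{K}_n+\mathcal{K}_n s_n$, which is that relation with $m=r<n$, $p=1$ and $q=c$. For the inductive step, write $x_{n,k}=s_{n,k-1}\,s_{n-k+1}^2$ with $s_{n,k-1}=s_n s_{n-1}\cdots s_{n-k+2}$, a product of generators of index $\ge n-k+2$. If $r\le n-k$ (so $k\le n-1$), then $s_r$ and $s_1$ commute with every factor of $s_{n,k-1}$, whence $s_r s_1^c x_{n,k}=s_{n,k-1}\bigl(s_r s_1^c s_{n-k+1}^2\bigr)$; applying the displayed relation at index $n-k+1$ (legitimate as $r<n-k+1$) and then re-multiplying on the left by $s_{n,k-1}$ --- using $s_{n,k-1}\mathcal{K}_{n-k+1}=\mathcal{K}_{n-k+1}s_{n,k-1}$, $s_{n,k-1}s_{n-k+1}=s_{n,k}$, and that $s_{n,k-1}$ commutes with $s_1^{c+1}$ --- yields $s_r s_1^c x_{n,k}\in s_1^{c+1}x_{n,k}+\mathcal{K}_n^{k-1}+\mathcal{K}_n^{k}$.

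The remaining range $n-k+1\le r\le n-1$ --- together with the exceptional case $k=n$, in which $s_1$ itself occurs inside $x_{n,n}$ --- is where the defining relation $\mathbf{b}=0$ of $\mathcal{K}_n$ must genuinely be used. For $r=n-1$ one first pushes $s_{n-1}$ through $x_{n,k}$ with the braid relation $s_{n-1}s_n s_{n-1}=s_n s_{n-1}s_n$, obtaining $s_{n-1}x_{n,k}=x_{n,k}s_n$ and hence $s_{n-1}s_1^c x_{n,k}=s_1^c x_{n,k}s_n$; one then rewrites $x_{n,k}s_n$ using the identities of Lemma~\ref{lem:relsternary} (transported to the indices near $n$) and the inductive hypothesis for $x_{n',k'}$ with $k'<k$, checking that every term produced is either the single summand $s_1^{c+1}x_{n,k}$ or lies in one of $\mathcal{K}_n^0,\dots,\mathcal{K}_n^k$. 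An index $r$ with $n-k+1\le r\le n-2$ is first commuted towards the inside of $x_{n,k}$ by braid moves, which reduces it either to a case already settled or to the interaction of $s_r$ with a single squared generator, again controlled by Lemma~\ref{lem:relsternary} and the induction; the case $k=n$ demands the analogous but heavier treatment, since $s_1^c$ can no longer simply be pulled past $x_{n,n}$. I expect this boundary analysis to be the main obstacle: the delicate point is the bookkeeping that the numerous lower-order terms --- among them spurious multiples of $x$'s with a smaller first index --- all collapse into $\mathcal{K}_n^0+\dots+\mathcal{K}_n^k$, with every unwanted contribution cancelling.

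Granting (1), statement (2) follows by induction on the length of a positive word $w$ in $s_1,\dots,s_{n-1}$: I claim $w\,x_{n,k}\in\mathcal{K}_2 x_{n,k}+\mathcal{K}_n^0+\dots+\mathcal{K}_n^n$. The empty word gives $x_{n,k}\in\mathcal{K}_2 x_{n,k}$. If $w=s_r w'$ with $r\le n-1$, then by induction $w'x_{n,k}\in\mathcal{K}_2 x_{n,k}+\sum_{j=0}^n\mathcal{K}_n^j$; multiplying by $s_r$ and using $s_r\mathcal{K}_n^j\subseteq\mathcal{K}_n^j$ reduces matters to $s_r\mathcal{K}_2 x_{n,k}\subseteq\mathcal{K}_2 x_{n,k}+\sum_{j=0}^n\mathcal{K}_n^j$. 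As $\mathcal{K}_2$ is spanned by $1,s_1,s_1^2$, this is immediate from (1): each $s_r s_1^c x_{n,k}$ with $c\in\{0,1,2\}$ lies in $s_1^{c+1}x_{n,k}+\sum_{j=0}^k\mathcal{K}_n^j$, and $s_1^{c+1}x_{n,k}\in\mathcal{K}_2 x_{n,k}$ because the cubic relation rewrites $s_1^3$ (the only value of $s_1^{c+1}$ outside $\{1,s_1,s_1^2\}$) inside the span of $1,s_1,s_1^2$. This closes the induction and proves (2).
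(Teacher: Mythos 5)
The parts you actually carry out are fine: for $r\le n-k$ your argument is correct (it is item (3) of the preceding proposition transported to index $n-k+1$ and multiplied on the left by $s_{n,k-1}$, which commutes with $s_1$ and with $\mathcal{K}_{n-k+1}$), and the deduction of (2) from (1) by induction on positive words, using $s_r\mathcal{K}_n^j\subseteq\mathcal{K}_n^j$ and the cubic relation to absorb $s_1^{c+1}$ into $\mathcal{K}_2$, is correct. But the lemma is not proved, and you say so yourself: for every $k\ge 2$ the whole range $n-k+1\le r\le n-1$, and the entire case $k=n$, are handled only by a plan (``one then rewrites \dots checking that every term produced is \dots'') followed by the admission that the bookkeeping is ``the main obstacle''. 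Those cases are the content of the lemma: they are exactly where $\mathbf{b}=0$ and the identities of Lemma \ref{lem:relsternary} must do real work, and the paper's own proof consists precisely in invoking the explicit induction of lemma 5.12 of \cite{CABANESMARIN}, which carries out that bookkeeping. Announcing that the unwanted terms should cancel is not a proof that they do.

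Moreover the sketch does not obviously close as written. Pulling $s_1^c$ past $s_{n-1}$ requires $n\ge4$, so small $n$ and the low values of $r$ occurring in the hard range (e.g.\ when $k=n$ or $k=n-1$) need separate treatment. For $n-k+2\le r\le n-1$ one indeed has $s_r x_{n,k}=x_{n,k}s_{r+1}$, so after commuting you must show $s_1^c x_{n,k}s_{r+1}\in s_1^{c+1}x_{n,k}+\mathcal{K}_n^0+\dots+\mathcal{K}_n^k$; this is a statement about \emph{right} multiplication of $x_{n,k}$ by a high-index generator, to which your ``inductive hypothesis for $x_{n',k'}$ with $k'<k$'' (a statement about left multiplication) does not apply directly, and your outline does not explain where the extra factor of $s_1$ is produced. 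Already the smallest hard instance, $n=3$, $k=2$, $r=2$, $c=0$, i.e.\ $s_2s_3s_2^2\in s_1s_3s_2^2+\mathcal{K}_3+\mathcal{K}_3s_3+\mathcal{K}_3s_3s_2$, illustrates the issue: expanding via the shifted identity $s_3s_2^2s_3=-\Sigma_3s_2+\Sigma_2s_2s_3+s_2s_3s_2^2-\Sigma_1s_2s_3^2+s_2^2s_3^2$ and the four-strand identities of Lemma \ref{lem:relsternary} produces intermediate contributions such as $\Sigma_1 s_1s_3^2-s_1^2s_3^2$, which lie along the forbidden direction $\mathcal{K}_2x_{3,1}$ of the decomposition and must be shown to cancel, while nothing may survive along $\mathcal{K}_3^3=\mathcal{K}_3s_3s_2s_1$ either (the statement allows only $j\le k$). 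Nothing in your proposal certifies these cancellations, and the case $k=n$, where $s_1$ itself sits inside $x_{n,n}$, is not touched at all. Until these cases are carried out explicitly (or reduced, as the paper does, to \cite{CABANESMARIN}, lemma 5.12), the argument remains an outline rather than a proof.
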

\begin{proof}
The proof of lemma 5.12 in \cite{CABANESMARIN} can be applied directly, as it only uses the previous result as well as $\mathbf{b}=0$.

\end{proof}

Finally, all these partial results imply the following one.

\begin{proposition}
Let $n \geq 2$. Then $\dim \mathcal{K}_n = 3(n!-1)$ and
$$
\mathcal{K}_{n+1} = \mathcal{K}_n \oplus \mathcal{K}_n^1 \oplus \dots \oplus \mathcal{K}_n^n \oplus \mathcal{K}_2 x_{n,1} \oplus \dots \oplus \mathcal{K}_2 x_{n,n}
$$
\end{proposition}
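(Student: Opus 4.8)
The plan is to prove both assertions simultaneously by induction on $n$, mirroring the structure of the proof of the corresponding statement in \cite{CABANESMARIN} (Theorem 5.2 there) and using the dimension bound $3(n!-1)$ as the arithmetic anchor. The base case $n=2$ is immediate: $\mathcal{K}_2 = H_2(a,b,c)$ is $3$-dimensional with basis $1,s_1,s_1^2$ (or $1,s_1,s_1^{-1}$), and $3(2!-1)=3$, while the right-hand side of the decomposition reduces to $\mathcal{K}_2 \oplus \mathcal{K}_2^1$... but for $n=2$ one should rather read the formula as giving $\mathcal{K}_3$, so the genuine base is the computation $\dim\mathcal{K}_3 = 15 = 3(3!-1)$, which is already established in Proposition~\ref{prop:KegalHtriple}. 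Assume now $\dim\mathcal{K}_n = 3(n!-1)$.

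First I would establish the \emph{spanning} half, i.e. that $\mathcal{K}_{n+1}$ is the sum (not yet known to be direct) of the $n+1$ spaces $\mathcal{K}_n^k$ ($0\le k\le n$) and the $n$ spaces $\mathcal{K}_2 x_{n,k}$ ($1\le k\le n$). Starting from part (4) of the previous proposition, $\mathcal{K}_{n+1} = \mathcal{K}_n + \mathcal{K}_n s_n \mathcal{K}_n + \mathcal{K}_2 s_n^2$, I would rewrite $\mathcal{K}_n s_n \mathcal{K}_n$. Using the braid/Hecke normal-form reductions inside $\mathcal{K}_n$ together with Lemma~\ref{lem:s1Un}(1), the products $\mathcal{K}_n s_n \mathcal{K}_n$ collapse into $\mathcal{K}_n^0 + \mathcal{K}_n^1 + \dots + \mathcal{K}_n^n$ exactly as in \cite{CABANESMARIN}: one pushes the second $\mathcal{K}_n$-factor across $s_n$ using that $s_n$ commutes with $\mathcal{K}_{n-1}$, that $s_n s_{n-1} s_n = s_{n-1} s_n s_{n-1}$, and that higher powers of $s_n$ are eliminated by the cubic relation, picking up terms in $\mathcal{K}_n^k$ at each step. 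Similarly $\mathcal{K}_2 s_n^2 = \mathcal{K}_2 x_{n,1}$, and Lemma~\ref{lem:s1Un}(2) shows that conjugating/multiplying the $x_{n,k}$ by $\mathcal{K}_n$ stays inside $\mathcal{K}_2 x_{n,1} + \dots + \mathcal{K}_2 x_{n,n} + \mathcal{K}_n^0 + \dots + \mathcal{K}_n^n$. This gives the spanning statement.

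Next I would count. Each $\mathcal{K}_n^k = \mathcal{K}_n s_{n,k}$ is a homomorphic image of $\mathcal{K}_n$ (right multiplication is surjective onto it), hence $\dim \mathcal{K}_n^k \le 3(n!-1)$, and in fact I expect $\dim\mathcal{K}_n^k = 3(n!-1)$ since $s_{n,k}$ is invertible so right multiplication by it is a bijection of $\mathcal{K}_{n+1}$ restricting to an injection on the copy of $\mathcal{K}_n$. Each $\mathcal{K}_2 x_{n,k}$ has dimension at most $3$. Summing: $(n+1)\cdot 3(n!-1) + n\cdot 3 = 3\bigl((n+1)! - (n+1) + n\bigr) = 3\bigl((n+1)! - 1\bigr)$, which is exactly the target dimension. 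Therefore the spanning set has total dimension $\le 3((n+1)!-1)$, so $\dim\mathcal{K}_{n+1} \le 3((n+1)!-1)$; combined with the lower bound $\dim\mathcal{K}_{n+1} \ge \dim\mathcal{H}_{n+1} = 3((n+1)!-1)$ (the surjection $\mathcal{K}_{n+1}\onto\mathcal{H}_{n+1}$ of the proposition-framework plus the known dimension of $\mathcal{H}_{n+1}$ from the earlier proposition), we get equality $\dim\mathcal{K}_{n+1} = 3((n+1)!-1)$. Equality of dimensions forces the spanning sum to be direct and each summand to have its maximal dimension, giving the displayed direct-sum decomposition.

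The main obstacle is the lower bound $\dim\mathcal{K}_{n+1}\ge 3((n+1)!-1)$: a priori this requires knowing $\dim\mathcal{H}_{n+1}$, which the cited proposition only asserts when $R$ is a field and, in the surrounding text, is genuinely controlled (via the injection into $\bigoplus H_{n+1}(x,y)$ and semisimplicity) only for small $n$ or under the Broué–Malle–Michel trace conjecture. I would therefore phrase the lower bound by invoking Proposition (the one on $\mathcal{H}_n\into H_n(a,b)\oplus H_n(a,c)\oplus H_n(b,c)$), which gives $\dim\mathcal{H}_n = 3(n!-1)$ for all $n$ once $R$ is a field with $(a-b)(a-c)(b-c)\in R^\times$ — and the surjection $\mathcal{K}_{n+1}\onto\mathcal{H}_{n+1}$ being available unconditionally from the definition $\mathbf b\mapsto 0$ in $\mathcal{H}$, the lower bound does \emph{not} need semisimplicity of $H_{n+1}$, only of $H_3$ and $H_4$ as used in Proposition~\ref{prop:KegalHtriple} to start the induction. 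The secondary delicate point is checking that the reductions in the spanning step genuinely terminate in the claimed spaces without generating extra $\mathcal{K}_n^k$-type terms with $k>n$ — this is handled exactly as in \cite{CABANESMARIN}, Lemmas 5.11–5.12, whose proofs, as noted, use only $\mathbf b=0$ and the relations of Lemma~\ref{lem:relsternary}, all valid in $\mathcal{K}_{n+1}$.
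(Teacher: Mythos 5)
Your proposal is correct and follows essentially the same route as the proof the paper invokes (that of \cite{CABANESMARIN}, Proposition 5.13): induction on $n$, the spanning of $\mathcal{K}_{n+1}$ by the $\mathcal{K}_n s_{n,k}$ and $\mathcal{K}_2 x_{n,k}$ coming from the preceding proposition and Lemma \ref{lem:s1Un}, the resulting upper bound $(n+1)\cdot 3(n!-1)+3n=3\bigl((n+1)!-1\bigr)$, and the lower bound from the surjection $\mathcal{K}_{n+1}\onto\mathcal{H}_{n+1}$ with $\dim\mathcal{H}_{n+1}=3\bigl((n+1)!-1\bigr)$, so that equality of dimensions forces the sum to be direct. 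Your final observation is also the correct reading: the lower bound needs only the unconditional computation of $\dim\mathcal{H}_{n+1}$ over a field with $(a-b)(a-c)(b-c)$ invertible, not semisimplicity of $H_{n+1}$.
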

\begin{proof} The proof is the same as the one of proposition 5.13 in \cite{CABANESMARIN}.
\end{proof}
\begin{corollary} \label{cor:surjternary} For generic $a,b,c$, and all $n \geq 3$, the natural morphism $\mathcal{K}_n \onto \mathcal{H}_n$ is an isomorphism. 
\end{corollary}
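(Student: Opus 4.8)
The plan is to deduce the corollary from a plain dimension count, all the real work being contained in the propositions that precede it. First I would make sure the natural morphism $\mathcal{K}_n \to \mathcal{H}_n$ — the one already used in Proposition~\ref{prop:KegalHtriple} — is available for every $n$: it is induced by the tautological surjection $H_n(a,b,c) \onto \mathcal{H}_n$ (legitimate since $(s_1-a)(s_1-b)(s_1-c)$ belongs to each of $J_n(a,b)$, $J_n(a,c)$, $J_n(b,c)$, hence to their intersection), and it kills $\mathbf{b} = [s_2^2,s_1] - [s_2,s_1^2]$ because $\mathcal{H}_n$ embeds into $H_n(a,b) \oplus H_n(a,c) \oplus H_n(b,c)$, in each summand of which $\mathbf{b}$ vanishes by Lemma~\ref{lem:relsternary}(2). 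Thus one has a surjective algebra homomorphism $\mathcal{K}_n \onto \mathcal{H}_n$.

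Then I would simply put the two dimension formulas side by side. On the one hand, the preceding proposition gives $\dim_K \mathcal{K}_n = 3(n!-1)$ for all $n \geq 2$. On the other hand, since $K$ is a field and, by the genericity hypothesis, $a,b,c$ and $(a-b)(a-c)(b-c)$ lie in $K^{\times}$, the proposition computing $\mathcal{H}_n$ applies and gives $\dim_K \mathcal{H}_n = 3(n!-1)$ as well. A surjective $K$-linear map between two $K$-vector spaces of the same finite dimension is bijective; being an algebra homomorphism, it is then an algebra isomorphism. This finishes the proof; note that for $n \leq 4$ the assertion is in any case already Proposition~\ref{prop:KegalHtriple}, so only $n \geq 5$ is genuinely new.

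I should stress where the difficulty really sits, and hence where genericity is used: not in the corollary itself, but in the two inputs, which are in fact interlocked. The equality $\dim \mathcal{K}_n = 3(n!-1)$ is obtained by combining the inductive upper bound $\dim \mathcal{K}_{n+1} \leq (n+1)\dim \mathcal{K}_n + n \dim \mathcal{K}_2$ coming from the spanning results of the previous propositions — which in turn rest on the relations of Lemma~\ref{lem:relsternary} holding inside $\mathcal{K}_3$ and $\mathcal{K}_4$, guaranteed by Proposition~\ref{prop:KegalHtriple} via the split semisimplicity of $H_3$ and $H_4$ — with the matching lower bound $\dim \mathcal{K}_n \geq \dim \mathcal{H}_n = 3(n!-1)$ furnished by the very surjection under study. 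So the main obstacle, if one traces the argument back, is the linear-independence half of the preceding proposition (namely that the decomposition $\mathcal{K}_{n+1} = \mathcal{K}_n \oplus \mathcal{K}_n^1 \oplus \dots \oplus \mathcal{K}_2 x_{n,n}$ is a genuine direct sum), and it is precisely the lower bound coming from $\mathcal{H}_n$ that closes that gap; once both facts are in place the corollary is just the remark that a surjection with full-dimensional target has no kernel.
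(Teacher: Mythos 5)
Your proposal is correct and matches the paper's (implicit) argument: the corollary is exactly the dimension count $\dim\mathcal{K}_n = 3(n!-1) = \dim\mathcal{H}_n$ from the two preceding propositions, combined with the surjectivity of the natural map $\mathcal{K}_n \onto \mathcal{H}_n$, which kills the kernel. Your remark about how the lower bound from $\mathcal{H}_n$ and the spanning upper bound interlock in the proof of the preceding proposition is also an accurate reading of the structure of the argument.
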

\begin{corollary} Let $x_{n,k}' =   s_n s_{n-1} \dots s_{n-k+2} s_{n-k+1}^{-1}$. Then
$$
\mathcal{K}_{n+1} = \mathcal{K}_n \oplus \mathcal{K}_n^1 \oplus \dots \oplus \mathcal{K}_n^n \oplus \mathcal{K}_2 x'_{n,1} \oplus \dots \oplus \mathcal{K}_2 x'_{n,n}
$$ 
\end{corollary}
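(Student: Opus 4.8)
The plan is to derive the statement from the preceding proposition by a change of the ``extra'' generators $x_{n,k}$, using only the cubic relation and the fact that $abc\in K^{\times}$. In $\mathcal{K}_{n+1}$ every $s_i$ is invertible, and multiplying the relation $s_i^3 - \Sigma_1 s_i^2 + \Sigma_2 s_i - \Sigma_3 = 0$ by $s_i^{-1}$ gives
$$
s_i^{-1} = \Sigma_3^{-1}\left(s_i^2 - \Sigma_1 s_i + \Sigma_2\right).
$$
Taking $i = n-k+1$ and left-multiplying by $s_n s_{n-1}\cdots s_{n-k+2}$ yields, with the notation $s_{n,k} = s_n s_{n-1}\cdots s_{n-k+1}$ (and $s_{n,0}=1$),
$$
x'_{n,k} = \Sigma_3^{-1}\left(x_{n,k} - \Sigma_1\, s_{n,k} + \Sigma_2\, s_{n,k-1}\right), \qquad x_{n,k} = \Sigma_3\, x'_{n,k} + \Sigma_1\, s_{n,k} - \Sigma_2\, s_{n,k-1}.
$$
Since $s_{n,k} \in \mathcal{K}_n^k$ and $s_{n,k-1} \in \mathcal{K}_n^{k-1}$ (with $\mathcal{K}_n^0 = \mathcal{K}_n$), each of $x'_{n,k}$ and $x_{n,k}$ is obtained from the other by adding an element of $\mathcal{K}_n^{k-1} + \mathcal{K}_n^k$, up to the invertible scalar $\Sigma_3$.

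Next I would left-multiply these two identities by an arbitrary $z \in \mathcal{K}_2$; using $\mathcal{K}_2 \subseteq \mathcal{K}_n$ and $\mathcal{K}_n^j = \mathcal{K}_n s_{n,j}$ one gets $\mathcal{K}_2 s_{n,k} \subseteq \mathcal{K}_n^k$ and $\mathcal{K}_2 s_{n,k-1}\subseteq \mathcal{K}_n^{k-1}$, hence
$$
\mathcal{K}_2 x'_{n,k} \subseteq \mathcal{K}_2 x_{n,k} + \mathcal{K}_n^{k-1} + \mathcal{K}_n^k, \qquad \mathcal{K}_2 x_{n,k} \subseteq \mathcal{K}_2 x'_{n,k} + \mathcal{K}_n^{k-1} + \mathcal{K}_n^k.
$$
Summing over $1 \le k \le n$ and adding $\mathcal{K}_n + \mathcal{K}_n^1 + \dots + \mathcal{K}_n^n$ to both sides, these inclusions show that the subspace $\mathcal{K}_n + \sum_{j=1}^n \mathcal{K}_n^j + \sum_{k=1}^n \mathcal{K}_2 x'_{n,k}$ equals $\mathcal{K}_n + \sum_{j=1}^n \mathcal{K}_n^j + \sum_{k=1}^n \mathcal{K}_2 x_{n,k}$, which by the preceding proposition is all of $\mathcal{K}_{n+1}$.

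It then only remains to upgrade this span to a direct sum, which I would do by counting dimensions. Right multiplication by the unit $s_{n,j}$ is injective on $\mathcal{K}_n$, so $\dim \mathcal{K}_n^j = \dim \mathcal{K}_n$; and $x'_{n,k} = s_n\cdots s_{n-k+2}\, s_{n-k+1}^{-1}$ is a product of invertible generators, hence a unit of $\mathcal{K}_{n+1}$, so right multiplication by it is injective on $\mathcal{K}_2$ and $\dim \mathcal{K}_2 x'_{n,k} = \dim \mathcal{K}_2$, exactly as for $x_{n,k}$. Therefore the sum of the dimensions of the claimed summands is $\dim \mathcal{K}_n + n\dim\mathcal{K}_n + n\dim\mathcal{K}_2 = \dim \mathcal{K}_{n+1}$ by the proposition, and since the sum already fills $\mathcal{K}_{n+1}$ it must be direct. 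I do not foresee a genuine obstacle here: once $s_i^{-1}$ is written in terms of $s_i^2,s_i,1$ everything is forced, the only slightly delicate point being to notice that the correction terms land in the $\mathcal{K}_n^{j}$-part of the decomposition, so that no new directions appear, and that $x'_{n,k}$, being a unit like $x_{n,k}$, contributes a summand of the same dimension.
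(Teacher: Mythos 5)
Your argument is correct and follows essentially the same route as the paper: the key identity $\Sigma_3 s_{n-k+1}^{-1}=s_{n-k+1}^2-\Sigma_1 s_{n-k+1}+\Sigma_2$ shows $x_{n,k}\in\Sigma_3 x'_{n,k}+\mathcal{K}_n^{k-1}+\mathcal{K}_n^{k}$, and since $\Sigma_3=abc$ is invertible the decomposition with the $x'_{n,k}$ follows. Your closing dimension count merely makes explicit the directness that the paper leaves implicit in this invertible change of generators.
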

\begin{proof}
From $s_i^3 - \Sigma_1 s_i^2 + \Sigma_2 s_i - \Sigma_3 = 0$ one gets
$s_i^2 - \Sigma_1 s_i + \Sigma_2  = \Sigma_3 s_i^{-1}$
hence
$x_{n,k} \in \Sigma_3 x'_{n,k} + \mathcal{K}_n \oplus \mathcal{K}_n^1 \oplus \dots \oplus \mathcal{K}_n^n$.
Since $\Sigma_3=abc$ is invertible this proves the claim.
\end{proof}

Consider the shift morphism $\mathcal{K}_n \to \mathcal{K}_{n+1}$ given by $s_i \mapsto s_{i+1}$.
We define a basis of $\mathcal{K}_n$ inductively
by choosing $\mathcal{B}_1=\{ 1\}$ as a basis of $\mathcal{K}_1$,
$\mathcal{B}_2 = \{ 1, s_1, s_1^{-1}\}$ as a basis of $\mathcal{K}_2$,
and 
$$
\mathcal{B}_{n+1} =\left( \bigcup_{k=0}^n \mathcal{B}_n s_{n,k} \right) \cup \left( \bigcup_{k=1}^n \mathcal{B}_2 x'_{n,k} \right) 
$$

\subsection{Vogel's algebra : definition}

\subsubsection{Trivalent diagrams}

We recall here basic material from the theory
of Vassiliev invariants of knots and links. We let $\mathbf{D}$ denote the category whose objects are the
$[n]=\{ 1,\dots,n\}$ for $n \in \N = \Z_{\geq 0}$
with the convention $[0]=\emptyset$, and $\Hom_{\mathbf{D}}([p],[q])$ is made of the linear combinations of the set of couples
$(\Gamma,\alpha)$ where $\Gamma$ is a graph with vertices of degrees either $1$ or $3$, together
with a cyclic ordering $\alpha$ of the neighbours of any vertex of degree $3$, such that the set of vertices
of degree $1$ is $[p]\sqcup [q]$, modulo the relations that
\begin{itemize}
\item changing $\alpha$ to $\alpha'$ where the cyclic ordering of one single vertex has been changed yields
$(\Gamma,\alpha') \equiv -(\Gamma,\alpha)$
\item the following local `IHX' relation, where the orientation of the plane determines the chosen cyclic orderings
\begin{center}
\begin{tikzpicture}
\begin{scope}
\draw[dotted] (0,0) circle (1);
\draw (0.7071067812,0.7071067812) --(-0.7071067812,0.7071067812);
\draw (0.7071067812,-0.7071067812) -- (-0.7071067812,-0.7071067812);
\draw (0,0.7071067812) -- (0,-0.7071067812);
\draw (1.5,0) node {$=$};
\end{scope}
\begin{scope}[shift={(3,0)}]
\draw[dotted] (0,0) circle (1);
\draw (0.7071067812,0.7071067812) --(0.7071067812,-0.7071067812);
\draw (-0.7071067812,0.7071067812) -- (-0.7071067812,-0.7071067812);
\draw (0.7071067812,0) -- (-0.7071067812,0);
\draw (1.5,0) node {$-$};
\end{scope}
\begin{scope}[shift={(6,0)}]
\draw[dotted] (0,0) circle (1);
\draw (-0.7071067812,0.7071067812) --(0.7071067812,-0.7071067812);
\draw (0.7071067812,0.7071067812) -- (0.1,0.1);
\draw (-0.1,-0.1) -- (-0.7071067812,-0.7071067812);
\draw (0.5,-0.5) -- (-0.5,-0.5);
\end{scope}
\end{tikzpicture}
\end{center}
\end{itemize}

If $\g$ is a `quadratic' Lie algebra in the sense of \cite{VOGELALGSTR}, for example a semisimple Lie algebra endowed with its Killing form, there is a well-defined functor $\Phi_{\g} : \mathbf{D} \to \g{-}\mathrm{mod}$,
where $\g$-mod is the category of (finite-dimensional) $\g$-modules, such that $\Phi_{\g}([n]) = \g^{\otimes n}$ is the $n$-th tensor power of the adjoint representation of $\g$. Moreover, in case it is a simple Lie algebra endowed with its Killing form,
\begin{center}
\begin{tikzpicture}
\draw (-2,0) node {$\Phi_{\g} : $};
\draw (0.7071067812,0.7071067812) --(-0.7071067812,0.7071067812);
\draw (0.7071067812,-0.7071067812) -- (-0.7071067812,-0.7071067812);
\draw (0,0.7071067812) -- (0,-0.7071067812);
\draw (0.7071067812,0.7071067812) node {$\bullet$};
\draw (0.7071067812,-0.7071067812) node {$\bullet$};
\draw (-0.7071067812,0.7071067812) node {$\bullet$};
\draw (-0.7071067812,-0.7071067812) node {$\bullet$};
\draw (1.5,0) node[right] {$\mapsto \ \ \ \psi_{\g}=\sum_{i=1}^n \eps_i \otimes \eps^i \in \End(\g)^{\otimes 2} =\End(\g^{\otimes 2})$};
\end{tikzpicture}
\end{center}
where $e_1,\dots,e_n$ is a fixed arbitrary basis of $\g$,
$e^1,\dots,e^n$ is its dual basis with respect to the
Killing form, and $\eps_i$, $\eps^i$ denote the
images of $e_i, e^i \in \g \subset \mathsf{U}\g$
under the natural map $\mathsf{U}\g \to \End(\g)$.
Note that  
$$
2 \sum_{i=1}^n e_i \otimes e^i = \Delta(C) - C \otimes 1 - 1 \otimes C
$$
where $C = \sum_{i=1}^n e_ie^i \in Z(\mathsf{U}\g)$ is
the Casimir operator. Therefore $\psi_{\g}$ acts
by a scalar on any simple component of $\g^{\otimes 2}$.

The action of $\psi_{\g}$ on $\Lambda^2 \g$ has 2 eigenvalues, $0$ and $t \neq 0$. Moreover, $K= \Ker (\psi_{\g})_{\Lambda^2 \g}) = \Ker([\ , \ ] : \g^{\otimes 2} \to \g)$
and $\Lambda^2\g /K \simeq \g$.
For most Cartan types,
$\psi_{\g}$ acts on $S^2\g \subset \g \otimes \g$ with 4 eigenvalues $2t, \alpha,\beta,\gamma$.

\subsubsection{The case of exceptional Lie algebras}

Assume now that $\g$ is a (complex) simple Lie algebra
of exceptional Cartan type $E_6$,$E_7$, $E_8$, $F_4$ or $G_2$. Then $\psi_{\g}$ acts on $S^2\g \subset \g \otimes \g$ with only 3 eigenvalues $2t, \alpha,\beta$
with the relation $\alpha + \beta =t/3$. With the
purpose of uniformizing the results in the spirit of
\cite{VOGELLIEUNIV}, another parameter $\gamma = 2t/3$ is introduced, so that $\alpha +\beta+\gamma = t$.
The prefered choice of ordering for $\alpha,\beta$ is as 
as follows (see \cite{VOGELLIEUNIV}).
$$
\begin{array}{|c||c|c|c|}
\hline
 & \alpha & \beta & \gamma \\
\hline
E_6 & 3 & -1 & 4 \\
\hline
E_7 & 4 & -1 & 6 \\
\hline
E_8 & 6 & -1 & 10 \\
\hline
F_4& 5 & -2 & 6 \\
\hline
G_2 & 5 & -3 & 4 \\
\hline
\end{array}
$$

Assume that such a Lie algebra $\g$ is fixed, together
with the corresponding choice of $\alpha,\beta$.
Then the functor $\Phi_{\g}$ factors through
the category $\mathbf{D}^{exc}$, quotient of $\mathbf{D}$ by the local relation
\begin{center}
\begin{tikzpicture}
\begin{scope}[scale=.5]
\draw[dotted] (0,0) circle (2);
\draw (0,0) circle (1);
\draw (0.7071067812,0.7071067812) --(2*0.7071067812,2*0.7071067812);
\draw (0.7071067812,-0.7071067812) --(2*0.7071067812,-2*0.7071067812);
\draw (-0.7071067812,0.7071067812) --(-2*0.7071067812,2*0.7071067812);
\draw (-0.7071067812,-0.7071067812) --(-2*0.7071067812,-2*0.7071067812);
\draw (3,0) node[right] {$=(\alpha+\beta) \left( \ \right. $};
\end{scope}
\begin{scope}[scale=.5,shift={(10,0)}]
\draw[dotted] (0,0) circle (2);
\draw (-2*0.7071067812,2*0.7071067812) --(2*0.7071067812,2*0.7071067812);
\draw (-2*0.7071067812,-2*0.7071067812) --(2*0.7071067812,-2*0.7071067812);
\draw (0,2*0.7071067812) --(0,-2*0.7071067812);
\draw (2.5,0) node[right] {$+ $};
\end{scope}
\begin{scope}[scale=.5,shift={(17,0)}]
\draw[dotted] (0,0) circle (2);
\draw (-2*0.7071067812,2*0.7071067812) --(-2*0.7071067812,-2*0.7071067812);
\draw (2*0.7071067812,2*0.7071067812) --(2*0.7071067812,-2*0.7071067812);
\draw (2*0.7071067812,0) --(-2*0.7071067812,0);
\draw (2.5,0) node[right] {$\left. \right)$};
\end{scope}
\begin{scope}[scale=.5,shift={(2,-5)}]
\draw (-4,0) node {$-\frac{\alpha\beta}{2} \left( \right.$};
\draw[dotted] (0,0) circle (2);
\draw (-2*0.7071067812,2*0.7071067812) --(-2*0.7071067812,-2*0.7071067812);
\draw (2*0.7071067812,2*0.7071067812) --(2*0.7071067812,-2*0.7071067812);
\draw (2.5,0) node[right] {$+$};
\end{scope}
\begin{scope}[scale=.5,shift={(9,-5)}]
\draw[dotted] (0,0) circle (2);
\draw (-2*0.7071067812,2*0.7071067812) --(2*0.7071067812,2*0.7071067812);
\draw (-2*0.7071067812,-2*0.7071067812) --(2*0.7071067812,-2*0.7071067812);
\draw (2.5,0) node[right] {$+$};
\end{scope}
\begin{scope}[scale=.5,shift={(16,-5)}]
\draw[dotted] (0,0) circle (2);
\draw (-2*0.7071067812,2*0.7071067812) --(2*0.7071067812,-2*0.7071067812);
\draw (2*0.7071067812,2*0.7071067812) -- (0.2,0.2);
\draw (-0.2,-0.2) -- (-2*0.7071067812,-2*0.7071067812);
\draw (2.5,0) node[right] {$\left. \right)$};
\end{scope}

\end{tikzpicture}
\end{center}

Let us now consider the algebra
$$
D^{exc}(n) = \frac{\mathbf{D}^{exc}([n],[n])}{\sum_{k<n} \mathbf{D}^{exc}([n],[k])\circ
\mathbf{D}^{exc}([k],[n])}
$$
Inside this quotient, the following local relation obviously holds.

\begin{center}
\begin{tikzpicture}
\begin{scope}[scale=.5]
\draw[dotted] (0,0) circle (2);
\draw (0,0) circle (1);
\draw (0.7071067812,0.7071067812) --(2*0.7071067812,2*0.7071067812);
\draw (0.7071067812,-0.7071067812) --(2*0.7071067812,-2*0.7071067812);
\draw (-0.7071067812,0.7071067812) --(-2*0.7071067812,2*0.7071067812);
\draw (-0.7071067812,-0.7071067812) --(-2*0.7071067812,-2*0.7071067812);
\draw (3,0) node[right] {$=(\alpha+\beta)$};
\end{scope}
\begin{scope}[scale=.5,shift={(9,0)}]
\draw[dotted] (0,0) circle (2);
\draw (-2*0.7071067812,2*0.7071067812) --(2*0.7071067812,2*0.7071067812);
\draw (-2*0.7071067812,-2*0.7071067812) --(2*0.7071067812,-2*0.7071067812);
\draw (0,2*0.7071067812) --(0,-2*0.7071067812);
\end{scope}
\begin{scope}[scale=.5,shift={(16,0)}]
\draw (-3.5,0) node {$-\frac{\alpha\beta}{2} \left( \right.$};
\draw[dotted] (0,0) circle (2);
\draw (-2*0.7071067812,2*0.7071067812) --(-2*0.7071067812,-2*0.7071067812);
\draw (2*0.7071067812,2*0.7071067812) --(2*0.7071067812,-2*0.7071067812);
\draw (2.5,0) node[right] {$+$};
\end{scope}
\begin{scope}[scale=.5,shift={(22,0)}]
\draw[dotted] (0,0) circle (2);
\draw (-2*0.7071067812,2*0.7071067812) --(2*0.7071067812,-2*0.7071067812);
\draw (2*0.7071067812,2*0.7071067812) -- (0.2,0.2);
\draw (-0.2,-0.2) -- (-2*0.7071067812,-2*0.7071067812);
\draw (2.5,0) node[right] {$\left. \right)$};
\end{scope}
\end{tikzpicture}
\end{center}

\subsubsection{Trivalent diagrams and infinitesimal braids}

\begin{center}
\begin{tikzpicture}
\begin{scope}[scale=.5]
\draw[dotted] (0,0) circle (2);
\draw[thick] (2*0.8660254037,2*0.5) arc (30:55:2);
\draw[thick] (2*0.8660254037,2*0.5) arc (30:5:2);
\draw[thick] (-2*0.8660254037,2*0.5) arc (150:175:2);
\draw[thick] (-2*0.8660254037,2*0.5) arc (150:125:2);
\draw[thick] (0,-2) arc (-90:-115:2);
\draw[thick] (0,-2) arc (-90:-65:2);
\draw (0,0)++(30:2.2) node (A) {$$};
\draw (0,0)++(150:2.2) node (B) {$$};
\draw[thick] (0,0)++(-80:2) -- (A);
\draw[thick] (0,0)++(-100:2) -- (B);
\draw (3,0) node[right] {$-$};
\end{scope}
\begin{scope}[scale=.5,shift={(7,0)}]
\draw[dotted] (0,0) circle (2);
\draw[thick] (2*0.8660254037,2*0.5) arc (30:55:2);
\draw[thick] (2*0.8660254037,2*0.5) arc (30:5:2);
\draw[thick] (-2*0.8660254037,2*0.5) arc (150:175:2);
\draw[thick] (-2*0.8660254037,2*0.5) arc (150:125:2);
\draw[thick] (0,-2) arc (-90:-115:2);
\draw[thick] (0,-2) arc (-90:-65:2);
\draw (0,0)++(30:2.2) node (A) {$$};
\draw (0,0)++(150:2.2) node (B) {$$};
\draw[thick] (0,0)++(-80:2) -- (B);
\draw[thick] (0,0)++(-100:2) -- (A);
\draw (3,0) node[right] {$=$};
\end{scope}
\begin{scope}[scale=.5,shift={(14,0)}]
\draw[dotted] (0,0) circle (2);
\draw[thick] (2*0.8660254037,2*0.5) arc (30:55:2);
\draw[thick] (2*0.8660254037,2*0.5) arc (30:5:2);
\draw[thick] (-2*0.8660254037,2*0.5) arc (150:175:2);
\draw[thick] (-2*0.8660254037,2*0.5) arc (150:125:2);
\draw[thick] (0,-2) arc (-90:-115:2);
\draw[thick] (0,-2) arc (-90:-65:2);
\draw (0,0)++(-90:2.2) node (B) {$$};
\draw (0,0)++(150:2.2) node (C) {$$};
\draw[thick] (0,0)++(20:2) -- (B);
\draw[thick] (0,0)++(40:2) -- (C);
\draw (3,0) node[right] {$-$};
\end{scope}
\begin{scope}[scale=.5,shift={(21,0)}]
\draw[dotted] (0,0) circle (2);
\draw[thick] (2*0.8660254037,2*0.5) arc (30:55:2);
\draw[thick] (2*0.8660254037,2*0.5) arc (30:5:2);
\draw[thick] (-2*0.8660254037,2*0.5) arc (150:175:2);
\draw[thick] (-2*0.8660254037,2*0.5) arc (150:125:2);
\draw[thick] (0,-2) arc (-90:-115:2);
\draw[thick] (0,-2) arc (-90:-65:2);
\draw (0,0)++(-90:2.2) node (B) {$$};
\draw (0,0)++(150:2.2) node (C) {$$};
\draw[thick] (0,0)++(20:2) -- (C);
\draw[thick] (0,0)++(40:2) -- (B);
\end{scope}
\end{tikzpicture}
\end{center}

\begin{center}
\begin{tikzpicture}
\begin{scope}[scale=.5]
\draw[dotted] (0,0) circle (2);
\draw[thick] (2*0.8660254037,2*0.5) arc (30:55:2);
\draw[thick] (2*0.8660254037,2*0.5) arc (30:5:2);
\draw[thick] (-2*0.8660254037,2*0.5) arc (150:175:2);
\draw[thick] (-2*0.8660254037,2*0.5) arc (150:125:2);
\draw[thick] (0,-2) arc (-90:-115:2);
\draw[thick] (0,-2) arc (-90:-65:2);
\draw (0,0)++(30:2.2) node (A) {$$};
\draw (0,0)++(150:2.2) node (B) {$$};
\draw[thick] (0,0)++(-80:2) -- (A);
\draw[thick] (0,0)++(-100:2) -- (B);
\draw[red] (0,-2) circle (1);
\draw (3,0) node[right] {$-$};
\end{scope}
\begin{scope}[scale=.5,shift={(7,0)}]
\draw[dotted] (0,0) circle (2);
\draw[thick] (2*0.8660254037,2*0.5) arc (30:55:2);
\draw[thick] (2*0.8660254037,2*0.5) arc (30:5:2);
\draw[thick] (-2*0.8660254037,2*0.5) arc (150:175:2);
\draw[thick] (-2*0.8660254037,2*0.5) arc (150:125:2);
\draw[thick] (0,-2) arc (-90:-115:2);
\draw[thick] (0,-2) arc (-90:-65:2);
\draw (0,0)++(30:2.2) node (A) {$$};
\draw (0,0)++(150:2.2) node (B) {$$};
\draw[thick] (0,0)++(-80:2) -- (B);
\draw[thick] (0,0)++(-100:2) -- (A);
\draw (3,0) node[right] {$=$};
\draw[red] (0,-2) circle (1);
\end{scope}
\begin{scope}[scale=.5,shift={(14,0)}]
\draw[dotted] (0,0) circle (2);
\draw[thick] (2*0.8660254037,2*0.5) arc (30:55:2);
\draw[thick] (2*0.8660254037,2*0.5) arc (30:5:2);
\draw[thick] (-2*0.8660254037,2*0.5) arc (150:175:2);
\draw[thick] (-2*0.8660254037,2*0.5) arc (150:125:2);
\draw[thick] (0,-2) arc (-90:-115:2);
\draw[thick] (0,-2) arc (-90:-65:2);
\draw (0,0)++(-90:2.2) node (B) {$$};
\draw (0,0)++(150:2.2) node (C) {$$};
\draw[thick] (0,0)++(20:2) -- (B);
\draw[thick] (0,0)++(40:2) -- (C);
\draw (3,0) node[right] {$-$};
\draw[red] (2*0.8660254037,2*0.5) circle (1);
\end{scope}
\begin{scope}[scale=.5,shift={(21,0)}]
\draw[dotted] (0,0) circle (2);
\draw[thick] (2*0.8660254037,2*0.5) arc (30:55:2);
\draw[thick] (2*0.8660254037,2*0.5) arc (30:5:2);
\draw[thick] (-2*0.8660254037,2*0.5) arc (150:175:2);
\draw[thick] (-2*0.8660254037,2*0.5) arc (150:125:2);
\draw[thick] (0,-2) arc (-90:-115:2);
\draw[thick] (0,-2) arc (-90:-65:2);
\draw (0,0)++(-90:2.2) node (B) {$$};
\draw (0,0)++(150:2.2) node (C) {$$};
\draw[thick] (0,0)++(20:2) -- (C);
\draw[thick] (0,0)++(40:2) -- (B);
\draw[red] (2*0.8660254037,2*0.5) circle (1);
\end{scope}
\end{tikzpicture}
\end{center}
The IHX relations imply the above so-called 4T relations (apply the IHX relation inside the red circles).
These can be rewritten `horizontally' as
\begin{center}
\begin{tikzpicture}
\begin{scope}[scale=.5]
\draw (0,2) -- (0,-1);
\draw (1,2) -- (1,-1);
\draw (2,2) -- (2,-1);
\draw (0,1) -- (1,1);
\draw (1,0) -- (2,0);
\draw (3,0) node {$-$};
\end{scope}
\begin{scope}[scale=.5,shift={(4,0)}]
\draw (0,2) -- (0,-1);
\draw (1,2) -- (1,-1);
\draw (2,2) -- (2,-1);
\draw (1,1) -- (2,1);
\draw (0,0) -- (1,0);
\draw (3,0) node {$=$};
\end{scope}
\begin{scope}[scale=.5,shift={(8,0)}]
\draw (0,2) -- (0,-1);
\draw (1,2) -- (1,-1);
\draw (2,2) -- (2,-1);
\draw (0,1) -- (2,1);
\draw (1,0) -- (2,0);
\draw (3,0) node {$-$};
\end{scope}
\begin{scope}[scale=.5,shift={(12,0)}]
\draw (0,2) -- (0,-1);
\draw (1,2) -- (1,-1);
\draw (2,2) -- (2,-1);
\draw (1,1) -- (2,1);
\draw (0,0) -- (2,0);
\end{scope}
\end{tikzpicture}
\end{center}
and can be viewed as a relation inside $\mathbf{D}([3],[3])$. Let $n \geq 2$. For $1 \leq i<j \leq n$ we let
$d_{ij} \in \mathbf{D}([n],[n])$ denote the diagram
that differs from the identity diagram only in that
the $i$-th and $j$-th strands are connected by an additional straight arc. Then the above relation reads
$[d_{ij},d_{ik}+d_{kj}]=0$, with the convention
that $d_{ij}=d_{ji}$ when $i > j$. Obviously one also
has the relations $[d_{ij},d_{rs}]=0$ whenever $\#\{i,j,r,s\}=4$.

Moreover, there exists a natural embedding
$\mathfrak{S}_n \subset \mathbf{D}([n],[n])$. We
denote $s_{ij}$ the (image of) the transposition $(i \ j)$. Clearly, for $w \in \mathfrak{S}_n$, one has
$w d_{ij} w^{-1}= d_{w(i),w(j)}$.

Recall that the holonomy Lie algebra $\mathcal{T}_n$ of the ordered configuration space of $n$ points in the plane
admits a presentation by generators $t_{ij}=t_{ji}$
for $1 \leq i\neq j \leq n$ and relations
$[t_{ij},t_{rs}]=0$ whenever $\#\{i,j,r,s\}=4$
and $[t_{ij},t_{ik}+t_{kj}]=0$. It is acted upon by
$\mathfrak{S}_n$ such that $w.t_{ij}=t_{w(i),w(j)}$.
It follows that
there is a natural algebra morphism
$\mathfrak{B}_n \to \mathbf{D}([n],[n])$
inducing the identity on $\mathfrak{S}_n$
and mapping $t_{ij}$ to $d_{ij}$,
where $\mathfrak{B}_n = \kk \mathfrak{S}_n\ltimes \mathsf{U}\mathcal{T}_n$.

\subsubsection{Vogel's algebra}

The image of $ \mathfrak{B}_n$ under the composite
map $\mathfrak{B}_n \to \mathbf{D}([n],[n]) \to \mathbf{D}^{exc}([n],[n]) \to D^{exc}(n)$
is an algebra generated by $\mathfrak{S}_n$
and the (images of the) $t_{ij}$'s, that satisfies the defining relations of $ \mathfrak{B}_n$ together
with the following
additional ones.
\begin{enumerate}
\item For all $i,j$, $t_{ij}(i,j)=(i,j)t_{ij}=t_{ij}$
\item For all $i,j$,
$$
t_{ij}^2 - (\alpha+\beta) t_{ij} + \frac{\alpha \beta}{2} \left(  1 + (i,j) \right) = 0
$$
\end{enumerate}
Indeed, the first one is a consequence of the image of the IHX relation inside $D^{exc}(n)$, and the second one is the image of the `exceptional' relation of $\mathbf{D}^{exc}$.
In this paper we call the corresponding quotient of $\mathfrak{B}_n$ by these abstract relations \emph{Vogel's algebra} and denote it by $\mathfrak{V}_n$. P. Vogel communicated to me (\cite{VOGELPRIV}) that he computed
the dimension over $\Q(\alpha,\beta)$ of these algebras for small $n$ under the assumption that they are finite-dimensional, and
that he got that they are semisimple of dimensions
$3, 20, 264, 6490, 141824, 6799151$
for $n =2,3,4,5,6,7$. He conjectured this algebra
to be finite-dimensional and semisimple in general.

\subsection{Vogel's algebra : representations}

\subsubsection{Quotients of Vogel's algebra}

We now assume that $\kk$ is a field of characteristic not $2$.
For all $u,v  \in \kk$ there exists a well-defined (surjective) morphism
$\varphi_{u,v} :\mathfrak{B}_n \to \kk \mathfrak{S}_n$
restricting to the identity on $\kk \mathfrak{S}_n$
and mapping $t_{ij}$ to $u.1 +v.(i \ j)$. It is immediately checked that it factorizes through $\mathfrak{V}_n$ iff $u=v \in \{ \alpha/2,\beta/2\}$.
It follows that the composite of natural maps
$\kk \mathfrak{S}_n \to \mathfrak{B}_n \to \mathfrak{V}_n$ is injective.
We also note that the morphism $\kk \mathfrak{B}_n \to \kk$ mapping each $w \in \mathfrak{S}_n$ to its sign
and $t_{ij}$ to $0$ also factorizes through $\mathfrak{V}_n$. We denote it by $\eps : \mathfrak{V}_n \to \kk$.

We now introduce the algebra $Br_n(m)$ of Brauer diagrams. Its natural basis is made of Brauer diagrams,
a Brauer diagram being a collection of matchings
between $2n$ points which is depicted with $n$ points on the top and $n$ points on the bottom, so that they
can be composed to the expense of possibly making a
circle appear. In this case this circle is converted
into the scalar $m$. We refer to e.g. \cite{GOODMANWALLACH} for more details on
this object, and recall that is admits a natural
subalgebra isomorphic to $\kk \mathfrak{S}_n$. By
abuse of notations we identify the transposition $(i,j)$ with its image. We denote $p_{ij}$ the diagram
matching $i$ with $j$, $n+i$ with $n+j$ and $k$
with $n+k$ for $k \geq n$ and $k \not\in \{ i,j \}$.
For instance the following picture depicts $p_{13} \in Br_4(m)$.
\begin{center}
\begin{tikzpicture}
\draw (0,1.5) node {$\bullet$};
\draw (1,1.5) node {$\bullet$};
\draw (2,1.5) node {$\bullet$};
\draw (3,1.5) node {$\bullet$};
\draw (0,-1.5) node {$\bullet$};
\draw (1,-1.5) node {$\bullet$};
\draw (2,-1.5) node {$\bullet$};
\draw (3,-1.5) node {$\bullet$};
\draw (0,1.5) arc (-180:0:1);
\draw (1,1.5) -- (1,-1.5);
\draw (3,1.5) -- (3,-1.5);
\draw (0,-1.5) arc (180:0:1);
\end{tikzpicture}
\end{center}
There exists a 2-parameters family of morphisms
$\psi_{u,v} : \mathfrak{B}_n \to Br_n(m)$ restricting to the identity on $\kk \mathfrak{S}_n$
and mapping $t_{ij}$ to $u.1 +v.((i,j) - p_{ij})$
(see \cite{QUOTINF}). It factors through the relation
$t_{ij}(i,j) = t_{ij}$ only if $u=v$, and finally
factors through $\mathfrak{V}_n$ iff $u \in \{ \alpha/2,\beta/2\}$ and $u(m-4) = -(\alpha + \beta)$. We thus
get the following commutative diagram, with $m_x = 4- 2(1+x^{-1})$ :
$$
\xymatrix{
 & \mathfrak{V}_n \ar[dr] \ar[dl] \ar[ddl]^{\varphi_{\alpha}}
\ar[ddr]_{\varphi_{\beta}} \ar[ddd]_{\eps}
 & \\
 Br_n(m_{\frac{\beta}{\alpha}}) \ar[d] & & Br_n(m_{\frac{\alpha}{\beta}}) \ar[d] \\
 \kk \mathfrak{S}_n \ar[dr] & &  \kk \mathfrak{S}_n \ar[dl] \\
 & \kk & 
}
$$ 

\subsubsection{Vogel's algebra for $n=2,3$}
First assume $n=2$, $\alpha\beta \neq 0$ and $\alpha \neq \beta$. Also assume that $\kk$ is a field.
From the relations it is clear that $\mathfrak{V}_2$
is spanned by $1,(1,2), t_{12}$ and also
by $1,t_{12},t_{12}^2$. Moreover,
we easily get from the relations that
$t_{ij}^3 = (\alpha+\beta) t_{ij}^2-\alpha\beta t_{ij}$
which can be rewritten $t_{ij}(t_{ij}- \alpha)(t_{ij}-\beta)=0$. Therefore 
$$
\mathfrak{V}_2\simeq \kk[X]/X(X-\alpha)(X-\beta)\simeq
\frac{\kk[X]}{X}\oplus \frac{\kk[X]}{X-\alpha} \oplus \frac{\kk[X]}{X-\beta} \simeq \kk \oplus \kk \oplus \kk
$$
and, under this isomorphism, we have $t_{12} \mapsto (0,\alpha,\beta)$ while $(1,2) \mapsto (-1,1,1)$.

We now consider the case $n=3$, and assume that $char. \kk \not\in \{ 2,3 \}$. In particular the algebra $\kk \mathfrak{S}_3$ is split semisimple. The two surjective morphisms $\varphi_{\alpha}$ and $\varphi_{\beta}$
together with the irreducible representations of $\mathfrak{S}_3$ provide 
irreducible representations
of dimension 1 and 2
 of $\mathfrak{V}_3$. They can be distinguished up to isomorphism from the spectrum of $t_{1,2}$, which is
$
\{ \alpha, 0 \},
\{ \beta, 0 \},\{ \alpha \},\{ \beta \},\{ 0 \}
$. Moreover, since the Brauer algebra admits an irreducible representation of dimension $3$, it provides generically at least one 3-dimensional generically irreducible representation for $\mathfrak{V}_3$. Actually, the irreducible representations of dimension $3$ of $\mathfrak{B}_3$ have been classified in \cite{THESE} (see proposition 10). There is up to isomorphism only one such irreducible representation such that the spectrum of $t_{12}$ is (contained in) $\{ 0, \alpha,\beta \}$, provided that $(\alpha\beta)/2 \neq ((\alpha+\beta)/3)^2$. One matrix model for it is
$$
(1,2)=
\left(
\begin{array}{ccc}
-1 & 0 & 0 \\
0 & 1 & 0 \\
0 & 0 & 1
\end{array}
\right) \ \ 
(2,3)=\frac{1}{2}
\left(
\begin{array}{ccc}
1 & 1 & 0 \\
3 & -1 & 0 \\
0 & 0 & 2
\end{array}
\right) \ \ 
t_{12}=
\left(
\begin{array}{ccc}
0 & 0 & 0 \\
0 & 2d & b \\
0 & c & d
\end{array}
\right)  
$$
with $d = (\alpha+\beta)/3$ and $bc = 2d^2 - \alpha\beta$. It is straightforward to check that it defines indeed
a representation of $\mathfrak{B}_3$, which is irreducible under the above condition, and that the
defining relations of $\mathfrak{V}_3$ are indeed satisfied. This proves that $\mathfrak{V}_3$ has dimension at least $3 \times 1^2 + 2 \times 2^2 + 3^2 = 20$.

\subsubsection{Braids and infinitesimal braids}

In this section, we let $R = \C[[h]]$, and
assume $\alpha,\beta \in \kk\setminus \{ 0\} \subset \C$. We also assume $\alpha \neq \beta$. We identify
$B_n$ with the fundamental group of $\C_*^n/\mathfrak{S}_n$ with $\C_*^n= \{ (z_1,\dots,z_n) \in \C^n | z_i \neq z_j \}$ with respect with some arbitrarily chosen base point (alternatively, we could choose an arbitrary associator). From the $\mathfrak{V}_n\otimes R$-valued
1-form 
$$
\frac{h}{\ii \pi} \sum_{1 \leq i < j \leq n} t_{ij} \dd \log(z_i - z_j)
$$
we get by monodromy a morphism $R B_n \to \mathfrak{V}_n\otimes R$ that can be extended to $K B_n \to \mathfrak{V}_n\otimes K$ with $K = \C((h))$.
The image of an arbitrary Artin generator $s_i$
is then conjugated to $(i,i+1)e^{h t_{i,i+1}}$ and therefore to $(1,2)e^{h t_{1,2}} \in \mathfrak{V}_2\otimes R$. It follows that it is annihilated
by the polynomial $(X+1)(X-e^{h\alpha})(X-e^{h\beta})$.
Therefore the morphism $R B_n \to \mathfrak{V}_n\otimes R$ factorizes through the cubic Hecke algebra.

We now prove that the morphism $K B_n \to \mathfrak{V}_n\otimes K$ is surjective, following arguments of \cite{ASSOC}. Indeed, The image of $s_i$ is congruent to $(i,i+1)$ modulo $h$,
and the image of $h^{-1}(s_i^2 -1)$ is congruent to $2 t_{i,i+1}$ modulo $h$. Let $C$ be the $R$-subalgebra of $K B_n$ generated by the $\sigma_i$ and the 
$h^{-1}(s_i^2 -1)$ for $1 \leq i \leq n-1$. The monodromy morphism $R B_n \to \mathfrak{V}_n \otimes R$
can be extended to a morphism $C \to \mathfrak{V}_n \otimes R$. Since $\mathfrak{S}_n$
is generated by the $(i,i+1)$ and since $\mathfrak{V}_n$ is generated by $\mathfrak{S}_n$ and $t_{12}$,
we get that that reduction mod $h$ of this morphism is surjective, and therefore the morphism $C \to \mathfrak{V}_n \otimes R$ is surjective by Nakayama's lemma. It follows immediately that the morphism
$K B_n \to \mathfrak{V}_n\otimes K$ is surjective.

In the cases $n \leq 3,4,5$, since the cubic Hecke algebras are finite dimensional and semisimple, it follows that $K \mathfrak{V}_n$ is also finite-dimensional and semisimple.

We now restrict to the case $n=3$. We know that 
$K \mathfrak{V}_3$ is isomorphic to a quotient of the
cubic Hecke algebra of dimension at least $20$, with
at least 3 irreducible 1-dimensional representations,
2 of dimension 2, 1 of dimension 3. It follows that either $K \mathfrak{V}_3 \simeq \GQ_3$, or
 $K \mathfrak{V}_3 \simeq H_3$ and there exists a 2-dimensional irreducible representation of 
 $ \mathfrak{V}_3$ such that the spectrum of $\sigma_i$ under the monodromy morphism 
 is $\{ e^{h\alpha},e^{h\beta} \}$. This is possible only if there exists a 2-dimensional irreducible representation of $\mathfrak{V}_3$ such that $Sp(t_{ij}) = \{ \alpha,\beta \}$. But since $\alpha\beta \neq 0$
 this forces $t_{ij}$ to be invertible, whence by the relation $t_{ij}(i,j)=t_{ij}$ we would have $(i,j)=1$ for all $i,j$. But then the action of $\mathfrak{S}_n$ would be trivial, and $\Ker(t_{12}-\alpha)$ would
 be proper stable subspace of the representation. This
 contradiction proves that the irreducible representations of $\mathfrak{V}_3$ determined above are the only ones,
 that $\dim \mathfrak{V}_3 = 20$ and that $\GQ_3 \simeq K \mathfrak{V}_3$.

\subsubsection{Representations}

If $E$ be a representation of $\mathfrak{S}_n$ it
can be expanded as a representation of $\mathfrak{V}_n$
by letting $t_{ij} = \frac{\alpha}{2}(1 + (i \ j))$
or $t_{ij} = \frac{\beta}{2}(1 + (i \ j))$.

For $E$ associated to $[3,1]$ we denote these representations $\mathcal{U}_{b,a}$ and $\mathcal{U}_{c,a}$, 
while for $E$ associated to $[2,1,1]$ we denote these representations $\mathcal{U}_{a,b}$ and $\mathcal{U}_{a,c}$. One readily checks that the monodromy representation associated to $\mathcal{U}_{x,y}$ is $U_{x,y}$.

There is an algebra morphism $\mathfrak{V}_4 \to \mathfrak{V}_3$ induced by the special morphism $\mathfrak{S}_4 \onto \mathfrak{S}_3$ and $t_{ij} \mapsto t_{rs}$ with $\{r,s \} = \{i,j\}$ if $1 \leq i,j \leq 3 $ and $\{r,s,i,j \} = \{1,2,3,4 \}$ otherwise. From this the irreducible representations of $\mathfrak{V}_3$ can be expanded to $\mathfrak{V}_4$. The monodromy of
these representations yields the representations $V,T_{a,b},T_{a,c},S_a,S_b,S_c$.

Let $E = \C \mathcal{R}$ be a vector space spanned
by the set $\mathcal{R}$ of transpositions of $\mathfrak{S}_n$. We denote $v_s = v_{ij}$ the basis
vector corresponding to $s = (i \ j) \in \mathcal{R}$. We endow it with the $\mathfrak{S}_n$-permutation module structure associated to the action by 
conjugation on the transpositions, $w. v_s = v_{wsw^{-1}}$ and we fix $\la\in \C^{\times}, m,x \in \C$,  

We set $t_s.v_s = \la (m+x) v_s$, $t_s.v_u =x \la v_u+ \la v_{sus}$
if $s \neq u$, $su = us$, and
$t_s.v_u = x \la v_u+ \la v_{sus} - \la v_s$ otherwise. It is known
(see \cite{KRAMMINF}, \cite{KRAMCRG}) that this defines a representation of $\mathfrak{B}_n$ for $x =0$, which is irreducible for generic values of $m$.
Since $t_{ij} \mapsto \la(t_{ij} + x)$ defines an automorphism of $\mathfrak{B}_n$ these formulas provide a representation of $\mathfrak{B}_n$ for arbitrary values of $x$, irreducible for generic values of $m$.
 Moreover,
$st_s.v_s = \la (m+x) v_s = t_s.v_s$ ; if $s \neq u$, $su = us$, then 
 $st_s.v_u =x \la v_{sus}+ \la v_{u} = t_s.v_u$ iff $x = 1$ ; 
otherwise $st_s.v_u = x \la v_{sus}+ \la v_{u} -\la  v_s
= t_s.v_u$ iff $x = 1$.

Consider $s = (1 \ 2)$. 
The eigenspaces for $t_{12}$ are $\C v_s$ with eigenvalue $\la(m+1)$ ;
the subspace spanned by the $v_u$ for $su = us$, $s \neq u$ and the $v_{1k}+v_{2,k}+(2/(m-1))v_s$ for $3 \leq k \leq n$ with eigenvalue $2\la$, which has dimension $(n-1)(n-2)/2$ ; the subspace spanned by the $v_{1k}-v_{2k}$ with eigenvalue $0$ has dimension $0$, which has
dimension $n-2$. This proves that $t_{12}$ is diagonizable and satisfies the
relation $t_{12}(t_{12} - \la(m+1))(t_{12} - 2 \la) =0$.
Choosing $2\la \in \{ \alpha,\beta \}$ and $m = (\alpha+\beta)/\la -3$ we get that $t_{12}$ satisfies the relation $t_{12}(t_{12}^2 - (\alpha+\beta)t_{12} + \alpha \beta) = 0$ whence we
get two irreducible representations of $\mathfrak{V}_n$.

We now construct matrix models for the remaining
representations. From the study of the quantum
construction in section \ref{sect:quantumLS} we get an hint
about their restriction to $\mathfrak{S}_4$. Computing
the plethysm of $V(\varpi_2)^{\otimes 4}$ for e.g. type $A_8$  we get table \ref{tab:plethysm} 
which implies that the restriction of $M(\varpi_3+\varpi_5)$ to $\mathfrak{S}_4$ has isomorphism type $[3,1]+[2,1,1]$ while
the restriction of $M(\varpi_1+\varpi_2+\varpi_5)$ has type $[3,1]+[2,2]+[2,1,1]$ (at least for $n=9$).

Let us then introduce the matrices

$$
A_1 =
\left(\begin{array}{ccc}
-1 & 0 & 0 \\ 1 & 1 & 0 \\ 0 & 0 & 1
\end{array}\right)
A_2 =
\left(\begin{array}{ccc}
1 & 1 & 0 \\ 0 & -1 & 0 \\ 0 & 1 & 1
\end{array}\right)
A_3 =
\left(\begin{array}{ccc}
1 & 0 & 0 \\ 0 & 1 & 1 \\ 0 & 0 & -1
\end{array}\right)
$$
and
$$
B_1 =\diag(-1,1) = B_3,
B_2 = \frac{1}{2} \left(\begin{array}{cc}
1 & 3  \\ 1 & -1\\
\end{array}\right)
$$
Then $s_i \mapsto A_i$ defines the representation of $\mathfrak{S}_4$ associated with the partition $[3,1]$,
while $s_i \mapsto B_i$ defines the representation associated with the partition $[2,2]$,

We first consider the representation of $\mathfrak{S}_4$ associated to $[3,1]+[2,1,1]$, given by $s_i \mapsto \diag(A_i,-A_i)$. We set $
t_{12} \mapsto \frac{\alpha}{2} (s_1+1) +p_{12}$
with
$$p_{12} =    \left(\begin{array}{cccccc}
0 & 0 & \frac{2\alpha-\beta}{4}& \frac{\beta(\beta-2\alpha)}{16} & 0 & 0 \\
0 & 0 & \frac{\beta-2 \alpha}{2}& \frac{\beta(2\alpha-\beta)}{8} & 0 & 0 \\
0 & 0 & -2 & \beta/2 & 0 & 0 \\
0 & 0 & 1 & -\beta/4 & 0 & 0 \\
0 & 0 & 0 & 0 & 0 & 0  \\
\end{array} \right)
$$
The endomorphism $p_{12}$ has rank $1$ and nontrivial
eigenvalue $\beta - \alpha$. This provides an irreducible representation of dimension $6$ of $\mathfrak{V}_4$, for which $t_{12}$ has eigenvalues $0,0,0,\alpha,\alpha,\beta$. Another one is obtained by exchanging $\alpha$ and $\beta$. \footnote{This representation of $\mathfrak{S}_4$
is its hyperoctahedral representation as a Coxeter group of type $A_3$, see \cite{ARRREFL}. However, on the corresponding easy matrix model we were unable to find a nicer description of $t_{12}$, so we prefer this one where irreducibility is obvious.}

We now construct a 8-dimensional representation. We set $s_i \mapsto \diag(A_i,B_i,-A_i)$ and
$$
t_{12} \mapsto 
\left(\begin{array}{cccccccc}
0 & 0 & 0 & 0 & 0 & 0 & 0 & 0 \\
2c & 4c & c & 0 & 2c^2+2a &a&0&0 \\
0 & 0 & 2c & 0 &0 & -2a & 0 & 0 \\
0 & 0 & 0 & 0 & 0 & 0 & 0 & 0 \\
1 & 2 & 1 & 0 & 4c & 0 & 0 & 0 \\
0 & 0 & -2 & 0 & 0 & 6c & 0 & 0 \\
0 & 0 & 1 & 0 & 0 & -3c& 0 & 0 \\
0 & 0 & 0 & 0 & 0 & 0 & 0 & 0 \\
\end{array}\right)$$
It is readily checked that, $t_{ij} = w t_{12}w^{-1}$
does not depend on the choice of $w \in \mathfrak{S}_4$ such that $w(\{1,2\}) = \{i,j\}$,
and that this provides a representation of $\mathfrak{B}_4$. One then checks that the other relations of $\mathfrak{V}_4$ are satisfied for $\alpha+\beta = 8c$
and $\alpha\beta = 12c^2-4a$,
that is $\alpha =4c+2\sqrt{c^2+a}$
and $\beta =4c-2\sqrt{c^2+a}$. The representation
is clearly irreducible (look at the restriction
to $\mathfrak{S}_4$ and check that the isotopic components
are mapped to each other by $t_{12}$). The spectrum is
easily checked to be $0,0,0,0,\alpha,\alpha,\beta,\beta$, and the representation is clearly irreducible for generic values of $a,c \neq 0$.

\subsection{Freeness for the case $n=3$}

We set $x_i = t_{jk}$ and $z_i = (j,  k)$ whenever $\{i,j,k\}=\{1,2,3\}$. From the defining relation $x_1(x_2+x_3)=(x_2+x_3)x_1$ we get
$x_1(x_2+x_3)z_3=(x_2+x_3)x_1z_3$
hence
$$
\begin{array}{clcl}
&x_1x_2z_3+ x_1 (x_3z_3) &=& x_2x_1z_3 + x_3(x_1z_3) \\
\Leftrightarrow &x_1x_2z_3+ x_1 x_3 &=& x_2x_1z_3 + (x_3z_3)x_2 \\
\Leftrightarrow &x_1x_2z_3+ x_1 x_3 &=& x_2x_1z_3 + x_3x_2 \\
\Leftrightarrow &(x_1x_2-x_2x_1)z_3 &=&  x_3x_2- x_1 x_3 \\
\end{array}$$
that is $[x_1,x_2]z_3 =  x_3x_2- x_1 x_3$. Multiplying on the right by $x_3$ yields 
$[x_1,x_2]x_3 =  x_3x_2x_3- x_1 x_3^2$
hence $x_1x_2x_3 - x_2x_1x_3 = x_3x_2x_3 - x_1x_3^2$.
and

$[x_1,x_2]x_3 =  x_3x_2x_3- x_1 x_3^2 = x_3(x_2+x_1)x_3 - x_3x_1x_3 - x_1 x_3^2 
= (x_2+x_1)x_3^2 - x_3x_1x_3 - x_1 x_3^2 
= x_2x_3^2 - x_3x_1x_3  = (\alpha+\beta)x_2x_3 - \frac{\alpha \beta}{2}x_2(1+z_3)-x_3x_1x_3  $

Also note that $x_1x_2x_1 =x_1x_2z_1x_1 = x_1 z_1 x_3 x_1 = x_1x_3x_1$.

 Let
$$\mathcal{B}(3) = \{ 1,
x_1,
x_2,
x_3,
z_1,
z_2,
z_3,
x_1x_2,
x_1x_3,
x_1z_2,
x_1z_3,
x_2x_1,
x_2x_3,
x_2z_1,
x_2z_3,
x_3z_1,
x_3z_2,
z_1z_2,
z_1z_3,
x_1x_2x_3\}$$
We let
$\mathcal{B}_0(3) = \mathcal{B}(3)\setminus \{ x_1x_2x_3 \}$.
We need to prove $b x_i \subset R \mathcal{B}(3)$ for $i\in \{ 1,2,3 \}$ and $b \in \mathcal{B}(3)$. For $b =1$ this clear. Let us consider the case $b = x_j$. The case $i=j$ is clear, so we assume $i \neq j$. We have $b x_i \in \mathcal{B}(3)$ for all $i$, except for $j = 3$. From $(x_3+x_2)x_1 = x_1 (x_2+x_3)$ we get
$x_3 x_1 = x_1x_2 + x_1 x_3 - x_2 x_1 \in R \mathcal{B}_0(3)$ and similarly from $(x_3+x_1)x_2=x_2(x_1+x_3$ we get $x_3x_2 \in R \mathcal{B}_0(3)$. The case $b = z_j$
is clear since $\mathfrak{S}_3 \in \mathcal{B}(3)$
and from the defining relations $z_j x_i = x_k z_j$ and $z_j x_j = x_j z_j= x_j$ for $\{i,j,k\}=\{1,2,3\}$. Before the next case we first prove the following lemma.

\begin{lemma} \ 
\begin{enumerate}
\item For all $i,j$, $x_ix_j \in R \mathcal{B}_0(3)$,
$x_iz_j \in R \mathcal{B}_0(3)$ and $\mathfrak{S}_3 \subset R\mathcal{B}_0(3)$.
\item For all $i,j$ we have $2 x_i x_j x_i \in R\mathcal{B}_0(3)$
\item For all $\sigma \in \mathfrak{S}_3$ we have $2 x_{\sigma(1)}x_{\sigma(2)}x_{\sigma(3)} \equiv 2 x_1x_2x_3 \mod R \mathcal{B}_0(3)$
\item $(\alpha+\beta) x_1x_2z_3 \in 2 x_1x_2x_3 + R \mathcal{B}_0(3)$
\end{enumerate}
\end{lemma}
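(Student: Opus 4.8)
\emph{Strategy.} All four identities will be obtained from the relations collected just before the statement, together with the elementary facts $x_iz_i=z_ix_i=x_i$, $x_iz_j=z_jx_k$ and $z_jx_i=x_kz_j$ for $\{i,j,k\}=\{1,2,3\}$, and the cubic relation $x_i^2=(\alpha+\beta)x_i-\frac{\alpha\beta}{2}(1+z_i)$, all immediate from the defining relations of $\mathfrak{V}_3$. A useful preliminary is that $R\mathcal{B}_0(3)$ coincides with the $R$-span of $\mathfrak{S}_3\cup\{x_1,x_2,x_3\}\cup\{x_ix_j:i\neq j\}\cup\{x_iz_j:i\neq j\}$ (for the products $x_3x_1$ and $x_3x_2$ this uses the reductions already carried out above), so that $R\mathcal{B}_0(3)$ is stable under conjugation by $\mathfrak{S}_3$.

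\emph{Items (1)--(2).} For (1): $x_i^2\in R\mathcal{B}_0(3)$ by the cubic relation; for $i\neq j$ each $x_ix_j$ either lies in $\mathcal{B}_0(3)$ or was reduced above; $x_iz_i=x_i$ and $x_iz_j\in\mathcal{B}_0(3)$ for $i\neq j$; and $\mathfrak{S}_3=\{1,z_1,z_2,z_3,z_1z_2,z_1z_3\}\subset\mathcal{B}_0(3)$. For (2): if $i=j$ then $x_i^3=(\alpha+\beta)x_i^2-\alpha\beta x_i\in R\mathcal{B}_0(3)$ by the cubic relation and (1); if $i\neq j$ with $\{i,j,k\}=\{1,2,3\}$ then $x_ix_jx_i=x_ix_jz_ix_i=x_iz_ix_kx_i=x_ix_kx_i$, hence $2x_ix_jx_i=x_i(x_j+x_k)x_i=(x_j+x_k)x_i^2$ by the commutation relation, and expanding $x_i^2$ and invoking (1) finishes it.

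\emph{Item (3).} Doubling $x_1x_2x_3-x_2x_1x_3=x_3x_2x_3-x_1x_3^2$ gives $2(x_1x_2x_3-x_2x_1x_3)=2x_3x_2x_3-2x_1x_3^2\in R\mathcal{B}_0(3)$ by (2) and (1). Conjugating $[x_1,x_2]z_3=x_3x_2-x_1x_3$ by the $3$-cycle $(1\,2\,3)$ gives $[x_2,x_3]z_1=x_1x_3-x_2x_1$; left-multiplying by $x_1$, right-multiplying by $z_1$ and using $x_1z_1=x_1$ yields $x_1x_2x_3-x_1x_3x_2=x_1^2(x_3z_1)-x_1x_2x_1$, so $2(x_1x_2x_3-x_1x_3x_2)\in R\mathcal{B}_0(3)$ by (1) (note $x_1(x_3z_1)=x_1x_2$) and (2). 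Thus $2x_1x_2x_3\equiv 2x_2x_1x_3$ and $2x_1x_2x_3\equiv 2x_1x_3x_2$ modulo $R\mathcal{B}_0(3)$; conjugating these two congruences by $\mathfrak{S}_3$ gives the analogous ones for every index ordering, and since adjacent transpositions of the three positions generate the symmetric group acting on positions, all six products $2x_{\sigma(1)}x_{\sigma(2)}x_{\sigma(3)}$ are congruent modulo $R\mathcal{B}_0(3)$.

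\emph{Item (4), the hard one.} First record the cancellation $x_1x_2x_3^2=x_1^2x_2x_3$: rearranging $[x_1,x_2]x_3=x_2x_3^2-x_3x_1x_3$ gives $x_2x_3^2=x_1x_2x_3-x_2x_1x_3+x_3x_1x_3$, hence $x_1x_2x_3^2=x_1^2x_2x_3-x_1x_2x_1x_3+x_1x_3x_1x_3$, and the last two terms cancel because $x_1x_3x_1=x_1x_2x_1$. Expanding the two sides of $x_1x_2x_3^2=x_1^2x_2x_3$ by the cubic relation (on the right, resp. on the left) and comparing, the term $(\alpha+\beta)x_1x_2x_3$ cancels on both sides and one is left with $x_1x_2z_3-z_1x_2x_3=x_2x_3-x_1x_2\in R\mathcal{B}_0(3)$. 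It therefore suffices to prove $(\alpha+\beta)z_1x_2x_3-2x_1x_2x_3\in R\mathcal{B}_0(3)$, which, via $x_1x_2x_3^2=x_1^2x_2x_3=(\alpha+\beta)x_1x_2x_3-\frac{\alpha\beta}{2}(x_2x_3+z_1x_2x_3)$, amounts to identifying the $x_1x_2x_3$-component of $x_1x_2z_3$ (equivalently of $z_1x_2x_3$). This last datum does not seem to be forced by the relations used so far: every rewriting of the relevant degree-three monomials that I can produce collapses, modulo $R\mathcal{B}_0(3)$, to an identity already in hand. I would pin it down by evaluating $(\alpha+\beta)x_1x_2z_3-2x_1x_2x_3$ in the (jointly faithful) irreducible representations of $\mathfrak{V}_3\simeq\GQ_3$ --- the three one-dimensional, the two two-dimensional, and the three-dimensional one, for which explicit matrices are available --- and checking that it lies in the $R$-span of the images of $\mathcal{B}_0(3)$; equivalently, one uses the already-established equality $\dim\mathfrak{V}_3=20$ to see that $\mathcal{B}(3)$ is a basis and reads off the coordinate from the multiplication table. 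This representation-theoretic input is the only real obstacle; items (1)--(3) are formal consequences of the relations already derived.
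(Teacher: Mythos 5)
Items (1)--(3) of your proposal are correct and follow essentially the same route as the paper (reduce $2x_1x_2x_1$ to $x_1^2(x_2+x_3)$ via $x_1x_2x_1=x_1x_3x_1$ and the commutation relation, then expand $x_1^2$; obtain (3) by substituting $x_j=(x_j+x_k)-x_k$ and using (1)--(2)).

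Item (4), however, is not proved: you explicitly stop at the point where the coefficient of $x_1x_2x_3$ in $x_1x_2z_3$ has to be identified, and the fallback you propose cannot stand in for a proof here. First, it is not carried out. Second, it is circular in this context: this lemma is precisely the step used to show that $R\mathcal{B}(3)$ is stable under right multiplication by the $x_i$, i.e.\ that $\mathcal{B}(3)$ spans $\mathfrak{V}_3$ over $R$, so you may not invoke ``$\mathcal{B}(3)$ is a basis'' or read the answer off a multiplication table. Third, the equality $\dim\mathfrak{V}_3=20$ and the identification with $\GQ_3$ were obtained only over characteristic-zero fields with $\alpha,\beta$ nonzero and distinct (via the monodromy argument), whereas item (4) is asserted as an identity over the general coefficient ring, in which $2$ and $\alpha+\beta$ need not be invertible; faithfulness of the listed representations on $\mathfrak{V}_3$ itself over $R$ is exactly what is not yet known. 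So there is a genuine gap, not merely an omitted verification.

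The missing step is elementary and uses only relations you already have. From the quadratic relation, $(\alpha+\beta)x_1x_2z_3 = x_1^2x_2z_3 + \tfrac{\alpha\beta}{2}(1+z_1)x_2z_3$, and the second summand lies in $R\mathcal{B}_0(3)$ since $x_2z_3\in\mathcal{B}_0(3)$ and $z_1x_2z_3\in x_3\,\mathfrak{S}_3\cap \mathcal{B}_0(3)$ (indeed $z_1x_2z_3=x_3z_1z_3=x_3z_3z_2=x_3z_2$, using $x_3z_3=x_3$). For the first summand, write $x_1^2x_2z_3 = x_1^2(x_2+x_3)z_3 - x_1^2x_3z_3$. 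The last term equals $x_1^2x_3$ (again $x_3z_3=x_3$), which is in $R\mathcal{B}_0(3)$ after expanding $x_1^2$. For the remaining term, the commutation $[x_1,x_2+x_3]=0$ gives $x_1^2(x_2+x_3)=x_1(x_2+x_3)x_1=2x_1x_3x_1$ (using $x_1x_2x_1=x_1x_3x_1$), hence $x_1^2(x_2+x_3)z_3 = 2x_1x_3x_1z_3 = 2x_1x_3z_3x_2 = 2x_1x_3x_2$, since $x_1z_3=z_3x_2$ and $x_3z_3=x_3$. By your item (3), $2x_1x_3x_2\equiv 2x_1x_2x_3 \bmod R\mathcal{B}_0(3)$, which is exactly (4). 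With this inserted, the lemma is proved by formal manipulation of the defining relations, as intended.
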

\begin{proof} (1) is a direct consequence of the definition of $\mathcal{B}_0(3)$ and of the defining relations, except for $x_3 x_i \in R\mathcal{B}_0(3)$
for $i =1,2$. From $(x_3+x_2)x_1 = x_1 (x_2+x_3)$ we get
$x_3 x_1 = x_1x_2 + x_1 x_3 - x_2 x_1 \in R \mathcal{B}_0(3)$ and similarly from $(x_3+x_1)x_2=x_2(x_1+x_3$ we get $x_3x_2 \in R \mathcal{B}_0(3)$. 
 In order to prove (2) we establish the identity
$ 2x_1x_2x_1 + (\alpha\beta/2)x_3z_1 + (\alpha\beta/2)x_2z_1 - (\alpha+\beta)x_1x_3 - (\alpha+\beta)x_1x_2 + (\alpha\beta/2)x_3 + (\alpha\beta/2)x_2 $. 
From $x_1(x_2-x_3)x_1=0$ we get $0=2 x_1x_2x_1 -x_1(x_2+x_3)x_1=2 x_1x_2x_1 -x_1^2(x_2+x_3)$ and the identity follows by expanding $x_1^2$ and easy defining relations. Therefore $2x_1x_2x_1 \in \mathcal{B}_0(3)$.
Since the statement is $\mathfrak{S}_3$-symmetric this
proves (2). Now notice
$x_2x_1x_3 = x_2(x_1+x_3)x_3 - x_2x_3^2 =
(x_1+x_3)x_2x_3 - x_2x_3^2 = x_1x_2x_3 + x_3x_2x_3 - x_2x_3^2$ hence $2 x_2x_1x_3 \equiv 2 x_1x_2x_3 + R \mathcal{B}_0(3)$. Similarly we get $2 x_1x_3x_2 \equiv 2 x_1x_2x_3 + R \mathcal{B}_0(3)$ and this proves (3).
From the defining relations we get
$(\alpha+\beta)x_1x_2z_3 = x_1^2 x_2z_3 + (\alpha\beta/2)(1+z_1)x_2z_3$. Since $z_1x_2z_3 = z_1 z_3 x_2 = z_3z_2 x_2 = z_3 x_2 = x_1 z_3 R\mathcal{B}_0(3)$
we only need to prove $x_1^2 x_2z_3\in 2 x_1x_2x_3 + R\mathcal{B}_0(3)$. Now
$x_1^2x_2 z_3 = x_1^2(x_2+x_3)z_3 - x_1^2x_3z_3 
=x_1(x_2+x_3)x_1z_3 - x_1^2x_3$.
But we get $x_1^2 x_3 \in R\mathcal{B}_0(3)$ by expanding $x_1^2$ and we have $x_1(x_2+x_3)x_1 = x_1x_2x_1+x_1x_3x_1 = 2 x_1x_3x_1$ hence $x_1(x_2+x_3)x_1z_3 = 2 x_1x_3x_1z_3 = 2 x_1x_3z_3x_2 = 2 x_1x_3x_2 \equiv 1 x_1x_2x_3 \mod R\mathcal{B}_0(3)$
and this proves (4). 
\end{proof}

Now consider the case $b = x_j x_k$ with $j\neq k$.
If $i \in \{j,k\}$ then we get $2bx_i \in R\mathcal{B}_0(3)$ either by expanding $x_k^2$ or by part (2) of the lemma. If not, then there exists $\sigma\in \mathfrak{S}_3$
such that $x_j x_k x_i = x_{\sigma(1)}x_{\sigma(2)}x_{\sigma(3)}$ hence $2 bx_i \in 2 x_1x_2x_3 + 
R\mathcal{B}_0(3)$ by part (3) of the lemma.
Now consider the case $b= x_j z_k$. If $i =k$ it is clear, and if $i\not\in \{k,j\}$ we get
$bx_i = x_j z_k x_i = x_j^2z_k \in R\mathcal{B}_0(3)$
by expanding $x_j^2$. Therefore we can assupe $i=j$.
Then $bx_i = x_i z_k x_i = x_{\sigma(1)}x_{\sigma(2)}z_{\sigma(3)}$ for some $\sigma \in \mathfrak{S}_3$.
Therefore $(\alpha+\beta) b x_i \in 2 x_1x_2x_3 + R\mathcal{B}_0(3)$ by the lemma. Finally, if $b  \in \mathfrak{S}_3$ is a 3-cycle, then $b$ can be written
as $z_j z_i$ for some $j$ hence $bx_i = z_j z_i x_i = z_j x_i \in R\mathcal{B}_0(3)$. This proves $R\mathcal{B}_0(3) x_i \subset R\mathcal{B}(3)$ for all $i$.

Assume now that $2$ and $(\alpha+\beta)$ are invertible.
We need to prove $x_1x_2x_3.x_i \in R\mathcal{B}(3)$
for all $i$. By part (3) of the lemma we get that $x_1x_2x_3 \equiv x_k x_j x_i \mod R\mathcal{B}_0(3)$ for some $k,j$. Since $R\mathcal{B}_0(3)x_i \subset R\mathcal{B}(3)$ this implies
$x_1x_2x_3x_i \in x_k x_j x_i^2 + R\mathcal{B}(3)$.
But expanding $x_i^2$   
we get that $x_kx_jx_i^2 \in R\mathcal{B}(3)$ and this proves the claim.

\section{Structure on 3 strands and general properties}

In this section we first define $\GQ_n$ over $R=\Z[a^{\pm 1},b^{\pm 1},c^{\pm 1}]$, as in the introduction.
By definition, $\GQ_n$ is the quotient of $H_n$ by one
of the following two relations, that we will prove to be equivalent.

$$
\begin{array}{clcl}
(1) & s_1^{-1}s_2s_1 &=& s_1 s_2 s_1^{-1} - a^{-1} s_1 s_2 + a s_1 s_2^{-1} + a s_1^{-1}s_2 - a^3 s_1^{-1} s_2^{-1}
+ a^{-1} s_2 s_1 - a s_2 s_1^{-1} - a s_2^{-1} s_1 \\
& & & + a^3 s_2^{-1}s_1^{-1} + a^2 s_1^{-1} s_2^{-1} s_1
- a^2 s_1 s_2^{-1} s_1^{-1}  \\
(2) & s_1s_2s_1^2 s_2 &=& s_1^2 s_2 s_1^2 + a s_2 s_1^2 s_2 - a s_1 s_2^2 s_1
+ a^2 s_2^2 s_1 - a^2 s_2 s_1^2 + a^2 s_1 s_2^2 - a^2 s_1^2s_2 \\ & & & 
- a^3 s_2^2 + a^3 s_1^2 + a^4 s_2 - a^4 s_1
\end{array}
$$

In other words, it is defined as the quotient of $R B_n$
for $n \geq 3$ by the cubic relation $(s_1 -a)(s_1 -b)(s_1 -c)$ and either (1) or (2). The proof that these two relations are equivalent will be given below.

In this section, we prove the following.

\begin{theorem} {\ } \label{theo:structmodGQ3}
The natural morphism $Q_2 \to Q_3$ is injective, and
\begin{enumerate}
\item $\GQ_3$ is a free $R$-module of rank $20$.
\item We have
$\GQ_3 = \GQ_2 + \GQ_2 s_2 \GQ_2 +  \GQ_2 s_2^{-1} \GQ_2 + R s_2 s_1^{-1} s_2$
\item We have
$\GQ_3 = \GQ_2 + \GQ_2 s_2 \GQ_2 +  \GQ_2 s_2^{-1} \GQ_2 + R s_2^{-1} s_1 s_2^{-1}$
\end{enumerate}
where $Q_2$ is identified with the subalgebra of $Q_3$ generated by $s_1$.
\end{theorem}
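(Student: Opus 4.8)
The plan is to work from the explicit $R$-basis $\mathcal{B}_1$ of $H_3$ provided by Proposition~\ref{prop:H3libre} and keep track of which elements survive in the quotient. Put $\mathbf{r}=s_1^{-1}s_2s_1-\big[\text{right-hand side of }(1)\big]$, so that $\GQ_3=H_3/I$ with $I$ the two-sided ideal of $H_3$ generated by $\mathbf{r}$, and set $W=\GQ_2+\GQ_2s_2\GQ_2+\GQ_2s_2^{-1}\GQ_2+R\,s_2s_1^{-1}s_2$. A first, purely mechanical step is to check that relations $(1)$ and $(2)$ are equivalent modulo the cubic and braid relations (for instance $(2)$ is recovered from $(1)$ by left-multiplying by $s_1^2$, right-multiplying by $s_1s_2$, and simplifying), so that one is free to use whichever is more convenient.

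Next come the spanning statements $(2)$ and $(3)$. Every monomial of $\mathcal{B}_1$ already lies in $W$ except the two length-$4$ words $s_1s_2s_1^{-1}s_2$ and $s_1^{-1}s_2s_1^{-1}s_2$. I would show that, modulo $I$, these two — together with $s_1^{-1}s_2s_1$ and $s_1^{-1}s_2s_1^{-1}$ — are $R$-linear combinations of the remaining $20$ elements of $\mathcal{B}_1$, the crucial point being that the coefficient of each eliminated monomial can be taken to be a \emph{unit} of $R$. Concretely: relation $(1)$ itself expresses $s_1^{-1}s_2s_1$ (coefficient $1$) in terms of monomials of $W$; inside relation $(2)$ one expands $s_1^2=(a{+}b{+}c)s_1-(ab{+}bc{+}ca)+abc\,s_1^{-1}$, which isolates $s_1s_2s_1^{-1}s_2$ with coefficient $abc$ against terms lying in $W$ (after reducing $s_1s_2s_1s_2=s_2s_1s_2^2$, $s_1s_2^2$, and the right-hand side of $(2)$ by the cubic and braid relations); right-multiplying relation $(1)$ by $s_1^{-1}$ and expanding $s_1^{-2}$ by the cubic relation isolates $s_1^{-1}s_2s_1^{-1}$ with coefficient $a$; and finally, writing $s_1^{-1}s_2=\mathbf{r}'s_1^{-1}$ (with $\mathbf{r}'$ the right-hand side of $(1)$) gives $s_1^{-1}s_2s_1^{-1}s_2=\mathbf{r}'\,s_1^{-2}s_2$, which after expanding $s_1^{-2}s_2$ and reducing every resulting product by the previous three reductions lands in the span of the surviving $20$ monomials. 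Since $abc$ and $a$ are units, this shows $\GQ_3$ is generated over $R$ by a $20$-element subset $\mathcal{B}_3\subset\mathcal{B}_1$ still containing $1,s_1,s_1^{-1}$, and in particular $\GQ_3=W$, which is $(2)$.

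To pin down the rank I would combine this with a count over $K$. The element $\mathbf{r}$ vanishes in each of the six representations $S_a,S_b,S_c,U_{a,b},U_{a,c},V$ of $H_3$ — for the one-dimensional ones the $a$-terms on the right-hand side of $(1)$ telescope to zero and one gets $\mathbf{r}\mapsto 0$ in a single line; for the others this is a direct matrix check with the models of \S\ref{sect:H3}, which are defined over $R$. Hence $I\otimes K$ is contained in the $U_{b,c}$-isotypic summand $M_2(K)$ of $H_3\otimes K\cong K^3\oplus M_2(K)^3\oplus M_3(K)$, so $\dim_K\GQ_3\otimes K\ge 24-4=20$, while the spanning set gives $\dim_K\GQ_3\otimes K\le 20$. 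Therefore $\dim_K\GQ_3\otimes K=20$, the $20$ elements of $\mathcal{B}_3$ are $K$-linearly independent, a fortiori $R$-linearly independent, and hence an $R$-basis: $\GQ_3$ is free of rank $20$, which is $(1)$. Injectivity of $\GQ_2\to\GQ_3$ is then immediate, since $1,s_1,s_1^{-1}\in\mathcal{B}_3$ (equivalently: $S_a\oplus S_b\oplus S_c$ restricts on $\GQ_2=R[s_1]/((s_1-a)(s_1-b)(s_1-c))$ to an injection over the domain $R$, and it factors through $\GQ_3$). Finally $(3)$: knowing now that $\GQ_3$ is torsion-free, the map $s_i\mapsto s_i^{-1}$ is a well-defined isomorphism $\GQ_3(a,b,c)\xrightarrow{\ \sim\ }\GQ_3(a^{-1},b^{-1},c^{-1})$ (it carries $I$ to the analogous ideal for the inverted parameters, which can be verified over $K$ and transported since both algebras are torsion-free); it fixes $\GQ_2$, exchanges $\GQ_2s_2\GQ_2$ and $\GQ_2s_2^{-1}\GQ_2$, and sends $s_2s_1^{-1}s_2$ to $s_2^{-1}s_1s_2^{-1}$, so applying it to $(2)$ (for the inverted parameters) yields $(3)$.

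The main obstacle is the bookkeeping in the second paragraph: the individual reductions are routine uses of the braid and cubic relations, but they must be organised so that each discarded monomial is eliminated against a \emph{unit} coefficient (the quantities $abc$ and $a$), for otherwise one would only obtain a $K$-basis and the $R$-freeness of $\GQ_3$ — as opposed to freeness only after $\otimes K$ — would not follow. A secondary point needing care is the verification that $\mathbf{r}$ dies in all six surviving representations, which is what underlies the lower bound $\dim_K\GQ_3\otimes K\ge 20$.
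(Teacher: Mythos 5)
Your overall architecture (reduce the $24$-element basis $\mathcal{B}_1$ of $H_3$ to $20$ monomials by eliminations with \emph{unit} pivots, get the lower bound $20$ from the vanishing of $r_1$ in the six representations other than $U_{b,c}$, and deduce (3) from the mirror automorphism) is reasonable and genuinely different from the paper, which proceeds by computer-guessed rewriting systems verified inside $H_3$ via $\Phi_{H_3}$, together with the known equality $\dim_K \GQ_3\otimes K=20$. But the decisive step -- the one you yourself flag as ``the main obstacle'' -- does not go through as you describe it. Write $\mathbf{r}'$ for the right-hand side of relation (1), so that in $\GQ_3$ one has $s_1^{-1}s_2=\mathbf{r}'s_1^{-1}$ and hence $s_1^{-1}s_2s_1^{-1}s_2=\mathbf{r}'s_1^{-2}s_2$. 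Expanding $s_1^{-2}=\frac{1}{w}s_1+\frac{v}{w}s_1^{-1}-\frac{u}{w}$ (with $u=a+b+c$, $v=ab+ac+bc$, $w=abc$), the term $a\,s_1^{-1}s_2$ of $\mathbf{r}'$ multiplied by $\frac{v}{w}s_1^{-1}s_2$ reproduces exactly the monomial $s_1^{-1}s_2s_1^{-1}s_2$ you are trying to eliminate, with coefficient $\frac{av}{w}$; a check of the remaining products (they reduce by braid moves, expansion of squares, and your reductions (a)--(c) to words not involving this monomial) shows nothing else re-creates it. So your reduction only yields $\bigl(1-\frac{av}{w}\bigr)s_1^{-1}s_2s_1^{-1}s_2$ in the span of the surviving monomials, and $1-\frac{av}{w}=-\frac{a(b+c)}{bc}$ is \emph{not} a unit of $R$, since $b+c$ is not. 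This gives spanning only after inverting $b+c$ (equivalently over $K$), which is precisely the failure mode you identify as fatal for $R$-freeness. (Your $20$-element set is in fact an $R$-basis -- one can extract this from the paper's signed rewriting tables -- but establishing that requires a different, integrally unit-pivoted identity, and producing such identities is exactly the content the paper gets from its Gr\"obner computations and then verifies through the embedding $\Phi_{H_3}$.)

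A second load-bearing step is also left unproven. Your elimination of $s_1s_2s_1^{-1}s_2$ with unit pivot $abc$ uses relation (2), i.e.\ it needs $r_2\in(r_1)$ as an exact identity over $R$, not merely that the two ideals agree after $\otimes K$ (which, once again, would only give a non-unit multiple of $r_2$ in $(r_1)$). The recipe you offer -- $s_1^2\,r_1\,s_1s_2$ ``simplifies'' to $r_2$ -- is not correct: that product contains, for instance, the term $-a^{-1}s_1^2s_2s_1^2s_2$ and several words involving $s_2^{-1}$, and it is not a scalar multiple of $r_2$. The identity that actually does the job is $s_2\,r_1\,s_1^{-1}s_2^{-1}=ab^2c^2\,r_2$, and even this the paper verifies by computing in the faithful model $\Phi_{H_3}$ (or through the quotient to the quadratic Hecke algebra $\mathcal{H}_3(b,c)$) rather than by bare rewriting. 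So both eliminations that carry your unit pivots rest on integral identities you have not produced, and the one explicit computation you do sketch delivers a statement with a $(b+c)$ denominator rather than the one required; the rest of your plan (the lower bound via the six representations, freeness from a $20$-element spanning set plus $\dim_K=20$, injectivity of $\GQ_2\to\GQ_3$, and (3) via the mirror automorphism) is fine.
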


Moreover we provide two $R$-bases, one of them being made of positive braids,
and both of them originating from explicit Gröbner bases of the defining
ideal that we determine. 

Notice that it is known (see \cite{LG}) that $\GQ_3 \otimes_R K$ has dimension $20$,
where $K$ is the field of fractions of $R$. Therefore, in order for
a collection of elements spanning $\GQ_3$ to be a $R$-basis, it is necessary and sufficient
that it has exactly 20 elements.

\subsection{The defining relation(s)}

In order to compare the two possible defining relations, notice that $s_1s_2s_1^2 s_2 = s_2 s_1 s_2 s_1 s_2$.

We let $\Phi^a : R B_3 \to M_2(R)$ denote the representation $U_{b,c}$ of section \ref{sect:H3},
and compute the image of each of these relations under $\Phi^a$, and
also the $R B_3$-module they generate inside $M_2(R)$. Note that $R$ is noetherian,
and therefore every $R$-submodule of $M_2(R) \simeq R^4$ has finite type.

We check that all the other components of $\Phi_{H_3}(r)$ are $0$ for $r \in \{ r_1,r_2 \}$.
We
get
$$
\Phi^a(r1) = \frac{(a-c)(a-b)(a^2+bc)}{abc} \left( \begin{array}{cc} 1 & \frac{b-c}{b} \\
\frac{b-c}{c} & -1 \\ \end{array} \right)
$$
{}
$$
\Phi^a(r2) = (a-b)(a-c)(a^2+bc) \left( \begin{array}{cc} b-c & -c \\
-b & c-b \\ \end{array} \right)
$$
We consider the natural projection $p : H_3 \to \mathcal{H}_3(b,c)$, where
$\mathcal{H}_3(b,c)$ is the Iwahori-Hecke algebra of type $A_2$ defined over $R$
with parameters $b,c$, that is
the quotient of the group algebra $R B_3$ by the relations $(s_i-b)(s_i-c)=0$.
A straightforward computation in the standard basis of $\mathcal{H}_3(b,c)$ shows that
$$
p(r_1) = \frac{(a-c)(a-b)(a^2+bc)}{ab^2c^2}(s_1s_2 - s_2 s_1)
$$
{}
$$
p(r_2) = (a-c)(a-b)(a^2+bc)(s_1 - s_2)
$$
Now, note that $s_2(s_1 s_2 - s_2 s_1)s_1^{-1}s_2^{-1} = (s_2s_1s_2)s_1^{-1}s_2^{-1} - s_2s_2 s_1 s_1^{-1}s_2^{-1}
= s_1s_2s_1s_1^{-1}s_2^{-1} - s_2 = s_1 - s_2$. This proves that $p(r_1)$ and $p(r_2)$ generate the same ideal
inside $\mathcal{H}_3(b,c)$. From the following commutative diagram and the fact that $\Phi_{H_3}(r_1)$ and 
$\Phi_{H_3}(r_2)$ both belong to the ideal $Mat_2(R) \times R^2$ we get that $r_1$ and $r_2$ generate the same ideal.
$$
\xymatrix{
H_3 \ar@{^(->} [r] \ar@{->>}[d] & Mat_3(R)\times Mat_2(R)^3 \times R^3 \ar@<1ex>@{->>}[d]  \\
\mathcal{H}_3(b,c) \ar@{^(->} [r] & Mat_2(R) \times R^2  \ar@{^(->} [u] 
}
$$
Actually, by the same argument we get that the relation between $p(r_1)$ and $p(r_2)$ inside $\mathcal{H}_3(b,c)$
has to lift to the equality 
$$
s_2r_1 s_1^{-1}s_2^{-1} = ab^2c^2 r_2.
$$ 
Once this equality is guessed, the proof of its validity
can also be obtained computationally by applying directly $\Phi_{H_3}$ to it.

More generally, the membership problem for an element to belong to the defining ideal
of $\GQ_3$ is easily reduced, by this method to
\begin{itemize}
\item first checking that its image under $\Phi_{H_3}$ belongs to $(a-c)(a-b)(a^2+bc) M_2(R)$
\item and, if yes, whether its image inside $\mathcal{H}_3(b,c)$ divided by $(a-c)(a-b)(a^2+bc)$
belongs to the ideal of $\mathcal{H}_3(b,c)$ generated by $s_1 - s_2$.
\end{itemize}
For this last step, since $\mathcal{H}_3(b,c) = R B_3/(s_1 - b)(s_1-c)$ and the
quotient of $B_3$ by the relation $s_1s_2^{-1}$ is $B_3^{ab} \simeq \Z$,
we get  $\mathcal{H}_3(b,c)/(s_1 - s_2) = R[s]/(s-b)(s-c)$,
and this last membership problem is very easy to solve.

\subsection{Automorphisms of $\GQ_n$}

The braid group $B_n$ admits a group automorphism characterized by
the property that each generator $s_i$ is mapped to its inverse $s_i^{-1}$.
The image of a given braid is usually called its mirror image. It induces an automorphism of
$H_n$ as a $\Z$-algebra through via $s_i \mapsto s_i^{-1}$, $a \mapsto a^{-1}$,
$b \mapsto b^{-1}$, $c \mapsto c^{-1}$. We denote this automorphism by $\phi$. 

Now, the braid group $B_n$, as any other group, admits a group \emph{anti}automorphism
mapping every element to its inverse. It induces an \emph{anti}automorphism of $H_n$
as a $\Z$-algebra that maps $s_i, a,b,c$ to the same images as $\phi$. We denote this
antiautomorphism by $\psi$. Note that $\phi \circ \psi = \psi \circ \phi$ is a \emph{$R$-algebra} antiautomorphism
of $H_n$ which maps $s_i$ to $s_i$.

Direct computation (either by hand or using a computer implementation
of the injective morphism $\Phi_{H_3}$) proves that 
$\phi(r1) = a^{-2} r1$
and $\psi(r1) = -a^{-2}r1$.
Therefore $\phi \circ \psi(r1) = \phi( -a^{-2} r1) = - a^2 \phi(r1) = - r_1$.

This proves that $\phi$ induces a $\Z$-algebra automorphism of $\GQ_n$,
that $\psi$ induces a $\Z$-algebra anti-automorphism of $\GQ_n$,
and that $\phi \circ \psi = \psi \circ \phi$ induces a $R$-algebra anti-automorphism
of $\GQ_n$.

\subsection{A Gröbner basis with positive words}

Using the GBNP package of GAP4 (see \cite{GBNP}) on specialized rational values of $a,b,c$,
 we guess a (noncommutative) Gröbner basis for $\GQ_3$. The rewriting system corresponding to it is
the following one. Here and later on, we have used for concision the convention that the empty word is denoted $\emptyset$ and the Artin generators $s_i$, $s_i^{-1}$ are denoted $i$ and $\bar{i}$, respectively.

$$
\begin{array}{clcl}
(1) & 111 & \leadsto & \Sigma_1. 11 - \Sigma_2. 1 + \Sigma_3. \emptyset \\
(2) & 222 & \leadsto & \Sigma_1. 22 - \Sigma_2. 2 + \Sigma_3 .\emptyset \\
(3) & 212 & \leadsto & 121 \\
(4) & 1211 2 & \leadsto & 11 2 11 + a. 2 11 2 - a. 1 22 1
+ a^2 .22 1 - a^2. 2 11 \\ & & & + a^2. 1 22 - a^2. 112 
- a^3. 22 + a^3. 11 + a^4. 2 - a^4 .1 \\
(5) & 21121 & \leadsto & 11211 + a.2112 - a. 1221 + a^2. 221- a^2.211\\ & & & +a^2.122-a^2.112-a^3.22+a^3.11+a^4.2 - a^4.1 \\
(6) & 12211 & \leadsto & 11221 + a.2211 +(a^2+\Sigma_2)a^{-1}.1211 - a.1122 - (a^2+\Sigma_2)a^{-1}.1121 \\ & & & - a^2.221 - (a^2+\Sigma_2).211+a^2.122 + (a^2+\Sigma_2).112\\ & & & +(a^2+\Sigma_2)a.21 - (a^2+\Sigma_2)a.12 \\
(7) & 21122 & \leadsto & 11221 + \Sigma_1.2112 - \Sigma_2.1221 + \Sigma_2.221 - \Sigma_2.211 \\
(8) & 22112 & \leadsto & 12211 + \Sigma_1.2112 - \Sigma_1.1221 + \Sigma_2.122 - \Sigma_2.112
\end{array}
$$

Notice that, inside $B_3$, we have $21121 = 21212 = 12112$, whence the validity of (5) is equivalent
to the validity of (4). Using $\Phi_{H_3}$ we check that the relations (7) and (8) are actually true inside $H_3$.
Relation (6) is mapped inside $(a^2+bc)(a-b)(a-c)M_2(R)$, and its image inside $\mathcal{H}_3(b,c)$
is equal to $(a^2+bc)(a-b)(a-c)a^{-1}(s_1 s_2 - s_2 s_1)$. Therefore it is valid inside $\GQ_3$, and
actually could be taken as a defining relation, too.

We then check (by computer) that the set of (positive) words avoiding the patterns 
$$111, \ 222, \ 212, \ 12112, \ 21121, \ 12211, \ 21122, \ 22112$$
is finite, and has exactly 20 elements. We also check that it provides a generating set
of $\GQ_3$ (by using the rewriting system described above). Actually, this
proves that $\GQ_3$ is spanned by these 20 elements even if it
is defined over the smaller ring $\Z[b,c,a,a^{-1}]$ using the relation $r_2$.

This is remarkable because $H_3$ is \emph{not}
finitely generated if defined over $\Z[a,b,c]$ (see \cite{CYCLO}). 

$$
\begin{array}{|c|c|c|c|c|}
\hline
\emptyset & 1 & 2 & 11 & 12  \\ 
21 & 22 & 112 & 121 & 122  \\ 
211 & 221 & 1121 & 1122 & 1211  \\ 
1221 & 2112 & 2211 & 11211 & 11221  \\ 
\hline
\end{array}
$$

\subsection{Two Gröbner bases with signed words}

We now construct two rewriting systems on \emph{signed words}.
The procedure is similar, as we use GBNP to find Gröbner basis on specialisations,
and then the previously described algorithm to check the validity of the relations inside $\GQ_3$.
We use two different orderings on the signed generators to find the Gröbner basis. The first one is $1 < 2 < \bar{1} < \bar{2}$
and is described in table \ref{table:GQ3grobsigned1}, 
the second one is $1 < \bar{1} < 2 < \bar{2}$ 
and is described in table \ref{table:GQ3grobsigned2}. 
In these tables, we used the conventions $u = a+b+c$, $v = ab+ac+bc$ and $w = abc$.

For the first ordering, we avoid the patterns
$$
1\bar{1}, 22, 2\bar{2}, \bar{1}1, \bar{2}2, 212, 2\bar{1}\bar{2}, \bar{1}21, \
\bar{1}\bar{2}1, \bar{2}12, \bar{2}\bar{1}2, \bar{2}\bar{1}\bar{2}, 11, \bar{1\
}\bar{1}, \bar{2}\bar{2}, \bar{1}\bar{2}\bar{1}, \bar{2}1\bar{2}, 121\bar{2}, \
12\bar{1}2, 21\bar{2}1, 21\bar{2}\bar{1}, 2\bar{1}2\bar{1}, \bar{1}2\bar{1}2
$$
and we get the following basis
$$
\begin{array}{|c|c|c|c|c|}
\hline
\emptyset & 1 & 2&  \bar{1}&  \bar{2}\\
 12 & 1\bar{2} & 21 & 2\bar{1} & \bar{1}2 \\
 \bar{1}\bar{2} & \bar{2}1 & \bar{2}\bar{1} & 121 & 12\bar{1} \\
 1\bar{2}1 & 1\bar{2}\bar{1} & 21\bar{2} & 2\bar{1}2 & \bar{1}2\bar{1}  \\
 \hline
 \end{array}
$$

For the second ordering, we avoid the patterns
$$
1\bar{1}, \bar{1}1, 22, 2\bar{2}, \bar{2}2, \bar{1}\bar{2}1, 212, 21\bar{2}, 2\
\bar{1}\bar{2}, \bar{2}12, \bar{2}\bar{1}2, \bar{2}\bar{1}\bar{2}, 11, \bar{1}\
\bar{1}, \bar{2}\bar{2}, \bar{1}\bar{2}\bar{1}, \bar{2}1\bar{2}, 12\bar{1}2, \
\bar{1}2\bar{1}2, 2\bar{1}21, 2\bar{1}2\bar{1}
$$
and we get the following basis
$$
\begin{array}{|c|c|c|c|c|}
\hline
\emptyset & 1 & \bar{1} & 2 & \bar{2} \\ 12 & 1\bar{2} & \bar{1}2 & \bar{1}\bar{2} & 21 \\
 2\bar{1} & \bar{2}1 & \bar{2}\bar{1} & 121 & 12\bar{1} \\ 1\bar{2}1 & 1\bar{2}\bar{1}
  & \bar{1}21 & \bar{1}2\bar{1} & 2\bar{1}2 \\
  \hline
  \end{array}
$$ 
We notice that these two distinct collections of signed words represent exactly the same collection
of elements inside the braid group $B_3$, since $\bar{1}21 = 21\bar{2}$ in $B_3$. In particular they provide the same basis
of $\GQ_3$.

\begin{table}
$$
\begin{array}{|clcl|clcl|}
\hline
(1) & 1\bar{1} & \leadsto & \emptyset & (2) &
22 & \leadsto & w.\bar{2}+u.2-v.\emptyset \\ (3) &
2\bar{2} & \leadsto & \emptyset& 
(4) & \bar{1}1 & \leadsto & \emptyset \\ (5) &
\bar{2}2 & \leadsto & \emptyset & (6) & 
212 & \leadsto & 121 \\  
(7) & \bar{1}21 & \leadsto & 21\bar{2} & (8) &
\bar{1}\bar{2}1 & \leadsto & 2\bar{1}\bar{2} \\ (9) &
\bar{2}12 & \leadsto & 12\bar{1}&
(10) & \bar{2}\bar{1}2 & \leadsto & 1\bar{2}\bar{1} \\ (11) &
\bar{2}\bar{1}\bar{2} & \leadsto & \bar{1}\bar{2}\bar{1} & (12) &
11 & \leadsto & w.\bar{1}+u.1-v.\emptyset\\ 
(13) & \bar{1}\bar{1} & \leadsto & v/w.\bar{1}+w^{-1}.1-u/w.\emptyset & (14) &
\bar{2}\bar{2} & \leadsto & v/w.\bar{2}+w^{-1}.2-u/w.\emptyset \\ (15) & 
121\bar{2} & \leadsto & 21 &
(16) & 12\bar{1}2 & \leadsto & w.\bar{2}1\bar{2}+u.12\bar{1}-v.\bar{2}1 \\ 
(17) & 21\bar{2}1 & \leadsto & w.\bar{1}2\bar{1}+u.21\bar{2}-v.\bar{1}2 & 
(18) & 21\bar{2}\bar{1} & \leadsto & \bar{1}2\\ 
\hline
\end{array}
$$
{}$$
\begin{array}{|clcl|}
\hline
(19) & 2\bar{1}\bar{2} & \leadsto & a^{-2}.21\bar{2}+1\bar{2}\bar{1}-a^{-2}.12\bar{1}+(-a).\bar{2}\bar{1}+a^{-1}.\bar{2}1+a.\bar{1}\bar{2}-a^{-1}.\bar{1}2+a^{-1}.2\bar{1}-a^{-3}.21\\ & & & -a^{-1}.1\bar{2}+a^{-3}.12\\ 
(20) & \bar{1}\bar{2}\bar{1} & \leadsto & a^{-2}.\bar{1}2\bar{1}-(b+c)/w.1\bar{2}\bar{1}-(1/(wa)).1\bar{2}1+(b+c)/(wa^2).12\bar{1}\\ & & & +(1/(wa^3)).121+v/w.\bar{2}\bar{1}+(1/w).\bar{2}1+a^{-1}.\bar{1}\bar{2}-a^{-3}.\bar{1}2\\ & & & -v/(wa^2).2\bar{1}-(wa^2)^{-1}.21+u/(wa).1\bar{2}\\ & & & -u/(wa^3).12-(a^2+v)/(wa).\bar{2}\\ & & & +(a^2+v)/(wa^3).2\\ 
(21) & \bar{2}1\bar{2} & \leadsto & a^2.\bar{1}\bar{2}\bar{1}+(bc)^{-1}.2\bar{1}2+(a(b+c))/(bc).1\bar{2}\bar{1}-(b+c)/w.12\bar{1}-(wa)^{-1}.121\\ & & & -(va)/(bc).\bar{2}\bar{1}+v/w.\bar{2}1-a.\bar{1}\bar{2}-u/(bc).\bar{1}2\\ & & & -a/(bc).2\bar{1}+w^{-1}.21+a^{-1}.1\bar{2}+(u/(wa)).12\\ & & & +((a^2+v)/bc).\bar{1}-(a^2+v)/(wa).1\\ 
(22) & 2\bar{1}2\bar{1} & \leadsto & (bc)^{-1}.21\bar{2}1+a^2.1\bar{2}\bar{1}\bar{1}+(-1).12\bar{1}\bar{1}-a^3.\bar{2}\bar{1}\bar{1}
+(a(a^2+bc)).\bar{1}\bar{2}\bar{1}+(-(a^2+bc)/a).\bar{1}2\bar{1}\\
 & & & +a.2\bar{1}\bar{1}+a^{-1}.2\bar{1}2+(-(a^2+v)/w).21\bar{2}+(b+c-2a).1\bar{2}\bar{1}+a^{-1}.1\bar{2}1\\ 
& & & -(b+c-a)/a^2).12\bar{1}-a^{-3}.121+a^2.\bar{2}\bar{1} +(-1).\bar{2}1+(-(a^2+bc)).\bar{1}\bar{2} \\
 & & &  +((a^3(b+c)+2a^2bc+b^2c^2)/(wa)).\bar{1}2\\
 & & & +(((b+c)a^3+(b+c)^2a^2+2w(b+c)+(bc)^2)/(wa)).2\bar{1}\\ & & & +( (a^2 + bc+v)/(wa)).21\\ & & & -(b+c)a^{-1}.1\bar{2}+(u/a^3).12+a.\bar{2}-(a+b)(a+c)v/w.\bar{1}\\ & &  & -(a^4+2a^3(b+c)+a^2(b^2+5bc+c^2)+2w(b+c)+b^2c^2)/(wa^2).2\\ & & & +(-(a^2+v)/w).1+((a+b)^2(a+c)^2/(wa)).\emptyset\\ 
(23) & \bar{1}2\bar{1}2 & \leadsto & a.\bar{1}2\bar{1}+a^{-1}.2\bar{1}2-a^{-1}.21\bar{2}-a.1\bar{2}\bar{1}+a^2.\bar{2}\bar{1}\\ & & & +v.\bar{1}\bar{2}+(uv/w).\bar{1}2+(-1).2\bar{1}+a^{-2}.21+1\bar{2}\\ & & & +(u/w).12+(-(a^2+v)/a).\bar{2}+(-(a+b)(a+c)v/w).\bar{1}\\ & & & +(-u(a^2+v)/(wa)).2\\ & & & +(-(a^2+v)/w).1+((a+b)^2(a+c)^2/(wa)).\emptyset\\ 
\hline
\end{array}
$$
\caption{Rewriting system for $\GQ_3$ from $1 < 2 < \bar{1} < \bar{2}$}
\label{table:GQ3grobsigned1}
\end{table}

\begin{table}
$$
\begin{array}{|clcl|clcl|}
\hline
(1) & 1\bar{1} & \leadsto & \emptyset &
(2) & \bar{1}1 & \leadsto & \emptyset\\ 
(3) & 22 & \leadsto & (w).\bar{2}+(u).2+(-v).\emptyset & 
(4) & 2\bar{2} & \leadsto & \emptyset\\ 
(5) &\bar{2}2 & \leadsto & \emptyset &
(6) & 212 & \leadsto & 121\\ 
(7) & 21\bar{2} & \leadsto & \bar{1}21 & (8) & 2\bar{1}\bar{2} & \leadsto & \bar{1}\bar{2}1\\ 
(9) & \bar{2}12 & \leadsto & 12\bar{1} & (10) & 
\bar{2}\bar{1}2 & \leadsto & 1\bar{2}\bar{1}\\ 
(11) & \bar{2}\bar{1}\bar{2} & \leadsto & \bar{1}\bar{2}\bar{1} & (12) & 11 & \leadsto & (w).\bar{1}+(u).1+(-v).\emptyset\\ 
(13) & \bar{1}\bar{1} & \leadsto & (v/w).\bar{1}+(1/w).1+(-u/w).\emptyset & (14) & \bar{2}\bar{2} & \leadsto & (v/w).\bar{2}+(1/w).2+(-u/w).\emptyset\\ 
(15) & 12\bar{1}2 & \leadsto & (w).\bar{2}1\bar{2}+(u).12\bar{1}+(-v).\bar{2}1 & & & & \\
\hline
\end{array}
$$
{}
$$
\begin{array}{|clcl|}
\hline
 &  & & \ \\
(16) & \bar{1}\bar{2}1 & \leadsto & a^{-2}.\bar{1}21+1\bar{2}\bar{1}-a^{-2}.12\bar{1}-a.\bar{2}\bar{1}+a^{-1}.\bar{2}1+a^{-1}.2\bar{1}-a^{-3}.21+a.\bar{1}\bar{2} \\
& & & -a^{-1}.\bar{1} 2 - a^{-1}. 1 \bar{2} + a^{-3}.12\\
(17) & \bar{1}\bar{2}\bar{1} & \leadsto & a^{-2}.\bar{1}2\bar{1}-(b+c)w^{-1}.1\bar{2}\bar{1}-(wa)^{-1}.1\bar{2}1+((b+c)/(wa^2)).12\bar{1}\\
 & & & +(1/(wa^3)).121+(v/w).\bar{2}\bar{1}+w^{-1}.\bar{2}
1-(v/(wa^2)).2\bar{1}-(1/(wa^2)).21+a^{-1}.\bar{1}\bar{2}\\ & & & -a^{-3}.\bar{1}2+(
u/(wa)).1\bar{2}-(u/(wa^3)).12-((a^2+v)/(wa)).\bar{2}+((a^2+v)/(wa^3)).2\\ 
(18) & \bar{2}1\bar{2} & \leadsto & (bc)^{-1}.2\bar{1}2+\bar{1}2\bar{1}-(bc)^{-1}.1\bar{2}1+((a^2+v)/w).\bar{2}1-((a^2+v)/w).2\bar{1}\\
 & & &-((a^2+v)/w).\bar{1}2+((a^2+v)/w).1\bar{2}-((a^2+v)/(bc)).\bar{2}+((a^2+v)/(wa)).2
\\ & &  &+((a^2+v)/(bc)).\bar{1}-((a^2+v)/(wa)).1\\ 
(19) & \bar{1}2\bar{1}2 & \leadsto & -(bc).\bar{1}2\bar{1}\bar{1}+((bc)/a^2).12\bar{1}\bar{1}+a^{-1}.2\bar{1}2+((a^2+v)/a).2\bar{1}\bar{1}+((a^2+v)/a).\bar{1}2\bar{1}\\
& & & -a.1\bar{2}\bar{1}-(v/a^3).12\bar{1}-a^{-3}.121+a^2.\bar{2}\bar{1}\\
& & & -((a^3(b+c)+a^2(b^2+c^2+4bc) + 2w(b+c) + (bc)^2)/(wa)).2\bar{1}-(u/w).21
+v.\bar{1}\bar{2}\\ & & & +((u(b+c))/(bc)).\bar{1}2-((a^2+v)).\bar{1}\bar{1}+1\bar{2}+(u(a^2+bc)/(wa^2)).12-((a^2+v)/a).\bar{2}\\ & & & +((a^2+v)/a^2).\emptyset\\ 
(20) & 2\bar{1}21 & \leadsto & 12\bar{1}2+u.\bar{1}21-u.12\bar{1}+
v.\bar{2}1-v.1\bar{2}\\ 
(21) & 2\bar{1}2\bar{1} & \leadsto & \bar{1}2\bar{1}2+v.\bar{2}\bar{1}+((uv)/w).2\bar{1}+(u/w).21-v.\bar{1}\bar{2}-((uv)/w).\bar{1}2-(u/w).12 \\
\hline
\end{array}
{}
$$
\caption{Rewriting system for $\GQ_3$ from $1 < \bar{1} < 2 <   \bar{2}$}
\label{table:GQ3grobsigned2}
\end{table}

\subsection{$\GQ_3$ as a $\GQ_2$-bimodule} \label{sect:GQ3asGQ2bimodule}
An immediate consequence of the basis found above
is that$$
\GQ_3 = \sum_{k=0}^2 \GQ_2 s_2^k \GQ_2  + R s_2 s_1^{-1}s_2
$$
as a $R$-module.
Let $M_1 = \sum_{k=0}^2 \GQ_2 s_2^k \GQ_2  \subset \GQ_3$.
From the basis with signed words described above, it
is clear that the bimodule $\GQ_3 /M_1$ is a free $R$-module of rank $1$
spanned over $R$ by the element $s_2 s_1^{-1}s_2$, and that $M_1$ is a free $R$-module
of rank $19$.
Since $s_1.s_2s_1^{-1}s_2 = 
(s_1s_2s_1^{-1})s_2 = 
s_2^{-1}s_1s_2s_2 = 
s_2^{-1}s_1(s_2^2) = w.s_2^{-1}s_1s_2^{-1}+u.(s_2^{-1}s_1s_2) - v.s_2^{-1}s_1
= w.s_2^{-1}s_1s_2^{-1}+u.s_1s_2s_1^{-1} - v.s_2^{-1}s_1 \equiv w.s_2^{-1}s_1s_2^{-1} \mod M_1$.
 Now relation (21) of table \ref{table:GQ3grobsigned1} states that $s_2^{-1} s_1 s_2^{-1} \equiv (bc)^{-1} s_2 s_1^{-1} s_2 \mod M_1$
hence $s_1. s_2s_1^{-1}s_2  \equiv a .s_2s_1^{-1}s_2 \mod M_1$.
Similarly, $s_2s_1^{-1}s_2.s_1 = s_2(s_1^{-1}s_2s_1) = 
s_2^2s_1s_2^{-1} = w. s_2^{-1}s_1s_2^{-1}  + u. s_2s_1s_2^{-1}  - v .s_1s_2^{-1} 
\equiv  w. s_2^{-1}s_1s_2^{-1}  \equiv  w(bc)^{-1}. s_2s_1^{-1}s_2 \equiv a.s_2s_1^{-1}s_2 \mod M_1$.
If $S_x$ denotes the $\GQ_2$-module defined by $s_1 \mapsto x$, this proves that  $\GQ_3/M_1 \simeq S_a \otimes S_a$
as a $\GQ_2$-bimodule. As a by-product we get the following alternative
descriptions of $\GQ_3$ as a $R$-module :
$$
\GQ_3 = \sum_{k=0}^2 \GQ_2 s_2^k \GQ_2  + R s_2^{-1} s_1s_2^{-1}
$$

Let $M_2 = \GQ_2 \subset M_1$. We know from the basis description that $M_2$ is a free $R$-module of rank $3$
such that $M_1/M_2$ is a free $R$-module of rank $16$.
We consider $M_+ \subset M'_1 = M_1/M_2$ the bimodule generated by $s_2$.
It is spanned as a $R$-module by $2$, $12$, $\bar{1}2$, $2 \bar{1}$, $21$,
$121$, $12\bar{1}$, $\bar{1}2 1$, $\bar{1}2 \bar{1}$, and since they belong to the
basis of $\GQ_3$ and are disjoint from the part of it providing a basis of $M_2$,
we get that they provide a basis of $M_+$. Therefore, $M_+ \simeq \GQ_2 \otimes \GQ_2$ as a $\GQ_2$-bimodule. We then consider $M'_1/M_+$. It is a free
$R$-module of rank $7$, and is spanned by $\bar{2}$, $1\bar{2}$,
$\bar{1}\bar{2}$, $\bar{2} \bar{1}$, $\bar{2}1$, $1 \bar{2} 1$,  $1 \bar{2} \bar{1}$.
Clearly we have a natural surjective map $\GQ_2 \otimes \GQ_2 \to M'_1/M_+$,
defined by $x \otimes y \mapsto x \bar{2}y$. Inside the kernel we find
$\bar{1} \bar{2} 1 - 1 \bar{2}\bar{1} +a.\bar{2}\bar{1} - a^{-1}.\bar{2}1 - a.\bar{1}\bar{2} + a^{-1}.1 \bar{2}$
by relation (16) of table \ref{table:GQ3grobsigned2} and
$$\bar{1} \bar{2} \bar{1}+(b+c)w^{-1}. 1 \bar{2}\bar{1} + (wa)^{-1}. 1 \bar{2}1  - (v/w).\bar{2}\bar{1} -w^{-1}\bar{2}1 - a^{-1}\bar{1}\bar{2} - (u/(wa)).1\bar{2} + ((a^2+v)/(wa)) \bar{2}$$
by relation (17) of table \ref{table:GQ3grobsigned2}.

\subsection{Another basis and some special computations}
$$
\begin{array}{|c|c|c|c|c|}
\hline
\emptyset & \bar{2} & \bar{2}\bar{1} & \bar{2}1 & \bar{1} \\ 
\bar{1}\bar{2} & \bar{1}2 & 1 & 1\bar{2} & 12 \\ 
2 & 2\bar{1} & 21 & \bar{2}\bar{1}\bar{2} & 2\bar{1}\bar{2} \\ 
21\bar{2} & \bar{2}\bar{1}2 & 212 & \bar{2}1\bar{2} & \bar{1}2\bar{1} \\ 
\hline
\end{array}$$

We claim that the above list of 20 elements provides an alternative basis for $\GQ_3$. This can be checked using
$\Phi_{H_3}$ and express each element of this list as a linear combination over $K$ of one of the previously
obtained $R$-basis. Then, one checks that both the corresponding $K$-valued matrix and its inverse have all their
coefficients inside $R$, which proves our claim.

An interest of this basis is that the 19 first elements of the basis belong to
the $R$-submodule $u_1u_2+ \bar{2}u_1u_2 + u_2u_1\bar{2} + R. 121$, which will have
some importance in our computations. In particular, by expressing $\bar{2}1\bar{2}1\bar{2}$ in this
basis one gets the following identities 
\begin{equation} \label{eq2b12b12b}
\bar{2}1\bar{2}1\bar{2} \equiv \frac{bc}{a^2} .\bar{1}2\bar{1} + \frac{bc-a^2}{a^4bc}.121
\ \ \mod u_1u_2+ u_2u_1 + \bar{2}u_1u_2 + u_2u_1\bar{2} 
\end{equation}

\begin{equation}
2\bar{1}2\bar{1}2 \equiv a^2 .\bar{1}2\bar{1} \ \ \mod u_1u_2+ u_2u_1 \bar{2}u_1u_2 + u_2u_1\bar{2} + R. 121
\end{equation}

\subsection{Spanning sets for specific $\GQ_3$-modules}

\begin{lemma} \label{lem:quotA3dim5}
The quotient of $\GQ_3$ by the left ideal generated by $(2- a.\emptyset)1$ and $(2 - a.\emptyset)(\emptyset-a.\bar{1})$
is spanned as a $R$-module by $\emptyset, 1 , \bar{1}, 2, \bar{2}$.
\end{lemma}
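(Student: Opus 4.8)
The plan is to work with the left ideal $I \subset \GQ_3$ generated by $g_1 = (s_2 - a) s_1$ and $g_2 = (s_2-a)(1 - a s_1^{-1})$, so that $\GQ_3/I$ is the cyclic left $\GQ_3$-module generated by the class $\bar 1$ of $1$, and to prove that the $R$-submodule $N = R\bar 1 + R\bar s_1 + R\bar s_1^{-1} + R\bar s_2 + R\bar s_2^{-1}$ is stable under left multiplication by $s_1^{\pm 1}$ and $s_2^{\pm 1}$. Since $\GQ_3$ is generated as an $R$-algebra by $s_1$ and $s_2$, this forces $N = \GQ_3 \bar 1 = \GQ_3/I$, which is exactly the assertion. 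Throughout I write $\equiv$ for congruence mod $I$ and use repeatedly that, $I$ being a \emph{left} ideal, $u \equiv v$ implies $zu \equiv zv$ for every $z \in \GQ_3$ — but \emph{not} $uz \equiv vz$ — and that the image of $\GQ_2 = R\,1 + R s_1 + R s_1^{-1}$ is contained in $N$.

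First I would extract from $g_1, g_2$ and their left translates $s_2^{-1}g_1, s_2^{-1}g_2 \in I$ the four congruences $s_2 s_1 \equiv a s_1$, $s_2 s_1^{-1} \equiv a^{-1}s_2 - 1 + a s_1^{-1}$, $s_2^{-1}s_1 \equiv a^{-1}s_1$ and $s_2^{-1}s_1^{-1} \equiv -a^{-2} + a^{-1}s_1^{-1} + a^{-1}s_2^{-1}$; all four right-hand sides lie in $N$. Combined with $s_2^{\pm 1}\cdot 1 \in N$, with $s_2^{2}, s_2^{-2} \in R s_2 + R\,1 + R s_2^{-1} \subseteq N$ coming from the cubic relation on $s_2$, and with $s_2 s_2^{-1} = s_2^{-1}s_2 = 1$, these congruences immediately give $s_2 N \subseteq N$ and $s_2^{-1} N \subseteq N$.

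The only real work is the stability of $N$ under left multiplication by $s_1$, and there the only nontrivial cases are $\bar s_2$ and $\bar s_2^{-1}$ (since $s_1\cdot 1, s_1^2$ and $s_1 s_1^{-1}$ all land in $\GQ_2 \subseteq N$). Here I would invoke the braid relation $s_1 s_2 s_1 = s_2 s_1 s_2$ of $B_3$, which yields the two identities $s_1 s_2 = s_2^{-1}s_1\,(s_2 s_1)$ and $s_1 s_2^{-1} = s_2^{-1}s_1^{-1}\,(s_2 s_1)$ in $\GQ_3$. Applying the congruence $s_2 s_1 \equiv a s_1$ (as a left translate, which is legitimate) gives $s_1 s_2^{-1} \equiv a s_2^{-1}$ at once, and $s_1 s_2 \equiv a\, s_2^{-1}s_1^{2}$; reducing $s_2^{-1}s_1^{2}$ by the cubic relation $s_1^{2} = \Sigma_1 s_1 - \Sigma_2 + \Sigma_3 s_1^{-1}$ and then by the two congruences for $s_2^{-1}s_1$ and $s_2^{-1}s_1^{-1}$ shows $s_1 s_2 \in N$. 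Hence $s_1 N \subseteq N$. Finally, the cubic relation expresses $s_1^{-1} = \Sigma_3^{-1}(s_1^{2} - \Sigma_1 s_1 + \Sigma_2)$ as an $R$-combination of $1, s_1, s_1^{2}$, so $s_1^{-1}N \subseteq N + s_1 N + s_1^{2}N \subseteq N$, and the proof is complete. The single step demanding care — and the obvious place to go wrong — is this use of the braid relation: every reduction must be carried out by multiplying a congruence on the left, never on the right.
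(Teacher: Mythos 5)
Your proposal is correct and follows essentially the same route as the paper: both proofs show the five-element span is stable under left multiplication by $s_1^{\pm 1},s_2^{\pm 1}$, using the ideal generators (and their left translates by $s_2^{-1}$) to rewrite $s_2^{\pm 1}s_1^{\pm 1}$, and the braid-relation identities $s_1s_2=s_2^{-1}s_1(s_2s_1)$, $s_1s_2^{-1}=s_2^{-1}s_1^{-1}(s_2s_1)$ together with the cubic relation to handle $s_1s_2^{\pm 1}$. The only difference is organizational: the paper deduces stability under $s_2^{-1}$ from stability under $s_2$ via the cubic relation, whereas you derive the $s_2^{-1}$-congruences explicitly, which amounts to the same computation.
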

\begin{proof}
Let us consider the left ideal $I$ generated by $(2- a.\emptyset)1$ and $(2 - a.\emptyset)(\emptyset-a.\bar{1})$,
and $V$ the $R$-submodule spanned by $\emptyset, 1 , \bar{1}, 2, \bar{2}$.
Since $\GQ_3$ is generated as a unital algebra by $1$ and $2$ it is sufficient to prove
$2.V \subset V + I$ and $1.V \subset V+I$. We start with the former.
We have $21 = (2 - a.\emptyset)1 + a.1 \in a.1 + I \subset V + I$.
Moreover 
$$2 \bar{1} = (2- a.\emptyset)\bar{1} + a.\bar{1}
= -a^{-1}(2- a.\emptyset)(\emptyset - a .\bar{1}) + a^{-1}.2 - \emptyset + a.\bar{1} \in V + I$$
hence $2.V \subset V+I$. Now, $$12 = \bar{2}212 = \bar{2}121 \in \bar{2}1(a.1 + I)
\subset \bar{2}(a.11 + I) \subset \bar{2}(V + I) \subset V+I.
$$
Finally, $1\bar{2} = \bar{2}21\bar{2} = \bar{2}\bar{1}21  \in \bar{2}\bar{1}(a.1 + I) = a.\bar{2} + I \subset V+I$
hence $1.V \subset V+I$ and this proves the claim.
\end{proof}

\section{Structure on 4 strands}

It was determined in \cite{LG} that $\dim \GQ_4 \otimes_R K  = 264$.
Moreover, there are models of all the irreducible representations
of $H_4$ which are defined over $R' = \Z[j][a^{\pm 1},b^{\pm 1},c^{\pm 1}]$,
and this provides an embedding $\Phi_{H_4}$ of $H_4$
inside a product of matrix algebras over $R'$. These models can be found in \cite{LG},
too.

\subsection{$\GQ_4$ as a $\GQ_3$-bimodule}

We denote $u_i = R + R s_i + R s_i^2 = R + R s_i + R s_i^2$ the $R$-subalgebra of $\GQ_4$ generated by $s_i$,
and $\GQ_3,\GQ_2$ the subalgebra of $\GQ_4$  generated by $s_1,s_2$ and $s_1$. Obviously $\GQ_2 = u_1$.
We first basically use the decompositions 
$$\GQ_3 = u_1 u_2 u_1 + u_2 u_1 u_2
= 
u_1 u_2 u_1 + R s_2 s_1^{-1} s_2 
= u_1 u_2 u_1 + R s_2^{-1} s_1 s_2^{-1}.$$
which follow from theorem \ref{theo:structmodGQ3}.

We set $\GQ_4^{(1)} = \GQ_3 u_3 \GQ_3$ and $\GQ_4^{(i+1)} = \GQ_4^{(i)} u_3 \GQ_3$.

\begin{lemme} \label{lemGQ4L1} {\ }
\begin{enumerate}
\item $u_3 u_2 u_3 \GQ_3 u_3 \subset \GQ_4^{(2)}$
\item $u_3 \GQ_3 u_3u_2 u_3  \subset \GQ_4^{(2)}$
\end{enumerate}
\end{lemme}
\begin{proof}
We first show (1). We have $\GQ_3 = u_1 u_2 u_1 + u_2 u_1 u_2$
hence $u_3 u_2 u_3 \GQ_3 u_3 \subset u_3 u_2 u_3 u_1 u_2 u_1 u_3 + u_3 u_2 u_3 u_2 u_1 u_2 u_3$.
From $u_3 u_2u_3 u_2 = u_3 u_2u_3 + u_2 u_3 u_2$ we get
$(u_3 u_2u_3 u_2) u_1 u_2 u_3 \subset u_3 u_2u_3 u_1 u_2 u_3 + \GQ_4^{(2)}$.
Since $u_3 u_2u_3 u_1 u_2 u_1 u_3 = u_3 u_2 u_3 u_1 u_2 u_3 u_1 \subset u_3 u_2u_3 u_1 u_2 u_3 \GQ_3$
it is sufficient to show that $u_3 u_2 u_3 u_1 u_2 u_3 \subset \GQ_4^{(2)}$. From $u_3 = R + R s_3 + R s_3^{-1}$ we
deduce $u_3 u_2 u_3 u_1 u_2 u_3 \subset \GQ_4^{(2)} + \sum_{\alpha \in \{-1,1 \}} s_3^{\alpha} u_2 u_3 u_1 u_2 u_3$.
But $s_3^{\alpha} u_2 u_3 u_1 u_2 u_3 = s_3^{\alpha}u_2 u_1 u_3 u_2 u_3 
\subset  s_3^{\alpha}u_2 u_1 s_3^{-\alpha}s_2^{\alpha}s_3^{-\alpha} + s_3^{\alpha} u_2 u_1 u_2 u_3 u_2
\subset (s_3^{\alpha} u_2 s_3^{-\alpha}) u_1 s_2^{\alpha} s_3^{-\alpha} + \GQ_4^{(2)} \subset \GQ_4^{(2)}$.
This proves (1), and (2) can be deduced from it, or can be proven similarly.

\end{proof}

\begin{proposition} \label{propGQ4P1} $\GQ_4 = \GQ_4^{(2)}$.
\end{proposition}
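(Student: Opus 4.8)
The goal is to show $\GQ_4 = \GQ_4^{(2)} = \GQ_3 u_3 \GQ_3 u_3 \GQ_3$. Since $\GQ_4$ is generated as an algebra by $\GQ_3$ (which contains $s_1,s_2$) together with $s_3$, and since $\GQ_4^{(2)}$ is visibly a two-sided $\GQ_3$-submodule containing $1$, it suffices to prove that $\GQ_4^{(2)}$ is stable under right multiplication by $s_3^{\pm 1}$, i.e.\ that $\GQ_4^{(2)} u_3 \subseteq \GQ_4^{(2)}$, equivalently $\GQ_3 u_3 \GQ_3 u_3 \GQ_3 u_3 \subseteq \GQ_4^{(2)}$. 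The natural strategy is to reduce, using Theorem~\ref{theo:structmodGQ3}, the middle factor $\GQ_3$ to its pieces. Write $\GQ_3 = u_1 u_2 u_1 + u_2 u_1 u_2$ and recall $u_1 = \GQ_2$ commutes with $u_3$ (disjoint strands). So $\GQ_3 u_3 \GQ_3 u_3 \GQ_3 u_3 \subseteq \GQ_3 u_3 (u_1 u_2 u_1) u_3 \GQ_3 u_3 + \GQ_3 u_3 (u_2 u_1 u_2) u_3 \GQ_3 u_3$. In the first term we slide the outer $u_1$'s past $u_3$, landing in $\GQ_3 u_3 u_2 u_3 \GQ_3 u_3$ up to factors already in $\GQ_3$; in the second term, $u_3 u_2 u_1 u_2 u_3 = u_1 u_3 u_2 u_3$ (moving $u_1$ out), again reducing to $\GQ_3 u_3 u_2 u_3 \GQ_3 u_3$. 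Thus everything comes down to showing $u_3 u_2 u_3 \GQ_3 u_3 \subseteq \GQ_4^{(2)}$, which is precisely part (1) of Lemma~\ref{lemGQ4L1}, together with the symmetric statement (2) applied after using $\GQ_3 = u_1 u_2 u_1 + u_2 u_1 u_2$ on the outer factors.

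More concretely, I would organize the argument as: \emph{(a)} observe $\GQ_4^{(2)}$ is a $\GQ_3$-bimodule containing $\GQ_3$ and $\GQ_3 u_3 \GQ_3 = \GQ_4^{(1)}$, so it is enough to show $\GQ_4^{(2)} u_3 \subseteq \GQ_4^{(2)}$, i.e.\ $\GQ_4^{(1)} u_3 \GQ_3 u_3 \subseteq \GQ_4^{(2)}$; \emph{(b)} expand $\GQ_4^{(1)} u_3 \GQ_3 u_3 = \GQ_3 u_3 \GQ_3 u_3 \GQ_3 u_3$ and use $\GQ_3 = u_1u_2u_1 + u_2u_1u_2$ on the \emph{middle} copy of $\GQ_3$, commuting $u_1$ with $u_3$ to get the inclusion into $\GQ_3 u_3 u_2 u_3 \GQ_3 u_3 + \GQ_3 u_1 u_3 u_2 u_3 \GQ_3 u_3 \subseteq \GQ_3 u_3 u_2 u_3 \GQ_3 u_3$ (absorbing the stray $u_1$ into the left $\GQ_3$); \emph{(c)} apply Lemma~\ref{lemGQ4L1}(1), which gives $u_3 u_2 u_3 \GQ_3 u_3 \subseteq \GQ_4^{(2)}$, hence $\GQ_3 \cdot (u_3 u_2 u_3 \GQ_3 u_3) \subseteq \GQ_3 \GQ_4^{(2)} = \GQ_4^{(2)}$ since $\GQ_4^{(2)}$ is a left $\GQ_3$-module. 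The only subtlety is making sure the manipulations stay inside $\GQ_4^{(2)}$ at each step rather than growing to $\GQ_4^{(3)}$; Lemma~\ref{lemGQ4L1} is exactly designed to control this, since it shows certain length-increasing products collapse back into $\GQ_4^{(2)}$.

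The main obstacle, and the reason the lemma above is stated the way it is, is the braid relation gymnastics hidden inside Lemma~\ref{lemGQ4L1}: the key reductions $s_3^\alpha u_2 u_3 u_1 u_2 u_3 \subseteq (s_3^\alpha u_2 s_3^{-\alpha}) u_1 s_2^\alpha s_3^{-\alpha} + \GQ_4^{(2)}$ rely on $u_3 u_2 u_3 u_2 = u_3u_2u_3 + u_2u_3u_2$ (a consequence of the cubic relation and the $\GQ_3$-structure applied to the subalgebra on strands $2,3,4$) and on moving $s_1$-factors through. For the proof of Proposition~\ref{propGQ4P1} proper, assuming Lemma~\ref{lemGQ4L1}, the remaining work is essentially bookkeeping. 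So I expect the proof of the proposition to read: \emph{``Since $\GQ_4^{(2)}$ is a $\GQ_3$-sub-bimodule of $\GQ_4$ containing $1$, it suffices to show $\GQ_4^{(2)} u_3 \subseteq \GQ_4^{(2)}$. Using $\GQ_3 = u_1u_2u_1 + u_2u_1u_2$ and the fact that $u_1$ commutes with $u_3$, we reduce $\GQ_4^{(1)} u_3 \GQ_3 u_3$ to $\GQ_3 \cdot u_3 u_2 u_3 \GQ_3 u_3$, which lies in $\GQ_4^{(2)}$ by Lemma~\ref{lemGQ4L1}(1) and the fact that $\GQ_4^{(2)}$ is a left $\GQ_3$-module. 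Hence $\GQ_4 = \sum_{k\geq 0} \GQ_4^{(k)} = \GQ_4^{(2)}$.''}
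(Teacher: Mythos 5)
Your reduction of the proposition to Lemma~\ref{lemGQ4L1} breaks down at the one place where the real difficulty sits. The identity you use for the second term, ``$u_3 u_2 u_1 u_2 u_3 = u_1 u_3 u_2 u_3$ (moving $u_1$ out)'', is false: $u_1$ commutes with $u_3$ but not with $u_2$, and in $u_2 u_1 u_2$ the factor $u_1$ is trapped between two copies of $u_2$, so it cannot be slid past anything. By Theorem~\ref{theo:structmodGQ3} one has $u_2u_1u_2 = u_1u_2u_1 + R\,s_2s_1^{-1}s_2$, so after handling the $u_1u_2u_1$ part (which is the easy case you treat correctly) you are left with terms of the form $s_3^{\alpha}\,s_2^{\pm 1}s_1^{\mp 1}s_2^{\pm 1}\,s_3^{\beta}$, and these do \emph{not} visibly lie in $\GQ_3 u_3u_2u_3\GQ_3$: indeed $w_+ = s_3s_2s_1^{-1}s_2s_3$ survives as an independent generator in Proposition~\ref{propGQ4P2} and as the basis element $e_{22}$ of Theorem~\ref{theo:A4libre}, so it is not in $\GQ_3 u_3u_2u_3\GQ_3 \subset \GQ_3 u_3\GQ_3 + \GQ_3\,s_3s_2^{-1}s_3\,\GQ_3$. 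Hence your claimed inclusion of $\GQ_3 u_3\GQ_3 u_3\GQ_3 u_3$ into $\GQ_3 u_3u_2u_3\GQ_3 u_3$ is unjustified, and with it the whole ``bookkeeping'' step.

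The paper's proof has to do genuine work exactly here. It reduces to $s_3^{\alpha}\GQ_3 s_3^{\beta}\GQ_3 u_3 \subset \GQ_4^{(2)}$ and decomposes \emph{both} inner copies of $\GQ_3$, using the flexibility $\GQ_3 = R\,s_2^{\alpha}s_1^{-\alpha}s_2^{\alpha} + u_1u_2u_1 = R\,s_2^{-\alpha}s_1^{\alpha}s_2^{-\alpha} + u_1u_2u_1$ to choose the sign of the exceptional element to match the adjacent $s_3^{\alpha}$. The mixed terms are absorbed by both parts of Lemma~\ref{lemGQ4L1} (your plan never invokes part (2), which is needed), and the doubly exceptional term $s_3^{\alpha}s_2^{\alpha}s_1^{-\alpha}s_2^{\alpha}s_3^{\beta}s_2^{-\alpha}s_1^{\alpha}s_2^{-\alpha}u_3$ is handled by an explicit braid rewriting ($s_2^{\alpha}s_3^{\beta}s_2^{-\alpha} = s_3^{-\alpha}s_2^{\beta}s_3^{\alpha}$, then $s_3^{\alpha}s_2^{\alpha}s_3^{-\alpha} = s_2^{-\alpha}s_3^{\alpha}s_2^{\alpha}$) which lands it in $\GQ_3 u_3\GQ_3 u_3u_2u_3$, where Lemma~\ref{lemGQ4L1}(2) applies. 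Some argument of this kind (or an equivalent one) is indispensable; as written, your proof omits the central case.
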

\begin{proof}
It is sufficient to show $u_3 \GQ_3 u_3 \GQ_3 u_3 \subset \GQ_4^{(2)}$, hence that
$s_3^{\alpha} \GQ_3 s_3^{\beta} \GQ_3 u_3 \subset \GQ_4^{(2)}$ pour $\alpha,\beta
\in \{-1,1\}$. We have $\GQ_3 = R s_2^{\alpha}s_1^{-\alpha}s_2^{\alpha} + u_1 u_2 u_1
=R s_2^{-\alpha}s_1^{\alpha}s_2^{-\alpha} + u_1 u_2 u_1$, hence
$$
\begin{array}{lclr}
s_3^{\alpha} \GQ_3 s_3^{\beta} \GQ_3 u_3 &\subset &s_3^{\alpha}s_2^{\alpha}s_1^{-\alpha}s_2^{\alpha}s_3^{\beta} \GQ_3 u_3 
+ s_3^{\alpha} u_1 u_2 u_1 s_3^{\beta} \GQ_3 u_3\\
 &\subset &s_3^{\alpha}s_2^{\alpha}s_1^{-\alpha}s_2^{\alpha}s_3^{\beta} \GQ_3 u_3 
+ \GQ_3s_3^{\alpha}  u_2  s_3^{\beta} \GQ_3 u_3\\
 &\subset &s_3^{\alpha}s_2^{\alpha}s_1^{-\alpha}s_2^{\alpha}s_3^{\beta} \GQ_3 u_3 
+ \GQ_4^{(2)}& \mbox{ by lemma \ref{lemGQ4L1}}\\
 &\subset &s_3^{\alpha}s_2^{\alpha}s_1^{-\alpha}s_2^{\alpha}s_3^{\beta} (s_2^{-\alpha}s_1^{\alpha}s_2^{-\alpha}) u_3 
+ \GQ_4^{(2)}& \mbox{ again after lemma \ref{lemGQ4L1}}\\
 &\subset &s_3^{\alpha}s_2^{\alpha}s_1^{-\alpha}(s_2^{\alpha}s_3^{\beta} s_2^{-\alpha})s_1^{\alpha}s_2^{-\alpha} u_3 
+ \GQ_4^{(2)}\\
 &\subset &s_3^{\alpha}s_2^{\alpha}s_1^{-\alpha}s_3^{-\alpha}s_2^{\beta} s_3^{\alpha})s_1^{\alpha}s_2^{-\alpha} u_3 
+ \GQ_4^{(2)}\\
 &\subset &(s_3^{\alpha}s_2^{\alpha}s_3^{-\alpha})s_1^{-\alpha}s_2^{\beta} s_1^{\alpha}s_3^{\alpha} s_2^{-\alpha} u_3 
+ \GQ_4^{(2)}\\
 &\subset &s_2^{-\alpha}s_3^{\alpha}(s_2^{\alpha}s_1^{-\alpha}s_2^{\beta} s_1^{\alpha})s_3^{\alpha} s_2^{-\alpha} u_3 
+ \GQ_4^{(2)}\\
 &\subset &\GQ_3u_3\GQ_3u_3 u_2 u_3 
+ \GQ_4^{(2)}\\
 &\subset &
 \GQ_4^{(2)}& \mbox{ by lemma \ref{lemGQ4L1}}\\
\end{array}
$$
and this proves the claim.
\end{proof}

\begin{lemme} {\ }\label{lemGQ4L2}
\begin{enumerate}
\item $s_3 \GQ_3 s_3^{-1} \subset \GQ_4^{(1)} + \GQ_3 s_3 s_2^{-1} s_3 \GQ_3$.
\item $s_3^{-1} \GQ_3 s_3 \subset \GQ_4^{(1)} + \GQ_3 s_3 s_2^{-1} s_3 \GQ_3$.
\end{enumerate}
\end{lemme}
\begin{proof}
We prove  (1). From $\GQ_3 = u_1 u_2 u_1 + R s_2 s_1^{-1} s_2$ we get
$s_3 \GQ_3 s_3^{-1} \subset u_1 s_3 u_2 s_3^{-1} u_1 + R s_3 s_2 s_1^{-1} s_2 s_3^{-1}
\subset \GQ_4^{(1)} + u_1 s_3 s_2^{-1} s_3 u_1 + R s_3 s_2 s_1^{-1} s_2 s_3^{-1}$,
hence it is sufficient to prove $s_3 s_2 s_1^{-1} s_2 s_3^{-1} \in \GQ_4^{(1)} + \GQ_3 s_3 s_2^{-1} s_3 \GQ_3$.
But $$ \begin{array}{llclcl}
 &s_3 s_2 s_1^{-1} s_2 s_3^{-1} &=& 
s_3 s_2 s_1^{-1} (s_2 s_3^{-1} s_2^{-1}) s_2 &=& 
s_3 s_2 s_1^{-1} s_3^{-1} s_2^{-1} s_3 s_2 \\ = &
(s_3 s_2 s_3^{-1})s_1^{-1}  s_2^{-1} s_3 s_2 &= &
s_2^{-1} s_3 (s_2 s_1^{-1}  s_2^{-1}) s_3 s_2 &=& 
s_2^{-1} s_3 s_1^{-1} s_2^{-1}  s_1 s_3 s_2 \\ = & 
s_2^{-1} s_1^{-1}s_3  s_2^{-1}  s_3  s_1s_2 &\in& \GQ_3 (s_3 s_2^{-1} s_3)\GQ_3\\
\end{array} $$
and this proves (1). The proof of (2) is similar.
\end{proof}

We set $w_0 = s_3 s_2 s_1^2 s_2 s_3$, $w_+ = s_3 s_2 s_1^{-1} s_2 s_3$,
$w_- = s_3^{-1} s_2^{-1} s_1 s_2^{-1} s_3^{-1}$.

\begin{lemme} \label{lemGQ4L3}
\begin{enumerate}
\item $w_0 \in R^{\times} w_+ + u_1 u_3 u_2 u_3 u_1$
\item $w_0^{-1} \in R^{\times} w_- + u_1 u_3 u_2 u_3 u_1$
\item $\forall x \in \GQ_3 \ \ w_+ x \in x w_+ + \GQ_3 u_3 u_2 u_3 \GQ_3.$
\item $\forall x \in \GQ_3 \ \ w_- x \in x w_- + \GQ_3 u_3 u_2 u_3 \GQ_3.$
\item $w_+ s_3, w_+ s_1 \in R w_+ + \GQ_3 u_3 u_2 u_3 \GQ_3$.
\item $w_- s_3, w_- s_1 \in R w_- + \GQ_3 u_3 u_2 u_3 \GQ_3$.
\end{enumerate}
\end{lemme}
\begin{proof}
(1) is a consequence of $s_1^2 \in R^{\times} s_1^{-1} + R s_1 + R$ and of the braid relations.
(2) is similar. (3) is deduced from  (1) and (2), and of the fact that $w_0$ commutes with $s_1$ et $s_2$. (4)
is similar. From $s_3^2 \in u_3 = R s_3 + R s_3^{-1} + R$
we deduce $w_+ s_3 \in R w^+ + \GQ_4^{(1)} + R s_3 s_2 s_1^{-1} s_2 s_3^{-1} $ and (5) is a consequence of lemma \ref{lemGQ4L2} (1).
(6) is similar.

\end{proof}

We now use another aspect of the defining relation of $\GQ_3$, under the form
$$
\begin{array}{lcl}
s_1^{-1} s_2 s_1 &=& 
\frac{-1}{a}  s_1 s_2  
+ a s_1 s_2^{-1}  + a s_1^{-1} s_2  - 
a^3  s_1^{-1} s_2^{-1}  +a^{-1} s_2 s_1  - a s_2 s_1^{-1}  -
a s_2^{-1} s_1  +  a^3 s_2^{-1} s_1^{-1} \\
& &  + a^2 s_1^{-1} 
 s_2^{-1} s_1  - a^2 s_1 s_2^{-1} s_1^{-1}  +  s_1 s_2 s_1^{-1}
 \end{array}
$$

In particular,
$
s_1^{-1} s_2 s_1 \equiv  s_1 s_2 s_1^{-1}  +  a^2 (s_1^{-1} 
 s_2^{-1} s_1  -  s_1 s_2^{-1} s_1^{-1})   \mod u_1 u_2 + u_2 u_1.
$

\begin{lemme} \label{lemGQ4L4}
We have $w_+ u_2 \subset R w_+ +\GQ_3 u_3 u_2 u_3 \GQ_3$,
and $w_- u_2  \subset R w_- +\GQ_3 u_3 u_2 u_3 \GQ_3$.
\end{lemme}
\begin{proof}
The second claim is deduced from the first one through the natural automorphisms, hence we can limit ourselves to considering the first one.
Since  $u_2$ is generated as a $R$-algebra by $s_2^{-1}$,
it sufficient to prove $w_+ s_2^{-1} \in R w_+ + \GQ_3 u_3 u_2 u_3 \GQ_3$.
Using the obvious shift morphism $u_1 u_2 u_1 \mapsto u_2 u_3 u_2$ characterized by $s_1 \mapsto s_2$, $s_2 \mapsto s_3$, we deduce from the preceding relation (inside $u_1 u_2 u_1$)
that
$
s_2^{-1} s_3 s_2 \equiv  s_2 s_3 s_2^{-1}  +  a^2 (s_2^{-1} 
 s_3^{-1} s_2  -  s_2 s_3^{-1} s_2^{-1})   \mod u_2 u_3 + u_3 u_2.
$
hence
$s_2 s_3 s_2^{-1} \in s_2^{-1} s_3 s_2 - a^2 
(s_2^{-1} 
 s_3^{-1} s_2  -  s_2 s_3^{-1} s_2^{-1}) + u_2 u_3 + u_3 u_2$.
We deduce from this
$w_+ s_2^{-1} = s_3 s_2 s_1^{-1} s_2 s_3 s_2^{-1} \in s_3 s_2 s_1^{-1}s_2^{-1} s_3 s_2 - a^2 
(s_3 s_2 s_1^{-1}s_2^{-1} 
 s_3^{-1} s_2  -  s_3 s_2 s_1^{-1}s_2 s_3^{-1} s_2^{-1}) + s_3 s_2 s_1^{-1}u_2 u_3 + s_3 s_2 s_1^{-1}u_3 u_2$.
We have $s_3 s_2 s_1^{-1} u_2 u_3 \subset R s_3 s_2 s_1^{-1} s_2 s_3 + u_3 u_1 u_2 u_1 u_3 + s_3 \GQ_3 s_3^{-1} + \GQ_4^{(1)}
 \subset R w_+ + \GQ_3 u_3 u_2 u_3 \GQ_3 $ after lemma \ref{lemGQ4L2}; 
Clearly $s_3 s_2 s_1^{-1} u_3 u_2 \subset \GQ_3 u_3 u_2 u_3 \GQ_3$
 and $s_3 (s_2 s_1^{-1} s_2^{-1}) s_3 s_2
 =s_3 s_1^{-1} s_2^{-1} s_1 s_3 s_2
 = s_1^{-1}s_3 s_2^{-1} s_3  s_1s_2
 \in \GQ_3 u_3 u_2 u_3 \GQ_3$. Finally, 
 $s_3 s_2 s_1^{-1}s_2^{-1} 
 s_3^{-1} s_2 \in  \GQ_3 u_3 u_2 u_3 \GQ_3$ after lemma \ref{lemGQ4L2}. 
\end{proof}

\begin{proposition} \label{propGQ4P2}
We have
$$
\GQ_4 = \GQ_3 + \GQ_3 s_3 \GQ_3 + \GQ_3 s_3^{-1} \GQ_3  + \GQ_3 s_3 s_2^{-1} s_3 \GQ_3   + R w_0 + R w_0^{-1}
$$
\end{proposition}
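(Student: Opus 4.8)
The plan is to prove both inclusions. The inclusion $\supseteq$ is immediate, since every generator appearing on the right-hand side lies in $\GQ_4$. For the reverse inclusion I would start from Proposition~\ref{propGQ4P1}, which identifies $\GQ_4$ with $\GQ_4^{(2)}=\GQ_3 u_3\GQ_3 u_3\GQ_3$, and it therefore suffices to show $\GQ_4^{(2)}$ is contained in the right-hand side $N$. Throughout I would abbreviate $M_0=\GQ_3 u_3 u_2 u_3\GQ_3$, a $\GQ_3$-subbimodule of $\GQ_4$ with $M_0\GQ_3=M_0$. The first step is to check that $M_0\subseteq N$. Taking the middle $u_2$-factor equal to $1$ shows $\GQ_4^{(1)}=\GQ_3 u_3\GQ_3=\GQ_3+\GQ_3 s_3\GQ_3+\GQ_3 s_3^{-1}\GQ_3\subseteq M_0$; conversely, applying the shifted form of Theorem~\ref{theo:structmodGQ3}(2) to the subalgebra $\langle s_2,s_3\rangle$ gives $u_3 u_2 u_3\subseteq\langle s_2,s_3\rangle=u_2 u_3 u_2+R\,s_3 s_2^{-1}s_3$, whence $M_0\subseteq\GQ_4^{(1)}+\GQ_3 s_3 s_2^{-1}s_3\GQ_3\subseteq N$. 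I also record that $R w_+\subseteq R w_0+M_0$ and $R w_-\subseteq R w_0^{-1}+M_0$: indeed $u_1 u_3 u_2 u_3 u_1\subseteq M_0$, so Lemma~\ref{lemGQ4L3}(1)--(2) gives $w_0\in R^\times w_++M_0$ and $w_0^{-1}\in R^\times w_-+M_0$.

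The second step is to prove that the $\GQ_3$-subbimodule of $\GQ_4$ generated by $w_+$ (resp. $w_-$) is contained in $R w_++M_0$ (resp. $R w_-+M_0$). Since $\GQ_3$ is generated as an algebra by $s_1$ and $s_2$, and $w_+ s_1\in R w_++M_0$ by Lemma~\ref{lemGQ4L3}(5) while $w_+ u_2\subseteq R w_++M_0$ by Lemma~\ref{lemGQ4L4}, an induction on word length in $\GQ_3$ (using $M_0\GQ_3=M_0$) yields $w_+\GQ_3\subseteq R w_++M_0$. On the other side, Lemma~\ref{lemGQ4L3}(3) gives $x w_+\in w_+ x+M_0$ for $x\in\GQ_3$, hence $\GQ_3 w_+\subseteq w_+\GQ_3+M_0\subseteq R w_++M_0$; combining, $\GQ_3 w_+\GQ_3\subseteq R w_++M_0$. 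The same argument with parts (4),(6) of Lemma~\ref{lemGQ4L3} and the second assertion of Lemma~\ref{lemGQ4L4} handles $w_-$. Finally, from $w_\pm s_3\in R w_\pm+M_0$ and the cubic relation satisfied by $s_3$ (which expresses $s_3^{-1}$ through $1,s_3,s_3^2$) one deduces $w_\pm u_3\subseteq R w_\pm+M_0$.

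The third and main step is a case analysis. Writing $u_3=R+R s_3+R s_3^{-1}$ in each of the two free $u_3$-slots of $\GQ_4^{(2)}=\GQ_3 u_3\GQ_3 u_3\GQ_3$, any term in which a slot contributes the scalar $1$ lies in $\GQ_4^{(1)}\subseteq M_0$; so I am reduced to the terms $\GQ_3 s_3^{\alpha}\GQ_3 s_3^{\beta}\GQ_3$ with $\alpha,\beta\in\{1,-1\}$. In each case I would expand the \emph{middle} factor using the form of Theorem~\ref{theo:structmodGQ3} matching the sign $\alpha$: $\GQ_3=u_1 u_2 u_1+R\,g_\alpha$ with $g_1=s_2 s_1^{-1}s_2$ (part (2)) and $g_{-1}=s_2^{-1}s_1 s_2^{-1}$ (part (3)). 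Using $[s_1,s_3]=0$, the $u_1 u_2 u_1$-part contributes $\GQ_3 s_3^{\alpha} u_2 s_3^{\beta}\GQ_3$, which is inside $M_0$ when $\alpha=\beta$, and when $\alpha=-\beta$ is contained in $\GQ_3(s_3^{\pm1}\GQ_3 s_3^{\mp1})\GQ_3\subseteq\GQ_3 M_0\GQ_3=M_0$ by Lemma~\ref{lemGQ4L2}. The $g_\alpha$-part is $\GQ_3\,(s_3^{\alpha}g_\alpha s_3^{\alpha})\,s_3^{\beta-\alpha}\,\GQ_3$, and here $s_3^{\alpha}g_\alpha s_3^{\alpha}$ is precisely $w_+$ when $\alpha=1$ and $w_-$ when $\alpha=-1$, while $s_3^{\beta-\alpha}\in u_3$ since $\beta-\alpha\in\{-2,0,2\}$; by the second step this part lies in $\GQ_3(R w_\pm+M_0)\GQ_3\subseteq R w_\pm+M_0\subseteq R w_0^{\pm1}+M_0$. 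Assembling all cases gives $\GQ_4=\GQ_4^{(2)}\subseteq M_0+R w_0+R w_0^{-1}\subseteq N$, which completes the proof.

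As for the main obstacle: the substantive content — how $w_0$, $w_\pm$ and $\GQ_3$ behave under conjugation by $s_3^{\pm1}$ — is already packaged in Lemmas~\ref{lemGQ4L1}--\ref{lemGQ4L4}, so the proposition is essentially a synthesis of those. The one point requiring genuine care is aligning, in the case analysis, the chosen presentation $u_1 u_2 u_1+R\,g_\alpha$ of the middle factor with the signs of the flanking $s_3^{\pm1}$'s, so that the leftover ``bad'' word is literally $w_+$ or $w_-$ up to a power of $s_3$; with the wrong choice that word no longer reduces into $R w_0^{\pm1}+M_0$, and the bookkeeping breaks down.
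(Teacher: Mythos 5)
Your argument is essentially the paper's own proof: reduce to $\GQ_4^{(2)}$ via Proposition~\ref{propGQ4P1}, expand the middle copy of $\GQ_3$ using Theorem~\ref{theo:structmodGQ3}, kill the mixed-sign terms with Lemma~\ref{lemGQ4L2}, recognize the residual words as $w_+,w_-$, absorb the flanking copies of $\GQ_3$ with Lemmas~\ref{lemGQ4L3} and~\ref{lemGQ4L4}, and pass from $w_\pm$ to $w_0^{\pm1}$ via Lemma~\ref{lemGQ4L3}(1)--(2).

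The one step whose justification does not go through as written is the claim $w_\pm u_3\subseteq R w_\pm+M_0$. You deduce it from $w_\pm s_3\in Rw_\pm+M_0$ ``and the cubic relation'', but that deduction needs you to multiply the inclusion on the right by $s_3$ (to reach $w_\pm s_3^{2}$ and hence $w_\pm s_3^{-1}$), and $M_0=\GQ_3u_3u_2u_3\GQ_3$ is not known at this stage to be stable under right multiplication by $u_3$ --- such stability is, in substance, what the proposition is proving. (Your Step 2 induction works precisely because $M_0\GQ_3=M_0$; there is no analogue for $s_3$.) The claim itself is true, and the fix is one line: since $u_3=R+Rs_3+Rs_3^{-1}$, it suffices to note $w_+s_3^{-1}=s_3s_2s_1^{-1}s_2\in u_3\GQ_3\subseteq\GQ_4^{(1)}\subseteq M_0$ and $w_-s_3=s_3^{-1}s_2^{-1}s_1s_2^{-1}\in M_0$, while $w_+s_3$ and $w_-s_3^{-1}$ are covered by Lemma~\ref{lemGQ4L3}(5),(6). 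Alternatively --- and this is what the paper does --- you can avoid $w_\pm s_3^{\mp2}$ altogether by treating the mixed-sign $g_\alpha$-terms exactly as you treat the $u_1u_2u_1$-part: for $\beta=-\alpha$ one has $s_3^{\alpha}g_\alpha s_3^{-\alpha}\subseteq s_3^{\alpha}\GQ_3 s_3^{-\alpha}\subseteq M_0$ by Lemma~\ref{lemGQ4L2}, so only the equal-sign case, where the residue is literally $w_\pm$, remains.
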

\begin{proof}
By proposition \ref{propGQ4P1} we know that $\GQ_4 = \GQ_4^{(2)} = \GQ_3 u_3 \GQ_3 u_3 \GQ_3$.
But $\GQ_3 = u_2 u_1 u_2 + u_1 u_2 u_1$, hence
$\GQ_4 = \GQ_3 u_3 u_2 u_1 u_2 u_3 \GQ_3
+ \GQ_3 u_3 u_1 u_2 u_1 u_3 \GQ_3
= \GQ_3 u_3 u_2 u_1 u_2 u_3 \GQ_3
+ \GQ_3 u_3  u_2  u_3 \GQ_3$.
Since $u_3 u_2 u_3 \subset R s_3 s_2^{-1} s_3 + u_2 u_3 u_2$
we have $\GQ_3 u_3 u_2 u_3 \GQ_3 =\GQ_3 + \GQ_3 s_3 \GQ_3 + \GQ_3 s_3^{-1} \GQ_3 + \GQ_3 s_3 s_2^{-1} s_3 \GQ_3$.
On the other hand, $$
u_3 (u_2 u_1 u_2) u_3 \subset \GQ_4^{(1)} + \sum_{\alpha,\beta \in \{ -1,1 \}}  s_3^{\alpha} u_2 u_1 u_2 s_3^{\beta}$$
hence, after lemma \ref{lemGQ4L2}, we get
$$
u_3 (u_2 u_1 u_2) u_3 \subset \GQ_3 u_3 u_2 u_3 \GQ_3 + \sum_{\alpha \in \{-1,1\} }  s_3^{\alpha} u_2 u_1 u_2 s_3^{\alpha}.$$
From $u_2 u_1 u_2  \subset R s_2 s_1^{-1} s_2 + u_1 u_2 u_1$ and 
$u_2 u_1 u_2  \subset R s_2^{-1} s_1 s_2^{-1} + u_1 u_2 u_1$ 
we deduce that $u_3 u_2 u_1 u_2 u_3 \subset  \GQ_3 u_3 u_2 u_3 \GQ_3 + R w_+ + R w_- $.
From lemma \ref{lemGQ4L3} (3) and (4) we get
$\GQ_3 u_3 u_2 u_1 u_2 u_3\GQ_3  \subset  \GQ_3 u_3 u_2 u_3 \GQ_3 +  w_+\GQ_3 +  w_-\GQ_3 $.
Finally, from lemma \ref{lemGQ4L3} (5), (6) and lemma \ref{lemGQ4L4} we get
$\GQ_3 u_3 u_2 u_1 u_2 u_3\GQ_3  \subset  \GQ_3 u_3 u_2 u_3 \GQ_3 +  R w_+ + R w_- $,
and this proves the claim.
\end{proof}

\subsection{$\GQ_3 u_3 \GQ_3$ as a $R$-module}

\begin{proposition} \label{prop:A3s3pmA3}
$\GQ_3s_3^{\pm 1}\GQ_3 = \GQ_3.s_3^{\pm 1}.F_1 + R.E.\{s_3^{\pm 1}s_2^{-1}s_1^{-1},s_3^{\pm 1}s_2s_1^{-1}s_2\}$ where 
$$F_1 = \{ 1,s_2,s_2s_1,s_2s_1^{-1},s_2^{-1},s_2^{-1}s_1 \}$$
and $E = \{ \emptyset, 2, \bar{2}, 12, 1\bar{2}, \bar{1}2, \bar{1}\bar{2}, 2\bar{1}2 \}$. 
In particular, $\GQ_3 s_3^{\pm 1} \GQ_3$
is spanned as a $R$-module by $136$ elements.
\end{proposition}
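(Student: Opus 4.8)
I only discuss $s_3$ below; the case of $s_3^{-1}$ is entirely analogous. The inclusion ``$\supseteq$'' is clear, since $F_1,E\subset\GQ_3$ and the words $s_3s_2^{-1}s_1^{-1}$, $s_3s_2s_1^{-1}s_2$ lie in $s_3\GQ_3\subset\GQ_3s_3\GQ_3$. For ``$\subseteq$'', the first step is to start from the decomposition $\GQ_3=u_1u_2u_1+R\,s_2s_1^{-1}s_2$ of Theorem~\ref{theo:structmodGQ3} (where $u_1=\GQ_2$), which after right multiplication gives $\GQ_3s_3\GQ_3=\GQ_3s_3u_1u_2u_1+R\,\GQ_3s_3s_2s_1^{-1}s_2$. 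Since $s_3$ commutes with $u_1$ one has $\GQ_3s_3u_1u_2u_1=\GQ_3s_3u_2u_1$; writing $u_2u_1$ as the $R$-span of $1,s_1,s_1^{-1},s_2,s_2s_1,s_2s_1^{-1},s_2^{-1},s_2^{-1}s_1,s_2^{-1}s_1^{-1}$ and again moving any isolated $s_1^{\pm1}$ through $s_3$, every one of these monomials except the last yields a right multiplication of $s_3$ by an element of $F_1$, while $s_2^{-1}s_1^{-1}$ yields $\GQ_3s_3s_2^{-1}s_1^{-1}$. Hence
$$
\GQ_3s_3\GQ_3=\GQ_3s_3F_1+\GQ_3v_1+\GQ_3v_2,\qquad v_1=s_3s_2^{-1}s_1^{-1},\ \ v_2=s_3s_2s_1^{-1}s_2.
$$

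The remaining and essential point is to show $\GQ_3v_1+\GQ_3v_2\subseteq P$, where $P:=\GQ_3s_3F_1+REv_1+REv_2$. Fixing an $R$-basis $\mathcal B$ of $\GQ_3$ (which exists by Theorem~\ref{theo:structmodGQ3}), and using that $P$ is an $R$-module, this amounts to checking $bv_1\in P$ and $bv_2\in P$ for each $b\in\mathcal B$. Each such membership is verified by a word manipulation: one commutes $s_3$ past the adjacent $s_1^{\pm1}$, uses the braid relation $s_2s_3s_2=s_3s_2s_3$ to slide $s_3$ across, and re-expands every $s_3^{-1}$ produced along the way by the cubic relation $s_3^{-1}=(abc)^{-1}(s_3^{2}-\Sigma_1s_3+\Sigma_2)$ so that only positive powers of $s_3$ survive; the $\GQ_3$-factor thus produced on the left is then reduced using the defining relation of $\GQ_3$ and, for the words built from $2\bar{1}2=s_2s_1^{-1}s_2$ and $\bar{2}1\bar{2}=s_2^{-1}s_1s_2^{-1}$, the special identities \eqref{eq2b12b1com} and \eqref{eq2b12b12b}. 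One is then left, for each $b$, with a combination of $\GQ_3s_3F_1$ and of elements $ev_1,ev_2$ with $e\in E$. In practice this is best organised as a terminating rewriting procedure based on the Gröbner bases of Tables~\ref{table:GQ3grobsigned1}--\ref{table:GQ3grobsigned2}; it is finite but laborious, and it carries the whole weight of the proposition — the sets $F_1$ and $E$ have been chosen precisely so that this closure holds, and I see no way around the case analysis. This step is the main obstacle.

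Granting this, $\GQ_3s_3\GQ_3=\GQ_3s_3F_1+\GQ_3v_1+\GQ_3v_2\subseteq P\subseteq\GQ_3s_3\GQ_3$, so equality holds, and likewise in the $s_3^{-1}$ case. Finally $\GQ_3s_3F_1=\sum_{b\in\mathcal B,\,f\in F_1}R\,b\,s_3\,f$ is spanned by $|\mathcal B|\cdot|F_1|=20\cdot6=120$ elements, and $REv_1+REv_2$ by the $2|E|=16$ elements $ev_1,ev_2$, so $\GQ_3s_3^{\pm1}\GQ_3$ is spanned by $136$ elements as asserted.
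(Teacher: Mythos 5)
Your reduction to $\GQ_3 s_3 \GQ_3 = \GQ_3.s_3.F_1 + \GQ_3 v_1 + \GQ_3 v_2$ coincides with the opening lines of the paper's argument, but from there on you leave the actual content of the proposition --- the inclusion $\GQ_3 v_1 + \GQ_3 v_2 \subset \GQ_3.s_3.F_1 + R.E.v_1 + R.E.v_2$ --- as an unexecuted ``laborious case analysis'', and this is precisely the gap. The paper needs no case analysis: the missing idea is that, modulo $\GQ_3.s_3.F_1$, left multiplication by $s_1$ acts as a \emph{scalar} on each of $v_1$ and $v_2$. For $v_1 = s_3s_2^{-1}s_1^{-1}$ this follows from rewriting the defining relation as $a^2(s_1-a)s_2^{-1}s_1^{-1} = (s_1-a)s_2s_1^{-1} + a(as_1^{-1}-1)s_2^{-1}s_1 + (a^{-1}-s_1^{-1})s_2s_1 + (as_1^{-1}-a^{-1}s_1)s_2 + a(s_1-a^2s_1^{-1})s_2^{-1}$: every term on the right is a single $s_2^{\pm 1}$ flanked by $u_1$-factors, so multiplying $v_1$ on the left by $s_1-a$ and pushing the $u_1$-factors through $s_3$ lands in $\GQ_3.s_3.F_1$. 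For $v_2 = s_3s_2s_1^{-1}s_2$ one uses $s_1s_2s_1^{-1}=s_2^{-1}s_1s_2$, the cubic expansion of $s_2^2$, and the $\GQ_3$-identity expressing $s_2^{-1}s_1s_2^{-1}$ in terms of $s_2s_1^{-1}s_2$ modulo $u_1u_2u_1$, to obtain the analogous statement. Since $E$ spans $\GQ_3$ as a \emph{right} $u_1$-module (this is exactly where $E$ and the count $8$ come from), one gets $\GQ_3 v_i = \sum_{e\in E} e\,u_1 v_i \subset R.E.v_i + \GQ_3.s_3.F_1$ in one stroke; no verification over a $20$-element basis is needed, and the sets $F_1,E$ are not chosen by trial to make $40$ memberships close up.

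Beyond being unexecuted, the rewriting procedure you sketch would not establish the stated equality: applying $s_2s_3s_2=s_3s_2s_3$ or expanding $s_3^{-1}$ through the cubic relation produces words with two occurrences of $s_3$, or with $s_3^{-1}$, or with no $s_3$ at all, and such elements have no reason to lie in $\GQ_3 s_3 \GQ_3$ (which is defined by expressions carrying a single middle factor $s_3$). At best you would be proving something about $\GQ_3 u_3 \GQ_3$, not the equality asserted for each sign separately. Note that the paper's manipulations never touch the middle letter $s_3^{\pm 1}$: they only use that $u_1$ commutes with $s_3$ and rewrite the outer $\GQ_3$-factors, which is what keeps the whole computation inside $\GQ_3 s_3^{\pm 1}\GQ_3$.
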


From $\GQ_3 = u_1u_2u_1+Rs_2s_1^{-1}s_2$ we get
$\GQ_3s_3\GQ_3 = \GQ_3 s_3 u_1u_2u_1 + \GQ_3 s_2s_1^{-1}s_2
= \GQ_3 s_3 u_2u_1 + \GQ_3 s_2s_1^{-1}s_2$.
Now, $s_3u_2u_1 = R.s_3.F_1 + Rs_3s_2^{-1}s_1^{-1}$
hence $\GQ_3 s_3u_2u_1 \subset \GQ_3.s_3.F_1+\GQ_3 s_3s_2^{-1}s_1^{-1}$.

We use that the defining relation can be rewritten
$$
a^2 (s_1-a) s_2^{-1} s_1^{-1}= (s_1-a) s_2 s_1^{-1} + a(a s_1^{-1}-1) s_2^{-1} s_1  + (a^{-1} -s_1^{-1})s_2 s_1  
+ (a s_1^{-1}- a^{-1} s_1)s_2   + a (s_1-a^2s_1^{-1}) s_2^{-1} 
  $$
whence
$$
a^2(1-a.\emptyset ).3\bar{2}\bar{1} \in \GQ_3.321 + \GQ_3.32\bar{1} + \GQ_3.32 + \GQ_3.3\bar{2} + \GQ_3.3\bar{2}1 \subset \GQ_3.3.F_1
$$
From the $R$-bases for $\GQ_3$ obtained above, we know that $\GQ_3$ is spanned
as a right $u_1$-module by $E = \{ \emptyset, 2, \bar{2}, 12, 1\bar{2}, \bar{1}2, \bar{1}\bar{2}, 2\bar{1}2 \}$.
Therefore by the relation above we get that $\GQ_3 s_3 s_2^{-1}s_1^{-1} \subset R.E.3\bar{2}\bar{1} + \GQ_3.3.F_1$
hence $\GQ_3s_3u_2u_1 \subset \GQ_3.s_3.F_1+R.E.3\bar{2}\bar{1}$.

We now notice that $s_1 s_3 s_2 s_1^{-1}s_2 =s_3  (s_1 s_2 s_1^{-1})s_2 =s_3  s_2^{-1} s_1 s_2^2$.
Since $s_2^2 = us_2-v+ws_2^{-1}$ we get
$s_1 s_3 s_2 s_1^{-1}s_2 =u s_3  (s_2^{-1} s_1s_2) - v s_3  s_2^{-1} s_1 + w s_3  s_2^{-1} s_1s_2^{-1}
=u s_3  s_1 s_2s_1^{-1} - v s_3  s_2^{-1} s_1 + w s_3  s_2^{-1} s_1s_2^{-1}
=u s_1 s_3   s_2s_1^{-1} - v s_3  s_2^{-1} s_1 + w s_3  s_2^{-1} s_1s_2^{-1}
\in \GQ_3.s_3   s_2s_1^{-1}+\GQ_3s_3  s_2^{-1} s_1 + w s_3  s_2^{-1} s_1s_2^{-1}
$ and this proves $(s_1-w).s_3s_2s_1^{-1}s_2 \in \GQ_3.s_3   s_2s_1^{-1}+\GQ_3s_3  s_2^{-1} s_1 \subset \GQ_3.F_1$.
Therefore, $\GQ_3.s_3s_2s_1^{-1}s_2 \subset R.E.s_3s_2s_1^{-1}s_2 + \GQ_3.F_1$
and this proves the claim for $\GQ_3 s_3 \GQ_3$. The proof for $\GQ_3 s_3^{-1} \GQ_3$ is the same.

\begin{lemma} \label{lem:A3s3pmF}
$\GQ_3.s_3.F + \GQ_3 s_3^{-1}.F = \GQ_3.F_2 +  R.E'.\{\bar{3} \bar{2},\bar{3} \bar{2}1 \} + R.E_0.\bar{3}2\bar{1}$
with 
$$
F_2 = \{ s_3, s_3^{-1}, s_3 s_2, s_3 s_2^{-1}, s_3^{-1} s_2, s_3 s_2 s_1,
s_3^{-1} s_2 s_1, s_3 s_2 s_1^{-1}, s_3 s_2^{-1} s_1 \}
$$
and $E' = (s_1s_2s_1)E(s_1s_2s_1)^{-1} = \{ \emptyset, 1, \bar{1},21,2 \bar{1},\bar{2} 1, \bar{2}\bar{1}, 1 \bar{2} 1 \}$,
$E_0 = \{ \emptyset, 1 ,\bar{1},2,\bar{2} \}$. In particular,
$\GQ_3.s_3.F + \GQ_3 s_3^{-1}.F$ is spanned as a $R$-module by 201 elements.
\end{lemma}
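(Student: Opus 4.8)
The plan is to establish the two inclusions separately, $\supseteq$ being routine and $\subseteq$ being the substance. For $\supseteq$: one checks that $s_3F_1\subseteq F_2$ and that $s_3^{-1},\,s_3^{-1}s_2,\,s_3^{-1}s_2s_1\in F_2$, so that the only members of $s_3F_1\cup s_3^{-1}F_1$ not already lying in $F_2$ are $\bar 3\bar 2=s_3^{-1}s_2^{-1}$, $\bar 3\bar 2 1=s_3^{-1}s_2^{-1}s_1$ and $\bar 3 2\bar 1=s_3^{-1}s_2s_1^{-1}$; since $E'$ and $E_0$ are contained in $\GQ_3$, every generator of the right-hand side lies in $\GQ_3 s_3 F_1+\GQ_3 s_3^{-1}F_1$ (here $F=F_1$ in the notation of Proposition~\ref{prop:A3s3pmA3}). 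For $\subseteq$, since $s_3F_1\cup s_3^{-1}F_1$ is the disjoint union of $F_2$ with $\{\bar 3\bar 2,\bar 3\bar 2 1,\bar 3 2\bar 1\}$, it suffices to show that $\GQ_3\cdot w$ is contained in $\GQ_3 F_2+RE'\{\bar 3\bar 2,\bar 3\bar 2 1\}+RE_0\,\bar 3 2\bar 1$ for each $w\in\{\bar 3\bar 2,\bar 3\bar 2 1,\bar 3 2\bar 1\}$.

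For the tools I would use exactly the apparatus of the proof of Proposition~\ref{prop:A3s3pmA3}, applied now at the right-hand end of the words. As recalled there, $\GQ_3$ is spanned as a right $u_1$-module by $E$; conjugating this fact by the half-twist $\Delta=s_1s_2s_1$ — which induces the algebra automorphism of $\GQ_3$ exchanging $s_1$ and $s_2$ — yields $\GQ_3=RE'u_2$ with $E'=\Delta E\Delta^{-1}$ and $u_2=\Delta u_1\Delta^{-1}$, a direct computation identifying $\Delta E\Delta^{-1}$ with the set $E'$ of the statement. Besides this I would use: the braid relations, in particular $s_2s_3^{\pm1}s_2^{-1}=s_3^{-1}s_2^{\pm1}s_3$ and $s_1s_3=s_3s_1$; the decomposition $u_3u_2u_3\subseteq u_2u_3u_2+Rs_3s_2^{-1}s_3$, which is Theorem~\ref{theo:structmodGQ3}(2) with the indices shifted by one; the cubic relation, used to absorb a trailing letter $s_i^{\pm1}$ into $u_i=R+Rs_i+Rs_i^{-1}$; and the defining relation of $\GQ_3$ in its shifted form ($1\mapsto2$, $2\mapsto3$), which expresses $s_3^{-1}s_2^{-1}$ as an $R$-linear combination of words containing at most one occurrence of $s_3^{\pm1}$ together with the two words $s_2^{-1}s_3^{-1}s_2$ and $s_2s_3^{-1}s_2^{-1}$. (The mirror automorphism $\phi$ of Section~3 produces no new relation but is convenient to symmetrise computations.)

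Concretely: for $\bar 3 2\bar 1=(s_3^{-1}s_2)\,s_1^{-1}$, since $s_3^{-1}s_2\in F_2$ one has $\GQ_3\bar 3 2\bar 1=(\GQ_3 s_3^{-1}s_2)s_1^{-1}\subseteq(\GQ_3 F_2)s_1^{-1}$, and I would reduce $\GQ_3 g\,s_1^{-1}$ modulo $\GQ_3 F_2$ for $g$ running over $F_2$ — the generators $s_3^{\pm1}$ being absorbed via $s_1s_3=s_3s_1$, those involving $s_1^{\pm1}$ via the cubic relation, and the residue being spanned over $R$ by $E_0\,\bar 3 2\bar 1$, the set $E_0$ being this small precisely because commutativity of $s_1$ and $s_3$ kills most correction terms. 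For $\bar 3\bar 2=s_3^{-1}s_2^{-1}$, writing $\GQ_3=RE'u_2$ and reducing $u_2 s_3^{-1}s_2^{-1}$ with the cubic relation, the unit of $u_2$ contributes $RE'\bar 3\bar 2$ while the $s_2^{\pm1}$-parts contribute the words $s_3^{-1}s_2^{-1}s_3^{\pm1}=s_2^{\pm1}s_3^{-1}s_2^{-1}$, whose left $E'$-translates I would drive into $\GQ_3 F_2$ using the shifted defining relation together with the decomposition of $u_3u_2u_3$. Finally $\GQ_3\bar 3\bar 2 1=(\GQ_3\bar 3\bar 2)s_1$ follows from the previous case and the fact that right multiplication by $s_1$ maps $\GQ_3 F_2+RE'\bar 3\bar 2$ into $\GQ_3 F_2+RE'\{\bar 3\bar 2,\bar 3\bar 2 1\}$. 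The displayed spanning set then has $9\cdot 20+8+8+5=201$ elements, using that $\GQ_3$ is free of rank $20$ over $R$ (Theorem~\ref{theo:structmodGQ3}).

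The hard part will be the termination of the $\bar 3\bar 2$ reduction. The shifted defining relation expresses $s_3^{-1}s_2^{-1}$ partly through $s_2 s_3^{-1}s_2^{-1}$, and $\GQ_3 s_2 s_3^{-1}s_2^{-1}=\GQ_3 s_3^{-1}s_2^{-1}$, so a single naive substitution yields only $(a-1)\GQ_3\bar 3\bar 2\subseteq\GQ_3 F_2$, i.e. the inclusion over the localisation $R[(a-1)^{-1}]$ and not over $R$ itself. To obtain the honest $R$-statement I would organise the rewriting as an induction on the number of occurrences of $s_3^{\pm1}$, invoking the right-$u_2$-module description at each step so that the surviving \emph{bad-word residue} is exactly the finite-rank module $RE'\bar 3\bar 2$; and, as in Section~3, I would certify the finitely many linear identities one ends up with by evaluating them through the explicit embedding $\Phi_{H_4}$ of $H_4$ into a product of matrix algebras over $R'$.
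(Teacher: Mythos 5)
Your reduction to the three exceptional words $\bar{3}\bar{2}$, $\bar{3}\bar{2}1$, $\bar{3}2\bar{1}$ is correct, and your treatment of the first two is essentially the paper's: the shifted defining relation gives $(s_2-a)\bar{3}\bar{2}\in \GQ_3.F_2$, and since $\GQ_3=R.E'.u_2=R.E'+\GQ_3(s_2-a)$ one gets $\GQ_3.\bar{3}\bar{2}\subset R.E'.\bar{3}\bar{2}+\GQ_3.F_2$ directly over $R$, and likewise after right multiplication by $s_1$. Your worry about termination and a stray factor $(a-1)$ comes from trying to substitute for $\bar{3}\bar{2}$ itself rather than using the relation in the form $(s_2-a)\bar{3}\bar{2}\in\GQ_3.F_2$; no induction on the number of occurrences of $s_3$ and no localisation are needed. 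The proposed fallback of certifying the resulting identities through $\Phi_{H_4}$ is moreover not available at this point: these are $R$-linear statements inside $\GQ_4$, whose freeness is exactly what this section is building towards (Theorem~\ref{theo:A4libre}), and $\Phi_{H_4}$ embeds $H_4$, not its quotient $\GQ_4$, so a matrix-model verification only certifies such an identity up to possible torsion.

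The genuine gap is the word $\bar{3}2\bar{1}$. Writing $\GQ_3.\bar{3}2\bar{1}\subset(\GQ_3.F_2)s_1^{-1}$ and reducing $\GQ_3.g.s_1^{-1}$ separately for $g\in F_2$ cannot work: for $g=\bar{3}2$ the term is $\GQ_3.\bar{3}2\bar{1}$ itself, so the reduction is circular, and for $g=3\bar{2}$ the term is $\GQ_3.3\bar{2}\bar{1}$, which is \emph{not} contained in the right-hand side --- by the proof of Proposition~\ref{prop:A3s3pmA3} its reduction leaves the residue $R.E.3\bar{2}\bar{1}$, which is retained as a separate summand in Lemma~\ref{lem:spanA3s3pmA3}, and indeed the sixteen elements $E.3\bar{2}\bar{1}$ occur in the basis $\mathcal{B}_{(1)}$ of Theorem~\ref{theo:A4libre} outside the $201$ elements spanning the module of the present lemma. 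Hence no termwise reduction of $(\GQ_3.F_2)s_1^{-1}$ can succeed, and ``commutativity of $s_1$ and $s_3$'' is not the mechanism that produces $E_0$. What actually handles this case, and what your proposal does not replace, is: prove $(2-a.\emptyset)1.\bar{3}2\bar{1}\in\GQ_3.F_2$ and $(2-a.\emptyset)(\emptyset-a.\bar{1}).\bar{3}2\bar{1}\in\GQ_3.F_2+R.E'.\{\bar{3}\bar{2},\bar{3}\bar{2}1\}$ (the latter requiring a further use of the defining relation to control $\bar{3}\bar{1}\bar{2}\bar{1}=\bar{3}\bar{2}\bar{1}\bar{2}$), and then invoke Lemma~\ref{lem:quotA3dim5}, which says that the quotient of $\GQ_3$ by the left ideal generated by these two elements is spanned by $E_0$; this is precisely where the residue $R.E_0.\bar{3}2\bar{1}$ comes from. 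Without that lemma, or an equivalent statement, the $\bar{3}2\bar{1}$ case of your argument does not go through.
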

\begin{proof}
By definition, $\GQ_3.s_3.F + \GQ_3 s_3^{-1}.F$ is the (left) $\GQ_3$-module
generated by $$ \{ s_3,s_3s_2,s_3s_2s_1,s_3s_2s_1^{-1},s_3s_2^{-1},s_3s_2^{-1}s_1,
s_3^{-1},s_3^{-1}s_2,s_3^{-1}s_2s_1,s_3^{-1}s_2s_1^{-1},s_3^{-1}s_2^{-1},s_3^{-1}s_2^{-1}s_1 \}$$
{}
$$
= F_2 \cup \{ 
s_3^{-1}s_2s_1^{-1},s_3^{-1}s_2^{-1},s_3^{-1}s_2^{-1}s_1 \}
$$
Notice that $\GQ_3.s_3.F_1 \subset \GQ_3.F_2$.
Using the defining relation as above (and taking its image by conjugation under $s_1 s_2 s_3$),
we have
$$
a^2 (s_2-a) s_3^{-1} s_2^{-1}
= (s_2-a) s_3 s_2^{-1} + a(a s_2^{-1}-\emptyset) s_3^{-1} s_2  + (a^{-1} -s_2^{-1})s_3 s_2  
+ (a s_2^{-1}- a^{-1} s_2)s_3   + a (s_2-a^2s_2^{-1}) s_3^{-1}.
  $$
  Since $\GQ_3$ is spanned as a right $u_1$-module by $E$, it is spanned as a right $u_2$-module
  by $E' = (s_1s_2s_1)E(s_1s_2s_1)^{-1}$ hence
$\GQ_3 s_3^{-1} s_2^{-1} \subset \GQ_3.F_2 + R.E'.s_3^{-1}s_2^{-1}$. 
Similarly, $\GQ_3. s_3^{-1} s_2^{-1} s_1 \subset \GQ_3.F_2 + R.E'.s_3^{-1}s_2^{-1}s_1$.

Now, $1.\bar{3}2\bar{1} = \bar{3}(12\bar{1}) = (\bar{3}\bar{2})12$. Using the defining relation
we get
$$
\begin{array}{lcl}
a^2(2 - a.\emptyset) 1.\bar{3}2\bar{1} &=& a^2(2 - a.\emptyset)(\bar{3}\bar{2})12 \\
&=& (2-a.\emptyset) 3\bar{2}12 + a(a \bar{2}-\emptyset) \bar{3}212  + (a^{-1}.\emptyset -\bar{2}) 32 12 
+ (a \bar{2}- a^{-1} 2) 3 12  \\ & & + a (2-a^2\bar{2}) \bar{3} 12 \\
&=& (2-a.\emptyset) 3\bar{2}12 + a(a \bar{2}-\emptyset) \bar{3}121  + (a^{-1}.\emptyset -\bar{2}) 3121
+ (a \bar{2}- a^{-1} 2) 13 2  \\ & & + a (2-a^2\bar{2}) 1\bar{3} 2 \\
&\in & (2 - a.\emptyset)3\bar{2}12 + \GQ_3.F_2 \\ & \subset & \GQ_3.F_2 \\
\end{array}
$$
We now use that $1 2 \bar{1} \in \bar{1}21 + u_1.2 + u_1.\bar{2} + R.21+ a.2\bar{1} + u_1.\bar{2}1-a^3.\bar{2}\bar{1}$ to get
$$
\bar{3}2\bar{1} = \bar{1}\bar{3} (12\bar{1}) \in 
u_1.\bar{3}21 + u_1.\bar{3}2 +  u_1.\bar{3}\bar{2} +    a.\bar{1}\bar{3}2\bar{1} 
+  u_1.\bar{3}\bar{2}1-a^3.\bar{3}\bar{1}\bar{2}\bar{1}
$$
that is
$$
(\emptyset -  a.\bar{1} )\bar{3}2\bar{1} \in u_1.\bar{3}21 + u_1.\bar{3}2 +  u_1.\bar{3}\bar{2}+  u_1.\bar{3}\bar{2}1-a^3.\bar{3}\bar{1}\bar{2}\bar{1}
$$
and $(\emptyset -  a.\bar{1} )\bar{3}2\bar{1} \in  \GQ_3.F_2 + \GQ_3.\{\bar{3} \bar{2},\bar{3} \bar{2}1 \} -a^3.\bar{3}\bar{1}\bar{2}\bar{1}$.
Now, 
$\bar{3}(\bar{1}\bar{2}\bar{1}) = \bar{3}\bar{2}\bar{1}\bar{2}$ and, by the defining relation,
$$
a^2(2-a.\emptyset) \bar{3}\bar{1}\bar{2}\bar{1}
= a^2(2-a.\emptyset) \bar{3}\bar{2}\bar{1}\bar{2} \in \GQ_3.3.\GQ_3 + u_2.\bar{3}2\bar{1}\bar{2} + u_2.\bar{3}\bar{2}
$$
and, since $ \bar{3}2\bar{1}\bar{2}=\bar{3}\bar{1}\bar{2}1=\bar{1}\bar{3}\bar{2}1 \in \GQ_3.\bar{3}\bar{2}1$
we get 
$a^2(2-a.\emptyset) \bar{3}\bar{1}\bar{2}\bar{1} \in \GQ_3.F_2 + R.E'.\{\bar{3} \bar{2},\bar{3} \bar{2}1 \}$.
Therefore,  $(2-a.\emptyset)(\emptyset - a.\bar{1}) \bar{3}2 \bar{1} \in \GQ_3.F_2 + \GQ_3.\{\bar{3} \bar{2},\bar{3} \bar{2}1 \}
= \GQ_3.F_2 + R.E'.\{\bar{3} \bar{2},\bar{3} \bar{2}1 \}$.
From lemma \ref{lem:quotA3dim5} we deduce that 
$$
\GQ_3. \bar{3}2 \bar{1} \subset \GQ_3.F_2 + R.E'.\{\bar{3} \bar{2},\bar{3} \bar{2}1 \} + R.E_0.\bar{3}2\bar{1}
$$
and this proves the claim.
\end{proof}

\begin{lemma}  \label{lem:spanA3s3pmA3} 
$\GQ_3s_3.\GQ_3 + \GQ_3 s_3^{-1}.\GQ_3 = \GQ_3.s_3.F + \GQ_3 . s_3^{-1}.F + R.E.3 \bar{2}\bar{1}  +R.E.32\bar{1}2+ R.\bar{3}\bar{2}\bar{1}+R. \bar{3} 2 \bar{1} 2. $
In particular, it is spanned by 219 elements, and $\GQ_3 u_3 \GQ_3 = \GQ_3 + \GQ_3s_3.\GQ_3 + \GQ_3 s_3^{-1}.\GQ_3$ is spanned by 239 elements.
\end{lemma}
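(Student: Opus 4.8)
The plan is to derive the identity from Proposition~\ref{prop:A3s3pmA3} and Lemma~\ref{lem:A3s3pmF}. Applying Proposition~\ref{prop:A3s3pmA3} with both exponents $\varepsilon=\pm1$ gives
$$
\GQ_3 s_3^{\varepsilon}\GQ_3 \;=\; \GQ_3 s_3^{\varepsilon}F + R.E.s_3^{\varepsilon}s_2^{-1}s_1^{-1} + R.E.s_3^{\varepsilon}s_2 s_1^{-1}s_2 ,
$$
so that, adding the two identities,
$$
\GQ_3 s_3\GQ_3 + \GQ_3 s_3^{-1}\GQ_3 \;=\; \bigl(\GQ_3 s_3 F + \GQ_3 s_3^{-1}F\bigr) + \sum_{\varepsilon=\pm1}\bigl(R.E.s_3^{\varepsilon}s_2^{-1}s_1^{-1} + R.E.s_3^{\varepsilon}s_2 s_1^{-1}s_2\bigr) .
$$
The first summand is exactly the module shown, in Lemma~\ref{lem:A3s3pmF}, to be spanned by $201$ elements, and the two families with $\varepsilon=+1$ are the terms $R.E.(3\bar{2}\bar{1})$ and $R.E.(32\bar{1}2)$ occurring in the statement. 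Hence the assertion is equivalent to the claim that the families with $\varepsilon=-1$ are superfluous, i.e. that for every $e\in E$
$$
e\,s_3^{-1}s_2^{-1}s_1^{-1},\ \ e\,s_3^{-1}s_2 s_1^{-1}s_2 \;\in\; \GQ_3 s_3 F + \GQ_3 s_3^{-1}F + R.E.(3\bar{2}\bar{1}) + R.E.(32\bar{1}2) + R.(\bar{3}\bar{2}\bar{1}) + R.(\bar{3}2\bar{1}2) .
$$
Once this is established the displayed decomposition follows, the spanning count is $201+8+8+1+1=219$, and, since $u_3 = R+Rs_3+Rs_3^{-1}$, one gets $\GQ_3 u_3\GQ_3 = \GQ_3 + \GQ_3 s_3\GQ_3 + \GQ_3 s_3^{-1}\GQ_3$, which is then spanned by $20+219 = 239$ elements.

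To cut down the list of words to be checked I would use that the elements of $E=\{\emptyset,s_2,s_2^{-1},s_1s_2,s_1s_2^{-1},s_1^{-1}s_2,s_1^{-1}s_2^{-1},s_2s_1^{-1}s_2\}$ are words in $s_1,s_2$; writing $\delta = s_1s_2s_3$ one has $\delta s_i\delta^{-1} = s_{i+1}$ for $i\in\{1,2\}$ and $\delta^{-1} = \bar{3}\bar{2}\bar{1}$, whence $e(s_1,s_2)\cdot\bar{3}\bar{2}\bar{1} = \bar{3}\bar{2}\bar{1}\cdot e(s_2,s_3)$. Thus the first half of the containment above is equivalent to showing that $\bar{3}\bar{2}\bar{1}\cdot y$ lies in the target module for each $y$ in the $7$-element set $Y = \{s_3,s_3^{-1},s_2s_3,s_2s_3^{-1},s_2^{-1}s_3,s_2^{-1}s_3^{-1},s_3s_2^{-1}s_3\}$, the image of $E\setminus\{\emptyset\}$ under $s_1\mapsto s_2,\ s_2\mapsto s_3$. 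For the second half $\bar{3}2\bar{1}2 = s_3^{-1}s_2s_1^{-1}s_2$ does not admit the same conjugation shortcut, and one reduces the words $e\,s_3^{-1}s_2s_1^{-1}s_2$ directly by the same method.

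The heart of the argument is then to rewrite these finitely many words inside the target module, using only: the braid relations among $s_1,s_2,s_3$ (in particular $s_1$ commutes with $s_3$, and $s_2s_3^{-1}s_2^{-1} = s_3^{-1}s_2^{-1}s_3$, $s_2^{-1}s_3^{-1}s_2 = s_3s_2^{-1}s_3^{-1}$, together with their variants); the cubic relation in the forms $s_3^{2} = \Sigma_1 s_3 - \Sigma_2 + \Sigma_3 s_3^{-1}$, $s_3^{-2} = \Sigma_2\Sigma_3^{-1}s_3^{-1} + \Sigma_3^{-1}s_3 - \Sigma_1\Sigma_3^{-1}$ (and likewise on $s_2$); and the defining relation $r_1$ of $\GQ_3$ transported to the triple $\{s_2,s_3,s_4\}$, together with its $\phi$- and $\psi$-twisted forms, which are available because $\phi(r_1) = a^{-2}r_1$ and $\psi(r_1) = -a^{-2}r_1$. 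Exactly as in the proofs of Proposition~\ref{prop:A3s3pmA3} and Lemma~\ref{lem:A3s3pmF}, once the $s_1^{\pm1}$ are commuted past $s_3$ and one braid move is carried out, the word becomes $s_3^{\pm1}$ times a word in $s_1,s_2$ plus one or two explicit exceptional words, and applying $r_1$ once (occasionally twice) on the $\{s_2,s_3,s_4\}$-triple sends these into $\GQ_3 s_3 F + \GQ_3 s_3^{-1}F$, into $R.E.(3\bar{2}\bar{1}) + R.E.(32\bar{1}2)$, or onto a scalar multiple of $\bar{3}\bar{2}\bar{1}$ (resp. $\bar{3}2\bar{1}2$). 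The main obstacle is the bookkeeping that prevents these reductions from being circular --- e.g. $s_2\cdot\bar{3}\bar{2}\bar{1} = \bar{3}\bar{2}\bar{1}\,s_3$ is a mere braid identity and yields nothing, and the case $y = s_3s_2^{-1}s_3$ (coming from $e = s_2s_1^{-1}s_2$) is the delicate one --- so that the cases must be handled in an order in which the defining relation is used only when it genuinely lowers complexity and never reproduces a term $e'.(\bar{3}\bar{2}\bar{1})$ with $e'\neq\emptyset$. Each of the finitely many resulting identities can be, and in the paper is, double-checked by a direct matrix computation using the models of the irreducible representations of $H_4$ defined over $R'$, so this part of the proof is purely mechanical.
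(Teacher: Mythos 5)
Your reduction is the right one: Proposition \ref{prop:A3s3pmA3} applied with both signs, together with Lemma \ref{lem:A3s3pmF}, reduces the lemma to showing that the words $e.\bar{3}\bar{2}\bar{1}$ and $e.\bar{3}2\bar{1}2$, $e \in E$, lie in the right-hand side module, and your counts $201+8+8+1+1=219$ and $20+219=239$ are correct; the conjugation identity $e(s_1,s_2)\bar{3}\bar{2}\bar{1}=\bar{3}\bar{2}\bar{1}e(s_2,s_3)$ is also fine as far as it goes. The problem is that these containments, which are the entire content of the lemma, are never actually proved. What you offer is a rewriting strategy (commute the $s_1^{\pm1}$, apply the defining relation shifted to the pair $(s_2,s_3)$ --- note there is no $s_4$ here, the ambient algebra is $\GQ_4$) together with an explicit acknowledgement that the strategy risks circularity, that the case coming from $e=s_2s_1^{-1}s_2$ is ``delicate'', and that some unspecified ordering of the cases avoids reproducing terms $e'.\bar{3}\bar{2}\bar{1}$ with $e'\neq\emptyset$. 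No such ordering is exhibited and no complexity measure is given that forces termination, so the heart of the proof is missing. The fallback that each identity ``can be double-checked by a direct matrix computation'' does not close the gap either: what must be verified is membership in an $R$-submodule of $\GQ_4$ spanned by a prescribed finite set, not an identity in $H_4$, and over $R=\Z[a^{\pm1},b^{\pm1},c^{\pm1}]$ this is precisely the nontrivial linear-algebra issue the paper flags at the end of Section 4.

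For comparison, the paper's proof avoids all case-by-case bookkeeping by an eigenvalue/left-ideal trick. Since $\GQ_3 s_3\GQ_3=\GQ_3 s_3 F+R.E.3\bar{2}\bar{1}+R.E.32\bar{1}2$, the module $I=\GQ_3 s_3 F+\GQ_3 s_3^{-1}F+R.E.3\bar{2}\bar{1}+R.E.32\bar{1}2$ is a left $\GQ_3$-module; one application of the defining relation gives $(s_2-a)\bar{3}\bar{2}\bar{1}\in I$, the proof of Proposition \ref{prop:A3s3pmA3} already gives $(s_1-a)\bar{3}\bar{2}\bar{1}\in I$, and since the quotient of $\GQ_3$ by the left ideal generated by $s_1-a$ and $s_2-a$ is spanned by $\emptyset$, one concludes $\GQ_3.\bar{3}\bar{2}\bar{1}\subset R.\bar{3}\bar{2}\bar{1}+I$ in one stroke. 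The same device, applied modulo $J=R.\bar{3}\bar{2}\bar{1}+I$ with the elements $s_1-a$ and $a.s_2^{-1}-\emptyset$ (in the spirit of Lemma \ref{lem:quotA3dim5}), handles $\bar{3}2\bar{1}2$. This is the missing idea: with it, only two or three explicit instances of the defining relation are needed and your circularity worries disappear.
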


\begin{proof}
Note that, according to proposition \ref{prop:A3s3pmA3}, we have $\GQ_3.s_3.\GQ_3 =  \GQ_3 .s_3 .F +R.E.3 \bar{2}\bar{1} + R.E.32\bar{1}2
\subset \GQ_3.s_3. F+  \GQ_3 . s_3^{-1}.F + R.E.3 \bar{2}\bar{1} + R.E.32\bar{1}2$. In particular $I = \GQ_3.s_3. F+  \GQ_3 . s_3^{-1}.F + R.E.3 \bar{2}\bar{1} + R.E.32\bar{1}2$
is a left $\GQ_3$-module.
From the defining relation we get
$$
a^2 (2-a.\emptyset) \bar{3}\bar{2}\bar{1}
= (2-a.\emptyset) 3\bar{2}\bar{1} + a(a .\bar{2}-1) \bar{3}2\bar{1}  + (a^{-1}.\emptyset -\bar{2})32\bar{1}  
 + (a \bar{2}- a^{-1} 2)3\bar{1}   + a (2-a^2\bar{2}) \bar{3}\bar{1}
$$
hence $a^2 (2-a.\emptyset) \bar{3}\bar{2}\bar{1} \in \GQ_3.3.\GQ_3 + \GQ_3. \bar{3}.F \subset I$.
{}
Now, in the proof of proposition \ref{prop:A3s3pmA3} we proved that $(1 -a.\emptyset) \bar{3}\bar{2}\bar{1} \in \GQ_3. \bar{3}.F_1 \subset I$. Since
the quotient of $\GQ_3$ by its left ideal generated by $1 -a.\emptyset$ and $2  -a.\emptyset$ is obviously spanned by $\emptyset$ we get that
$\GQ_3. \bar{3}\bar{2}\bar{1} \subset R.\bar{3}\bar{2}\bar{1}  + I$. In particular $J = R.\bar{3}\bar{2}\bar{1}  + I$ is a $\GQ_3$-submodule.

From the defining relation we get $a(a.\bar{2}-\emptyset) \bar{3}2 \in a^2(2- a.\emptyset). \bar{3}\bar{2} + \GQ_3.3.\GQ_3 + \GQ_3.\bar{3}$
hence $a(a.\bar{2}-\emptyset) \bar{3}2\bar{1}2 \in a^2(2- a.\emptyset). \bar{3}\bar{2}\bar{1}2 + \GQ_3.3.\GQ_3 + \GQ_3.\bar{3}\bar{1}2$
that is 
$$a(a.\bar{2}-\emptyset) \bar{3}2\bar{1}2 \in a^2(2- a.\emptyset). \bar{3}1\bar{2}\bar{1} + \GQ_3.3.\GQ_3 + \GQ_3.\bar{3}2 \subset \GQ_3. \bar{3}\bar{2}\bar{1}+I = J.
$$
Similarly, from $(1- a.\emptyset)2\bar{1} \in a^2(1-a.\emptyset)\bar{2}\bar{1} - a(a.\bar{1}-\emptyset) \bar{2}1 - (a^{-1}.\emptyset - \bar{1}) 21 + u_1 u_2$
we get 
$$
\begin{array}{lcl}
(1- a.\emptyset)\bar{3}2\bar{1}2 &\in &a^2(1-a.\emptyset)\bar{3}\bar{2}\bar{1}2 - a(a.\bar{1}-\emptyset) \bar{3}\bar{2}12 
- (a^{-1}.\emptyset - \bar{1}) \bar{3}212 + u_1 \bar{3} u_2 \\
 &\in &a^2(1-a.\emptyset)\bar{3} 1\bar{2}\bar{1} - a(a.\bar{1}-\emptyset) \bar{3}12\bar{1}
- (a^{-1}.\emptyset - \bar{1}) \bar{3}121 + u_1 \bar{3} u_2 \\
 &\in &u_1 \bar{3} \bar{2}\bar{1} - u_1 \bar{3}2\bar{1}
- u_1 \bar{3}21 + u_1 \bar{3} u_2 \\
&\in &\GQ_3 \bar{3} \bar{2}\bar{1} + \GQ_3.\bar{3}.F_1 \subset J \\
\end{array}
$$
Since the quotient of $\GQ_3$ by its left ideal generated by $(1- a.\emptyset)$ and $a(a.\bar{2}-\emptyset)$
is spanned by the image of $\emptyset$, we get $\GQ_3.\bar{3}2\bar{1}2 \in R.\bar{3}2\bar{1}2 + J$
and this proves the claim.

\end{proof}

\subsection{Spanning $\GQ_4/\GQ_3u_3\GQ_3$ as a $R$-module - preliminaries}

By proposition \ref{propGQ4P2} we know that $\GQ_4 = \GQ_3 u_3 \GQ_3 + R w_0 + R w_0^{-1} + \GQ_3 .3 \bar{2} 3.\GQ_3$.

\subsubsection{Step 1 : $\GQ_3 u_3 \GQ_3 + u_2 3 \bar{2}3 u_2 = \GQ_3 u_3 \GQ_3 + R.3 \bar{2} 3$} 

First note that $2.3\bar{2}3 = (23\bar{2})3 = \bar{3}233 \in u_2u_3u_2 + R. 3 \bar{2} 3$ by theorem \ref{theo:structmodGQ3}.
Since $\GQ_3$ is spanned as a right $u_2$-module by $E'$,
we get that $\GQ_3 u_3 \GQ_3 + \GQ_3. 3 \bar{2}3 = \GQ_3 u_3 \GQ_3  + R.E'.3 \bar{2}3$.
By a similar argument we get $3\bar{2}3.2 \in u_2u_3u_2 + R. 3 \bar{2} 3$
hence $\GQ_3 u_3 \GQ_3+\GQ_3.3\bar{2}3.u_2  = \GQ_3 u_3 \GQ_3+ \GQ_3.3 \bar{2} 3 = \GQ_3 u_3 \GQ_3  + R.E'.3 \bar{2}3$.

\paragraph{suite}
ble
\subsubsection{Step 2 : $\GQ_3 u_3 \GQ_3 + u_1 3 \bar{2}3 u_1 = \GQ_3 u_3 \GQ_3 + u_1.x + x. u_1 +R .1x1  + R. 1 x \bar{1}$, $x = 3 \bar{2}3$} { \ } \\

While studying $\GQ_3$ as a $\GQ_2$-module in section \ref{sect:GQ3asGQ2bimodule}, the quotient module of $\GQ_2 u_2 \GQ_2$
by its submodule $\GQ_2+\GQ_2 s_2 \GQ_2$ has been determined under the name $M'_1/M_+$, and it was proven
to be generated by (images of) the seven elements
$\bar{2}$, $1\bar{2}$,
$\bar{1}\bar{2}$, $\bar{2} \bar{1}$, $\bar{2}1$, $1 \bar{2} 1$,  $1 \bar{2} \bar{1}$. Therefore,
$\GQ_3 u_3 \GQ_3 + u_1 3 \bar{2} 3 u_1$ is spanned by the already determined $219+20 = 239$ elements
spanning $\GQ_3 u_3 \GQ_3$ plus the 7 elements $3\bar{2}3$, $13\bar{2}3$,
$\bar{1}3\bar{2}3$, $3\bar{2}3 \bar{1}$, $3\bar{2}31$, $1 3\bar{2}3 1$,  $1 3\bar{2}3 \bar{1}$

\subsubsection{Step 3 : $u_2 31 \bar{2} \bar{1}3 \subset \GQ_3 u_3 \GQ_3 + R.31 \bar{2} \bar{1}3+ u_2.3 1 \bar{2} 1 3 + u_2.13\bar{2}3$} { \ } \\

By the study at the end of section \ref{sect:GQ3asGQ2bimodule} we know that
$\bar{1} \bar{2} \bar{1}\equiv -(b+c)w^{-1}. 1 \bar{2}\bar{1} - (wa)^{-1}. 1 \bar{2}1  + (v/w).\bar{2}\bar{1} +w^{-1}\bar{2}1 + a^{-1}\bar{1}\bar{2} + (u/(wa)).1\bar{2} - ((a^2+v)/(wa)) \bar{2}$
modulo $\GQ_2 + \GQ_2 s_2 \GQ_2$. Therefore
$3\bar{1} \bar{2} \bar{1}3 \equiv -(b+c)w^{-1}. 31 \bar{2}\bar{1}3 - (wa)^{-1}. 31 \bar{2}13  + (v/w).3\bar{2}3\bar{1} +w^{-1}3\bar{2}31 + a^{-1}\bar{1}3\bar{2}3 + (u/(wa)).31\bar{2}3 - ((a^2+v)/(wa)) 3\bar{2}3$ modulo $\GQ_3 u_3 \GQ_3$
and $123\bar{1} \bar{2} \bar{1}3 \equiv -(b+c)w^{-1}. 1231 \bar{2}\bar{1}3 - (wa)^{-1}. 1231 \bar{2}13  + (v/w).123\bar{2}3\bar{1} +w^{-1}.123\bar{2}31 + a^{-1}.12\bar{1}3\bar{2}3 + (u/(wa)).1231\bar{2}3 - ((a^2+v)/(wa)) 123\bar{2}3$ modulo $\GQ_3 u_3 \GQ_3$.

Since $23\bar{2}3 \equiv a. 3 \bar{2} 3 \mod \GQ_3 u_3 \GQ_3$, we have 
$1231 \bar{2}\bar{1}3 = 1213 \bar{2}\bar{1}3 = 2123 \bar{2}3\bar{1} \equiv
a.213 \bar{2}3\bar{1}$, 
$1231 \bar{2}13 = 1213 \bar{2}31= 2123 \bar{2}31 \equiv a.213 \bar{2}31$,
 $123\bar{2}3\bar{1} \equiv a.13\bar{2}3\bar{1}$, 
 $123\bar{2}31 \equiv a.13\bar{2}31$, 
$12\bar{1}3\bar{2}3 = \bar{2}123\bar{2}3 \equiv a.\bar{2}13\bar{2}3 $ 
and $1231\bar{2}3 = 1213\bar{2}3 = 2123\bar{2}3 \equiv  a.213\bar{2}3$  
modulo $\GQ_3 u_3 \GQ_3$.

It follows that
$123\bar{1} \bar{2} \bar{1}3 \equiv -(b+c)w^{-1}. a.213 \bar{2}3\bar{1} - w^{-1}.213 \bar{2}31  + (v/w).a.13\bar{2}3\bar{1} +w^{-1}.a.13\bar{2}31 + \bar{2}13\bar{2}3 + (u/w).213\bar{2}3 - ((a^2+v)/w) 13\bar{2}3$ modulo $\GQ_3 u_3 \GQ_3$. On the other hand, we have $123\bar{1}\bar{2}\bar{1} 3 = 12\bar{1}3\bar{2}3\bar{1}  = 
\bar{2}123\bar{2}3\bar{1}  \equiv a.\bar{2}13\bar{2}3\bar{1}$ modulo $\GQ_3 u_3 \GQ_3$. This yields to
$\bar{2}13\bar{2}3\bar{1} \equiv 
-(b+c)w^{-1}. 213 \bar{2}3\bar{1} - (aw)^{-1}.213 \bar{2}31  + (v/w).13\bar{2}3\bar{1} +w^{-1}.13\bar{2}31 + a^{-1}.\bar{2}13\bar{2}3 + (u/(wa)).213\bar{2}3 - ((a^2+v)/(aw)) 13\bar{2}3$ modulo $\GQ_3 u_3 \GQ_3$, which can be rephrased as $(\bar{2} + (b+c)w^{-1}. 2 - (v/w).\emptyset) 13\bar{2}3\bar{1}$ being congruent to 
$
 - (aw)^{-1}.213 \bar{2}31+  w^{-1}.13\bar{2}31 + a^{-1}.\bar{2}13\bar{2}3 + (u/(wa)).213\bar{2}3 - ((a^2+v)/(aw)) 13\bar{2}3$ modulo $\GQ_3 u_3 \GQ_3$.
 Now, $\bar{2} + (b+c)w^{-1}. 2 - (v/w).\emptyset  = w^{-1}.2.(2 - a.\emptyset)$
 hence
 $$
 2(2- a.\emptyset)31\bar{2}\bar{1}3
 \equiv  - a^{-1}.213 \bar{2}31 + 13\bar{2}31 + wa^{-1}.\bar{2}13\bar{2}3 + (u/a).213\bar{2}3 - ((a^2+v)/a) .13\bar{2}3
 $$
 modulo $\GQ_3 u_3 \GQ_3$ and
\begin{equation} \label{eq21x1bA}
 (2- a.\emptyset)31\bar{2}\bar{1}3
 \equiv  - a^{-1}.13 \bar{2}31 + \bar{2}13\bar{2}31 + wa^{-1}.\bar{2}\bar{2}13\bar{2}3 + (u/a).13\bar{2}3 - ((a^2+v)/a) .\bar{2}13\bar{2}3
\end{equation}
 modulo $\GQ_3 u_3 \GQ_3$ and this proves in particular that $u_2 31 \bar{2} \bar{1}3 \subset \GQ_3 u_3 \GQ_3 + R.31 \bar{2} \bar{1}3+ u_2.3 1 \bar{2} 1 3 + u_2.13\bar{2}3$.
 Applying the usual automorphisms and the previous reductions we deduce that $31 \bar{2} \bar{1}3u_2 \subset \GQ_3 u_3 \GQ_3 +R.31 \bar{2} \bar{1}3+  3 1 \bar{2} 1 3.u_2 + 3\bar{2}31.u_2 + u_1.3\bar{2}3.u_1$

\subsection{$\GQ_4/\GQ_3u_3\GQ_3$ as a $R$-module - computational description} 
\label{sect:A4tildegen}
We concentrate our attention on the $\GQ_3$-bimodule $\tilde{A}_4 = \GQ_4/\GQ_3 u_3 \GQ_3$.

\subsubsection{A convenient basis}
\label{sect:A4tildebasis}
We introduce the following list of $25$ elements of $\tilde{A}_4$. They will turn out to provide
a $R$-basis. We set $x = 3 \bar{2}3$ and $y = 1 \bar{2}1$.

$$
\begin{array}{|lcllcllcllcl|}
\hline
e_{1} & = & 3\bar{2}3 & e_{2} & = & 13\bar{2}3 & e_{3} & = & \bar{1}3\bar{2}3 & e_{4} & = & 213\bar{2}3 \\ 
e_{5} & = & 2\bar{1}3\bar{2}3 & e_{6} & = & \bar{2}13\bar{2}3 & e_{7} & = & \bar{2}\bar{1}3\bar{2}3 & e_{
8} & = & 3\bar{2}31 \\ 
e_{9} & = & 3\bar{2}3\bar{1} & e_{10} & = & 3\bar{2}312 & e_{11} & = & 3\bar{2}31\bar{2} & e_{
12} & = & 3\bar{2}3\bar{1}2 \\ 
e_{13} & = & 3\bar{2}3\bar{1}\bar{2} & e_{14} & = & 213\bar{2}31 & e_{15} & = & \bar{2}13\bar{2}31 & e_{
16} & = & 13\bar{2}312 \\ 
e_{17} & = & 13\bar{2}31\bar{2} & e_{18} & = & 13\bar{2}3\bar{1} & e_{19} & = & 13\bar{2}31 & e_{
20} & = & 213\bar{2}312 \\ 
e_{21} & = & 213\bar{2}31\bar{2} & e_{22} & = & 32\bar{1}23 & e_{23} & = & \bar{3}\bar{2}1\bar{2}\bar{3} & e_{
24} & = & 3\bar{2}31\bar{2}1 \\ 
e_{25} & = & 1\bar{2}13\bar{2}3 & & & & &&& \\ 
\hline
\end{array}
$$

\subsubsection{Description of $f = \Phi \circ \Psi$ on $\tilde{A}_4$}

We let $f$ denote the $R$-module automorphism induced by $\Phi \circ \Psi = \Psi \circ \Phi$ on $\tilde{A}_4$.
We have $f(3\bar{2}3) = 3 \bar{2}3$, and an immediate verification shows that $f(e_i) = e_{\sigma_f(i)}$
for all $i \not\in \{ 18, 21 \}$, with
$$\sigma_f = (2,8)(3,9)(4,10)(5,12)(6,11)(7,13)(14,16)(15,17)(24,25) \in \mathfrak{S}_{25}.
$$
It remains to compute $f(e_{18})$ and $f(e_{21})$.

We have $f(e_{18}) = \bar{1}x1 = 3 \bar{1}\bar{2}13$. By section \ref{sect:GQ3asGQ2bimodule}
(or by relation (16) of table \ref{table:GQ3grobsigned2}) we know that $\bar{1}\bar{2} 1 \equiv 1 \bar{2}\bar{1} - a.\bar{2}\bar{1} + a^{-1}.\bar{2}1 + a. \bar{1}\bar{2}
- a^{-1}.1\bar{2}$ modulo $M_+$. This implies, as in step 2, that
$\bar{1}x1 \equiv 1x\bar{1} - a.x\bar{1} + a^{-1}.x1 + a.\bar{1}x - a^{-1}.1x$ (mod. $\GQ_3 u_3 \GQ_3$).
This proves that
$$f(e_{18}) = e_{18} +a.(e_3 - e_9) + a^{-1}.(e_8 - e_2).
$$
which completes the explicit determination of $f$, except for $f(e_{21}) = \bar{2}1x12$. We will determine in section \ref{sect:u2u1xu1u2}
that
\begin{equation} \label{eqfe21}
f(e_{21}) = e_{21} + a.(e_{15}-e_{17}) + a^{-1}.(e_{16} - e_{14})
\end{equation}

 Note that this description provides
a square matrix of size 25 with coefficients in the subring $\Z[a^{\pm 1}]$ of $R$.

\subsubsection{Description of $u_2u_1xu_1+u_1xu_1u_2$}

We have $\bar{1} x 12 = f(21x\bar{1})$ and, from equation (\ref{eq21x1bA}) we get
$(2-a.\emptyset).1x\bar{1} = -a^{-1}. e_{19} + e_{15} + w a^{-1}. \bar{2}\bar{2}1x + (u/a).e_2 - (a^2+v)a^{-1}.e_6$.
Since $\bar{2}\bar{2}1x = w^{-1}21.x - uw^{-1}.1x + vw^{-1}.\bar{2}1x
= w^{-1}. e_4 - uw^{-1}.e_2 + vw^{-1}.e_6$ we get
$(2 - a.\emptyset)1x\bar{1} = -a^{-1} e_{19} + e_{15} + a^{-1}.e_4 - a.e_6$,
that is 
\begin{equation} \label{eq21x1b}
2 1 x \bar{1} = a.e_{18}  -a^{-1} e_{19} + e_{15} + a^{-1}.e_4 - a.e_6
\end{equation}
Then $\bar{1}x12 = f(2 1 x \bar{1}) = a.f(e_{18})  -a^{-1} e_{19} + e_{17} + a^{-1}.e_{10} - a.e_{11}$

We want to compute $\bar{1} x \bar{1}$. Following the indications of step 2, we use the results of section \ref{sect:GQ3asGQ2bimodule} (in particular relation (17) of table \ref{table:GQ3grobsigned2}) to expand $\bar{1} x \bar{1} = 3 (\bar{1}\bar{2} \bar{1}) 2$ and get
\begin{equation} \label{eq1bx1b}
\bar{1} x \bar{1} = -\frac{b+c}{w} e_{18} - \frac{1}{wa} e_{19} + \frac{v}{w} e_{9} + w^{-1}. e_{8} + a^{-1}.e_3 + \frac{u}{wa} e_2 - \frac{a^2+v}{wa} e_1
\end{equation}
From this and the identity $2 x = x 2 = a. x$ one readily gets
\begin{equation}
\bar{1} x \bar{1}2 = -\frac{b+c}{w} .1x\bar{1}2 - \frac{1}{wa} e_{16} + \frac{v}{w} e_{12} + w^{-1}. e_{10} + e_3 + \frac{u}{w} e_2 - \frac{a^2+v}{w} e_1
\end{equation}
From the identity $\bar{1}x1 \equiv 1x\bar{1} - a.x\bar{1} + a^{-1}.x1 + a.\bar{1}x - a^{-1}.1x$ obtained above, 
we get
$2\bar{1}x1 = 21x\bar{1} - a.2x\bar{1} + a^{-1}.2x1 + a.2\bar{1}x - a^{-1}.21x
= 21x\bar{1} - a^2.x\bar{1} + x1 + a.2\bar{1}x - a^{-1}.21x$ hence
\begin{equation}
2\bar{1}x1 = 21x\bar{1} - a^2 e_{9}+e_8 + a.e_5 -a^{-1}.e_4
\end{equation}
and $21x\bar{1}$ is known by (\ref{eq21x1b}).
Similarly, we get
\begin{equation}
\bar{2}\bar{1}x1 = \bar{2}1x\bar{1} -  e_{9}+a^{-2}.e_8 + a.e_7 -a^{-1}.e_6
\end{equation}

But $\bar{2} 1 x \bar{1}$ is not known yet. We get it as follows. From the description of $2 1 x \bar{1}$ in (\ref{eq21x1b}) we get, after using the cubic relation
a couple of times, that
$221x\bar{1} = a.21x\bar{1} - a^{-1}.e_{14}+e_{19} +(u/a).e_4 - \frac{v+a^2}{a} .e_2 + (w/a).e_6$.
Expanding $22 = u.2 - v.\emptyset + w.\bar{2}$ we get from this that
\begin{equation} \label{eq2b1x1b}
\bar{2} 1 x \bar{1} = a^{-1}.e_{18} + \frac{u}{wa}.e_{19} + \frac{a-u}{w}.e_{15} + w^{-1}.e_4 + \frac{v}{w}.e_6 - \frac{a^2+v}{aw}.e_2 - \frac{1}{aw}.e_{14}
\end{equation}

\subsubsection{Description of $u_2u_1xu_1u_2$}
\label{sect:u2u1xu1u2}
Let us denote $e' = \bar{2}1x1\bar{2}$. We postpone for now the determination of its value. Note that $f(e')=e'$.
Multiplying (\ref{eq21x1b}) on the right by $2$ and using expansion of $\bar{2}\bar{2}$ by the cubic relation as well as $x2 = a.x$, we get that
\begin{equation} \label{eq21x1b2}
 21x\bar{1}2 = a.f(2\bar{1}x1) + f(e_{21}) + e_4 - a^2. e_6 -a^{-1}.e_{16}
\end{equation}
Similarly, multiplying (\ref{eq21x1b}) on the right by $\bar{2}$, one gets
\begin{equation}
 21x\bar{1}\bar{2} = a.f(\bar{2}\bar{1}x1) -a^{-1}.e_{17} + e' + a^{-2}.e_4 - e_6
\end{equation}
We start over the same computations, this time from (\ref{eq2b1x1b}), multiplying first by $2$ and then by $\bar{2}$ on the right. One gets
\begin{equation} \label{eq21x1b2}
\bar{2}1x\bar{1} 2= \frac{bc}{w}.f(2 \bar{1} x1) + \frac{u}{wa}.e_{16} + \frac{a-u}{w}.f(e_{21}) -\frac{1}{wa}.e_{20}+\frac{va}{w} . e_6 - \frac{v+a^2}{w}. e_2 + \frac{1}{bc}.e_4
\end{equation}
and
\begin{equation}
\bar{2} 1 x\bar{1} \bar{2} = \frac{bc}{w}.f(\bar{2}\bar{1}x1) + \frac{u}{wa}.e_{17} + \frac{a-u}{w}.e' + w^{-1}a^{-1}.e_4 + \frac{v}{wa}.e_6 - \frac{v+a^2}{a^2w}.e_2 - \frac{1}{aw}.e_{21}
\end{equation}

Note that the above four equations need an expression of $f(e_{21})$ and $e'$ to be expressable as a linear combination of the $(e_i)_{1 \leq i \leq 25}$.
We first compute $f(e_{21})=21x1\bar{2}$. Its expression as been given in (\ref{eqfe21}), but is not yet justified. We do it now.
Following rule (18) of table \ref{sect:GQ3asGQ2bimodule}, we can expand $1 \bar{2}1$ inside
$21x1\bar{2} = 231\bar{2}13\bar{2}$. This yields
\begin{equation} \label{eq21x12bA}
21x1\bar{2} \equiv 232\bar{1}23\bar{2} - (bc).\bar{3}213\bar{2}3\bar{2} + (a^2+v).x1\bar{2} + \frac{a^2+v}{a^2}.21x - (a^2+v).e_1
\end{equation}

Note that $232\bar{1}23\bar{2} = 2.w_+.\bar{2} = w_+$.
We now use rule (23) of table \ref{table:GQ3grobsigned1} (after applying the shift morphism $1 \mapsto 2$, $2 \mapsto 3$)
to expand $\bar{2}3\bar{2} 3$ inside $\bar{2}3\bar{2} 312\bar{3} = f(\bar{3}213\bar{2}3\bar{2})$.  Using in addition a few easy braid relations we get from this expansion that
$$
\bar{2}3\bar{2} 312\bar{3} \equiv a^{-1}.3\bar{2}312\bar{3} - \frac{a}{bc}.21x\bar{1} + \frac{a^2}{bc}.1x\bar{1} + \frac{v}{bc}.\bar{2}1x + (bc)^{-1}.21x - \frac{a^2+v}{w}.1x
$$
modulo $\GQ_3 u_3 \GQ_3$. Pluging this into (\ref{eq21x12bA}) we get
that $21x1\bar{2}$ is equal to 
$$
w_+ - \frac{bc}{a}.\bar{3}213\bar{2}3 + a.\bar{1}x12 - a^2.\bar{1}x1 + a^2.x1\bar{2} - x12 + \frac{a^2+v}{a}.x1 +\frac{a^2+v}{a^2}.21x - (a^2+v).x$$
modulo $\GQ_3 u_3 \GQ_3$. Now, $\bar{3}213\bar{2}3 = \bar{3}231\bar{2}3= 23\bar{2}1\bar{2}3$ and expanding $\bar{2}1\bar{2}$ by rule (18) of table \ref{sect:GQ3asGQ2bimodule},
we get that 
$$
(bc).23\bar{2}1\bar{2}3 = 2.w_+ - 21x1 + (a^2+v).x1 +\frac{a^2+v}{a}.21x - a(a^2+v).x
$$
Altogether, this yields
\begin{equation} \label{eq21x12bB}
21x1\bar{2} = (\emptyset - a^{-1}.2).w_+ + a^{-1}.21x1 + a.\bar{1}x12 - a^2.\bar{1}x1 + a^2.x1\bar{2} - x12
\end{equation}
Applying $f$ we get 
\begin{equation} \label{eq2b1x12}
\bar{2}1x12 = (\emptyset - a^{-1}.2).w_+ + a^{-1}.1x12 + a.21x\bar{1} - a^2.1x\bar{1} + a^2.\bar{2}1x - 21x
\end{equation}
since $w_+2 = 2w_+$. Therefore, we have
\begin{equation} \label{eq2b1x12Delta}
21x1\bar{2} - \bar{2}1x12 = a^{-1}.(21x1 - 1x12) + a.(\bar{1}x12 - 21x\bar{1}) + a^2.(1x\bar{1} - \bar{1}x1)+a^2.(x1\bar{2} - \bar{2}1x) + (21x-x12)
\end{equation}
and from this we get the expression of $f(e_{21})$ obtained above (\ref{eqfe21}).

We now compute $e' = \bar{2}1x1\bar{2}$.
Multiplying (\ref{eq2b1x12}) on the right by $2$ yields (after expanding $22$ inside $1x12$)
$$
\bar{2}1x122 = w_+.(\emptyset - a^{-1}.2)2 + a^{-1}u.1x12 - a^{-1}v.1x1 + a^{-1}w.1x1\bar{2} + a.21x\bar{1}2 - a^2.1x\bar{1}2 + a^3.\bar{2}1x - a.21x
$$
Using that $\bar{2} = w^{-1}.22 - uw^{-1}.2 + vw^{-1}.\emptyset$ we deduce from this that $\bar{2}1x1\bar{2}$ is equal to
$$
w^{-1}.w_+.(\emptyset - a^{-1}.2)2+\frac{u}{aw}.1x12 - \frac{v}{aw}.1x1 + \frac{1}{a}.1x1\bar{2} + \frac{a}{w}.21x\bar{1}2 - \frac{a^2}{w}.1x\bar{1}2
+ \frac{a^3}{w}.\bar{2}1x - \frac{a}{w}.21x - \frac{u}{w}.\bar{2}1x12 + \frac{v}{w}.\bar{2}1x1
$$
Now, we use that $1x12 = 2 \bar{2}1x12$. From (\ref{eq2b1x12}) 
this yields
$1x12  =2. w_+.(\emptyset- a^{-1}.2) + a^{-1}.21x12 + a.221x\bar{1} - a^2.21x\bar{1} + a^2.1x - 221x$.
Expanding $22$ twice, we get that
$w_+.(\emptyset- a^{-1}.2)2 = 2. w_+.(\emptyset- a^{-1}.2)$ is equal to
$$
1x12 - a^{-1}.21x12 - au.21x\bar{1} - av.1x\bar{1} - aw.\bar{2}1x\bar{1} + a^2.21x\bar{1} - (a^2+v).1x + u.21x + w.\bar{2}1x
$$
Pluging this into the former equation provides an expression for $\bar{2}1x1\bar{2}$, as
$$
\bar{2}1x1\bar{2} = \frac{a+u}{aw}.e_{16} - \frac{1}{aw}.e_{20} + \frac{a(a-u)}{w}.21x\bar{1} + \frac{av}{w}.e_{18} - a.\bar{2}1x\bar{1} - \frac{a^2+v}{w}.e_2 + \frac{u-a}{w}.e_4 - \frac{v}{aw}.e_{19}
$$
{}
$$
+ \frac{a}{w} . 21x\bar{1}2+a^{-1}.e_{17} - \frac{a^2}{w}.1x\bar{1} 2 + \left( \frac{a^3}{w}+1\right).e_6 - \frac{u}{w}.\bar{2}1x12 + \frac{v}{w}.e_{15}
 $$
 
 We shall need to compute $x \bar{1}2\bar{1}$. Using relation (18) of table \ref{table:GQ3grobsigned2} to expand $\bar{1}2 \bar{1}$, we get after an easy
 calculation, that
\begin{equation}  \label{eqx1b21b}
x\bar{1}2\bar{1} = \frac{v}{w}.e_{12} + (bc)^{-1}.e_{24} - \frac{au}{w}.e_{11}
\end{equation} 

 \subsection{$\GQ_4/\GQ_3u_3\GQ_3$ as a $\GQ_3$-bimodule - computational description} 
\label{sect:A4tildebimod}
\subsubsection{Left multiplication by $s_1$ inside $\tilde{A}_4$}
$$
\begin{array}{|lcl|lcl|lcl|}
\hline
e_1 & \mapsto & e_2 &  e_6 & \mapsto & e_{25}  & e_{11} & \mapsto & e_{17} \\
e_2 & \mapsto & -v.e_1 + u.e_2 + w.e_3  &  e_7 & \mapsto & a.e_{7}  & e_{12} & \mapsto & f(2 \bar{1} x1) \\
e_3 & \mapsto & e_1 &  e_8 & \mapsto & e_{19}  & e_{13} & \mapsto & f(\bar{2}\bar{1}x1) \\
e_4 & \mapsto & a.e_4 &  e_9 & \mapsto & e_{18}  & e_{14} & \mapsto & a.e_{14} \\
e_5 & \mapsto & a.e_6 &  e_{10} & \mapsto & e_{16}  & e_{15} & \mapsto & f(1.e_{24}) \\
\hline
\end{array}
$$
and
$$
\begin{array}{|lcl|lcl|}
\hline
e_{16} & \mapsto & u.e_{16}-v.e_{10} +w.f(21x\bar{1}) & e_{21} & \mapsto & a.e_{21} \\
e_{17} &  \mapsto & u.e_{17} - v.e_{11}+w. f(\bar{2}1x\bar{1}) & e_{22} &  \mapsto & (\ref{eq1wplus}) \\
e_{23} &  \mapsto & (\ref{eq1wmoins}) \& (\ref{eq1bx1b}) \\
e_{18} &  \mapsto & u.e_{18} - v.e_9 + w. \bar{1}x\bar{1} & e_{23} &  \mapsto & (\ref{eq1wmoins}) \\ 
e_{19} & \mapsto & u.e_{19} - v.e_8 + w.f(e_{18}) & e_{24} &  \mapsto & (\ref{eq1xy}) \\
e_{20} & \mapsto & a.e_{20} & e_{25} &  \mapsto &  u.e_{25} - v.e_6+wa^{-1}.e_5\\
\hline
\end{array}
$$
We now consider $1 w_+ = 3(12\bar{1}2)3$. Using rule (15) in table \ref{table:GQ3grobsigned2} we
get that $1 w_+ = w. 3 \bar{2} 1 \bar{2} 3 + u.312\bar{1}3 - v. e_8$. We have $312\bar{1}3 = 1323\bar{1} = 1232\bar{1} \equiv 0$,
hence $1 w_+ \equiv w. 3 \bar{2} 1 \bar{2} 3  - v. e_8$.
Now, using rule (18) in table  \ref{table:GQ3grobsigned2} we get
\begin{equation}
3 \bar{2}1\bar{2} 3 \equiv (bc)^{-1}.e_{22} - (bc)^{-1}.e_{19} + \frac{a^2+v}{w}. e_8 + \frac{a^2+v}{w} . e_2 - \frac{a^2+v}{bc}. e_1
\end{equation}
hence 
\begin{equation} \label{eq1wplus}
1w_+ \equiv a.e_{22} - a. e_{19} + a^2.e_8 + (a^2+v)e_2 - a(a^2+v).e_1.
\end{equation}
In particular we get the following potentially useful property
\begin{equation} \label{eq1wpmodu1xu1}
1.w_+ \equiv a.w_+ \mod \GQ_4^{(1)}+u_1xu_1.
\end{equation}

We now want to compute $1 w_- = w_- 1 = 1.e_{23}$. For this we first compute
$\bar{1}w_- = \bar{1} \bar{3}\bar{2} 1 \bar{2} \bar{3}
= \bar{3}( \bar{1}\bar{2} 1) \bar{2} \bar{3}
= \bar{3} 2\bar{1} (\bar{2} \bar{2}) \bar{3}
= w^{-1} \bar{3} 2\bar{1} 2 \bar{3} - uw^{-1}.\bar{3} 2\bar{1}  \bar{3} + vw^{-1}.\bar{3} 2\bar{1} \bar{2}  \bar{3}$.
Since $\bar{3} 2\bar{1} \bar{2}  \bar{3} = \bar{3} \bar{1} \bar{2} 1 \bar{3}=  \bar{1} \bar{3}\bar{2}  \bar{3}1   \bar{1} \bar{2}\bar{3}  \bar{3}1  \equiv 0$
we get
$\bar{1}.w_- = w^{-1}.\bar{3} 2\bar{1} 2 \bar{3} - uw^{-1}(bc)^{-1}.x\bar{1}$.

Then using relation (18) of table \ref{table:GQ3grobsigned2}  to replace $2 \bar{1} 2$, we get after a straightforward computation that
\begin{equation}
\bar{3} 2 \bar{1} 2 \bar{3} \equiv (bc) e_{23} - \bar{1} x \bar{1} + \frac{a^2+v}{w} e_9 + \frac{a^2+v}{w} e_3 - \frac{a^2 + v}{wa} e_1
\end{equation}
Now, $\bar{1}\bar{1}.w_- =  w^{-1}.\bar{1}.\bar{3} 2\bar{1} 2 \bar{3} - uw^{-1}(bc)^{-1}.\bar{1}x\bar{1}$. From the expression of
$\bar{3} 2 \bar{1} 2 \bar{3} $ given above, this yields after a again straightforward computation that
$$
\bar{1}\bar{1}.w_- = w^{-2}(bc)^2.e_{23} + \frac{a^2+v}{w^2a}.e_9 + \frac{v(a^2+v)}{w^3}.e_3 - \frac{a^2+v}{w^3a}(bc+ua).e_1 - w^{-2}.e_{18}
-\frac{v}{w^2}.\bar{1}x\bar{1} + \frac{a^2+v}{w^3}.e_2
$$

Now, $1.w_- = u.w_- - v .\bar{1}.w_- + w. \bar{1}\bar{1} . w_-$. From this we easily get
\begin{equation} \label{eq1wmoins}
1.w_- = a.e_{23} + \frac{a}{w}.e_9 - \frac{a(a^2+v)}{w^2}.e_1 - w^{-1}.e_{18}+\frac{a^2+v}{w^2}.e_2
\end{equation}

We now want to compute $1.e_{24} = 1xy$. We use the identity 
 $1x\bar{1} = \bar{1}x1 +  a.x\bar{1} -a^{-1}.x1 -a.\bar{1}x+a^{-1}.1x$ (mod $\GQ_3 u_3 \GQ_3$).
 Multiplying on the right by $2\bar{1}$ we get (through a couple of braid relations and $x2 = ax$) that
 $$
 1x\bar{1}2\bar{1} = a^{-1}.f(21x\bar{1}) + a. x \bar{1}2\bar{1} - a^{-2}.e_{10}-a^2.\bar{1}x\bar{1}+e_{18}
 $$
 which provides an explicit description of $ 1x\bar{1}2\bar{1}$ thanks to (\ref{eqx1b21b}).

Using relation (18) of table \ref{table:GQ3grobsigned2}  to replace $y = 1 \bar{2} 1$, we get after a straightforward computation
making use of $x2 = ax$ and $x\bar{2} = a^{-1} x$ that 
\begin{equation} \label{eq1xy}
1.xy = u.e_{17} - \frac{v}{a} f(2 \bar{1} x1) + (bc).1x\bar{1}2\bar{1}
\end{equation}

Now, $1.e_{15} = 1 \bar{2} 1 3 \bar{2} 3 1 = yx1 = f(1xy)$
and this completes the table of left multiplication by $s_1$.

\subsubsection{Left multiplication by $s_2$ inside $\tilde{A}_4$}
$$
\begin{array}{|lcl|lcl|lcl|}
\hline
e_1 & \mapsto & a.e_1  &  e_6 & \mapsto & e_2  & e_{11} & \mapsto & a.e_{11} \\
e_2 & \mapsto & e_4  &  e_7 & \mapsto & e_3  & e_{12} & \mapsto & a.e_{12} \\
e_3 & \mapsto & e_5 &  e_8 & \mapsto & a.e_8 & e_{13} & \mapsto & a.e_{13} \\
e_4 & \mapsto & u.e_4 - v.e_2+w.e_6 &  e_9 & \mapsto & a.e_9  & e_{14} & \mapsto & u.e_{14}-v.e_{19}+w.e_{15} \\
e_5 & \mapsto & u.e_5-v.e_3+w.e_7 &  e_{10} & \mapsto &  a.e_{10} & e_{15} & \mapsto & e_{19} \\
\hline
\end{array}
$$
and 
$$
\begin{array}{|lcl|lcl|}
\hline
e_{16} & \mapsto & e_{20} & e_{21} & \mapsto & u.e_{21}-v.e_{17}+w.e' \\
e_{17} &  \mapsto & e_{21} & e_{22} &  \mapsto & (\ref{de22}) \\
e_{18} &  \mapsto & 21x\bar{1} \ \ \ (\ref{eq21x1b}) & e_{23} &  \mapsto & (\ref{eqde23}) \\
e_{19} & \mapsto & e_{14} & e_{24} &  \mapsto & a.e_{24} \\
e_{20} & \mapsto & u.e_{20}-v.e_{16}+w.f(e_{21}) & e_{25} &  \mapsto &  (\ref{eqde25}) \\
\hline
\end{array}
$$
where $e' = \bar{2}1x1\bar{2}$ has been computed in section \ref{sect:u2u1xu1u2}.

Every entry in the table is straightforward to compute, except for 3 of them. We need to compute $2.e_{22} = 2.w_+$, $2.e_{23} = 2.w_-$ and $2.e_{25}$. We
start with $2.e_{25} = 2yx = 21\bar{2}1x = \bar{1}211x = u.\bar{1}21x - v.\bar{1}2.x + w.\bar{1}2\bar{1}x = u.21\bar{2}x - av.\bar{1}.x + w.f(x\bar{1}2\bar{1})$ 
hence
\begin{equation} \label{eqde25}
2.e_{25} = \frac{u}{a}.e_4 - av.e_3 + w.f(x\bar{1}2\bar{1})
\end{equation}
Now, from (\ref{eq21x12bB}) one easily gets that
\begin{equation} \label{de22}
2.e_{22} = 2.w_+ = -a.e_{21} + a.e_{22} + e_{14} + a^2.\bar{1}x12 - a^3.f(e_{18}) + a^3.e_{11} - a.e_{10}
\end{equation}
We now compute $2.w_- = 2 \bar{3}\bar{2}1\bar{2}\bar{3}$.
Using rule (16) of table \ref{table:GQ3grobsigned2}, we get $2 \bar{3}\bar{2}
=\bar{2}\bar{3}2-a^{-2}.\bar{2}32 +a^{-2}.23\bar{2}+a.\bar{3}\bar{2}-a^{-1}.\bar{3}2
-a^{-1}.3\bar{2}+a^{-3}.32-a.\bar{2}\bar{3}+a^{-1}.\bar{2}3+a^{-1}.2\bar{3} - a^{-3}.23$.
Multiplying on the right by $1\bar{2}\bar{3}$, we get after a straightforward computation that
$$
2 .w_- \equiv (bc)^{-1}.\bar{2}\bar{1}x1 + a^{-2}.23\bar{2}1\bar{2}\bar{3} + a.e_{23} -w^{-1}.f(e_{18})-a^{-1}.3\bar{2}1\bar{2}\bar{3}
$$
Since $\bar{2}1 \bar{2} \equiv (bc)^{-1}.2\bar{1}2 \mod u_1u_2u_1$ (see section \ref{sect:GQ3asGQ2bimodule}),
we have $23\bar{2}1\bar{2}\bar{3} \equiv (bc)^{-1}.232\bar{1}2\bar{3}$ and $3\bar{2}1\bar{2}\bar{3} \equiv (bc)^{-1}.32\bar{1}2\bar{3}$ modulo $\GQ_3 u_3 \GQ_3$.
Now, in the proof of lemma \ref{lemGQ4L2} we checked that $3 2 \bar{1} 2 \bar{3} = \bar{2}\bar{1}x12$, whence
$23 2 \bar{1} 2 \bar{3} = \bar{1}x12$. Altogether, this yields
\begin{equation} \label{eqde23}
2.e_{23} = a.e_{23}+(bc)^{-1}.\bar{2}\bar{1}x1 + \frac{1}{aw}.f(21x\bar{1}) -w^{-1}.f(e_{18})-w^{-1}.f(21x\bar{1}\bar{2})
\end{equation}
\subsection{Freeness of $\GQ_4$ as a $R$-module} 

Let $\mathcal{B}_{(0)}$ be a basis of $\GQ_3$. We recall from lemmas \ref{lem:A3s3pmF} and \ref{lem:spanA3s3pmA3}  that $\GQ_3u_3\GQ_3 = \GQ_3+\GQ_3 s_3 \GQ_3 + \GQ_3 s_3^{-1}\GQ_3$ is
spanned by $$
\mathcal{B}_{(1)} = \mathcal{B}_{(0)} \sqcup 
\mathcal{B}_{(0)} \times F_2 \sqcup E' \times \{ \bar{3}\bar{2}, \bar{3}\bar{2} 1 \} \sqcup E_0 \times \{ \bar{3}2\bar{1} \} 
\sqcup E \times \{ 3 \bar{2}\bar{1},32\bar{1}2 \}
\sqcup \{ \bar{3}\bar{2}\bar{1}, \bar{3} 2 \bar{1} 2 \}
$$
where 
\begin{itemize}
\item $E = \{ \emptyset, 2, \bar{2}, 12, 1\bar{2}, \bar{1}2, \bar{1}\bar{2}, 2\bar{1}2 \}$ 
\item $F = \{ \emptyset, 2, 21, 2\bar{1}, \bar{2}, \bar{2}1 \}$.
\item $F_2 = \{ 3, \bar{3}, 32,3\bar{2}, \bar{3} 2, 321, \bar{3}21, 32\bar{1},3\bar{2}1 \}$
\item $E_0 =\{ \emptyset, 1 ,\bar{1},2,\bar{2} \} $
\item $E' = \{ \emptyset, 1, \bar{1},21,2 \bar{1},\bar{2} 1, \bar{2}\bar{1}, 1 \bar{2} 1 \}$
\end{itemize}
Since $\mathcal{B}_{(0)}$ has 20 elements, $\mathcal{B}_{(1)}$
has $239$ elements.
We now add to $\mathcal{B}_{(1)}$ the 25 elements described by the words $(e_i)_{1 \leq i \leq 25}$ of section \ref{sect:A4tildebasis}
to get a collection $\mathcal{B}_{(2)}$ of 264 elements.

\begin{theorem} \label{theo:A4libre} $\GQ_4$ is a free $R$-module of rank 264, and $\mathcal{B}_{(2)}$ is a basis.
\end{theorem}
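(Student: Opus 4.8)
The plan is to leverage the spanning results accumulated above against the known value $\dim_K(\GQ_4\otimes_R K)=264$, where $K=\mathrm{Frac}(R)$ (this dimension is computed in \cite{LG}). Concretely, suppose we have shown that $\mathcal{B}_{(2)}$ — a set of $239+25=264$ elements — spans $\GQ_4$ as an $R$-module. Then the associated surjection $R^{264}\onto\GQ_4$ becomes, after the exact base change $-\otimes_R K$ (note $K$ is a localization of $R$, hence flat), a surjective $K$-linear map $K^{264}\onto\GQ_4\otimes_R K$ between $K$-vector spaces of equal dimension $264$, hence an isomorphism. Its kernel $N\subseteq R^{264}$ then satisfies $N\otimes_R K=0$; but $N$, being a submodule of a free module over the domain $R$, is torsion-free and therefore embeds into $N\otimes_R K$, so $N=0$. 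Thus $R^{264}\to\GQ_4$ is an isomorphism, which is precisely the statement that $\GQ_4$ is a free $R$-module of rank $264$ with basis $\mathcal{B}_{(2)}$.

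So everything reduces to checking that $\mathcal{B}_{(2)}$ spans $\GQ_4$ over $R$. By proposition \ref{propGQ4P2} we have $\GQ_4=\GQ_3u_3\GQ_3+\GQ_3\,x\,\GQ_3+Rw_0+Rw_0^{-1}$ with $x=s_3s_2^{-1}s_3=3\bar23$, and by lemmas \ref{lem:A3s3pmF} and \ref{lem:spanA3s3pmA3} the subalgebra $\GQ_3u_3\GQ_3=\GQ_3+\GQ_3s_3\GQ_3+\GQ_3s_3^{-1}\GQ_3$ is spanned over $R$ by the $239$ words of $\mathcal{B}_{(1)}$. Since $\mathcal{B}_{(2)}=\mathcal{B}_{(1)}\sqcup\{e_1,\dots,e_{25}\}$, it suffices to prove that the quotient $\GQ_3$-bimodule $\tilde{A}_4=\GQ_4/\GQ_3u_3\GQ_3$ is spanned as an $R$-module by the images of the $25$ words $e_1,\dots,e_{25}$ of section \ref{sect:A4tildebasis}.

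For this, set $V=\sum_{i=1}^{25}Re_i\subseteq\tilde{A}_4$. First, $\tilde{A}_4$ is generated as a $\GQ_3$-bimodule by elements of $V$: its bimodule generators may be taken to be the images of $x=e_1$, of $w_+=s_3s_2s_1^{-1}s_2s_3=e_{22}$ and of $w_-=s_3^{-1}s_2^{-1}s_1s_2^{-1}s_3^{-1}=e_{23}$, since lemma \ref{lemGQ4L3}(1)--(2) gives $w_0\in R^{\times}w_++u_1u_3u_2u_3u_1\subseteq R^{\times}w_++\GQ_3u_3\GQ_3+\GQ_3\,x\,\GQ_3$ and symmetrically for $w_0^{-1}$. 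Hence it is enough to verify that $V$ is stable under left and right multiplication by $\GQ_3$. The $R$-algebra anti-automorphism $\phi\circ\psi$ of $\GQ_4$ introduced above preserves $\GQ_3u_3\GQ_3$, so it induces an anti-automorphism $f$ of $\tilde{A}_4$; the explicit description of $f$ (it permutes the $e_i$ up to the corrections recorded in section \ref{sect:A4tildegen}) shows $f(V)=V$, whence right stability is equivalent to left stability. Finally, as $\GQ_3$ is generated as an algebra by $s_1$ and $s_2$, left stability reduces to $s_1V\subseteq V$ and $s_2V\subseteq V$, and these two inclusions are exactly what the left-multiplication tables of section \ref{sect:A4tildebimod} assert, each entry expressing $s_1e_i$ or $s_2e_i$ modulo $\GQ_3u_3\GQ_3$ as an $R$-combination of the $e_j$ by means of identities such as (\ref{eq1wplus}), (\ref{eq1wmoins}), (\ref{eq1xy}), (\ref{de22}), (\ref{eqde23}) and (\ref{eqde25}). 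Granting those tables, $V=\tilde{A}_4$, and the spanning is proved.

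The genuine work, and the only real obstacle, is thus the verification of those multiplication tables, that is, checking that left multiplication of each $e_i$ by $s_1$ and by $s_2$ remains inside $\sum_jRe_j+\GQ_3u_3\GQ_3$. This is the lengthy chain of braid identities and applications of the defining relation of $\GQ_3$ — organized through the rewriting systems of Tables \ref{table:GQ3grobsigned1} and \ref{table:GQ3grobsigned2} and through the reduction steps of sections \ref{sect:A4tildegen} and \ref{sect:A4tildebimod} — carried out above. Once this bookkeeping is settled, freeness and the identification of $\mathcal{B}_{(2)}$ as a basis follow immediately from the dimension count of the first paragraph.
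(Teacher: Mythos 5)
Your proposal is correct and follows essentially the same route as the paper: reduce to spanning via proposition \ref{propGQ4P2} and the spanning of $\GQ_3 u_3\GQ_3$ by $\mathcal{B}_{(1)}$, show the $R$-span of $e_1,\dots,e_{25}$ contains the bimodule generators $x,w_+,w_-$ and is stable under left multiplication by $s_1,s_2$ (the tables of section \ref{sect:A4tildebimod}) and under the anti-automorphism $f$, hence equals $\tilde{A}_4$, and conclude freeness from $\dim_K(\GQ_4\otimes_R K)=264$. Your explicit torsion-freeness argument for the kernel is just a spelled-out version of the paper's final dimension-count step.
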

\begin{proof} We know that $\GQ_4 = \GQ_3u_3\GQ_3 + \GQ_3 x \GQ_3+R w_+ + R w_-$, where $x = 3\bar{2}3$,
by proposition \ref{propGQ4P2} and lemma \ref{lemGQ4L3},. We want to prove that $\mathcal{B}_{(2)}$ spans $\GQ_4$ as a $R$-module.
We recalled that $\mathcal{B}_{(1)}$ spans $\GQ_3 u_3 \GQ_3$, so it is sufficient
to prove that the image of $\mathcal{B}_{(2)} \setminus \mathcal{B}_{(1)}$ span $\GQ_4/\GQ_3u_3\GQ_3$.
We know that $\GQ_4/\GQ_3 u_3 \GQ_3$ is generated as a $\GQ_3$-bimodule by $x,w_+,w_-$. Let $M$ denote
the $R$-submodule of $\GQ_4/\GQ_3 u_3 \GQ_3$ spanned by the image of $\mathcal{B}_{(2)} \setminus \mathcal{B}_{(1)}$.
It contains $x,w_+,w_-$, and it is stable by left multiplication by $\GQ_3$, by section \ref{sect:A4tildebimod}. Morever, it is also stable by the antimorphism
of $\GQ_3$-module $f$, hence it is stable by left and right multiplication by $\GQ_3$. This proves $M = \GQ_4/\GQ_3u_3\GQ_3$,
hence $\GQ_4$ is spanned by $\mathcal{B}_{(2)}$. Since $\GQ_4 \otimes K$ has dimension 264, this
proves that $\GQ_4$ is a free $R$-module with basis $\mathcal{B}_{(2)}$.
\end{proof}

An immediate corollary is the following one.

\begin{corollary}
The natural morphism $Q_3 \to Q_4$ is injective.
\end{corollary}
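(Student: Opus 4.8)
The plan is to deduce injectivity directly from the freeness statement just proved, with essentially no further computation. First I would record that the map in question is well defined: relation~(1) in the definition of $\GQ_n$ only involves $s_1$ and $s_2$, so the composite $RB_3\to RB_4\to \GQ_4$ kills the cubic relation on $s_1$ together with relation~(1), hence kills the defining ideal of $\GQ_3$ and factors through a unital $R$-algebra morphism $\iota:\GQ_3\to\GQ_4$; this $\iota$ is the natural morphism whose injectivity we want.

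The key point is the way the basis $\mathcal{B}_{(2)}$ of $\GQ_4$ was constructed in Theorem~\ref{theo:A4libre}: it was built starting from an arbitrary but fixed $R$-basis $\mathcal{B}_{(0)}$ of $\GQ_3$ (which exists by Theorem~\ref{theo:structmodGQ3}), with $\mathcal{B}_{(0)}\subseteq\mathcal{B}_{(1)}\subseteq\mathcal{B}_{(2)}$. Each element of $\mathcal{B}_{(0)}$ is represented by a word in $s_1^{\pm1},s_2^{\pm1}$, and inside $\GQ_4$ it is exactly the image of that word under $\iota$. Therefore $\iota(\mathcal{B}_{(0)})$ is a subset of the $R$-basis $\mathcal{B}_{(2)}$ of $\GQ_4$, and in particular $\iota(\mathcal{B}_{(0)})$ is an $R$-linearly independent family in $\GQ_4$.

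From here the argument is a one-line bookkeeping step: any $z\in\GQ_3$ is uniquely $z=\sum_{b\in\mathcal{B}_{(0)}}\lambda_b\,b$ with $\lambda_b\in R$, so $\iota(z)=\sum_{b\in\mathcal{B}_{(0)}}\lambda_b\,\iota(b)$; since the $\iota(b)$ are $R$-linearly independent, $\iota(z)=0$ forces every $\lambda_b=0$, i.e.\ $z=0$. Hence $\iota$ is injective. I do not expect any obstacle in this corollary itself — all the difficulty has already been absorbed into Theorem~\ref{theo:A4libre}, namely the verification that $\mathcal{B}_{(2)}$ spans $\GQ_4$ over $R$ and that $\dim_K \GQ_4\otimes_R K=264$, which together force $\mathcal{B}_{(2)}$ to be a basis extending (the image of) a basis of $\GQ_3$.
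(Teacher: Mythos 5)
Your argument is correct and is exactly the reasoning the paper leaves implicit when it calls the statement an "immediate corollary" of Theorem \ref{theo:A4libre}: the basis $\mathcal{B}_{(2)}$ of $\GQ_4$ contains (the image of) the basis $\mathcal{B}_{(0)}$ of $\GQ_3$, so those images are $R$-linearly independent and injectivity follows. No gap.
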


The matrix of left and right multiplication by $s_1,s_2$ on the basis $\mathcal{B}_{(2)} \setminus \mathcal{B}_{(1)}$
of $\tilde{A}_4$ can be found in the file \verb+A4tilde.gap+
at \url{http://www.lamfa.u-picardie.fr/marin/data/A4tilde.gap}.

In \cite{LG} we described an explicit isomorphism
$$
\Phi_{4}^K : \GQ_4 \otimes \tilde{K} \to \tilde{K}^3 \times M_2(\tilde{K})^2 \times M_3(\tilde{K})^5 \times M_6(\tilde{K})^4 \times M_8(\tilde{K})
$$
from the explicit matrix models of the irreducible representations of the semisimple algebra $H_4 \otimes \tilde{K}$,
where $\tilde{K} = \Q(\zeta_3,a,b,c)$. We denote $\Phi_4$ the composite of $\Phi_4^K$ by the natural $R$-algebra
morphism $\GQ_4 \to \GQ_4 \otimes \tilde{K}$. By theorem \ref{theo:A4libre} we know that $\Phi_4$ is injective,
and therefore $\Phi_4$ can be used for explicit computations inside $\GQ_4$.

Computing the images of the elements of the relevant bases, together with their images
by left and right multiplications by the generators and their inverses, we could in principle get in this way
the structure constants of $\GQ_4$ on the basis $\mathcal{B}_{(2)}$.
However, because the coefficients of the equations belong to the field $\Q(a,b,c)$, this linear algebra matter
is computationally nontrivial (even after having reduced the problem to $a=1$).

\bigskip

\bigskip

\section{Structure on 5 strands}

\subsection{General properties}

We denote $\GQ_{n+1}^{(1)} = \GQ_n.1$ the image of $\GQ_n$ inside $\GQ_{n+1}$ under the natural map.
A collection of $\GQ_n$-subbimodules of $\GQ_{n+1}$ is defined inductively by the formula
$\GQ_{n+1}^{(k+1)} = \GQ_{n+1}^{(k)}u_n \GQ_n$. In other terms,
$$
\GQ_{n+1}^{(k)} = \underbrace{\GQ_n u_n \GQ_n u_n \dots u_n \GQ_n u_n \GQ_n}_{k+1 \mbox{ terms}}
$$
We know that $\GQ_3^{(2)} = \GQ_3$ and $\GQ_4^{(2)} = \GQ_4$.

Every element $x$ of the braid group $B_{n+1}$ either belongs to $B_n$ (that is, the image of $B_n$ under the usual map $B_n \to B_{n+1}$
of adding one strand `on the right'), or can be written either as $x_1 s_n x_2s_n \dots s_n x_{k+1}$ for some $k$,
or as $x_1 s_n^{-1} x_2s_n^{-1} \dots s_n^{-1} x_{k+1}$ for some $k$. The process to convert any given braid to one of these
forms is called by Dehornoy `handle reduction', and is at the origin of his ordering on the braid group $B_{n+1}$. A nice reference
for this is \cite{DEHORDER}, ch. 3. The basic `handle reduction' has the following form. If $a \in B_{n+1}$ is written
$$
a = s_n s_{n-1} a_1 s_{n-1} a_2 \dots s_{n-1} a_k s_{n-1} s_n^{-1}
$$
with $a_i \in B_{n-1}$, then it can be rewritten as
$$
a = s_{n-1}^{-1} \left( s_n  (^{s_{n-1}} a_1) s_n  (^{s_{n-1}} a_2) \dots s_n (^{s_{n-1}} a_k) s_n \right) s_{n-1}
$$

\begin{figure}
\begin{minipage}{\textwidth}
\begin{minipage}[c][9cm][c]{\dimexpr0.5\textwidth-0.5\Colsep\relax}
\begin{center}
\begin{tikzpicture}[scale=0.7]
\braid[braid colour=black,strands=5,braid start={(0,0)}]%
{ \dsigma _4 \dsigma_3 \dsigma_2 \dsigma_3 \dsigma_2 \dsigma_3 \dsigma_3 \dsigma_3 \dsigma_3 \dsigma_2 \dsigma_3\dsigma_4^{-1}   }
\fill[blue] (0.5,-2)-- (3.5,-2) -- (3.5,-3) -- (0.5,-3)
 -- cycle;
\fill[blue] (0.5,-4)-- (3.5,-4) -- (3.5,-5) -- (0.5,-5) -- cycle;
\fill[white] (0.5,-6)-- (4.5,-6) -- (4.5,-8) -- (0.5,-8) -- cycle;
\draw (2,-7) node {$\dots$};
\fill[blue] (0.5,-9)-- (3.5,-9) -- (3.5,-10) -- (0.5,-10) -- cycle;

\end{tikzpicture}
\end{center}

\end{minipage}\hfill
\begin{minipage}[c][9cm][c]{\dimexpr0.5\textwidth-0.5\Colsep\relax}
\begin{center}

\begin{tikzpicture}[scale=0.5]
\braid[braid colour=black,strands=5,braid start={(0,0)}]%
{ \dsigma_3^{-1} \dsigma_4 \dsigma_3 \dsigma_2 \dsigma_3^{-1} \dsigma_4 \dsigma_3 \dsigma_2 \dsigma_3^{-1}  \dsigma_1 \dsigma_1 \dsigma_4 \dsigma_3 \dsigma_2 \dsigma_3^{-1} \dsigma_4 \dsigma_3 }
\fill[blue] (0.5,-3)-- (3.5,-3) -- (3.5,-4) -- (0.5,-4) -- cycle;
\fill[blue] (0.5,-7)-- (3.5,-7) -- (3.5,-8) -- (0.5,-8) -- cycle;
\fill[white] (0.5,-9)-- (4.5,-9) -- (4.5,-11) -- (0.5,-11) -- cycle;
\draw (2,-10) node {$\dots$};
\fill[blue] (0.5,-13)-- (3.5,-13) -- (3.5,-14) -- (0.5,-14) -- cycle;

\end{tikzpicture}

\end{center}

\end{minipage}
\end{minipage}
\caption{Handle reduction} 
\label{fig:handlered}
\end{figure}
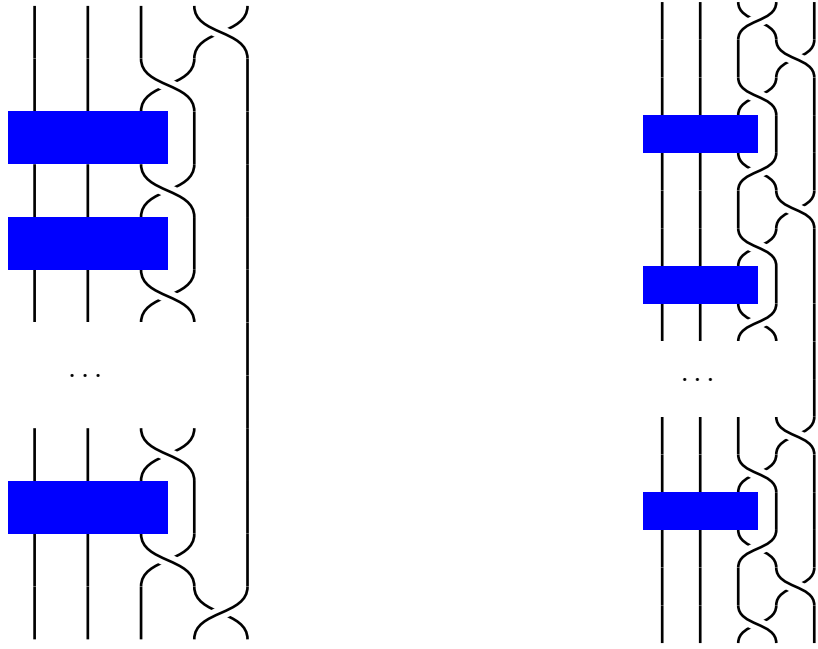

An iterated application of handle reduction proves the following identities

\begin{equation} \label{eq:whandleA}
s_n^{-1}(s_{n-1}^{-1}\dots s_2^{-1}s_1s_2^{-1}\dots s_{n-1}^{-1})s_n 
=(s_{n-1}\dots s_1)(s_n^{-1}\dots s_3^{-1}s_2 s_3^{-1}\dots s_n^{-1}) (s_{n-1}\dots s_1)^{-1}
\end{equation}
and its mirror image
\begin{equation} \label{eq:whandleB}
s_n(s_{n-1}\dots s_2s_1^{-1}s_2\dots s_{n-1})s_n^{-1}
=(s_{n-1}^{-1}\dots s_1^{-1})(s_n\dots s_3s_2^{-1} s_3\dots s_n) (s_{n-1}^{-1}\dots s_1^{-1})^{-1}
\end{equation}

Let us define $\GQ_{n+1}^{(1)+} =\GQ_{n+1}^{(1)-} = \GQ_n.1$ and 
$\GQ_{n+1}^{(k+1)+} = \GQ_{n+1}^{(k)+}s_n \GQ_n$,
$\GQ_{n+1}^{(k+1)-} = \GQ_{n+1}^{(k)-}s_n^{-1} \GQ_n$. In other terms,
$$
\GQ_{n+1}^{(k)+} = \underbrace{\GQ_n s_n \GQ_n s_n \dots s_n \GQ_n s_n \GQ_n}_{k+1 \mbox{ terms}}
\ \ \ \ 
\GQ_{n+1}^{(k)-} = \underbrace{\GQ_n s_n^{-1} \GQ_n s_n^{-1} \dots s_n^{-1} \GQ_n s_n^{-1} \GQ_n}_{k+1 \mbox{ terms}}
$$
Let
$$
\GQ_{n+1}^+ = \sum_{k \geq 1} \GQ_{n+1}^{(k)+}
\ \ \ \ 
\GQ_{n+1}^- = \sum_{k \geq 1} \GQ_{n+1}^{(k)-}
$$

The immediate consequence of 
Handle reduction in our context can be stated in the
following form, although it is the explicit process recalled above (and illustrated in figure \ref{fig:handlered}) that will be useful to us.

\begin{proposition} For all $n$ we have $\GQ_n = \GQ_n^+ + \GQ_n-$.
\end{proposition}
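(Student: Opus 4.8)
The statement $\GQ_n = \GQ_n^+ + \GQ_n^-$ is really a statement about the braid group: we must show that every element of $B_n$, once projected into $\GQ_n$, lies in the $R$-span of $\GQ_n^+ \cup \GQ_n^-$. The plan is to argue by induction on $n$, the cases $n\leq 2$ being trivial since $\GQ_n = \GQ_n^+ = \GQ_n^-$ there. For the inductive step I would take an arbitrary element $x \in B_n$ and apply Dehornoy's handle reduction, exactly in the explicit form recalled just before the statement (and illustrated in Figure~\ref{fig:handlered}). Handle reduction rewrites $x$ as a word of one of three shapes: either $x$ already lies in the image of $B_{n-1}$, or it has the form $x_1 s_{n-1} x_2 s_{n-1} \dots s_{n-1} x_{k+1}$ with $x_i \in B_{n-1}$, or it has the form $x_1 s_{n-1}^{-1} x_2 s_{n-1}^{-1} \dots s_{n-1}^{-1} x_{k+1}$ with $x_i \in B_{n-1}$. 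In the first case $x \in \GQ_{n-1}\cdot 1 \subseteq \GQ_n^+ \cap \GQ_n^-$; in the second case $x \in \GQ_n^{(k)+} \subseteq \GQ_n^+$; in the third, $x \in \GQ_n^{(k)-}\subseteq \GQ_n^-$. Since the $\GQ_n^\pm$ are $R$-submodules and $\GQ_n$ is spanned over $R$ by (images of) the elements of $B_n$, this gives the claim.

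\textbf{Key points to make precise.} Two things need care. First, one must be sure that handle reduction \emph{terminates} and produces a word in which the top strand (the $n$-th strand) is crossed only by $s_{n-1}^{\pm 1}$-generators all of the same sign; this is precisely the content of Dehornoy's theory, for which I would cite \cite{DEHORDER}, ch.~3, so no independent argument is needed. Second — and this is where the explicit process rather than the mere existence statement matters — the reduction is performed using only the braid relations valid already in $B_n$, hence a fortiori in $\GQ_n$: each elementary handle-reduction move
$$
s_n s_{n-1} a_1 s_{n-1} a_2 \dots s_{n-1} a_k s_{n-1} s_n^{-1}
= s_{n-1}^{-1} \left( s_n (^{s_{n-1}}a_1) s_n (^{s_{n-1}}a_2) \dots s_n (^{s_{n-1}}a_k) s_n \right) s_{n-1}
$$
is an identity in $B_n$ itself, so the projected identity holds in $\GQ_n$ without invoking any defining relation of the quotient. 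Consequently the decomposition of $x$ into the prescribed form, and hence its membership in $\GQ_n^+$ or $\GQ_n^-$, passes to the quotient unchanged. One should also note the boundary behaviour: a word with a single $s_{n-1}$-crossing is already of the form $x_1 s_{n-1} x_2$, and the handle-free case is the trivial one. Identities \eqref{eq:whandleA} and \eqref{eq:whandleB} are exactly the instances of this mechanism that will be reused later, so it is worth recording that they are obtained by iterating the move above.

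\textbf{Main obstacle.} There is no genuine algebraic obstacle here: the whole content is the invocation of handle reduction as a normalization procedure on $B_n$, together with the elementary remark that the resulting normal forms fall, by construction, into $\GQ_n^+$ or $\GQ_n^-$. The only point demanding attention in the write-up is making explicit that the procedure never leaves $B_n$ — i.e. that we never need to pass through $B_{n+1}$ or invoke a relation special to $\GQ_n$ — so that the equalities it produces are literally equalities of elements of $B_n$ and therefore descend to $\GQ_n$. I would phrase the proof to emphasize exactly this: apply handle reduction to each braid word, observe the trichotomy of output forms, and match each form with $\GQ_n^+$, $\GQ_n^-$, or their common part $\GQ_{n-1}\cdot 1$.
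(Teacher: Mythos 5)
Your proposal is correct and follows essentially the same route as the paper, which likewise treats the statement as an immediate consequence of Dehornoy's handle reduction: every braid of $B_n$ is either in (the image of) $B_{n-1}$ or can be written with all occurrences of the last generator having the same sign, and since these identities hold already in $B_n$ they descend to $\GQ_n$, which is spanned over $R$ by images of braids. The induction on $n$ you announce is not actually needed — handle reduction gives the trichotomy directly for each $n$ — but this is harmless.
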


\subsection{$\GQ_5^{(2)}$ as a $\GQ_4$-bimodule}

We know by proposition \ref{propGQ4P2} that
$\GQ_4 = \GQ_4^{(2)} = \GQ_3u_3\GQ_3 + \GQ_3.3\bar{2}3.\GQ_3 + \GQ_3.32\bar{1}23
+ \GQ_3.\bar{3}\bar{2}1\bar{2}\bar{3}$. Since $\bar{4}3\bar{4} \in R^{\times} .4\bar{3}4 + u_3u_4u_3$
and, by handle reduction, $4X\bar{4}$ for $X \in \{3\bar{2}3,32\bar{1}23,\bar{3}\bar{2}1\bar{2}\bar{3}\}$
can be written as $X4Y4Z$ or $X\bar{4}Y\bar{4}Z$ for $X,Z \in B_4$ and $Y \in \{3, \bar{3},3\bar{2}3,\bar{3}2\bar{3} \}$,
this implies
that $\GQ_5^{(2)}$
is generated as a $\GQ_4$-bimodule by $\emptyset, 4, \bar{4}, 4\bar{3}4, 43\bar{2}34,
\bar{4}\bar{3}2\bar{3}\bar{4}$, $432\bar{1}234$, $\bar{4}32\bar{1}23\bar{4}$, $4\bar{3}\bar{2}1\bar{2}\bar{3}4$,
$\bar{4}\bar{3}\bar{2}1\bar{2}\bar{3}\bar{4}$.

Let us introduce $\GQ_5^{(1.5)}$ the $\GQ_4$-subbimodule of $\GQ_5$ generated by $\emptyset,4,\bar{4},4\bar{3}4, 43\bar{2}34,
\bar{4}\bar{3}2\bar{3}\bar{4}$, and $\GQ_5^{(1.3)}$ the $\GQ_4$-subbimodule of $\GQ_5$ generated by $\emptyset,4,\bar{4},4\bar{3}4$.
Notice that both are stable under $F=\Phi \circ \Psi$.

We use the defining relation under the form
\begin{equation} \label{eq:343rel}
(3-a.\emptyset).4\bar{3}=a^2(3-a.\emptyset).\bar{4}\bar{3}+a(\emptyset-a.\bar{3}).\bar{4}3+(\bar{3}-a^{-1}.\emptyset)43 + (a^{-1}.3 - a.\bar{3}).4
+a(a^2.\bar{3}-3).\bar{4}
\end{equation}
and in particular $(3-a.\emptyset).4\bar{3} \in u_3 \bar{4}u_3+u_3. 4 + u_3.43$. Since $43\bar{2}1\bar{2}\bar{3}4
= \bar{2}\bar{1}.4\bar{3} 2\bar{3}4.12$
this implies 
$$
(3 - a.\emptyset).4w_-4 \in u_3.\bar{4}.u_3 \bar{2}1\bar{2}\bar{3}4 + \GQ_4.4\bar{3}4 + u_3u_2u_1.4\bar{3}2\bar{3}4.u_1u_2 \subset u_3.\bar{4}.\GQ_4.4+\GQ_4.4\bar{3}4+ \GQ_4.4\bar{3}2\bar{3}4.\GQ_4
$$

\begin{proposition} {\ }
\begin{enumerate} 
\item $4.\GQ_4.\bar{4} \subset \GQ_5^{(1.5)}$ and $\bar{4}.\GQ_4.4 \subset \GQ_5^{(1.5)}$
\item $\GQ_5^{(2)} = \GQ_5^{(1.5)} + R.4w_+4 + R.\bar{4}w_-\bar{4} + R.4w_-4 + R.\bar{4}w_+\bar{4}$. Moreover, 
for all $X \in \{ 4w_+4 ,\bar{4}w_-\bar{4} ,4w_-4 ,\bar{4}w_+\bar{4} \}$ and $\la \in \GQ_4$, we have $\la.X \equiv X.\la \equiv \eps(\la)X \mod \GQ_5^{(1.5)}$,
where $\eps : \GQ_4 \to R$
is induced by $s_i \mapsto a$.
\end{enumerate}
\end{proposition}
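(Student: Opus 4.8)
The plan is to prove the two assertions of the proposition in order, reusing the machinery already built for $\GQ_4$ on three strands (proposition \ref{propGQ4P2} and lemmas \ref{lemGQ4L1}--\ref{lemGQ4L4}) shifted up by one index, together with the handle reduction identities (\ref{eq:whandleA})--(\ref{eq:whandleB}).

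\medskip

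\noindent\textbf{Step 1: the conjugation stability $4.\GQ_4.\bar 4 \subset \GQ_5^{(1.5)}$.}
First I would recall from proposition \ref{propGQ4P2} (applied with all indices shifted by $+1$, i.e. replacing $s_1,s_2,s_3$ by $s_2,s_3,s_4$) that
$$
\GQ_4 = \GQ_3 + \GQ_3 s_3 \GQ_3 + \GQ_3 s_3^{-1}\GQ_3 + \GQ_3 s_3 s_2^{-1} s_3 \GQ_3 + R w_0 + R w_0^{-1},
$$
so that $4.\GQ_4.\bar 4$ decomposes as a sum of terms of the shape $4\cdot x\cdot\bar 4$ with $x$ in $\GQ_3$, in $\GQ_3 s_3^{\pm1}\GQ_3$, in $\GQ_3 s_3 s_2^{-1} s_3 \GQ_3$, or $x=w_0^{\pm1}=(3\bar 23)^{\pm1}$ up to the unit prefactor of lemma \ref{lemGQ4L3}. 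For $x\in\GQ_3$, the element $4$ commutes with $\GQ_3$ (generated by $s_1,s_2$), so $4x\bar 4 = x \in \GQ_5^{(1.5)}$. For $x$ involving $s_3^{\pm1}$, I would push $4$ and $\bar 4$ inward using the braid relations $4\,3\,\bar 4 = \bar 3\,4\,3$ and $4\,\bar 3\,\bar 4 = \bar 3\,\bar 4\,3$ (and their mirrors), producing expressions in $\GQ_4 \cdot 4^{\pm1}\cdot\GQ_4$, hence in $\GQ_5^{(1)+}+\GQ_5^{(1)-}\subset\GQ_5^{(1.3)}$. For $x = s_3 s_2^{-1}s_3$, the same braid moves give $4\,3\bar 23\,\bar 4 = $ a conjugate, but more efficiently one invokes handle reduction: $4\,(3\bar 23)\,\bar 4$ is of the form $X\,4\,Y\,4\,Z$ or $X\,\bar4\,Y\,\bar4\,Z$ with $X,Z\in B_4$ and $Y\in\{3,\bar 3,3\bar 23,\bar 3 2\bar 3\}$, and each such term lies in $\GQ_5^{(1.5)}$ by its definition. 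Finally, for $x = w_0^{\pm1} = (3\bar 23)^{\pm1}$, one uses lemma \ref{lemGQ4L3}(1)--(2) to replace $w_0$ by a unit multiple of $w_+ = 3\,2\,s_1^{-1}\,2\,3$ modulo $u_1 u_3 u_2 u_3 u_1$; the $u_1 u_3 u_2 u_3 u_1$ part conjugates into $\GQ_5^{(1.3)}$ as before, and the $w_+$ part is exactly $43\,2\,s_1^{-1}\,2\,3\bar 4$, which I handle by the displayed computation just preceding the proposition combined with handle reduction applied to the inner $s_1^{-1}$-handle. The statement $\bar 4.\GQ_4.4\subset\GQ_5^{(1.5)}$ follows either symmetrically or by applying the mirror automorphism $\phi$ (which is defined on $\GQ_n$ by section 3.2 and exchanges $4\leftrightarrow\bar 4$ while preserving $\GQ_5^{(1.5)}$).

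\medskip

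\noindent\textbf{Step 2: the decomposition of $\GQ_5^{(2)}$ and the $\eps$-scalar behavior.}
Recall that $\GQ_5^{(2)} = \GQ_4 u_4 \GQ_4 u_4 \GQ_4$. Using $u_4 = R + Rs_4 + Rs_4^{-1}$ and the generation of $\GQ_4$ as a $\GQ_4$-bimodule over the ten generators $\emptyset, 4, \bar 4, 4\bar 34, 43\bar 234, \bar4\bar 32\bar 3\bar 4, 432\bar 1234, \bar 432\bar 123\bar 4, 4w_-4$-type and $\bar4 w_+\bar 4$-type elements stated at the start of this subsection (itself a consequence of proposition \ref{propGQ4P2} together with $\bar43\bar4\in R^\times 4\bar34 + u_3u_4u_3$ and handle reduction), the only generators not already absorbed into $\GQ_5^{(1.5)}$ by Step 1 are the four ``long'' elements $4w_+4,\ \bar 4 w_-\bar 4,\ 4w_-4,\ \bar 4 w_+\bar 4$ (after using $\bar 4\bar 32\bar 3\bar 4$, $43\bar 234$, $432\bar 1234$, $\bar 432\bar 123\bar 4$ are either in $\GQ_5^{(1.5)}$ by definition or reducible to it via the displayed identity $(3-a.\emptyset).4w_-4\in u_3.\bar 4.\GQ_4.4 + \GQ_4.4\bar 34 + \GQ_4.4\bar 32\bar 34.\GQ_4$ combined with Step 1). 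This gives the claimed $R$-module form $\GQ_5^{(2)} = \GQ_5^{(1.5)} + R.4w_+4 + R.\bar 4 w_-\bar 4 + R.4w_-4 + R.\bar 4 w_+\bar 4$. For the scalar congruence, I would argue exactly as in section \ref{sect:GQ3asGQ2bimodule}: for $\la=s_1$ or $\la=s_2$, one has $\la\in\GQ_3$ commuting with $4$, and $w_\pm$ commute with $s_1$ (since $w_0=3\bar 23$ commutes with $s_1,s_2$, and by lemma \ref{lemGQ4L3}(3)--(4), (5)--(6) and lemma \ref{lemGQ4L4} $w_\pm$ absorb left/right multiplication by $s_1,s_2,s_3,s_4$ into $R w_\pm + \GQ_4 u_4 u_3 u_4\GQ_4\subset\GQ_5^{(1.5)}$), so $s_i\cdot(4 w_\pm 4) \equiv a\cdot(4 w_\pm 4)$ and similarly on the right, hence $\la\cdot X\equiv\eps(\la) X\equiv X\cdot\la\bmod\GQ_5^{(1.5)}$ for all $\la$ since $\GQ_4$ is generated by $s_1,s_2,s_3,s_4$ as a unital algebra; the stability of $\GQ_5^{(1.5)}$ under $F=\phi\circ\psi$ lets one deduce the right-hand congruences from the left-hand ones when convenient.

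\medskip

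\noindent\textbf{Main obstacle.}
The delicate part is Step 1, specifically showing $4\,w_+\,\bar 4\in\GQ_5^{(1.5)}$ (and its siblings): $w_+$ contains the $s_1^{-1}$-handle nested between $s_3$'s, and conjugating by $s_4$ does not obviously reduce its complexity. The trick is to perform handle reduction \emph{on the $s_4$-handle created around $w_+$}, i.e. apply (\ref{eq:whandleB}) with $n=4$ to rewrite $s_4(s_3 s_2 s_1^{-1} s_2 s_3)s_4^{-1}$ as $(s_3^{-1}s_2^{-1}s_1^{-1})(s_4 s_3 s_2^{-1} s_3 s_4)(s_3^{-1}s_2^{-1}s_1^{-1})^{-1}$, landing in $\GQ_4\cdot(4 3 \bar 2 3 4)\cdot\GQ_4 \subset \GQ_5^{(1.5)}$ by the very definition of $\GQ_5^{(1.5)}$ (its generator $43\bar 234$). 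One must be careful that the conjugating elements $(s_3^{-1}s_2^{-1}s_1^{-1})^{\pm1}$ lie in $\GQ_4=B_4$-part and hence act by $\GQ_4$-bimodule multiplication, which is exactly what is allowed. Checking that every one of the ten bimodule generators is handled and that nothing escapes $\GQ_5^{(1.5)}$ is somewhat bookkeeping-heavy, but each individual reduction is a one-line braid computation.
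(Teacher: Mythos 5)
Your Step 1 is essentially the paper's argument: decompose $\GQ_4$ into finitely many $\GQ_3$-bimodule generators, use that $s_4$ commutes with $\GQ_3$, and dispose of the delicate generators by the handle reduction identities (\ref{eq:whandleB}) (giving $4w_+\bar{4}=\bar{3}\bar{2}\bar{1}.43\bar{2}34.123\in\GQ_5^{(1.5)}$) and (\ref{eq:whandleA}) combined with the (anti)automorphisms. Apart from slips --- $w_0=s_3s_2s_1^2s_2s_3$, not $3\bar{2}3$, and you should say explicitly how $4w_-\bar{4}$ is obtained (e.g.\ by applying $F$ to $\bar{4}w_-4$, since (\ref{eq:whandleA})--(\ref{eq:whandleB}) only treat $4w_+\bar{4}$ and $\bar{4}w_-4$ directly) --- this part is sound and matches the paper.

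The genuine gap is in Step 2, in the congruences $\la.X\equiv\eps(\la)X$ for $X\in\{4w_+4,\bar{4}w_-\bar{4},4w_-4,\bar{4}w_+\bar{4}\}$. For $\la=s_1,s_2$ the reduction $\la.4w_\pm 4=4(\la w_\pm)4$ is legitimate, but your justification is not: $w_\pm$ do \emph{not} commute with $s_1$ (only $w_0$ does), and lemmas \ref{lemGQ4L3}(3)--(6) and \ref{lemGQ4L4} only give $w_\pm\la\in Rw_\pm+\GQ_3u_3u_2u_3\GQ_3$ without identifying the scalar as $a$; what is needed is the sharper statement $(s_i-a.\emptyset).w_\pm\in\GQ_3u_3\GQ_3+\GQ_3.3\bar{2}3.\GQ_3$ for $i=1,2$, which comes from the explicit computations of section \ref{sect:A4tildebimod}, not from those lemmas. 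More seriously, the case $\la=s_3^{\pm 1}$ is not addressed at all: $s_3$ does not commute with the flanking $s_4^{\pm1}$, so it cannot be reduced to a quasi-commutation property of $w_\pm$ inside $\GQ_4$, and no lemma you cite concerns multiplication of $4w_\pm4$ or $\bar{4}w_\pm\bar{4}$ by $s_3$ (note also that $\GQ_4$ is generated by $s_1,s_2,s_3$; $s_4\notin\GQ_4$). This is exactly the hard point of the paper's proof of (2): one rewrites the defining relation on strands $3,4$ in the form (\ref{eq:343rel}), uses part (1) to absorb the resulting term $u_3.\bar{4}.\GQ_4.4$, and performs further handle reductions such as $4\bar{3}2\bar{1}234=21.43\bar{2}34.\bar{1}\bar{2}$, treating separately $(3-a.\emptyset).4w_-4$, $(\emptyset-a.\bar{3}).\bar{4}w_+\bar{4}$ and $(\bar{3}-a^{-1}.\emptyset).4w_+4$. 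Until this is done, the displayed $R$-module decomposition of $\GQ_5^{(2)}$ is also unproved, since it is precisely the congruence that collapses the $\GQ_4$-bimodules generated by the four long elements to their $R$-spans.
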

\begin{proof} 
We first prove (1). Since $\GQ_5^{(1.5)}$ is stable under $F$ we only need to
prove $4.\GQ_4.\bar{4} \subset \GQ_5^{(1.5)}$.
Since $\GQ_4$ us generated as a $\GQ_3$-bimodule by $S= \{ \emptyset,3,\bar{3},3\bar{2}3,32\bar{1}23,\bar{3}\bar{2}1\bar{2}\bar{3} \}$, and $4$ commutes with $\GQ_3$, we need to prove $4.X.\bar{4} \in \GQ_5^{(1.5)}$ for all $X \in S$.
Clearly $4 X \bar{4} \in \GQ_5^{(1)} \subset \GQ_5^{(1.5)}$ for $X \in \{ \emptyset,3,\bar{3} \}$. By the proof of lemma \ref{lemGQ4L2},
or handle reduction,we know that $32\bar{1}2\bar{3} \in \GQ_3.3\bar{2}3.\GQ_3$,
whence $4.3\bar{2}3.\bar{4}  = s(32\bar{1}2\bar{3}) \in s(\GQ_3.3\bar{2}3.\GQ_3) \subset \GQ_4.4\bar{3}3.\GQ_4 \subset \GQ_5^{(1.3)}$.
It remains to consider $X \in \{w_+,w_- \}$. By handle reduction (see (\ref{eq:whandleB})) we have $4 w_+ \bar{4} = \bar{3}\bar{2}\bar{1}.43\bar{2}34.123 \in \GQ_5^{(1.5)}$.
Similarly, using (\ref{eq:whandleA}) we get $\bar{4}w_- 4 = 321.\bar{4}\bar{3}2\bar{3}\bar{4}.\bar{1}\bar{2}\bar{3} \in \GQ_5^{(1.5)}$. Applying $F$ we
get $4w_-\bar{4} \in \GQ_5^{(1.5)}$ and this concludes (1).

We now prove (2). We compute modulo the $\GQ_4$-bimodule $\GQ_5^{(1.5)}$. We proved that $(3 - a.\emptyset).4w_-4 \in u_3. \bar{4}.\GQ_4.4 + \GQ_4.4 \bar{3}4$
hence $(3 - a.\emptyset).4w_-4 \equiv 0$ by (1). By the computations of section \ref{sect:A4tildebimod} we know that $(2- a.\emptyset).w_{\pm} \in \GQ_3.3\bar{2}3.\GQ_3$
and $(1- a.\emptyset).w_{\pm} \in \GQ_3.3\bar{2}3.\GQ_3$. It follows that $\la.4 w_-4 \equiv \eps(\la).4 w_- 4$ for all $\la \in \GQ_4$.

From relation (\ref{eq:343rel}) we get similarly that $a.(\emptyset - a.\bar{3}).\bar{4}3 \in u_3 4 u_4 + a^2.(a.\emptyset-3).\bar{4}\bar{3} + u_3. \bar{4}$
whence $a.(\emptyset - a.\bar{3})\bar{4}w_+\bar{4} \in \GQ_5^{(1.5)}$. This can be rewritten as $\bar{3}.\bar{4}w_+\bar{4} \equiv \eps(\bar{3})$.
Since $\GQ_4$ is generated by $1,2,\bar{3}$ this yields $\la.\bar{4}w_+\bar{4} \equiv \eps(\la).\bar{4}w_+\bar{4}$ for all $\la \in \GQ_4$.

Again by (\ref{eq:343rel}) we get that $(\bar{3}-a^{-1}.\emptyset).43 \in (3-a.\emptyset).4\bar{3} + u_3.\bar{4}.u_3 + u_3 u_4$
hence $(\bar{3}-a^{-1}.\emptyset).4w_+4 \in (3-a.\emptyset).4\bar{3}2\bar{2}234 + u_3.\bar{4}.\GQ_4.4.u_3 + u_3 u_4.\GQ_4.u_4$
and this yields $(\bar{3}-a^{-1}.\emptyset).4w_+4  \in \GQ_5^{(1.5)}$ since $4\bar{3}2\bar{1}234 = 21.43\bar{2}34.\bar{1}\bar{2}$
by handle reduction. Since we already know $\la.4w_+4  = 4(\la.w_+)4 \equiv \eps(\la)4w_+4$ for $\la \in \GQ_3$,
this implies $\la.4w_+4 \equiv \eps(\la) 4 w_+4$. The case $\la.\bar{4}w_-\bar{4} \equiv \eps(\la).\bar{4}w_-\bar{4}$
is similar and left to the reader.

We have now proved that $\la.X \equiv \eps(\la)X$ for all $\la \in \GQ_4$ and $X \in  \{ 4w_+4 ,\bar{4}w_-\bar{4} ,4w_-4 ,\bar{4}w_+\bar{4} \}$.
Since $F(X) = X$ this implies $X.\la \equiv \eps(\la)X$ for all $\la,X$, and (2).

\end{proof}

We now claim that $\GQ_5 \neq \GQ_5^{(2)}$, that is $\GQ_5^{(2)} \subsetneq \GQ_5^{(3)}$. In order to prove this, one just need to check this on one
specialization over a field $\kk$, by comparing the dimensions of the two. Let us consider the specialization at $\{a,b,c \} = \mu_3(\kk)$ with $|\mu_3(\kk)| = 3$, in which case $\GQ_5$ is a quotient of the group
algebra $\kk \Gamma_5$, with $\Gamma_5 = B_5/s_1^3$. If $car. \kk \not\in \{2,3,5 \}$ then the algebra $\kk \Gamma_5$ is split semisimple (of dimension 155520), and one has a description of its irreducible
representation. Therefore, one can identify (this specialization of) $\GQ_5$ to a sum of matrix algebras. By computer means, we get that $\GQ_5^{(2)}$ has dimension $6489$
(over $\kk = \F_{103}$) while $\GQ_5=\GQ_5^{(3)}$ has dimension $6490$. Similarly, we check that $\GQ_5^{(1.5)}$ has dimension $6485 = 6489-4$, thus the equality
$\GQ_5^{(2)} = \GQ_5^{(1.5)} + R.4w_+4 + R.\bar{4}w_-\bar{4} + R.4w_-4 + R.\bar{4}w_+\bar{4}$ is sharp.
\bigskip

We now consider $\GQ_5^{(3)}$.

\subsection{$\GQ_5^{(3)}/\GQ_5^{(2)}$ as a $\GQ_4$-bimodule}

We first need to prove the following lemma.

\begin{lemma} {\ }
\begin{enumerate}
\item $sh(\GQ_4) \subset \GQ_5^{(2)}$
\item $u_4 \GQ_4 u_4u_3u_4 \subset \GQ_5^{(2)}$.
\item $u_4u_3u_4 \GQ_4 u_4 \subset \GQ_5^{(2)}$.
\item $u_4(u_3u_2u_3)(u_2u_1u_2)u_4(u_3u_2u_3)u_4 = u_4(u_3u_2u_3)u_4(u_2u_1u_2)(u_3u_2u_3)u_4 \subset \GQ_5^{(2)}$
\end{enumerate}
\end{lemma}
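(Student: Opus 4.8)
The plan is to reduce all four inclusions to the already-established structure of $\GQ_4$---Propositions \ref{propGQ4P1} and \ref{propGQ4P2} and Lemma \ref{lemGQ4L1}---using the shift morphism $sh$, the $R$-algebra anti-automorphism $F$ of $\GQ_n$ fixing the $s_i$, and the fact that $u_4$ commutes with $u_1$ and $u_2$, hence with $\GQ_3 = \langle s_1,s_2\rangle$; throughout, recall $\GQ_4 = \GQ_4^{(2)} = \GQ_3 u_3 \GQ_3 u_3 \GQ_3$ and $\GQ_5^{(2)} = \GQ_4 u_4 \GQ_4 u_4 \GQ_4$. Part (1) is then immediate: applying $sh$ to $\GQ_4 = \GQ_3 u_3 \GQ_3 u_3 \GQ_3$ and using $sh(\GQ_3) = \langle s_2,s_3\rangle \subseteq \GQ_4$, $sh(u_3) = u_4$, one gets $sh(\GQ_4) = sh(\GQ_3)\,u_4\,sh(\GQ_3)\,u_4\,sh(\GQ_3) \subseteq \GQ_4\,u_4\,\GQ_4\,u_4\,\GQ_4 = \GQ_5^{(2)}$.

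Statements (2) and (3) are exchanged by $F$ (which reverses products and preserves each $\GQ_5^{(k)}$), so it suffices to prove (3), which is the one-strand-higher analogue of Lemma \ref{lemGQ4L1}(1). Its proof transposes that of Lemma \ref{lemGQ4L1}(1) with the triple $(\GQ_3,u_2,u_3)$ replaced by $(\GQ_4,u_3,u_4)$: the role there of the decomposition $\GQ_3 = u_1 u_2 u_1 + u_2 u_1 u_2$ of the lower algebra is now played by $\GQ_4 = \GQ_3 u_3 \GQ_3 u_3 \GQ_3$, which, after substituting $\GQ_3 = u_1u_2u_1+u_2u_1u_2$, writes the ambient $\GQ_4$-factor as a sum of products of blocks of $u_1,u_2,u_3$. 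The identities needed are the $sh$-images of those recorded in Theorem \ref{theo:structmodGQ3}, namely $u_4 u_3 u_4 u_3 = u_4 u_3 u_4 + u_3 u_4 u_3$ and $u_4 u_3 u_4 \subseteq u_3 u_4 u_3 + R\,s_4 s_3^{-1}s_4$ (and its mirror), together with the braid relations $s_4 s_3^{\pm 1}s_4^{-1} = s_3^{-1}s_4^{\pm 1}s_3$ and the commutations $[s_4,s_1]=[s_4,s_2]=0$. Each reduction step pushes the letters $s_1,s_2,s_3$ past $s_4$ and collapses one superfluous $u_4$-block, until the expression lands in $\GQ_4 u_4 \GQ_4 u_4 \GQ_4 = \GQ_5^{(2)}$; the single new feature---that each $\GQ_3$-block hidden inside $\GQ_4$ itself contains two copies of $u_3$---produces only extra bookkeeping, no new cases.

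For (4) the equality is immediate, since $u_2 u_1 u_2 \subseteq \GQ_3$ commutes with $u_4$. For the inclusion I would write the element as $u_4 (u_3 u_2 u_3)\, u_4\, \Lambda\, u_4$ with $\Lambda = (u_2 u_1 u_2)(u_3 u_2 u_3) \subseteq \GQ_4$ and expand the leftmost factor via $u_3 u_2 u_3 \subseteq u_2 u_3 u_2 + R\,s_3 s_2^{-1}s_3$. On the $u_2 u_3 u_2$-summand, commuting the outer $s_2$'s past the neighbouring $s_4$'s turns the element into $u_2\,(u_4 u_3 u_4)\,(u_2\Lambda)\,u_4 \subseteq u_2\,[\,u_4 u_3 u_4\,\GQ_4\,u_4\,]$, which lies in $\GQ_5^{(2)}$ by part (3). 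On the exceptional summand one uses $u_4 (s_3 s_2^{-1}s_3) u_4 \subseteq \langle s_2,s_3,s_4\rangle = sh(\GQ_4) \subseteq \GQ_5^{(2)}$ by part (1); what remains is to absorb the trailing $\Lambda u_4$, and here lies the main obstacle. Reiterating the same expansion only reproduces terms of the shape $u_4 (s_3 s_2^{-1}s_3) u_4\,\Lambda'\,u_4$, so the argument does not close on parts (2)--(3) alone: to make it terminate I expect to invoke the handle-reduction identities \eqref{eq:whandleA}--\eqref{eq:whandleB}---equivalently, the explicit conjugation rules for $s_4$ acting on the elements $w_0^{\pm 1}$, $w_\pm$ of $\GQ_4$---which rewrite such a sandwiched expression, for $\Lambda$ of the special form $(u_2u_1u_2)(u_3u_2u_3)$, back into the five-block form $\GQ_4 u_4 \GQ_4 u_4 \GQ_4$. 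In short, unlike the generic three-to-two collapses of (2) and (3), the exceptional summand of (4) genuinely uses part (1) together with handle reduction (or the finer $\GQ_3$-bimodule description of $\GQ_4$) in order to terminate.
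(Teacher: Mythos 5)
Part (1), the reduction of (3) to (2) via the anti-automorphism $F$, the equality in (4), and your handling of the $u_2u_3u_2$-summand in (4) all agree with the paper. The gap is in the core of (2)--(3). Your plan is to ``transpose'' the proof of Lemma \ref{lemGQ4L1}(1), but that proof rests on two facts with no analogue one strand up: the equality $\GQ_3 = u_1u_2u_1 + u_2u_1u_2$ for the middle factor, and the conjugation $s_3^{\alpha}u_2s_3^{-\alpha} \subset u_2u_3u_2$, which is what lets the exceptional term land in $\GQ_3u_3\GQ_3u_3\GQ_3$. The corresponding statement ``$\GQ_4 = \GQ_3u_3\GQ_3 + u_3\GQ_3u_3$'' is false, and the substitute you use, $\GQ_4 = \GQ_3u_3\GQ_3u_3\GQ_3$, does not feed the same machine: after all available commutations (only $u_1,u_2$ commute with $s_4$ --- $s_3$ does not, contrary to what ``pushing $s_1,s_2,s_3$ past $s_4$'' suggests) you are left with words such as $s_4\,(s_3s_2^{-1}s_3)\,\gamma\,s_4s_3^{-1}s_4$ with $\gamma\in u_1u_2u_1$, or $s_4\,w_{\pm}\,s_4s_3^{-1}s_4$, in which all the letters $s_4$ carry the same sign, so neither the braid conjugations (which need opposite signs) nor the shifted identities of Theorem \ref{theo:structmodGQ3} collapse a $u_4$-block. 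This is precisely why the paper does not transpose the earlier lemma: it decomposes the middle factor by Proposition \ref{propGQ4P2} as $\GQ_4 = \GQ_3u_3\GQ_3 + \GQ_3\,s_3s_2^{-1}s_3\,\GQ_3 + Rw_+ + Rw_-$ and treats the last three summands by dedicated computations (the relation $s_3s_2^{-1}s_3\,u_2 \subset R\,s_3s_2^{-1}s_3 + u_2u_3u_2$, the manipulation of figure \ref{fig:A53fig1} based on $s_4s_3^{-1}s_4 \in R\,s_4^{-1}s_3s_4^{-1} + u_3u_4u_3$, and explicit braid identities for $w_\pm$). These are exactly the ``new cases'' your sketch dismisses as bookkeeping.

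For (4) you concede the point yourself: the exceptional summand $u_4(s_3s_2^{-1}s_3)u_4\Lambda u_4$ is not absorbed, and knowing $u_4(s_3s_2^{-1}s_3)u_4 \subset sh(\GQ_4)\subset\GQ_5^{(2)}$ does not help, since $\GQ_5^{(2)}$ is a $\GQ_4$-bimodule but not stable under further multiplication by $u_4$. The handle-reduction identities you hope to invoke are not how the paper closes this either. The paper first absorbs the $u_2u_1u_2$ factor using $u_3u_2u_3u_2 = sh(\GQ_3) = u_2u_3u_2u_3$, reducing to $u_4(u_3u_2u_3)u_1u_4(u_3u_2u_3)u_4$; the mixed-sign terms are disposed of by the shift of Lemma \ref{lemGQ4L2} combined with (2)--(3); the same-sign terms with $u_2u_3u_2$ in the first slot again fall under (2)--(3); and the single surviving term $s_4\,s_3s_2^{-1}s_3\,u_1\,s_4\,s_3s_2^{-1}s_3\,s_4$ is put into $\GQ_4u_4\GQ_4u_4\GQ_4$ by a short direct computation using $s_3s_4s_3=s_4s_3s_4$ and the commutation of $u_1,u_2$ with $s_4$ (figure \ref{fig:A53fig2}). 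So, as written, parts (2), (3) and the exceptional part of (4) remain unproven in your proposal, and in particular (4) cannot be repaired before (2)--(3) are, since it relies on them.
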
 
Actually part (4) easily implies the first items, but the first items will be easier to use in the sequel.

\begin{proof}
From $\GQ_4 = \GQ_4^{(2)}$ we get (1). For (2) we need to prove that $s_4^{\alpha} \GQ_4 u_4u_3u_4 \subset \GQ_5^{(2)}$ for $\alpha \in \{-1,1\}$.
We have $u_4u_3u_4 \subset sh^2(\GQ_3) = R.s_4^{\alpha}s_3^{-\alpha}s_4^{\alpha} + u_3u_4u_3$ hence
$s_4^{\alpha} \GQ_4 u_4u_3u_4\subset \GQ_5^{(2)}$ iff $s_4^{\alpha} \GQ_4 s_4^{\alpha}s_3^{-\alpha}s_4^{\alpha} \subset \GQ_5^{(2)}$.

Let $A \in \GQ_3 u_3 \GQ_3$. We prove that $s_4^{\alpha} A s_4^{\alpha}s_3^{-\alpha}s_4^{\alpha} \in \GQ_5^{(2)}$.
We have $s_4^{\alpha} A s_4^{\alpha}s_3^{-\alpha}s_4^{\alpha} \in 
s_4^{\alpha} \GQ_3 u_3 \GQ_3 s_4^{\alpha}s_3^{-\alpha}s_4^{\alpha}=
\GQ_3s_4^{\alpha}  u_3 \GQ_3 s_4^{\alpha}s_3^{-\alpha}s_4^{\alpha}$.
Now $s_4^{\alpha}s_3^{-\alpha}s_4^{\alpha} \in R.s_4^{-\alpha}s_3^{\alpha}s_4^{-\alpha} + u_3u_4u_3$
hence $s_4^{\alpha}  u_3 \GQ_3 s_4^{\alpha}s_3^{-\alpha}s_4^{\alpha}  \subset
s_4^{\alpha}  u_3 \GQ_3 s_4^{-\alpha}s_3^{\alpha}s_4^{-\alpha} + \GQ_5^{(2)}$.
Since $s_4^{\alpha}  u_3 \GQ_3 s_4^{-\alpha}s_3^{\alpha}s_4^{-\alpha} = (s_4^{\alpha}  u_3 s_4^{-\alpha})\GQ_3 s_3^{\alpha}s_4^{-\alpha} \subset \GQ_5^{(2)}$
we get $s_4^{\alpha} A s_4^{\alpha}s_3^{-\alpha}s_4^{\alpha} \in \GQ_5^{(2)}$ for $A \in \GQ_3 u_3 \GQ_3$.

Now, we note that we can assume $\alpha = 1$, up to applying $\Phi$. Let us assume $A \in \GQ_3.3 \bar{2}3.\GQ_3$.
We need to prove that $4 3 \bar{2}3 \GQ_3 4\bar{3}4 \in \GQ_5^{(2)}$. We have $\GQ_3 = u_1u_2u_1 + u_2u_1u_2$.
Then, $4 3 \bar{2}3 (u_2u_1u_2) 4\bar{3}4 = 4 (3 \bar{2}3 u_2)u_1u_2 4\bar{3}4$ and we know that
$(3 \bar{2}3 u_2) \subset R.3 \bar{2}3  + u_2 u_3 u_2$ hence 
$4 (3 \bar{2}3 u_2)u_1u_2 4\bar{3}4 \subset 4 3 \bar{2}3u_1u_2 4\bar{3}4 + u_24 u_3u_2u_1u_2 4\bar{3}4$.
Since we already proved $u_24 u_3u_2u_1u_2 4\bar{3}4 \subset 4 \GQ_3u_3\GQ_3 4\bar{3}4 \subset \GQ_5^{(2)}$
it thus remains to prove that $4 3 \bar{2}3u_1u_2 u_14\bar{3}4 \subset \GQ_5^{(2)}$. It follows from $4 \bar{3}4 \subset
R.\bar{4}3\bar{4} + u_3u_4u_3$, as depicted in figure \ref{fig:A53fig1}. There we depict words in the generators as music notes on the stave : bullets correspond to Artin generators, with white/black coloring corresponds to $\pm$
power signs, grey coloring corresponds to indeterminate power signes, and the height of the bullet determines the position of the strand.

We now assume $A \in R.w_+$. Then $4 A4\bar{3}4 \in R.4 w_+ 4\bar{3}4 = R.\bar{3}4 w_+ 44  \subset \GQ_5^{(2)}$.
Now assume $A \in R.w_-$. Then $4 A4\bar{3}4 \in R.4 w_- 4\bar{3}4$
and $4 w_- 4\bar{3}4 = 4 \bar{3}\bar{2}1\bar{2}(\bar{3} 4\bar{3}4) \in
a^{-1}.4 \bar{3}\bar{2}1\bar{2}4\bar{3}4 + \GQ_5^{(2)}$.
But $4 \bar{3}\bar{2}1\bar{2}4\bar{3}4 \in 4 u_3\GQ_34\bar{3}4 \subset \GQ_5^{(2)}$ hence  $4 A4\bar{3}4 \in \GQ_5^{(2)}$.

Since $\GQ_4 = \GQ_3 u_3 \GQ_3 + \GQ_3 .3\bar{2}3.\GQ_3 + R.w_+ + R.w_-$ this proves (2). (3) follows by applying $F$.

We now prove (4). First note that $u_3u_2u_3u_2 = sh(\GQ_3) = u_2u_3u_2u_3$,
whence 
$$\begin{array}{lcl}
u_4(u_3u_2u_3)(u_2u_1u_2)u_4(u_3u_2u_3)u_4 &=& u_4(u_3u_2u_3u_2)u_1u_4(u_2u_3u_2u_3)u_4 \\
&=&  u_2u_4(u_3u_2u_3)u_1u_4(u_3u_2u_3)u_4u_2
\end{array}
$$ and we need to prove $u_4(u_3u_2u_3)u_1u_4(u_3u_2u_3)u_4 \subset \GQ_5^{(2)}$.
Now, since 
$$
s_4^{\alpha} u_3u_2u_3s_4^{-\alpha} = sh(s_3^{\alpha} u_2u_1u_2s_3^{-\alpha}) \subset \GQ_5^{(1)} + \GQ_4 u_4u_3u_4$$
by lemma \ref{lemGQ4L2}, we only need to prove
$s_4^{\alpha}(u_3u_2u_3)u_1s_4^{\alpha}(u_3u_2u_3)s_4^{\alpha} \subset \GQ_5^{(2)}$
for $\alpha \in \{-1,1 \}$. Using $\Phi$, we can assume $\alpha = 1$. We then use
that $u_3u_2u_3 \subset R.3\bar{2}3 + u_2u_3u_2$ and that
$s_4(u_2u_3u_2)u_1s_4(u_3u_2u_3)s_4 = u_2s_4u_3s_4u_2u_1(u_3u_2u_3)s_4 \subset \GQ_5^{(1)}$ by (2),
so we only need to prove $43\bar{2}3.u_1.43\bar{2}34 \subset \GQ_5^{(2)}$.
But $43\bar{2}3.u_1.43\bar{2}34 = 43\bar{2}.u_1.343\bar{2}34= 43\bar{2}.u_1.434\bar{2}34= 434\bar{2}.u_1.3\bar{2}434
= 343\bar{2}.u_1.3\bar{2}343 \subset \GQ_5^{(2)}$ as in figure \ref{fig:A53fig2}, and this concludes the proof of (4).

\end{proof}

We then claim that $4.3\bar{2}3.\GQ_3.4.3\bar{2}3.4 \subset R.4.3\bar{2}3.1 \bar{2}1.4.3\bar{2}3.4 + \GQ_5^{(2)}$. This is depicted in figure \ref{fig:A53fig2},
as well as the fact that $4 xu_2u_1 4 u_2 x4 \subset \GQ_5^{(2)}$.

\begin{lemma} \label{lem:4xy4x4} {\ } For all $i \in \{1,2,3 \}$ we have $s_i.4xy4x4 \equiv 4xy4x4.s_i \equiv a.4xy4x4 \mod \GQ_5^{(2)}$.
\end{lemma}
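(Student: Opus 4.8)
The statement to establish is that $4xy4x4$ is, modulo $\GQ_5^{(2)}$, a common eigenvector for left and right multiplication by $s_1,s_2,s_3$, with eigenvalue $a$ in each case. Here $x = 3\bar 23$ and $y = 1\bar 21$, so $4xy4x4 = s_4 s_3 s_2^{-1}s_3\, s_1 s_2^{-1}s_1\, s_4 s_3 s_2^{-1}s_3 s_4$. The overall strategy is to reduce everything to facts already proved: the bimodule relations for $x,w_+,w_-$ established in section \ref{sect:A4tildebimod} (in particular $(s_i - a).w_\pm \in \GQ_3 x\GQ_3$ for $i\in\{1,2\}$ and $xs_2 = s_2 x = ax$, $x s_2^{-1} = s_2^{-1}x = a^{-1}x$), the identity $w_0 \in R^\times w_+ + u_1u_3u_2u_3u_1$ of lemma \ref{lemGQ4L3}, and the preceding lemma's claim that $4xy4x4$ spans, mod $\GQ_5^{(2)}$, the interesting piece of $4x\GQ_3 4x4$ (so any element of $4 x u_1u_2u_1 4 x 4$ lies in $R.4xy4x4 + \GQ_5^{(2)}$, and likewise $4 x u_2u_1u_2 4x4$ up to the $w_\pm$-type contributions that sit in $\GQ_5^{(2)}$ by the previous proposition).

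\textbf{The case $i=2$ (and the two strands furthest from the handle).} I would start with $i=3$, which should be the easiest: $s_3$ commutes past nothing useful directly, but $s_3 . 4 = ?$ — actually the cleanest is to use that $3$ commutes with $s_1$ and with $y = 1\bar21$, and to push $s_3$ through $4x = 43\bar23$ using braid relations, landing on an expression in $u_3u_4u_3$ times the rest; then absorb into $\GQ_5^{(2)}$ via part (3)/(2) of the previous lemma, leaving only the $R.4xy4x4$ term. For $i=2$: here one uses $xs_2 = ax$ on the rightmost $x$ for right multiplication, giving $4xy4x4.s_2 = 4xy4(xs_2)4\cdot(\text{correction})$; more carefully, $s_2$ must be commuted through the final $s_4$ first (they commute), then $x s_2 = ax$ gives the factor $a$ directly. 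For left multiplication by $s_2$, commute $s_2$ through the leading $s_4$ (they commute), then use $s_2 x = ax$ on the leading $x$. So $i=2$ should fall out almost immediately from $s_2 x = x s_2 = ax$ plus the commutation $[s_2,s_4]=0$.

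\textbf{The case $i=1$ — the main obstacle.} Left/right multiplication by $s_1$ is the delicate one, because $s_1$ does not commute with $x=3\bar23$ and does not act on $x$ as a scalar; instead it interacts with $y = 1\bar21$. The plan is: for right multiplication, commute $s_1$ through $s_4, s_3 s_2^{-1}s_3$ (all commute with $s_1$ since indices differ by $\geq 2$, except we must be careful — $s_1$ and $s_2$ do not commute, so $s_1$ does \emph{not} pass through the last $x$; rather the last $x = 3\bar23$ \emph{does} commute with $s_1$). Thus $4xy4x4.s_1 = 4xy4x4s_1 = 4xy4x(s_4 s_1)4$? No — reorganize: the rightmost block after $y$ is $4x4 = 4\cdot 3\bar23\cdot 4$, all of whose letters ($s_3,s_4$) commute with $s_1$. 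So $4xy4x4.s_1 = 4x(y s_1)4x4$. Now $ys_1 = 1\bar21 s_1$, and $y$ is (up to the center) essentially $w_0$-like on $2$ strands; using $ys_1 \in$ (scalar)$\,y + u_1u_2 + u_2u_1$ from the relation $s_2^{-1}s_1s_2^{-1}s_1 - s_1 s_2^{-1}s_1 s_2^{-1} \in u_1u_2+u_2u_1$ recalled in \eqref{eq2b12b1com}, one gets $ys_1 \equiv a\,y$ modulo terms of the form $u_1u_2 + u_2u_1$; feeding these back, $4x(u_1u_2)4x4$ and $4x(u_2u_1)4x4$ must be shown to lie in $\GQ_5^{(2)}$ — this is where lemma \ref{lem:4xy4x4}'s proof genuinely needs the structural lemmas about $u_4(u_3u_2u_3)(u_2u_1u_2)u_4(\dots)u_4 \subset \GQ_5^{(2)}$ and the ``$4x\GQ_3 4x4$ spans only $R.4xy4x4$ mod $\GQ_5^{(2)}$'' claim proved just above. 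The hard part is bookkeeping: verifying that every stray monomial produced when expanding $ys_1$ (there are about eight of them, with coefficients involving $a^{\pm1}, a^{\pm3}$) sandwiched as $4x(\cdot)4x4$ falls into $\GQ_5^{(2)}$ — each is handled by the figures \ref{fig:A53fig1}, \ref{fig:A53fig2} arguments (handle reduction plus $4\bar34 \in R^\times \bar43\bar4 + u_3u_4u_3$), but one must check no term survives outside $R.4xy4x4$. Left multiplication by $s_1$ is symmetric: since $F = \Phi\circ\Psi$ fixes $4xy4x4$ (it is visibly palindromic) and $\GQ_5^{(2)}$ is $F$-stable, the left identity follows from the right one by applying $F$. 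I would therefore prove the right identity for $i=1$ in full and then invoke $F$-invariance for the left, exactly as in the proof of the preceding proposition.
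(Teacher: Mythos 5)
Your overall skeleton (prove the congruences on one side, transfer to the other side by the palindromic symmetry $F=\Phi\circ\Psi$, and absorb error terms into $\GQ_5^{(2)}$ via the structural lemma) is the paper's, and your $i=2$ case is essentially right once you note that $s_2x=xs_2=a.x$ holds only modulo $\GQ_3u_3\GQ_3$, the resulting error terms being absorbed after sandwiching by part (4) of the preceding lemma. But there are two genuine gaps. First, the case $i=3$, which you call the easiest, is in fact the hard one, and your plan for it fails: braid relations alone give $3\cdot 43\bar{2}3=434\bar{2}3$, i.e.\ the same group element, and the resulting expression $434\,\bar{2}3y\,4\,x\,4$ sits in $u_4u_3u_4\,\GQ_4\,u_4\,\GQ_4\,u_4$, which is reached neither by $u_4\GQ_4u_4u_3u_4\subset\GQ_5^{(2)}$ nor by $u_4u_3u_4\GQ_4u_4\subset\GQ_5^{(2)}$; no purely braid-theoretic rewriting plus soft absorption can produce the eigenvalue $a$. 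The paper has to invoke the defining relation of $\GQ_n$ on the pair $(s_3,s_4)$, in the form (\ref{eq:343rel}), to show $(\bar{3}-a^{-1}.\emptyset).4xy4x4\in\GQ_5^{(2)}$, via a three-case analysis of the contributions $(3-a.\emptyset)4\bar{3}$, $u_3\bar{4}u_3$ and $u_3u_4$ (the middle one itself needing a nontrivial rewriting of $\bar{4}u_3\bar{2}34$). Your proposal never uses the defining relation here, so the $i=3$ case remains unproved.

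Second, for $i=1$ your key step is false: $ys_1\not\equiv a.y \mod u_1u_2+u_2u_1$. By the cubic relation, $ys_1=(a+b+c)\,1\bar{2}1-(ab+ac+bc)\,1\bar{2}+abc\,1\bar{2}\bar{1}$ exactly, and in the signed-word basis of $\GQ_3$ the module $u_1u_2+u_2u_1$ is the span of the thirteen basis words of length at most two, while $1\bar{2}1$ and $1\bar{2}\bar{1}$ are two further basis elements; moreover the relation \eqref{eq2b12b1com} you cite concerns $\bar{2}1\bar{2}$ and is a commutation, not an eigenvalue, statement. The stray term $4x(1\bar{2}\bar{1})4x4$ does not fall into $\GQ_5^{(2)}$: since $\GQ_3=u_2u_1u_2+R.y$, it contributes a further multiple of $4xy4x4$, and it is the total coefficient that must be shown to equal $a$ — so "checking no term survives outside $R.4xy4x4$" is not enough. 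That coefficient computation is precisely what the paper extracts from the $\tilde{A}_4$ tables: $s_i.xy\equiv a.xy$ modulo $\GQ_3u_3\GQ_3+u_2u_1xu_1u_2$ for $i\in\{1,2\}$ (the coefficient $a$ of $e_{24}$ comes out of (\ref{eq1xy}) together with (\ref{eqx1b21b})), after which $4(u_2u_1xu_1u_2)4x4=u_2u_1(4xu_1u_24x4)\subset\GQ_5^{(2)}$ by part (4) of the preceding lemma. So for $i=1$ the correct route is to cite the already-computed action on $e_{24}=xy$ rather than to re-derive an eigen-relation for $y$ alone, which does not hold.
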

\begin{proof}
From the computations in section \ref{sect:A4tildegen} we get that $s_i.xy \equiv a.xy \mod \GQ_3 u_3 \GQ_3 + u_2u_1 x u_1 u_2$
for $i \in \{1,2 \}$ (notice that the image of $u_2u_1 x u_1 u_2$ in $\tilde{A}_4$ is spanned by the $e_i$ for $i < 22$).
Since $4u_2u_1 x u_1 u_24x4 = u_2u_1 4x u_1 u_24x4 \subset \GQ_5^{(2)}$ this implies
$s_i.4xy4x4 \equiv a.4xy4x4 \mod \GQ_5^{(2)}$ for $i \in \{1,2 \}$.

Since $\GQ_5^{(2)}$ is stable under $F = \Phi \circ \Psi$ and $F(4xy4x4) = 4x4yx4 = 4xy4x4$
this implies $4xy4x4.s_i \equiv a.4xy4x4 \mod \GQ_5^{(2)}$ for $i \in \{1,2 \}$. For the same reason, the statement 
$3.4xy4x4 \equiv a.4xy4x4$ is equivalent to the statement $4xy4x4.3 \equiv a.4xy4x4$.

We use equation (\ref{eq:343rel}) under the form $(\bar{3}-a^{-1}.\emptyset).43 \in (3 -a.\emptyset).4\bar{3}+ u_3\bar{4}u_3 + u_3u_4$
to get $(\bar{3}-a^{-1}.\emptyset).4xy4x4 = (\bar{3}-a^{-1}.\emptyset).43\bar{2}3y4x4 \in \GQ_5^{(2)}$ since
\begin{itemize}
\item $(3 -a.\emptyset).4\bar{3}\bar{2}3y4x4 = (3 -a.\emptyset).42\bar{3}\bar{2}y4x4 \in(3 -a.\emptyset)2u_3.4\bar{3}4\bar{2}yx4 \subset \GQ_5^{(2)}$
\item $u_3\bar{4}u_3\bar{2}3y4x4\subset u_3\bar{4}u_3\bar{2}34yx4$ and $\bar{4}u_3\bar{2}34 = R.\bar{4}\bar{2}34+ R.\bar{4}3\bar{2}34 + R.\bar{4}\bar{3}\bar{2}34
= R.\bar{2}(\bar{4}34)+ R.(\bar{4}34)\bar{2}(\bar{4}34) + R.\bar{4}2\bar{3}\bar{2}4 \subset \GQ_5^{(1)} + R.34\bar{3}\bar{2}3)4\bar{3} \subset \GQ_5^{(1)} + u_3u_24u_34u_2u_3$
whence
$\bar{4}u_3\bar{2}34yx4 \subset \GQ_5^{(1)}\GQ_4u_4+u_3u_24u_34u_2u_3yx4 \subset \GQ_5^{(2)}$.
\item $u_3u_4\bar{2}3y4x4=u_3\bar{2}u_434yx4 \subset \GQ_5^{(2)}$ 
\end{itemize}
Therefore $s_3^{-1}.4xy4x4 \equiv a^{-1}.4xy4x4$ whence $s_3.4xy4x4 \equiv a.4xy4x4$ and this completes the proof of the lemma.
\end{proof}

\begin{figure}
\begin{tikzpicture} [scale=.3]
\fill[color=yellow] (0,0) rectangle (11,0.7*4+0.7);
\draw[color=black] (0,0.7*1) -- (11,0.7*1);
\draw[color=black] (0,0.7*2) -- (11,0.7*2);
\draw[color=black] (0,0.7*3) -- (11,0.7*3);
\draw[color=black] (0,0.7*4) -- (11,0.7*4);
\draw [fill=white] (1,0.7*4) circle (0.4); 
\draw [fill=white] (2,0.7*3) circle (0.4); 
\draw [fill=black] (3,0.7*2) circle (0.4); 
\draw [fill=white] (4,0.7*3) circle (0.4); 
\draw [fill=gray] (5,0.7*1) circle (0.4); 
\draw [fill=gray] (6,0.7*2) circle (0.4); 
\draw [fill=gray] (7,0.7*1) circle (0.4); 
\draw [fill=white] (8,0.7*4) circle (0.4); 
\draw [fill=black] (9,0.7*3) circle (0.4); 
\draw [fill=white] (10,0.7*4) circle (0.4); 
\end{tikzpicture} 

\begin{tikzpicture} [scale=.3] 
\fill[color=yellow] (0,0) rectangle (11,0.7*4+0.7);
\draw[color=black] (0,0.7*1) -- (11,0.7*1);
\draw[color=black] (0,0.7*2) -- (11,0.7*2);
\draw[color=black] (0,0.7*3) -- (11,0.7*3);
\draw[color=black] (0,0.7*4) -- (11,0.7*4);
\draw [fill=white] (1,0.7*4) circle (0.4); 
\draw [fill=white] (2,0.7*3) circle (0.4); 
\draw [fill=black] (3,0.7*2) circle (0.4); 
\draw [fill=white] (4,0.7*3) circle (0.4); 
\draw [fill=gray] (5,0.7*1) circle (0.4); 
\draw [fill=gray] (6,0.7*2) circle (0.4); 
\draw [fill=gray] (7,0.7*1) circle (0.4); 
\draw [fill=black] (8,0.7*4) circle (0.4); 
\draw [fill=white] (9,0.7*3) circle (0.4); 
\draw [fill=black] (10,0.7*4) circle (0.4); 
\end{tikzpicture}

\begin{tikzpicture} [scale=.3] 
\fill[color=yellow] (0,0) rectangle (11,0.7*4+0.7);
\draw[color=black] (0,0.7*1) -- (11,0.7*1);
\draw[color=black] (0,0.7*2) -- (11,0.7*2);
\draw[color=black] (0,0.7*3) -- (11,0.7*3);
\draw[color=black] (0,0.7*4) -- (11,0.7*4);
\draw [fill=white] (1,0.7*4) circle (0.4); 
\draw [fill=white] (2,0.7*3) circle (0.4); 
\draw [fill=black] (3,0.7*2) circle (0.4); 
\draw [fill=white] (4,0.7*3) circle (0.4); 
\draw [fill=black] (5,0.7*4) circle (0.4); 
\draw [fill=gray] (6,0.7*1) circle (0.4); 
\draw [fill=gray] (7,0.7*2) circle (0.4); 
\draw [fill=gray] (8,0.7*1) circle (0.4); 
\draw [fill=white] (9,0.7*3) circle (0.4); 
\draw [fill=black] (10,0.7*4) circle (0.4); 
\draw (0.5,0.9) rectangle (5.5,3.5);
\end{tikzpicture} 

\begin{tikzpicture} [scale=.3] 
\fill[color=yellow] (0,0) rectangle (11,0.7*4+0.7);
\draw[color=black] (0,0.7*1) -- (11,0.7*1);
\draw[color=black] (0,0.7*2) -- (11,0.7*2);
\draw[color=black] (0,0.7*3) -- (11,0.7*3);
\draw[color=black] (0,0.7*4) -- (11,0.7*4);
\draw [fill=gray] (1,0.7*2) circle (0.4); 
\draw [fill=black] (2,0.7*3) circle (0.4); 
\draw [fill=gray] (2,0.7*1) circle (0.4); 
\draw [fill=white] (3,0.7*4) circle (0.4); 
\draw [fill=black] (4,0.7*3) circle (0.4); 
\draw [fill=gray] (4,0.7*1) circle (0.4); 
\draw [fill=gray] (5,0.7*2) circle (0.4); 
\draw [fill=gray] (6,0.7*1) circle (0.4); 
\draw [fill=gray] (7,0.7*2) circle (0.4); 
\draw [fill=gray] (8,0.7*1) circle (0.4); 
\draw [fill=white] (9,0.7*3) circle (0.4); 
\draw [fill=black] (10,0.7*4) circle (0.4); 
\end{tikzpicture} 
\caption{$4.3\bar{2}3.(u_1u_2u_1).4.\bar{3}. 4 \subset \GQ_5^{(2)}$}
\label{fig:A53fig1}.
\end{figure}
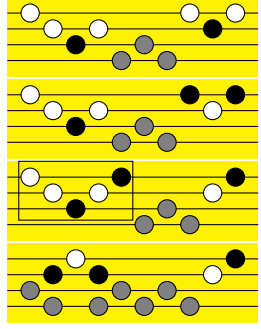

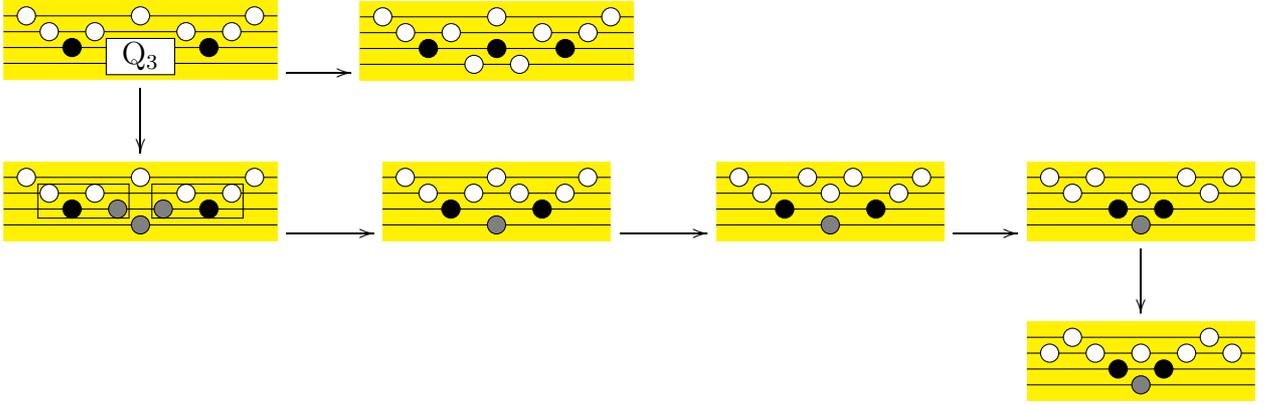
\begin{figure}
\xymatrix{
\begin{tikzpicture} [scale=.3] 
\fill[color=yellow] (0,0) rectangle (12,0.7*4+0.7);
\draw[color=black] (0,0.7*1) -- (12,0.7*1);
\draw[color=black] (0,0.7*2) -- (12,0.7*2);
\draw[color=black] (0,0.7*3) -- (12,0.7*3);
\draw[color=black] (0,0.7*4) -- (12,0.7*4);
\draw [fill=white] (1,0.7*4) circle (0.4); 
\draw [fill=white] (2,0.7*3) circle (0.4); 
\draw [fill=black] (3,0.7*2) circle (0.4); 
\draw [fill=white] (4,0.7*3) circle (0.4); 
\draw [fill=gray] (5,0.7*1) circle (0.4); 
\draw [fill=white] (6,0.7*4) circle (0.4); 
\draw [fill=gray] (6,0.7*2) circle (0.4); 
\draw [fill=gray] (7,0.7*1) circle (0.4); 
\draw [fill=white] (8,0.7*3) circle (0.4); 
\draw [fill=black] (9,0.7*2) circle (0.4); 
\draw [fill=white] (10,0.7*3) circle (0.4); 
\draw [fill=white] (11,0.7*4) circle (0.4); 
\draw [fill=white] (4.5,0.2) rectangle (7.5,1.8); 
\draw (6,1) node {\large $\GQ_3$};
\end{tikzpicture} \ar[r] \ar[d] & \begin{tikzpicture} [scale=.3] 
\fill[color=yellow] (0,0) rectangle (12,0.7*4+0.7);
\draw[color=black] (0,0.7*1) -- (12,0.7*1);
\draw[color=black] (0,0.7*2) -- (12,0.7*2);
\draw[color=black] (0,0.7*3) -- (12,0.7*3);
\draw[color=black] (0,0.7*4) -- (12,0.7*4);
\draw [fill=white] (1,0.7*4) circle (0.4); 
\draw [fill=white] (2,0.7*3) circle (0.4); 
\draw [fill=black] (3,0.7*2) circle (0.4); 
\draw [fill=white] (4,0.7*3) circle (0.4); 
\draw [fill=white] (5,0.7*1) circle (0.4); 
\draw [fill=white] (6,0.7*4) circle (0.4); 
\draw [fill=black] (6,0.7*2) circle (0.4); 
\draw [fill=white] (7,0.7*1) circle (0.4); 
\draw [fill=white] (8,0.7*3) circle (0.4); 
\draw [fill=black] (9,0.7*2) circle (0.4); 
\draw [fill=white] (10,0.7*3) circle (0.4); 
\draw [fill=white] (11,0.7*4) circle (0.4); 
\end{tikzpicture}\\
\begin{tikzpicture} [scale=.3] 
\fill[color=yellow] (0,0) rectangle (12,0.7*4+0.7);
\draw[color=black] (0,0.7*1) -- (12,0.7*1);
\draw[color=black] (0,0.7*2) -- (12,0.7*2);
\draw[color=black] (0,0.7*3) -- (12,0.7*3);
\draw[color=black] (0,0.7*4) -- (12,0.7*4);
\draw [fill=white] (1,0.7*4) circle (0.4); 
\draw [fill=white] (2,0.7*3) circle (0.4); 
\draw [fill=black] (3,0.7*2) circle (0.4); 
\draw [fill=white] (4,0.7*3) circle (0.4); 
\draw [fill=gray] (5,0.7*2) circle (0.4); 
\draw [fill=white] (6,0.7*4) circle (0.4); 
\draw [fill=gray] (6,0.7*1) circle (0.4); 
\draw [fill=gray] (7,0.7*2) circle (0.4); 
\draw [fill=white] (8,0.7*3) circle (0.4); 
\draw [fill=black] (9,0.7*2) circle (0.4); 
\draw [fill=white] (10,0.7*3) circle (0.4); 
\draw [fill=white] (11,0.7*4) circle (0.4); 
\draw (1.5,1) rectangle (5.5,2.5);
\draw (6.5,1) rectangle (10.5,2.5);
\end{tikzpicture} \ar[r]  & \begin{tikzpicture} [scale=.3] 
\fill[color=yellow] (0,0) rectangle (10,0.7*4+0.7);
\draw[color=black] (0,0.7*1) -- (10,0.7*1);
\draw[color=black] (0,0.7*2) -- (10,0.7*2);
\draw[color=black] (0,0.7*3) -- (10,0.7*3);
\draw[color=black] (0,0.7*4) -- (10,0.7*4);
\draw [fill=white] (1,0.7*4) circle (0.4); 
\draw [fill=white] (2,0.7*3) circle (0.4); 
\draw [fill=black] (3,0.7*2) circle (0.4); 
\draw [fill=white] (4,0.7*3) circle (0.4); 
\draw [fill=white] (5,0.7*4) circle (0.4); 
\draw [fill=gray] (5,0.7*1) circle (0.4); 
\draw [fill=white] (6,0.7*3) circle (0.4); 
\draw [fill=black] (7,0.7*2) circle (0.4); 
\draw [fill=white] (8,0.7*3) circle (0.4); 
\draw [fill=white] (9,0.7*4) circle (0.4); 
\end{tikzpicture}\ar[r]  & 
\begin{tikzpicture} [scale=.3] 
\fill[color=yellow] (0,0) rectangle (10,0.7*4+0.7);
\draw[color=black] (0,0.7*1) -- (10,0.7*1);
\draw[color=black] (0,0.7*2) -- (10,0.7*2);
\draw[color=black] (0,0.7*3) -- (10,0.7*3);
\draw[color=black] (0,0.7*4) -- (10,0.7*4);
\draw [fill=white] (1,0.7*4) circle (0.4); 
\draw [fill=white] (2,0.7*3) circle (0.4); 
\draw [fill=black] (3,0.7*2) circle (0.4); 
\draw [fill=white] (4,0.7*4) circle (0.4); 
\draw [fill=white] (5,0.7*3) circle (0.4); 
\draw [fill=gray] (5,0.7*1) circle (0.4); 
\draw [fill=white] (6,0.7*4) circle (0.4); 
\draw [fill=black] (7,0.7*2) circle (0.4); 
\draw [fill=white] (8,0.7*3) circle (0.4); 
\draw [fill=white] (9,0.7*4) circle (0.4); 
\end{tikzpicture}\ar[r]  & 
 \begin{tikzpicture} [scale=.3] 
\fill[color=yellow] (0,0) rectangle (10,0.7*4+0.7);
\draw[color=black] (0,0.7*1) -- (10,0.7*1);
\draw[color=black] (0,0.7*2) -- (10,0.7*2);
\draw[color=black] (0,0.7*3) -- (10,0.7*3);
\draw[color=black] (0,0.7*4) -- (10,0.7*4);
\draw [fill=white] (1,0.7*4) circle (0.4); 
\draw [fill=white] (2,0.7*3) circle (0.4); 
\draw [fill=white] (3,0.7*4) circle (0.4); 
\draw [fill=black] (4,0.7*2) circle (0.4); 
\draw [fill=white] (5,0.7*3) circle (0.4); 
\draw [fill=gray] (5,0.7*1) circle (0.4); 
\draw [fill=black] (6,0.7*2) circle (0.4); 
\draw [fill=white] (7,0.7*4) circle (0.4); 
\draw [fill=white] (8,0.7*3) circle (0.4); 
\draw [fill=white] (9,0.7*4) circle (0.4); 
\end{tikzpicture}\ar[d] \\  & & & \begin{tikzpicture} [scale=.3] 
\fill[color=yellow] (0,0) rectangle (10,0.7*4+0.7);
\draw[color=black] (0,0.7*1) -- (10,0.7*1);
\draw[color=black] (0,0.7*2) -- (10,0.7*2);
\draw[color=black] (0,0.7*3) -- (10,0.7*3);
\draw[color=black] (0,0.7*4) -- (10,0.7*4);
\draw [fill=white] (1,0.7*3) circle (0.4); 
\draw [fill=white] (2,0.7*4) circle (0.4); 
\draw [fill=white] (3,0.7*3) circle (0.4); 
\draw [fill=black] (4,0.7*2) circle (0.4); 
\draw [fill=white] (5,0.7*3) circle (0.4); 
\draw [fill=gray] (5,0.7*1) circle (0.4); 
\draw [fill=black] (6,0.7*2) circle (0.4); 
\draw [fill=white] (7,0.7*3) circle (0.4); 
\draw [fill=white] (8,0.7*4) circle (0.4); 
\draw [fill=white] (9,0.7*3) circle (0.4); 
\end{tikzpicture} 
}
\caption{$4.3\bar{2}3.\GQ_3.4.3\bar{2}3.4 \subset R.4.3\bar{2}3.1 \bar{2}1.4.3\bar{2}3.4 + \GQ_5^{(2)}$}
\label{fig:A53fig2}
\end{figure}

Note that $4x\bar{4} = \bar{3}\bar{2}.4\bar{3}4.23$ hence $s_4^{\alpha}xys_4^{\beta}xs_4^{\gamma} \in \GQ_5^{(2)}$ whenever $\alpha,\beta,\gamma \in \{-1, 1\}$ with
$\# \{\alpha,\beta,\gamma\} > 1$.

\begin{lemma} {\ } \label{lem:4w4w4to4xy4x}
\begin{enumerate}
\item $\bar{4} w_+ \bar{4} w_+ \bar{4} \equiv -(a^2/w)^3.4xy4x4 \mod \GQ_5^{(2)}$.
\item $4w_-4w_-4 \equiv - (a^6/w^5). 4xy4x4  \mod \GQ_5^{(2)}$.
\end{enumerate}
\end{lemma}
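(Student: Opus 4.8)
The plan is to reduce the computation of $\bar{4}w_+\bar{4}w_+\bar{4}$ modulo $\GQ_5^{(2)}$ to the element $4xy4x4$ studied in Lemma~\ref{lem:4xy4x4}, by repeatedly rewriting $w_+ = 3\cdot 2\cdot 1^{-1}\cdot 2\cdot 3$ in terms of the central element $x = 3\bar{2}3$ and the $\GQ_4$-bimodule structure. First I would recall that $w_+ = s_3 s_2 s_1^{-1}s_2 s_3$ and that, by the remark immediately preceding this lemma, $4x\bar 4 = \bar 3\bar 2\,4\bar 34\,23$, so that all mixed-sign expressions $s_4^{\alpha}xys_4^{\beta}xs_4^{\gamma}$ with $\#\{\alpha,\beta,\gamma\}>1$ lie in $\GQ_5^{(2)}$. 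The strategy is then to massage $\bar 4 w_+\bar 4 w_+\bar 4$ so that the inner $w_+$'s get converted, one at a time, into occurrences of $x$: using the defining relation in the form
$$
s_1^{-1}s_2 s_1 \equiv s_1 s_2 s_1^{-1} + a^2\big(s_1^{-1}s_2^{-1}s_1 - s_1 s_2^{-1}s_1^{-1}\big) \mod u_1u_2+u_2u_1,
$$
shifted up by $sh^2$, one rewrites $s_3^{-1}s_4 s_3$-type patterns so that the negative crossing on strand $4$ can be slid past and absorbed. The key identity to establish is that $\bar 4 w_+\bar 4 \equiv (\text{scalar})\cdot 4 x y 4 \cdots$ type congruence, i.e. that conjugating $w_+$ by $\bar 4$ and then multiplying produces, modulo $\GQ_5^{(2)}$, the ``doubled'' word involving $x$ twice and $y$ once.

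More concretely I would proceed as follows. Step one: show $\bar 4 w_+ \bar 4 \equiv \lambda_1\, 4 x \bar 4 \cdot(\text{something in }\GQ_3) + \GQ_5^{(2)}$ is \emph{not} the right move; rather, observe that $w_+ = s_2 s_3 s_2 s_1^{-1}s_2 \cdot(\dots)$ is not central but that $w_0 = s_3 s_2 s_1^2 s_2 s_3$ commutes with $s_1, s_2$ (Lemma~\ref{lemGQ4L3}), and $w_+ \in R^{\times}w_0 + u_1u_3u_2u_3u_1$ after expanding $s_1^2$. So I would first replace each $w_+$ by $w_0$ up to lower terms, use that $w_0$ is ``almost central'' to collect the two $w_0$'s together, and then re-expand. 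Step two: in the product $\bar 4 w_0 \bar 4 w_0 \bar 4$, use handle reduction (equations \eqref{eq:whandleA}, \eqref{eq:whandleB}) to recognize $\bar 4 w_0\bar 4$ as a conjugate of a word supported on strands $1,2,3$ times the relevant powers of $s_4$, which forces the middle $\bar 4$ to meet the outer ones and produce $4x\bar 4$-patterns, all of which are controlled modulo $\GQ_5^{(2)}$ except for a single surviving term proportional to $4xy4x4$. Step three: pin down the scalar $-(a^2/w)^3$ by tracking the scalars produced at each rewriting: each conversion $w_+ \rightsquigarrow x$ (via the relation $s_2^{-1}s_1s_2^{-1} \equiv (bc)^{-1}s_2 s_1^{-1}s_2$ of table~\ref{table:GQ3grobsigned1}, relation (21), appropriately shifted) contributes a factor $a^2/w = a^2/(abc)$ up to sign, and there are three such conversions (two inner $w_+$'s plus a crossing term), giving $(a^2/w)^3$; the sign $-1$ comes from the cubic relation expansion $s_3^2 = u\, s_3 - v + w\, s_3^{-1}$ applied to collect the overlapping $\bar 4$'s. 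Part (2) follows from part (1) by applying the antiautomorphism $\phi$ (or $\phi\circ\psi$) which exchanges $w_+ \leftrightarrow w_-$ up to the sign computed in the automorphisms section, sending $4 \mapsto \bar 4$, $a\mapsto a^{-1}$, and tracking how $(a^2/w)^3$ transforms under $a\mapsto a^{-1}$, $w \mapsto w^{-1}$, which yields the asymmetric exponent $a^6/w^5$.

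The main obstacle I expect is Step three, the bookkeeping of scalars: the rewriting from $w_+^2$ to $xy\cdot x$ is not a single clean substitution but passes through several intermediate words (involving $w_0$, $\bar 4 3\bar 2 3 4$, $\bar 4\bar 3 2\bar 3\bar 4$, etc.), and at each stage one picks up corrections lying in $\GQ_5^{(2)}$ that must be verified to genuinely lie there — this is where the preliminary lemmas about $u_4\GQ_4u_4u_3u_4 \subset \GQ_5^{(2)}$ and $u_4u_3u_4\GQ_4u_4\subset\GQ_5^{(2)}$ and Lemma~\ref{lem:4xy4x4} do the heavy lifting. The subtlety is that $4xy4x4$ is the \emph{only} generator of $\GQ_5^{(3)}/\GQ_5^{(2)}$ surviving (by the dimension count $6490 = 6489+1$ and the fact that $s_i\cdot 4xy4x4 \equiv a\cdot 4xy4x4$ from Lemma~\ref{lem:4xy4x4}), so once one knows the left-hand sides are \emph{not} in $\GQ_5^{(2)}$, they must be nonzero scalar multiples of $4xy4x4$; the real content is extracting the \emph{exact} scalar, which I would do by reducing to the specialization $a=1$ (everything being homogeneous in $a$ after the renormalization $s_i \mapsto a^{-1}s_i$, except that here we want to keep $b,c$) and comparing coefficients in the explicit basis $\mathcal B_{(2)}$ via the embedding $\Phi_4$, or alternatively by a direct careful hand computation tracking only the coefficient of $4xy4x4$. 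I would present the hand computation, using the relations \eqref{eq:343rel}, the $w_0$-centrality, and relations (21)--(23) of table~\ref{table:GQ3grobsigned1} as the main tools, and verify the final scalar against the $\F_{103}$-computation used to establish $\dim\GQ_5^{(2)}=6489$.
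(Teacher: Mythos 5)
Your plan has two genuine gaps, both at the points where the actual content of the lemma lies. First, the deduction of (2) from (1) by applying $\phi$ does not work as stated: $\phi$ does send $\bar{4}w_+\bar{4}w_+\bar{4}$ to $4w_-4w_-4$, but it sends the right-hand side $4xy4x4$ to $\bar{4}\,\bar{3}2\bar{3}\,\bar{1}2\bar{1}\,\bar{4}\,\bar{3}2\bar{3}\,\bar{4}$, not to $4xy4x4$. Converting that element back into a multiple of $4xy4x4$ modulo $\GQ_5^{(2)}$ (three conversions of the form $\bar{3}2\bar{3}\rightsquigarrow (bc)^{-1}x$, $\bar{1}2\bar{1}\rightsquigarrow (bc)^{-1}y$, plus turning the three $\bar{4}$'s into $4$'s, each with correction terms that must be shown to die in $\GQ_5^{(2)}$) is essentially as much work as the original computation, and your bookkeeping shortcut of just transporting the scalar under $a\mapsto a^{-1}$, $w\mapsto w^{-1}$ gives $w^3/a^6$, which is not $a^6/w^5$ -- the discrepancy is exactly the unaccounted-for conversion factors. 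The paper does not obtain (2) from (1) by symmetry: it writes $\bar{4}w_+\bar{4}w_+\bar{4}=34\bar{3}2\bar{1}2(\bar{4}3\bar{4}3\bar{4})2\bar{1}2\bar{3}43$, expands the middle $\bar{4}3\bar{4}3\bar{4}$ via the identity (\ref{eq2b12b12b}), shows term by term what lands in $\GQ_5^{(2)}$, and ends up with a linear relation involving both $4xy4x4$ and $4w_-4w_-4$, which is then solved using (1).

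Second, for (1) itself your argument is a strategy rather than a proof, and the proposed ways of pinning down the scalar are not available. The substitution $w_+\rightsquigarrow w_0$ modulo $u_1u_3u_2u_3u_1$ creates cross terms such as $\bar{4}(u_3u_2u_3)\bar{4}w_+\bar{4}$ which are of the same difficulty as the original expression and are not a priori in $\GQ_5^{(2)}$; deciding which of them contribute to the coefficient of $4xy4x4$ is precisely the hard part, and the paper handles it by a specific chain of reductions (expanding the middle $3\bar{4}3$, using $\bar{4}3\bar{4}\equiv(bc)^{-1}4\bar{3}4$ twice, then $\bar{1}\bar{2}\bar{2}\bar{1}\equiv(wbc)^{-1}1\bar{2}1$ and lemma \ref{lem:4xy4x4}), producing the scalar as $-(bc)\cdot(bc)^{-2}\cdot(wbc)^{-1}\cdot a^4$; your heuristic ``three conversions each worth $a^2/w$'' is not a derivation. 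Finally, the fallback of fixing the scalar computationally fails: the paper has no analogue of $\Phi_4$ for $\GQ_5$, and the $\F_{103}$ computation is done at the specialization $\{a,b,c\}=\mu_3$, which yields dimensions only and cannot determine a Laurent polynomial in $\Z[a^{\pm1},b^{\pm1},c^{\pm1}]$; moreover the fact that $\GQ_5^{(3)}/\GQ_5^{(2)}$ is spanned over $R$ by $4xy4x4$ is proved in the paper as a consequence of this very lemma, so invoking it to reduce the problem to ``find the scalar'' would be circular unless you prove it independently first.
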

\begin{proof}
We first prove (1).
We use that $4 \bar{3}4 \equiv (bc).\bar{4}3\bar{4} + 3\bar{4}3 \mod u_3 4\bar{3} + u_4u_3 + u_3u_4$.
Since
\begin{itemize}
\item $\bar{4}32\bar{1}2(u_34\bar{3})2\bar{1}23\bar{4} = \bar{4}32\bar{1}2u_34(\bar{3}2\bar{1}23)\bar{4} = 
 \bar{4}32\bar{1}2u_34 21.3\bar{2}3.\bar{1}\bar{2}.\bar{4} = \bar{4}32\bar{1}2u_3 21.(43\bar{2}3\bar{4}) .\bar{1}\bar{2}
= \bar{4}32\bar{1}2u_3 21.\bar{3}\bar{2}.4\bar{3}4 .23.\bar{1}\bar{2} \subset \GQ_5^{(2)}$
\item $\bar{4}32\bar{1}2u_3u_42\bar{1}23\bar{4} = \bar{4}32\bar{1}2u_32\bar{1}2u_43\bar{4} \subset \GQ_5^{(2)}$
and $\bar{4}32\bar{1}2u_4u_32\bar{1}23\bar{4} = \bar{4}3u_42\bar{1}2u_32\bar{1}23\bar{4} 
 \subset \GQ_5^{(2)}$
 \item $\bar{4}32\bar{1}2(4\bar{3}4)2\bar{1}23\bar{4} =\bar{4}342\bar{1}2\bar{3}2\bar{1}243\bar{4} =34\bar{3}2\bar{1}2\bar{3}2\bar{1}2\bar{3}43 \in \GQ_5^{(2)}$
\end{itemize}
we get that $\bar{4}32\bar{1}2(3\bar{4}3)2\bar{1}23\bar{4}\equiv -(bc). \bar{4}32\bar{1}2\bar{4}3\bar{4}2\bar{1}23\bar{4} \mod \GQ_5^{(2)}$.
Now $\bar{4}32\bar{1}2\bar{4}3\bar{4}2\bar{1}23\bar{4} = (\bar{4}3\bar{4})2\bar{1}232\bar{1}2\bar{4}3\bar{4} \equiv (bc)^{-1}4\bar{3}42\bar{1}232\bar{1}2(\bar{4}3\bar{4})
\equiv (bc)^{-2}.4\bar{3}42\bar{1}232\bar{1}24\bar{3}4 \mod \GQ_5^{(2)}$.
Now $4\bar{3}42\bar{1}232\bar{1}24\bar{3}4 =4\bar{3}2\bar{1}2(434)2\bar{1}2\bar{3}4 =4(\bar{3}2\bar{1}23)4(32\bar{1}2\bar{3})4 =
4.21.x.\bar{1}\bar{2}.4.\bar{2}\bar{1}.x.12.4 = 21.4.x.\bar{1}\bar{2}\bar{2}\bar{1}.4.x.4 .12$. We use that
$\bar{1}\bar{2}\bar{2}\bar{1} \equiv (wbc)^{-1}.1\bar{2}1 \mod u_2 u_1 u_2$ to get
$21.4.x.\bar{1}\bar{2}\bar{2}\bar{1}.4.x.4 .12 \equiv (wbc)^{-1}.21.4xy4x4 .12 \mod \GQ_5^{(2)}$.
Now, by lemma \ref{lem:4xy4x4} we know $21.4xy4x4 .12 \equiv a^4.4xy4x4  \mod \GQ_5^{(2)}$. Altogether this proves (1).

We now prove (2),
We note that $\bar{4}w_+\bar{4}w_+\bar{4} = \bar{4}3 2\bar{1}23\bar{4}3 2\bar{1}23\bar{4} = 3(\bar{3}\bar{4}3) 2\bar{1}23\bar{4}3 2\bar{1}2(3\bar{4}\bar{3})3 
= 34\bar{3}\bar{4} 2\bar{1}23\bar{4}3 2\bar{1}2\bar{4}\bar{3}43 
= 34\bar{3} 2\bar{1}2(\bar{4}3\bar{4}3\bar{4}) 2\bar{1}2\bar{3}43$. Now,
\begin{itemize}
\item $4\bar{3} 2\bar{1}2(u_3u_4) 2\bar{1}2\bar{3}4 = 4\bar{3} 2\bar{1}2u_3 2\bar{1}2u_4\bar{3}4  \subset \GQ_5^{(2)}$
\item $4\bar{3} 2\bar{1}2(u_4u_3) 2\bar{1}2\bar{3}4 = 4\bar{3} u_42\bar{1}2u_3 2\bar{1}2\bar{3}4 \subset \GQ_5^{(2)}$
\item $4\bar{3} 2\bar{1}2(\bar{4}u_3u_4) 2\bar{1}2\bar{3}4 = (4\bar{3}\bar{4}) 2\bar{1}2u_3 2\bar{1}2u_4\bar{3}4 = \bar{3}\bar{4}3 2\bar{1}2u_3 2\bar{1}2u_4\bar{3}4 
\subset \GQ_5^{(2)} $
\item $4\bar{3} 2\bar{1}2(u_4u_3\bar{4}) 2\bar{1}2\bar{3}4 = 4\bar{3} u_42\bar{1}2u_3 2\bar{1}2(\bar{4}\bar{3}4)= 4\bar{3} u_42\bar{1}2u_3 2\bar{1}23\bar{4}\bar{3}
\subset \GQ_5^{(2)} $
\item $4\bar{3} 2\bar{1}2(343) 2\bar{1}2\bar{3}4 = 4\bar{3} 2\bar{1}234(3 2\bar{1}2\bar{3})4 = 4\bar{3} 2\bar{1}23\bar{2} \bar{1}43\bar{2}3412$ by handle reduction,
and this is equal to $4(\bar{3} 2\bar{1}23)\bar{2} \bar{1}4x412=214 3\bar{2}3\bar{1}\bar{2}\bar{2} \bar{1}4x412=214 x\bar{1}\bar{2}\bar{2} \bar{1}4x412$.
Expanding $\bar{2}\bar{2}$ by the cubic relation, we get $\bar{1}\bar{2}\bar{2} \bar{1} \equiv w^{-1}.\bar{1}2\bar{1} \mod u_2u_1u_2$, hence
$214 x\bar{1}\bar{2}\bar{2} \bar{1}4x412 \equiv w^{-1}.214 x\bar{1}2\bar{1}4x412 \mod \GQ_5^{(2)}$. Now $ \bar{1}2\bar{1}  \equiv (bc)^{-1} \mod u_2u_1u_2$
hence $214 x\bar{1}\bar{2}\bar{2} \bar{1}4x412 \equiv (wbc)^{-1}.214 x1\bar{2}14x412\equiv (wbc)^{-1}.21.4 xy4x4.12\equiv (a^4/wbc).4 xy4x4 \mod \GQ_5^{(2)}$.
\item $4\bar{3} 2\bar{1}2(\bar{3}4\bar{3}) 2\bar{1}2\bar{3}4 = 4\bar{3} (2\bar{1}2)\bar{3}4\bar{3} (2\bar{1}2)\bar{3}4$.
We have $2\bar{1}2 \equiv (bc).\bar{2}1\bar{2} \mod u_1u_2u_1$ hence
$4\bar{3} (2\bar{1}2)\bar{3}4 \equiv (bc).4\bar{3} \bar{2}1\bar{2}\bar{3}4 \mod u_14\bar{3} u_2\bar{3}4u_1$. Now
$u_14\bar{3} u_2\bar{3}4u_1 \subset u_14u_2u_3u_24u_1 + u_143 \bar{2}34u_1$
whence $4\bar{3} (2\bar{1}2)\bar{3}4\bar{3} (2\bar{1}2)\bar{3}4 \equiv
(bc).4\bar{3} (2\bar{1}2)\bar{3}4\bar{3} \bar{2}1\bar{2}\bar{3}4 \mod 
4\bar{3} (2\bar{1}2)\bar{3}u_14u_2u_3u_24u_1 +
4\bar{3} (2\bar{1}2)\bar{3}u_143 \bar{2}34u_1$. Since
$4\bar{3} (2\bar{1}2)\bar{3}u_14u_2u_3u_24u_1 = 4\bar{3} (2\bar{1}2)\bar{3}u_1u_24u_34u_2u_1 \subset \GQ_5^{(2)}$
and $4\bar{3} (2\bar{1}2)\bar{3}u_143 \bar{2}34u_1 = 4\bar{3} (2\bar{1}2)u_1(\bar{3}43) \bar{2}34u_1= 4\bar{3} (2\bar{1}2)u_143\bar{4} \bar{2}34u_1
= 4\bar{3} (2\bar{1}2)u_143 \bar{2}(\bar{4}34)u_1 \subset \GQ_5^{(2)}$
this implies $4\bar{3} (2\bar{1}2)\bar{3}4\bar{3} (2\bar{1}2)\bar{3}4 \equiv
(bc).4\bar{3} (2\bar{1}2)\bar{3}4\bar{3} \bar{2}1\bar{2}\bar{3}4 \mod \GQ_5^{(2)}$. 
Now  $4\bar{3} (2\bar{1}2)\bar{3}4\bar{3} \bar{2}1\bar{2}\bar{3}4 \equiv (bc).4\bar{3} \bar{2}1\bar{2}\bar{3}4\bar{3} \bar{2}1\bar{2}\bar{3}4
\mod 4\bar{3} u_1u_2u_1\bar{3}4\bar{3} \bar{2}1\bar{2}\bar{3}4$
and $4\bar{3} u_1u_2u_1\bar{3}4\bar{3} \bar{2}1\bar{2}\bar{3}4=u_14\bar{3} u_2\bar{3}4\bar{3} u_1\bar{2}1\bar{2}\bar{3}4$
with $4\bar{3} u_2\bar{3}4\bar{3} u_1\bar{2}1\bar{2}\bar{3}4 = 4\bar{3} \bar{3}4\bar{3} u_1\bar{2}1\bar{2}\bar{3}4 + 4\bar{3} \bar{2}\bar{3}4\bar{3} u_1\bar{2}1\bar{2}\bar{3}4
+ 4\bar{3} 2\bar{3}4\bar{3} u_1\bar{2}1\bar{2}\bar{3}4 \subset \GQ_5^{(2)} + 4\bar{3} 2\bar{3}4\bar{3} u_1\bar{2}1\bar{2}\bar{3}4$.
But $4(\bar{3} 2\bar{3})4\bar{3} u_1\bar{2}1\bar{2}\bar{3}4 \equiv (bc)^{-1}43\bar{2}3 4\bar{3} u_1\bar{2}1\bar{2}\bar{3}4 \mod \GQ_5^{(2)}$
and $43\bar{2}3 4\bar{3} u_1\bar{2}1\bar{2}\bar{3}4 = 43\bar{2}\bar{4}3 4 u_1\bar{2}1\bar{2}\bar{3}4= (43\bar{4})\bar{2}3  u_1\bar{2}1\bar{2}4\bar{3}4
= \bar{3}43\bar{2}3  u_1\bar{2}1\bar{2}4\bar{3}4 \in \GQ_5^{(2)}$. This proves $4\bar{3} (2\bar{1}2)\bar{3}4\bar{3} \bar{2}1\bar{2}\bar{3}4 \equiv (bc).4w_-4w_-4
\mod \GQ_5^{(2)}$ hence $4\bar{3} (2\bar{1}2)\bar{3}4\bar{3} (2\bar{1}2)\bar{3}4 \equiv (bc)^2.4w_-4w_-4$.
\end{itemize}
Therefore, by (\ref{eq2b12b12b}) we get
$\bar{4}w_+\bar{4}w_+\bar{4} \equiv  \frac{bc-a^2}{a^4bc}.(a^4/wbc).34 xy4x43+ \frac{bc}{a^2}. (bc)^2.34w_-4w_-43\mod \GQ_5^{(2)}$
that is 
$\bar{4}w_+\bar{4}w_+\bar{4} \equiv  \frac{bc-a^2}{bc}.(a^2/wbc).4 xy4x4+  (bc)^3.4w_-4w_-4 \mod \GQ_5^{(2)}$
hence
$$
4w_-4w_-4 \equiv - (a^6/w^5). 4xy4x4  \mod \GQ_5^{(2)}
$$
and this proves (2).
\end{proof}

\begin{proposition} {\ } 
\begin{enumerate} 
\item $\GQ_5^{(3)} = \GQ_5^{(2)} + R.4 xy 4x 4 $.
\item $\GQ_5^{(3)} = \GQ_5^{(2)} + R.4 w_- 4w_- 4 =  \GQ_5^{(2)}+ R. \bar{4}w_+\bar{4}w_+\bar{4}$
\item Modulo $\GQ_5^{(2)}$, $\la.4 w_- 4w_- 4 \equiv 4 w_- 4w_- 4.\la \equiv     \eps(\la).4 w_- 4w_- 4$ and
$\la.\bar{4}w_+\bar{4}w_+\bar{4} \equiv \bar{4}w_+\bar{4}w_+\bar{4}.\la \equiv     \eps(\la).\bar{4}w_+\bar{4}w_+\bar{4}$
for all $\la \in \GQ_4$.
\end{enumerate}
\end{proposition}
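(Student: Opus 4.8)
The plan is to deduce all three items from the structural results already in place, using the reductions proved in the preceding lemmas as the main engine. For item (1), recall that we have already shown $\GQ_5^{(3)}$ is generated as a $\GQ_4$-bimodule by $\GQ_5^{(2)}$ together with the finitely many `doubled' words of the form $s_4^\alpha X s_4^\beta Y s_4^\gamma$ obtained from the bimodule generators $X$ of $\GQ_4/\GQ_5^{(1.5)}$; combining this with the observation recorded just before Lemma \ref{lem:4w4w4to4xy4x} (namely $4x\bar 4 = \bar 3\bar 2.4\bar 34.23$, so that any mixed-sign triple $s_4^\alpha xy s_4^\beta x s_4^\gamma$ with $\#\{\alpha,\beta,\gamma\}>1$ already lies in $\GQ_5^{(2)}$), only the two `pure' words $4xy4x4$ and $\bar 4\, \overline{yx}\,\bar 4\,\overline{x}\,\bar 4 = F(4xy4x4)$ survive. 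Since $\GQ_5^{(2)}$ is stable under $F$, this reduces everything to $\GQ_5^{(3)} = \GQ_5^{(2)} + R.4xy4x4$, which is item (1). Here I will need to be a little careful to check that \emph{all} bimodule generators $X$ of $\GQ_4/\GQ_5^{(1.5)}$ — namely $x = 3\bar 23$, $w_+$, $w_-$ — give, after the handle reductions of the previous lemma, only words of the claimed forms; this is the routine bookkeeping step.

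For item (2), I would invoke Lemma \ref{lem:4w4w4to4xy4x} directly: parts (1) and (2) of that lemma state that $\bar 4 w_+\bar 4 w_+\bar 4 \equiv -(a^2/w)^3.4xy4x4$ and $4w_-4w_-4 \equiv -(a^6/w^5).4xy4x4$ modulo $\GQ_5^{(2)}$. Since $a$ and $w = abc$ are invertible in $R$, the scalars $-(a^2/w)^3$ and $-(a^6/w^5)$ are units of $R$, so each of $\bar 4 w_+\bar 4 w_+\bar 4$ and $4w_-4w_-4$ equals a unit multiple of $4xy4x4$ modulo $\GQ_5^{(2)}$. Feeding this into item (1) yields $\GQ_5^{(3)} = \GQ_5^{(2)} + R.4w_-4w_-4 = \GQ_5^{(2)} + R.\bar 4 w_+\bar 4 w_+\bar 4$.

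For item (3), I would combine item (2) with Lemma \ref{lem:4xy4x4}, which already asserts $s_i.4xy4x4 \equiv 4xy4x4.s_i \equiv a.4xy4x4 \pmod{\GQ_5^{(2)}}$ for $i\in\{1,2,3\}$. Since $\GQ_4$ is generated as an algebra by $s_1,s_2,s_3$ and $\eps$ sends each $s_i$ to $a$, an immediate induction on word length shows $\la.4xy4x4 \equiv 4xy4x4.\la \equiv \eps(\la).4xy4x4 \pmod{\GQ_5^{(2)}}$ for all $\la\in\GQ_4$. Transporting this congruence along the unit scalars from Lemma \ref{lem:4w4w4to4xy4x} — and using that $F$ fixes $\GQ_5^{(2)}$ and interchanges the left- and right-multiplication statements as in the proof of the previous proposition — gives the corresponding statement for $4w_-4w_-4$ and for $\bar 4 w_+\bar 4 w_+\bar 4$. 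The one genuine subtlety, and the step I expect to absorb most of the care, is making the induction in item (3) airtight: strictly speaking Lemma \ref{lem:4xy4x4} only gives the congruence for the \emph{generators} $s_i$, and to extend it to arbitrary $\la$ one must check that the submodule $\GQ_5^{(2)} + R.4xy4x4$ is a two-sided $\GQ_4$-submodule of $\GQ_5^{(3)}$ — which is exactly what item (1) furnishes — so that the congruence can be propagated through products. Once that is noted, the remaining verification is purely formal.
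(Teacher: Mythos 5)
Your items (2) and (3) are fine and essentially coincide with the paper's treatment: Lemma \ref{lem:4w4w4to4xy4x} converts $4xy4x4$ into unit multiples of $4w_-4w_-4$ and $\bar{4}w_+\bar{4}w_+\bar{4}$ modulo $\GQ_5^{(2)}$, and Lemma \ref{lem:4xy4x4}, together with the fact that $\GQ_5^{(2)}$ is by construction a $\GQ_4$-bimodule (you do not even need item (1) for the induction, only this), gives the $\eps$-centrality for all $\la \in \GQ_4$. (Small slip: $F=\Phi\circ\Psi$ fixes each $s_i$, so $F(4xy4x4)=4x4yx4=4xy4x4$; the barred word you write is the $\Phi$-image, not the $F$-image.) The genuine gap is item (1). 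You assert that it has ``already been shown'' that $\GQ_5^{(3)}$ is generated over $\GQ_5^{(2)}$ by finitely many words $s_4^{\alpha}Xs_4^{\beta}Ys_4^{\gamma}$ with $X,Y$ among the bimodule generators; nothing of the sort is proved earlier, and this reduction is precisely the content of the proposition. What one actually has is $\GQ_5^{(3)}=\GQ_5^{(2)}u_4\GQ_4$, so after expanding $u_4$ and writing $\GQ_4=\GQ_3u_3\GQ_3+\GQ_3x\GQ_3+Rw_++Rw_-$ one is left with words $s_4^{\pm}\alpha s_4^{\pm}\beta s_4^{\pm}$ in which (i) an arbitrary element $\gamma\in\GQ_3$ survives between the two inner slots (e.g. $4x\gamma 4x4$), and (ii) the slots may be $w_+$ or $w_-$. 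Neither of the two facts you invoke covers these cases: the remark before Lemma \ref{lem:4w4w4to4xy4x} kills mixed signs only for the specific words $s_4^{\alpha}xys_4^{\beta}xs_4^{\gamma}$, and Lemma \ref{lem:4w4w4to4xy4x} only treats $\bar{4}w_+\bar{4}w_+\bar{4}$ and $4w_-4w_-4$, not mixed products.

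Concretely, the missing steps are exactly where the paper's proof spends its effort: for the $x$--$x$ case one writes $\GQ_3=R.y+u_2u_1u_2$ and uses the previously established containment $4x(u_2u_1u_2)4x4\subset\GQ_5^{(2)}$ to land on $R.4xy4x4$; for slots involving $w_{\pm}$ one uses $w_{\pm}\in w^{\mp1}.w_0^{\pm1}+\GQ_3u_3\GQ_3+\GQ_3.x$ with $w_0=s_3s_2s_1^2s_2s_3$ commuting with $s_1,s_2$, in order to absorb the middle $\GQ_3$ factor, and then short explicit braid manipulations show $4x4w_+4$ (using that $x$ commutes with $4w_+4$), $4x4w_-4$ and $4w_+4w_-4$ all lie in $\GQ_5^{(2)}$; finally, the reduction to uniform-sign words for arbitrary $\alpha,\beta\in\GQ_4$ rests on handle reduction together with the $\Phi$-stability of $H=\GQ_5^{(2)}+R.4xy4x4$, not on the special mixed-sign remark you cite. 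Without this case analysis your argument for (1) does not go through; once (1) is supplied, your (2) and (3) stand as written.
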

\begin{proof}
By lemma \ref{lem:4w4w4to4xy4x} we know that
$\GQ_5^{(2)} + R.4 xy 4x 4 =
 \GQ_5^{(2)} + R.4 w_- 4w_- 4 =  \GQ_5^{(2)}+ R. \bar{4}w_+\bar{4}w_+\bar{4}$. We denote $H$ this $R$-module, and
 aim to show that $H = \GQ_5^{(3)}$ Clearly $H \subset \GQ_5^{(3)}$, and $\Phi(H) = H$ since $\Phi(w_{\pm}) = w_{\mp}$.
By handle reduction it is thus sufficient to check that $s_4 \alpha s_4 \beta s_4 \in H$ for all $\alpha,\beta \in \GQ_4$.
Recall that $\GQ_4 = \GQ_3 u_3 \GQ_3 + \GQ_3 x \GQ_3 + R.w_+ + R.w_-$. It is thus sufficient to check
$s_4 \alpha s_4 \beta s_4 \in H$ for all $\alpha,\beta \in \{ \GQ_3u_3 \GQ_3, \GQ_3x\GQ_3, w_+,w_- \}$. 
We first consider the following cases.
\begin{itemize}
\item if $\alpha$ or $\beta$ belong to $\GQ_3u_3\GQ_3$ we have $s_4 \alpha s_4 \beta s_4 \in \GQ_5^{(2)} \subset H$.
\item if $\alpha$ and $\beta$ belong to $\GQ_3x\GQ_3$
 we have that $s_4 \alpha s_4 \beta s_4 \in \GQ_3 s_4 x \gamma s_4 x s_4 \GQ_3$
 with $\gamma \in \GQ_3 = R.y + u_2u_1u_2$ and
 $s_4 x \gamma s_4 x s_4 \in R.s_4 x y s_4 x s_4 + s_4 x  u_2u_1u_2 \subset  R.s_4 x y s_4 x s_4 + \GQ_5^{(2)} = H$.
 \end{itemize}
 We now notice that the cubic relation almost immediatly implies
 $$
 \left\lbrace \begin{array}{lcl}
 w_+ &\in & w^{-1}.w_0 + \GQ_3 u_3 \GQ_3 +\GQ_3.x \\
 w_- &\in & w.w_0^{-1} + \GQ_3 u_3 \GQ_3 +\GQ_3.x \\
 \end{array} \right.
 $$
where $w_0 = 321123$ centralizes $s_1$ (and $s_2$). By the above cases we know this implies
that $s_4 x \GQ_3 s_4 w_{\pm} s_4 \subset s_4 x s_4 w_{\pm} s_4 \GQ_3 + H$.
Therefore,
\begin{itemize}
\item if $\alpha \in \GQ_3 x \GQ_3$ and $\beta = w_+$, we have $s_4 \alpha s_4 \beta s_4 \in H$
iff $s_4 x s_4 w_+ s_4 \in H$. But $x$ commutes with $s_4 w_+ s_4$ whence $s_4 x s_4 w_+ s_4 = s_4 s_4 w_+ s_4x \in \GQ_5^{(2)} \subset H$
and this solves the case;
\item if $\alpha \in \GQ_3 x \GQ_3$ and $\beta = w_-$, we have $s_4 \alpha s_4 \beta s_4 \in H$
iff $s_4 x s_4 w_- s_4 \in H$. But $s_4 x s_4 w_- s_4 = 43\bar{2}(34\bar{3})\bar{2}1\bar{2}\bar{3}4
= 43\bar{2}\bar{4}34\bar{2}1\bar{2}\bar{3}4 = (43\bar{4})\bar{2}3\bar{2}1\bar{2}4\bar{3}4= \bar{3}43\bar{2}3\bar{2}1\bar{2}4\bar{3}4 \in \GQ_5^{(2)} \subset H$
and this solves the case.
\end{itemize}
The cases where the roles of $\alpha$ and $\beta$ are exchanged are deduced from these ones by applying $F$. Therefore,
we are reduced to considering $\alpha, \beta \in \{w_+,w_- \}$. If $\alpha = \beta$ this is clear, so we can assume $\alpha \neq \beta$,
and via $F$ there is only one case to consider, namely $4w_+4w_-4 = 432\bar{1}2(34\bar{3})\bar{2}1\bar{2}\bar{3}4
= 432\bar{1}2\bar{4}34\bar{2}1\bar{2}\bar{3}4= (43\bar{4})2\bar{1}23\bar{2}1\bar{2}4\bar{3}4= \bar{3}432\bar{1}23\bar{2}1\bar{2}4\bar{3}4 \in \GQ_5^{(2)} \subset H$,
and this proves the claim that $H = \GQ_5^{(3)}$, which implies (1) and (2). Then (3) is an immediate consequence of lemmas \ref{lem:4xy4x4} and \ref{lem:4w4w4to4xy4x}.

\end{proof}

\begin{theorem} $\GQ_5$ is a $R$-module of finite rank, and $\GQ_5 = \GQ_5^{(3)}$.
\end{theorem}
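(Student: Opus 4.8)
The plan is to show that the increasing chain $\GQ_5^{(1)} \subseteq \GQ_5^{(2)} \subseteq \GQ_5^{(3)} \subseteq \dots$ already stabilizes at $k=3$, i.e.\ that $\GQ_5^{(3)} = \GQ_5^{(4)}$. Granting this, the inclusion $\GQ_5^{(k+1)} = \GQ_5^{(k)} u_4 \GQ_4 \supseteq \GQ_5^{(k)}$ together with $\GQ_5^{(4)} = \GQ_5^{(3)} u_4 \GQ_4$ propagates the equality upward: $\GQ_5^{(k)} = \GQ_5^{(3)}$ for all $k \geq 3$, and since handle reduction gives $\GQ_5 = \GQ_5^+ + \GQ_5^- = \sum_{k\geq 1}\GQ_5^{(k)}$ (the images of braids being exhausted by the $\GQ_5^{(k)\pm} \subseteq \GQ_5^{(k)}$), we conclude $\GQ_5 = \GQ_5^{(3)}$. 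Finiteness then follows from the explicit generating sets already obtained: $\GQ_5^{(1.5)}$ is generated as a $\GQ_4$-bimodule by the six elements $\emptyset,4,\bar4,4\bar3 4,43\bar2 3 4,\bar4\bar3 2 \bar3\bar4$, and $\GQ_4$ is a free $R$-module of rank $264$ by Theorem~\ref{theo:A4libre}, so $\GQ_5^{(1.5)}$ is a finitely generated $R$-module; adjoining the four elements $4w_+4,\bar4 w_-\bar4,4w_-4,\bar4 w_+\bar4$ gives finite generation of $\GQ_5^{(2)}$, and adjoining $4xy4x4$ gives it for $\GQ_5^{(3)} = \GQ_5$.

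To establish $\GQ_5^{(3)} = \GQ_5^{(4)}$ I would argue as follows. First, $\GQ_5^{(3)} = \GQ_5^{(2)} + R\cdot 4xy4x4$ is a sub-$\GQ_4$-bimodule of $\GQ_5$: the submodule $\GQ_5^{(2)} = \GQ_4 u_4 \GQ_4 u_4 \GQ_4$ is visibly a $\GQ_4$-bimodule, while Lemma~\ref{lem:4xy4x4} gives $s_i\cdot 4xy4x4 \equiv 4xy4x4\cdot s_i \equiv a\cdot 4xy4x4 \pmod{\GQ_5^{(2)}}$ for $i\in\{1,2,3\}$, hence $\la\cdot 4xy4x4 \equiv 4xy4x4\cdot\la\equiv\eps(\la)\,4xy4x4\pmod{\GQ_5^{(2)}}$ for every $\la\in\GQ_4$, so that $\GQ_4\cdot 4xy4x4 + 4xy4x4\cdot\GQ_4 \subseteq R\cdot 4xy4x4 + \GQ_5^{(2)} = \GQ_5^{(3)}$. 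Second — the one genuinely new but entirely elementary observation — the word $4xy4x$ already lies in $\GQ_5^{(2)}$: since $x = 3\bar2 3$ and $y = 1\bar2 1$ both lie in the subalgebra $\GQ_4$ generated by $s_1,s_2,s_3$, one has $4xy4x = s_4\,(xy)\,s_4\,x \in \GQ_4\,u_4\,\GQ_4\,u_4\,\GQ_4 = \GQ_5^{(2)}$. Therefore $4xy4x4 = (4xy4x)\cdot s_4$ with $4xy4x\in\GQ_5^{(2)}$, and using that $u_4$ is spanned over $R$ by $1,s_4,s_4^{-1}$ (and that $s_4^2\in u_4$) we get
\begin{align*}
(4xy4x4)\,s_4 &= (4xy4x)\,s_4^{\,2} \in \GQ_5^{(2)}\,u_4 \subseteq \GQ_5^{(2)}\,u_4\,\GQ_4 = \GQ_5^{(3)}, \\
(4xy4x4)\,s_4^{-1} &= 4xy4x \in \GQ_5^{(2)} \subseteq \GQ_5^{(3)}.
\end{align*}
Combined with $\GQ_5^{(2)}\,s_4^{\pm1}\subseteq\GQ_5^{(2)}\,u_4\subseteq\GQ_5^{(3)}$, this yields $\GQ_5^{(3)}\,u_4\subseteq\GQ_5^{(3)}$, and then the bimodule property gives $\GQ_5^{(4)} = \GQ_5^{(3)}\,u_4\,\GQ_4 \subseteq \GQ_5^{(3)}\,\GQ_4 = \GQ_5^{(3)}$; the reverse inclusion $\GQ_5^{(3)}\subseteq\GQ_5^{(4)}$ is trivial.

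The substantial work has in fact already been done in the preceding material: the reduction of $\GQ_5^{(2)}$ to $\GQ_5^{(1.5)}$ plus four rank-one pieces, the identity $\GQ_5^{(3)} = \GQ_5^{(2)} + R\cdot 4xy4x4$, and above all the two ``collapsing'' lemmas~\ref{lem:4xy4x4} and~\ref{lem:4w4w4to4xy4x}, whose proofs turn on an iterated use of handle reduction (Figures~\ref{fig:handlered}, \ref{fig:A53fig1}, \ref{fig:A53fig2}) together with the cubic and defining relations of $\GQ_5$. Against that background the assembly above is a short piece of bookkeeping, so I do not expect a serious obstacle here; the single point that must be checked with care is that every threefold product $s_4\,\GQ_4\,s_4\,\GQ_4\,s_4$ is absorbed, modulo $\GQ_5^{(2)}$, into $R\cdot 4xy4x4$ — which is exactly what the preceding proposition asserts and what guarantees that $\GQ_5^{(4)}=\GQ_5^{(3)}u_4\GQ_4$ stays inside $\GQ_5^{(3)}$.
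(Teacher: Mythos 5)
Your proposal is correct and follows essentially the paper's own argument: the decisive point in both is that the trailing $s_4$ of $4xy4x4$ is absorbed into the adjacent $u_4$, so that what remains, $4xy4x$, lies in $\GQ_5^{(2)}$ because $x,y\in\GQ_4$, which yields $\GQ_5^{(4)}=\GQ_5^{(3)}u_4\GQ_4\subset\GQ_5^{(3)}$, hence stabilization, with finite rank following from the finitely generated description of $\GQ_5^{(3)}$. Your extra appeal to Lemma~\ref{lem:4xy4x4} for the right $\GQ_4$-stability of $R\cdot 4xy4x4$ modulo $\GQ_5^{(2)}$ is valid but not needed in the paper's more direct computation $4xy4x4\,u_4\,\GQ_4=4xy4x\,u_4\,\GQ_4\subset\GQ_5^{(2)}u_4\GQ_4=\GQ_5^{(3)}$.
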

\begin{proof}
It is sufficient to show that $\GQ_5^{(4)} = \GQ_5^{(3)}$. But
$\GQ_5^{(4)} = \GQ_5^{(3)}u_4 \GQ_4 =\GQ_5^{(2)}u_4\GQ_4 + 4xy4x4u_4\GQ_4$.
and $\GQ_5^{(2)}u_4\GQ_4 = \GQ_5^{(3)}$ while $4xy4x4u_4\GQ_4 = 4xy4xu_4\GQ_4 \subset \GQ_5^{(3)}$,
whence the claim.
\end{proof}

\end{document}